\newtheorem{theorem}{Theorem}[]
\newtheorem{corollaryy}{Corollary}[]
\newtheorem{proposition}{Proposition}[section]
\newtheorem{lemma}[proposition]{Lemma}
\newtheorem{corollary}[proposition]{Corollary}
\newtheorem{claim}[proposition]{Claim}
\theoremstyle{definition}
\newtheorem{definition}[proposition]{Definition}
\theoremstyle{remark}
\newtheorem{remark}[proposition]{Remark}
\theoremstyle{definition}
\newtheorem{definitionn}{Definition}[]
\newcommand{\bb}{b}
\newcommand{\cc}{c}
\newcommand{\uu}{u}
\newcommand{\Gaf}{\mathcal{G}_{\mathsf{AF}}}
\newcommand{\Gk}{\mathcal{G}_{k}}
\newcommand{\origin}{\textbf{\textit{o}}}
\newcommand{\nut}{g_{\mathsf{TNUT}}}
\newcommand{\ode}{\mathsf{J}_{1}}
\newcommand{\oded}{\mathsf{J}_{2}}
\newcommand{\R}{\mathbb{R}}  % The real numbers.
\begin{document}
\title{Convergence of Ricci flow solutions to Taub-NUT}
\author{Francesco Di Giovanni}
\address{Department of Mathematics, University College London, Gower Street, London, WC1E 6BT, United Kingdom} 
\email{francesco.giovanni.17@ucl.ac.uk}
\begin{abstract}
%We study the Ricci flow evolving from an SU(2) cohomogeneity-1 metric $g_{0}$ on $\R^{4}$ with monotone warping coefficients and whose restriction to any hypersphere is a Berger metric. If $g_{0}$ is ALF, then either the solution develops a Type-II(b) singularity and converges in the Cheeger-Gromov sense to the Ricci-flat Taub-NUT metric $\nut$ or encounters a Type-III singularity, depending on whether the Hopf-fiber is bounded or not respectively. In fact, we prove that the Ricci flow still converges to $\nut$ in if $g_{0}$ has bounded Hopf-fiber, curvature controlled by the size of the principal orbits and opens faster than a paraboloid along the $\mathbb{CP}$-direction. As a by-product we show that there exists $g_{0}$ with $\text{sec}(g_{0}) \geq 0$ flowing to $\nut$ in infinite-time. Finally, we obtain a uniqueness result for $\nut$ which characterizes possible (collapsed) infinite-time singularity models.% by proving that the only complete warped Berger ancient solution with bounded curvature, monotone coefficients, bounded Hopf-fiber and opening Moreover, we prove that the 

We study the Ricci flow starting at an SU(2) cohomogeneity-1 metric $g_{0}$ on $\R^{4}$ with monotone warping coefficients and whose restriction to any hypersphere is a Berger metric. If $g_{0}$ has bounded Hopf-fiber, curvature controlled by the size of the orbits and opens faster than a paraboloid in the directions orthogonal to the Hopf-fiber, then the flow converges to the Taub-NUT metric $\nut$ in the Cheeger-Gromov sense in infinite time. We also classify the long-time behaviour when $g_{0}$ is asymptotically flat. In order to identify infinite-time singularity models we obtain a uniqueness result for $\nut$. 

%If $g_{0}$ is ALF, then either the solution develops a Type-II(b) singularity and converges in the Cheeger-Gromov sense to the Ricci-flat Taub-NUT metric $\nut$ or encounters a Type-III singularity, depending on whether the Hopf-fiber is bounded or not respectively. In fact, we prove that the Ricci flow still converges to $\nut$ in if $g_{0}$ has bounded Hopf-fiber, curvature controlled by the size of the principal orbits and opens faster than a paraboloid along the $\mathbb{CP}$-direction. As a by-product we show that there exists $g_{0}$ with $\text{sec}(g_{0}) \geq 0$ flowing to $\nut$ in infinite-time. Finally, we obtain a uniqueness result for $\nut$ which characterizes possible (collapsed) infinite-time singularity models.

%Finally we prove that a Type-I warped Berger Ricci flow with bounded curvature at infinity contains minimal 3-spheres for times close to the maximal time of existence. 

\end{abstract}
\maketitle
%\newcommand{\ode1}{$\mathsf{J}_{1}$}
%Setting: we let $g_{0}\in\Gin$ be asymptotically flat, with cubic volume growth, mass $m^{-1} > 0$ and both $\bb_{s}$ and $\cc_{s}$ positive. Then we have the following estimates (to double-check):
%\begin{lemma}
%There exists $\alpha > 0$ such that the following estimates are satisfied in $\R^{4}\times [0,+\infty)$:
%\begin{itemize}
%\item[(i)] $\left\vert\frac{\cc_{s}}{\cc} - \frac{\bb_{s}}{\bb}\right\vert \leq \alpha$.
%\item[(ii)] $\frac{1}{\cc}\left(\bb_{s}^{2} - 4\right)\leq \alpha$.
%\item[(ii)] $\frac{1}{\bb}\left(\frac{\bb}{\cc}-1\right)\leq \alpha$.
%\item[(iv)] $\lvert \bb_{s}\rvert + \lvert \cc_{s}\rvert \leq \alpha$.
%\end{itemize}
%\end{lemma}
\section{Introduction}
%\textbf{Should I write about the soliton? Should I specify solutions are immortal? Should I mention that sec positive converge to Ricci flat to emphasize that assumptions are quite weak? Title? Organization of intro, with an eye on mentioning the result for the ALF-setting although the proof still follows from the uniqueness result for \nut? Ask about the mass maybe? Should I specify that Taub-NUT is Ricci-flat? Is it clear what I mean by "uniqueness result" for \nut?}
In \cite{threemanifolds}, Hamilton introduced a method to study the topology of manifolds by flowing Riemannian metrics in the direction of the Ricci tensor: a family of metrics $g(t)$ solve Hamilton's Ricci flow on a manifold $M$ with initial condition $g_{0}$ if 
\[
\frac{\partial g}{\partial t} = -2\text{Ric}_{g(t)}, \,\,\,\,\,\,\, g(0) = g_{0}.
\]

If the initial metric $g_{0}$ is complete and has bounded curvature, then from celebrated works of Shi \cite{shi} and Chen-Zhu \cite{uniqueness} we derive that there exists a solution to the problem and that such solution is unique in the class of complete, bounded curvature solutions respectively: from now on, we always consider maximal complete, bounded curvature solutions to the Ricci flow. By \cite{shi} we know that a solution to the Ricci flow exists smoothly for all positive times if and only if the curvature is bounded on any time slice. Since the flow is a heat-type evolution problem for Riemannian metrics, it is tempting to expect immortal solutions to approach more regular structures in infinite time. In this regard, the behaviour of a solution existing for all positive times has been classified as follows, depending on whether the curvature decays at least as fast as $t^{-1}$ \cite{formationsingularities}:
\begin{align*}
\emph{Type-II(b)}&:\,\,\,\,\,\limsup_{t\nearrow \infty}\left(\sup_{M}\,t\lvert \text{Rm}_{g(t)}\rvert_{g(t)}\right) = \infty, \\
\emph{Type-III}&:\,\,\,\,\,\limsup_{t\nearrow \infty}\left(\sup_{M}\,t\lvert \text{Rm}_{g(t)}\rvert_{g(t)}\right) < \infty.
\end{align*}

Several examples of Type-III singularities for the Ricci flow have been found, both in the compact setting \cite{lottsesum} and in the non-compact one \cite{woolgar}. In fact, some of these cases have been shown to be occurrences of more general phenomena related to either the dimension or the existence of many symmetries: Bamler proved that any closed 3-dimensional immortal Ricci flow encounters a Type-III singularity in infinite time \cite{bamler}, while B\"ohm showed that the same conclusion applies to any immortal \emph{homogeneous} Ricci flow \cite{bohmIII}. 

If a solution develops a Type-III singularity and converges smoothly \emph{without rescaling} in the Cheeger-Gromov sense, meaning that some control on the injectivity radius is available, then the limit is flat. Since Ricci-flat metrics constitute fixed points for the flow, it is natural to search for immortal solutions converging to Ricci-flat \emph{non}-flat metrics in infinite time, thus encountering Type-II(b) singularities. In this sense, only few results are known and most of them are stability properties: the initial condition needs to be \emph{sufficiently close} to the Ricci-flat metric for the Ricci flow solution to converge. For such results, whether the underlying topology is compact or not plays a key role in the analysis. Using Perelman's $\lambda$-functional, Haslhofer and M\"uller \cite{haslhofer} proved stability properties for closed Ricci-flat spaces, generalizing earlier work of \cite{sesumstability}. In the non-compact setting, Deruelle and Kr\"oncke derived a stability result for a class of ALE Ricci flat manifolds \cite{alix}. 

Since the Ricci flow preserves isometries, one might consider looking for solutions converging to Ricci-flat fixed points when symmetries are present. In this direction, Marxen recently generalized earlier work of Hamilton to prove that if $(N,g_{N})$ is closed and Ricci-flat, then a class of warped product solutions to the Ricci flow $(\R\times N,g(t))$, of the form $g(t) = k^{2}(x,t)dx^{2} + f^{2}(x,t)g_{N}$, converge to $(\R\times N,dx^{2} + c^{2}g_{N})$, for some $c > 0$, whenever the initial condition is asymptotic to the target Ricci-flat metric \cite{marxen}. On the other hand, in the maximally symmetric case of homogeneous Ricci flows, convergence to Ricci-flat non-flat spaces is not possible due to a classic result of Alekseevskii and Kimelfeld \cite{classichom}. One of the main contributions of this work consists in proving that a large family of cohomogeneity-1 metrics on $\R^{4}$ converge to the Ricci-flat Taub-NUT metric in infinite time along the Ricci flow. 

We briefly describe the class of metrics we use as initial data for the flow. Any complete metric $g$ which is both invariant under the cohomogeneity-1 left-action of SU(2) on $\R^{4}$ and under rotations of the Hopf-fibres can be diagonalized with respect to a fixed Milnor frame and hence be written, away from the origin, as:
\[
g = ds^{2} + \bb^{2}(s)\,\pi^{\ast}g_{S^{2}(\frac{1}{2})} + \cc^{2}(s)\,\sigma_{3}\otimes \sigma_{3},
\]
\noindent where $s$ is the $g$-distance from the origin, $\pi^{\ast}g_{S^{2}(\frac{1}{2})}$ is the pull-back of the Fubini-Study metric under the Hopf map, and $\sigma_{3}$ is the one-form dual to the vector field tangent to the Hopf-fibres. 

\begin{definitionn}
An SU(2)U(1) invariant metric $g$ on $\R^{4}$ is a \emph{warped Berger metric} if $\cc/\bb \leq 1$ on $\R^{4}$. Moreover $g$ has \emph{monotone coefficients} if $\bb_{s}\geq 0$ and $\cc_{s}\geq 0$ on $\R^{4}$.
\end{definitionn}

We note that the notion of warped Berger metrics is consistent with the notations adopted in \cite{IKS1}. The Ricci flow problem in this symmetry class has been studied on different topologies and examples of non-rotationally symmetric Type-I and Type-II(a) singularities have been constructed in \cite{IKS1} and in \cite{appleton2},\cite{work2} respectively. A well-known warped Berger metric on $\R^{4}$ is given by the Taub-NUT metric $\nut$, which can be written explicitly as 
\[
\nut = \frac{1}{16}\left(1 + \frac{2m^{-1}}{x}\right)\,dx^{2} + \frac{x^{2}}{4}\left(1+ \frac{2m^{-1}}{x}\right)\,\pi^{\ast}g_{S^{2}(\frac{1}{2})} + \frac{m^{-2}}{1 + \frac{2m^{-1}}{x}}\,\sigma_{3}\otimes\sigma_{3},
\]
\noindent for some parameter $m$ which we call the \emph{mass} of $\nut$ and which measures the inverse of the \emph{finite} size of the Hopf-fiber at spatial infinity. The Taub-NUT metric is a gravitational instanton found on $\R^{4}$ by Hawking \cite{hawking}: it is a \emph{hyperk\"ahler} and thus \emph{Ricci-flat} asymptotically flat metric.  We point out that the stability result in \cite{alix} does \emph{not} apply to the Taub-NUT metric which is not ALE being $(\R^{4}\setminus \{\origin\},\nut)$ the total space of a circle fibration with fibres approaching constant length at spatial infinity.

In \cite{work2} we proved that if $g_{0}$ is a complete warped Berger metric with monotone coefficients and curvature decaying at spatial infinity, then the maximal Ricci flow solution starting at $g_{0}$ is immortal. In light of such result and being $\nut$ an asymptotically flat metric, we first focus on the following family of initial data:
\begin{definitionn}\label{ALFmetrics} The class $\Gaf$ consists of all complete warped Berger metrics $g$ on $\R^{4}$ with monotone coefficients satisfying 
\begin{equation}\label{epsilondecay}
\sup_{p\in\R^{4}}\,\left(d_{g}(\origin,p)\right)^{2+\epsilon}\lvert\text{Rm}_{g}\rvert_{g}(p) < \infty, 
\end{equation}
\noindent for some $\epsilon > 0$. A metric $g\in\Gaf$ is called \emph{asymptotically flat}.
\end{definitionn}
The class $\Gaf$ divides in two categories (see Lemma \ref{characterizationALF}): metrics with cubic volume growth, for which $\bb$ opens up linearly and the Hopf-fiber approaches a positive finite quantity $m^{-1}$, and metrics with Euclidean volume growth. Consistently with the Taub-NUT construction we say that a metric $g\in\Gaf$ has positive \emph{mass} $m$ in the first case and zero mass in the second case respectively. We prove that for asymptotically flat warped Berger metrics with monotone coefficients the long-time behaviour of the flow only depends on the mass. 

In the following we say that a Ricci flow solution converges to a Ricci-flat metric $g_{\infty}$ on $\R^{4}$ in the pointed Cheeger-Gromov sense as $t\nearrow \infty$ if for any $t_{j}\nearrow \infty$ the sequence $(\R^{4},g_{j}(t),\origin)$, defined by $g_{j}(t) = g(t_{j} + t)$, converges to $(\R^{4},g_{\infty},\origin)$ in the pointed Cheeger-Gromov sense. In particular, there is no rescaling of the solution.
\begin{theorem}\label{maintheoremconvergenceALF}
Let $(\R^{4},g(t))_{t\geq 0}$ be the maximal solution to the Ricci flow starting at some $g_{0}\in\Gaf$. Either one of the following conditions is satisfied:
\begin{itemize}
\setlength\itemsep{0.6em}
\item[(i)] If $g_{0}$ has positive mass $m$, then $g(t)$ encounters a Type-II(b) singularity. Moreover, $g(t)$ converges to the Taub-NUT metric of mass $m$ in the pointed Cheeger-Gromov sense as $t\nearrow \infty$.
\item[(ii)] If $g_{0}$ has zero mass, then the solution encounters a Type-III singularity. In particular, $g(t)$ converges to the Euclidean metric in the pointed Cheeger-Gromov sense as $t\nearrow \infty$.
% converges to the Euclidean metric in the pointed Cheeger-Gromov sense as $t\nearrow \infty$.
\end{itemize}
\end{theorem}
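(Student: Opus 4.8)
The plan is to study the solution as $t\nearrow\infty$. By \cite{work2} it is immortal, so it suffices to extract pointed Cheeger--Gromov limits along arbitrary sequences $t_{j}\nearrow\infty$, to identify each such limit with the Ricci--flat model prescribed by the mass, and to read off the singularity type from the (non)flatness of that model. The first task is a package of a priori estimates. I would check that the flow preserves $\Gaf$: the conditions $\bb_{s}\geq 0$, $\cc_{s}\geq 0$ and $\cc\leq\bb$ survive the maximum principle applied to the parabolic equations of $\bb_{s}$, $\cc_{s}$, $\cc/\bb$ in the Milnor frame, and the decay \eqref{epsilondecay} persists since near spatial infinity $g(t)$ stays uniformly close to flat; hence the dichotomy for $\Gaf$ recalled after Definition~\ref{ALFmetrics} is preserved and the asymptotic data are flow invariants, so the mass $m$ (i.e. $\lim_{s\to\infty}\cc(s,t)=m^{-1}$) is conserved in case (i) and Euclidean volume growth is conserved in case (ii). The crucial estimate is a scale--invariant, \emph{time--uniform} curvature bound $\lvert\text{Rm}_{g(t)}\rvert_{g(t)}\leq C\,\bb^{-2}$ --- curvature controlled by the size of the orbits --- obtained by running the maximum principle on a suitable combination of the sectional curvatures, with the monotonicity of $\bb,\cc$ dominating the reaction terms and \eqref{epsilondecay} confining the maximum to a compact region; in particular $\sup_{\R^{4}}\lvert\text{Rm}_{g(t)}\rvert_{g(t)}\leq C$ for all $t\geq 0$.

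Consider first the zero--mass case (ii). Euclidean volume growth furnishes the scale--invariant volume control needed to apply Perelman's pseudolocality theorem at every scale and basepoint, which yields the genuine decay $\sup_{\R^{4}}\lvert\text{Rm}_{g(t)}\rvert_{g(t)}\leq C/t$; this is a Type--III singularity. Euclidean volume growth also gives $\mathrm{vol}(B_{g(t)}(\origin,r))\geq c_{0}r^{4}$ uniformly, so Hamilton's compactness theorem produces, for every $t_{j}\nearrow\infty$, a subsequence along which $(\R^{4},g(t_{j}+t),\origin)$ converges in the pointed $C^{\infty}$ Cheeger--Gromov sense to an eternal flow $(M_{\infty},g_{\infty}(t),x_{\infty})$. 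Since $\lvert\text{Rm}_{g(t_{j})}\rvert\leq C/t_{j}\to 0$, the limit $g_{\infty}(0)$ has vanishing curvature; being complete on the manifold $M_{\infty}\cong\R^{4}$ inherited from the symmetry, it is the Euclidean metric. As every subsequential limit is the Euclidean metric, convergence holds along the full family $t\nearrow\infty$, giving conclusion (ii).

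In the positive--mass case (i) the same compactness scheme applies: non-collapsing at $\origin$ follows either from the local normal form $\bb,\cc\sim s$ near the origin with constants uniform in $t$ (a consequence of the curvature bound and Shi's estimates), or from Perelman's no--local--collapsing theorem on the windows $[t_{j}-1,t_{j}+1]$, so one obtains subsequential Cheeger--Gromov limits that are eternal warped Berger flows on $M_{\infty}\cong\R^{4}$ with monotone coefficients, cubic volume growth, and Hopf fibre of length $m^{-1}$. The analytic heart is to prove $g_{\infty}(t)$ is stationary, hence Ricci--flat: I would use a monotone quantity for the flow --- Perelman's $\lambda$--functional in a form valid for asymptotically flat geometries, or the monotonicity in $t$ of a geometric scalar adapted to the symmetry supplied by the maximum principle --- to force $\text{Ric}_{g(t)}\to 0$ in $C^{\infty}_{\mathrm{loc}}$ along the chosen subsequence (asymptotic flatness excluding a non-trivial steady soliton). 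Since a complete Ricci--flat warped Berger metric on $\R^{4}$ with cubic volume growth and Hopf fibre $m^{-1}$ must equal $\nut$ of mass $m$ by the uniqueness result for $\nut$ proved in this paper, every subsequential limit is $\nut$ of mass $m$ and convergence holds along the full family. Finally, the bound $\lvert\text{Rm}_{g(t)}\rvert\leq C\bb^{-2}$ prevents curvature from escaping to spatial infinity, so this convergence forces $\sup_{\R^{4}}\lvert\text{Rm}_{g(t)}\rvert_{g(t)}\to\sup_{\R^{4}}\lvert\text{Rm}_{\nut}\rvert=:K_{0}>0$ because $\nut$ is Ricci--flat but not flat, whence $\limsup_{t\nearrow\infty}t\sup_{\R^{4}}\lvert\text{Rm}_{g(t)}\rvert_{g(t)}=\infty$ and the singularity is Type--II(b).

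The decisive difficulties are the a priori estimates of the first paragraph --- above all the scale--invariant, time--uniform bound $\lvert\text{Rm}\rvert\leq C\bb^{-2}$ --- and, in case (i), the rigidity step forcing the limit flow to be static; granting these, identifying the limit is an appeal to the Taub--NUT uniqueness theorem and the singularity dichotomy follows formally.
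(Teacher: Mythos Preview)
Your overall scaffold---uniform curvature bounds, compactness along $t_j\nearrow\infty$, identification of the limit via rigidity---matches the paper, but two of the load-bearing steps are genuinely different from what the paper does, and in one of them your proposal has a real gap.

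\textbf{Case (i).} You propose to first force the limit to be \emph{stationary} (via Perelman's $\lambda$-functional or an unspecified monotone scalar), and only then invoke a uniqueness theorem for Ricci-flat warped Berger metrics. This is not how the paper proceeds, and your route is problematic: a $\lambda$-functional theory valid on asymptotically flat $4$-manifolds with cubic volume growth is not available off the shelf, and you give no candidate for the symmetry-adapted monotone quantity. The paper avoids this entirely. Its uniqueness theorem (Theorem~\ref{maintheoremancient}) is a \emph{dynamic} statement about ancient solutions in the class $\mathcal{A}$: the limit is not shown to be Ricci-flat first, but is instead shown to lie in $\mathcal{A}$, from which Ricci-flatness and the identification with $\nut$ follow simultaneously. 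To place the limit in $\mathcal{A}$ one needs, beyond bounded Hopf fibre, the ``faster than a paraboloid'' condition $\bb_{s}\uu^{-1}\geq f(\uu^{-1})$ with $f\to\infty$. This is the analytic heart of the positive-mass case and comes from a delicate estimate (Lemma~\ref{keyquantity}: $\bb^{\lambda}(\bb_{s}\uu^{-1}-\log\bb)\geq -\alpha$), which you do not mention. Without it the soliton of Appleton would satisfy all your hypotheses on the limit, so your argument as written cannot distinguish Taub--NUT from the soliton.

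Relatedly, the scale-invariant bound you assert, $\lvert\text{Rm}\rvert\leq C\bb^{-2}$, is stronger than what the paper actually proves; only $\cc^{2}\lvert\text{Rm}\rvert\leq C$ is obtained directly (Corollary~\ref{corollarycccurvature}), and the uniform-in-time bound on $\lvert\text{Rm}\rvert$ (Proposition~\ref{curvatureboundedintime}) is then established by a separate blow-up contradiction argument using the compactness result and Perelman's flat-rigidity for $\kappa$-solutions with positive AVR.

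\textbf{Case (ii).} Your pseudolocality route to $\lvert\text{Rm}\rvert\leq C/t$ is a different argument from the paper's and is only sketched; applying pseudolocality ``at every scale and basepoint'' for all $t$ would need justification. The paper instead shows $1-\uu\to 0$ at a power rate in $t$ (Corollary~\ref{corollaryzeromasscase}), so that any Type-II(b) blow-up limit would be a rotationally symmetric $\kappa$-solution with positive asymptotic volume ratio (from $\bb_{s}\uu^{-1}\geq\beta$), hence flat by Perelman---a contradiction. The Type-III conclusion and convergence to Euclidean space then follow.
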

We note that an analogous Type-III result for SO$(n)$-invariant Ricci flows without minimal hyperspheres was obtained in \cite{woolgar}. Moreover, a numerical investigation on the stability of the Taub-NUT metric for warped Berger Ricci flows in $\bar{B}^{4}$ was conducted in \cite{numerics}: Theorem \ref{maintheoremconvergenceALF} and its generalization below provide a rigorous frame for addressing the questions raised in \cite{numerics} on the $\R^{4}$-topology.

In \cite{appleton}, Appleton proved that on $\R^{4}$ there exists a warped Berger gradient steady soliton with monotone coefficients, bounded Hopf-fiber and coefficient $\bb$ in the directions orthogonal to the Hopf-fiber opening as fast as a paraboloid in $\R^{3}$. Namely, the soliton satisfies the asymptotics:
\[
\cc(s) \sim \text{constant}, \,\,\,\,\,\, \bb(s)\sim \sqrt{s}.
\]

Consequently, we cannot expect initial data opening with arbitrary speed to converge to $\nut$ along the flow. The paraboloid growth rate plays a role in \cite{Ivey}, where Ivey found a family of positively curved, pinched SO(3)-invariant immortal Ricci flows on $\R^{3}$ opening (at least) as fast as a paraboloid that do converge in subsequences in the pointed Cheeger-Gromov sense as $t\nearrow\infty$ \cite{Ivey}. Partly motivated by such analysis, we investigate whether a similar convergence property holds for warped Berger Ricci flows opening \emph{faster} than a paraboloid, thus ruling out Appleton's soliton, without restricting to positively curved pinched solutions. With that in mind, we give the following:
\begin{definitionn}\label{definitionGk}
For all $0 \leq k < 1$, the class $\Gk$ consists of all complete warped Berger metrics $g$ with monotone coefficients satisfying:
\begin{itemize}
\item[(i)] $0 < \liminf_{s \rightarrow \infty}\,\bb_{s}\,\bb^{k}(s) \leq \limsup_{s\rightarrow \infty}\,\bb_{s}\,\bb^{k}(s) < \infty$, 
\item[(ii)]$\sup_{p\in \R^{4}}\,\left(\bb^{2}\lvert\text{Rm}_{g}\rvert_{g}\right)(p) < \infty$, 
\item[(iii)] $\sup_{p\in\R^{4}}\cc(p) < \infty$. 
\end{itemize}
\end{definitionn} 
We note that the assumptions in Definition 2 are independent and that properties (i), (ii) cannot be replaced by requiring a suitable rate of curvature decay (a thorough discussion is provided in Section 2.6). In particular, we point out that metrics in $\Gaf$ with positive mass belong to $\mathcal{G}_{0}$. By integrating (i) we see that if $g\in\Gk$, then the warping coefficient $\bb$ grows like $s^{\frac{1}{k+1}}$, meaning that the projection of $g$ on the base space via the Hopf-map opens faster than a paraboloid in $\R^{3}$. The first order constraints in (i) and the decay in (ii) allow us to apply a maximum principle argument to show (we refer to the Outline for more details) that Ricci flow solutions starting in $\Gk$ have a well defined behaviour at spatial infinity on any time-slice, meaning that (i) is preserved - not uniformly though, in fact according to the convergence to Taub-NUT we deduce that there will be a jump in infinite time. 

We still call \emph{mass} the inverse of the size of the Hopf-fiber at spatial infinity. We prove that any maximal Ricci flow solution starting in $\Gk$ develops a Type-II(b) singularity modelled by an ancient solution satisfying the conditions below.
\begin{definitionn}\label{definitionancientpro} Let $m > 0$. The class $\mathcal{A}$ consists of all complete, warped Berger ancient solutions to the Ricci flow on $\R^{4}$ with monotone coefficients and curvature uniformly bounded in the space-time, satisfying
\begin{align*}
\bb_{s}&\geq \frac{f\left(\frac{\bb}{\cc}\right)}{\frac{\bb}{\cc}} \\
\cc &\leq m^{-1}
\end{align*}
%\inf_{\R^{4}\times (-\infty,0]}\,\,\frac{\bb_{s}}{f\left(\frac{\bb}{\cc}\right)}\,\frac{\bb}{\cc} &> 0,  
%\\ \sup_{\R^{4}\times (-\infty,0]}\cc = m, 
\noindent for some continuous positive function $f$ such that $f(z)\rightarrow\infty$ as $z\rightarrow\infty$.
\end{definitionn}
We point out that the class $\mathcal{A}$ describes warped Berger ancient solutions opening faster than a paraboloid in the directions orthogonal to the Hopf-fiber. Our second main result is a rigidity property. %We prove that 
%\\ \emph{The Taub-NUT metric is the only complete warped Berger ancient solution with bounded Hopf-fiber, bounded curvature and opening faster than a paraboloid in the direction orthogonal to the Hopf-fiber.}
%Namely, we show the following: 
\begin{theorem}\label{maintheoremancient}
The only ancient solution in $\mathcal{A}$ is the Taub-NUT metric.%If $(\R^{4},g(t))_{t\leq 0}$ is an ancient solution to the Ricci flow in $\mathcal{A}$, then $(\R^{4},g(t))$ is the Taub-NUT metric.
\end{theorem}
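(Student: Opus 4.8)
The plan is to show that any solution in $\mathcal{A}$ is \emph{stationary}, i.e.\ Ricci-flat, and then to pin down its profile by an ODE analysis: the complete Ricci-flat warped Berger metrics on $\R^{4}$ with bounded Hopf-fiber are exactly the Taub-NUT metrics. First I would record the spatial asymptotics forced by membership in $\mathcal{A}$. Since $\cc$ is monotone and bounded it converges, $\cc(\cdot,t)\nearrow\cc_{\infty}(t)\in(0,m^{-1}]$, as $s\to\infty$; if $\bb(\cdot,t)$ were bounded then $\bb_{s}\geq f(\bb/\cc)/(\bb/\cc)$ would be bounded below by a positive constant over a compact range of $\bb/\cc$, forcing $\bb\to\infty$, a contradiction. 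Hence $\bb(\cdot,t)\to\infty$, so $\bb/\cc\to\infty$ and $\bb\,\bb_{s}\geq\cc\,f(\bb/\cc)\to\infty$: along each time slice the base opens strictly faster than a paraboloid, exactly as for $\nut$ and strictly faster than for Appleton's steady soliton. Feeding these asymptotics into the warped Berger curvature expressions, together with the global bound $\sup_{\R^{4}\times(-\infty,T]}\lvert\text{Rm}\rvert<\infty$ and a bootstrap, I expect one can show the curvature decays to zero along each time slice as $s\to\infty$; by the immortality criterion of \cite{work2} (monotone coefficients plus curvature decay at infinity) the flow is then immortal, hence --- being also ancient --- \emph{eternal}. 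In any case, an ancient solution with bounded curvature has $R\geq 0$.

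The heart of the matter is to upgrade this to $\text{Ric}\equiv0$. I would look for a quantity monotone along the flow whose stationarity detects the soliton/Ricci-flat condition. Two candidates present themselves: the asymptotic Hopf-fiber length $\cc_{\infty}(t)$, which from its evolution $\partial_{t}\cc=\cc_{ss}+2(\bb_{s}/\bb)\,\cc_{s}-(\mathrm{const})\,\cc^{3}/\bb^{4}$ and the asymptotics above is non-increasing and bounded by $m^{-1}$, so that a barrier argument at spatial infinity should in fact force it to be \emph{constant} in $t$ (the ``mass'' being conserved); and a Perelman-type or Einstein--Hilbert-type functional localized near the fixed point $\origin$. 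Using such a monotone quantity --- bounded thanks to $\cc\leq m^{-1}$ and the curvature bound --- one extracts a sequence $t_{j}\to-\infty$ along which the soliton defect tends to $0$ and passes to a pointed limit based at $\origin$, a point where the flow is non-collapsed even though it collapses near spatial infinity. The limit is a complete warped Berger \emph{steady} gradient Ricci soliton on $\R^{4}$ with monotone coefficients, bounded Hopf-fiber, and $\bb$ opening faster than a paraboloid (the last property surviving passage to the limit because $\bb\,\bb_{s}\to\infty$ persists on compact subsets). But by Appleton's description \cite{appleton} the only nontrivial such steady soliton opens exactly like a paraboloid, $\cc\sim\mathrm{const}$, $\bb\sim\sqrt{s}$; the hypothesis $f(z)\to\infty$ rules it out. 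Hence the limit soliton is trivial, i.e.\ Ricci-flat.

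To propagate this back to the original flow I would use eternality: running the same extraction as $t_{j}\to+\infty$ identifies the forward limit as Ricci-flat as well, and the monotone quantity, squeezed between equal values at $\pm\infty$, must be constant --- whence the soliton defect vanishes identically and $g(t)$ is stationary for every $t$. (If a clean monotone quantity detecting the defect is not available in usable form, the alternative is a unique-continuation argument: the defect satisfies a linear parabolic inequality with no zeroth-order obstruction, so vanishing in a backward limit forces it to vanish throughout space-time.) Finally, integrating the Ricci-flat ODE system for $(\bb,\cc)$ subject to the smoothness conditions at $\origin$ yields, up to scaling, exactly the flat metric --- excluded, since its round cross-sections give $\cc=s\to\infty$, violating $\cc\leq m^{-1}$ --- and the Taub-NUT metric, with mass $\cc_{\infty}^{-1}\geq m$; this completes the proof.

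The step I expect to be the main obstacle is forcing stationarity. The metrics in $\mathcal{A}$ are volume-collapsed near spatial infinity (circle fibers of bounded length over an expanding base), so the standard non-collapsing-based compactness and monotonicity tools do not apply globally; one must run them locally near $\origin$ and check that the warped Berger structure and the defining inequalities of $\mathcal{A}$ --- in particular the opening bound with $f\to\infty$ --- survive passage to the limit. Producing a monotone quantity that is simultaneously bounded (via $\cc\leq m^{-1}$ and the curvature bound), insensitive to the collapsing, and has vanishing time-derivative precisely on steady solitons is the crux; everything else reduces to asymptotic ODE analysis of the warped Berger system.
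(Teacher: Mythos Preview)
Your proposal has genuine gaps at its central step. The plan hinges on producing a monotone quantity whose constancy forces stationarity, but you never construct one: the Hopf-fiber length $\cc_{\infty}(t)$ need not be strictly monotone, and there is no Perelman-type functional adapted to this collapsed setting in the literature. You acknowledge this is the crux, but without it the rest of the argument does not run. More seriously, even granting a monotone quantity, the claim that the backward pointed limit $(\R^{4},g(t_{j}+t),\origin)$ as $t_{j}\to-\infty$ is a \emph{steady gradient Ricci soliton} is unjustified: without rescaling, the limit is simply another ancient solution in $\mathcal{A}$, and nothing forces the soliton potential to exist. Finally, you invoke a classification of warped Berger steady solitons with bounded Hopf-fiber as either Ricci-flat or opening like a paraboloid, attributing it to Appleton; but \cite{appleton} is an \emph{existence} result for one such soliton, not a uniqueness or classification theorem, so this step is circular --- you are assuming something at least as hard as what you are trying to prove.

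The paper takes a completely different route, working directly with the first-order hyperk\"ahler quantities $\ode=\cc_{s}-\uu^{2}$ and $\oded=\bb_{s}+\uu-2$ that vanish identically on $\nut$. Through a sequence of maximum-principle arguments (Lemmas \ref{firstderivativesancientbounded}--\ref{lemmaforARF2}) it bounds auxiliary combinations like $\cc_{s}-2\uu^{2}$ and $\oded-\ode\uu^{-1}$, each time using the compactness result of Proposition \ref{compactnessresult} to pass to a limit where the relevant supremum is \emph{attained} and then checking the evolution equation has the right sign there. The key technical difficulty --- exactly the one you flag --- is ensuring $\uu$ stays bounded away from zero along the approximating sequences so that compactness applies; this is precisely where the hypothesis $f(z)\to\infty$ is used (Proposition \ref{keypropositionancient}), via an $\epsilon$-perturbed quantity $\uu^{-\delta}(\epsilon\bb_{s}\uu^{-1}+\bb_{s}-2)$ whose negativity forces $\uu$ to be bounded below for each $\epsilon>0$, with bounds uniform in $\epsilon$. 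Once $\oded\geq 0$ is established, Ricci-flatness follows from a local computation at the origin: $(\oded)_{ss}(\origin,t)=-R(\origin,t)/12$, so $\oded\geq 0$ near $\origin$ gives $R(\origin,t)\leq 0$, and Chen's theorem plus the strong maximum principle finish it. No soliton classification or global monotone functional is needed.
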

First, we observe that the result is optimal, for the existence of the gradient steady soliton found by Appleton highlights that we cannot drop the requirement on $f$ to diverge in space-time regions where the roundness ratio $\cc/\bb$ becomes degenerate. Moreover, the Euclidean metric would also be included in the class $\mathcal{A}$ if we allowed the size of the Hopf-fiber to be unbounded. 

We emphasize that the rigidity result applies to possible \emph{collapsed} infinite-time singularity models. Indeed, since the Taub-NUT metric is asymptotically flat with bounded Hopf-fiber, we see that for any $\kappa > 0$ there exist $p\in\R^{4}$ and $r > 0$ such that $\nut$ is $\kappa$-strongly collapsed at $p$ for all scales larger than $r$. It is also worth comparing Theorem \ref{maintheoremancient} with a quantization result obtained by Minerbe in \cite{minerbe}, where they proved that a class of hyperk\"ahler 4-manifolds must have cubic volume growth. Our rigidity result may then be interpreted in terms of quantization of the volume growth as well, for in the definition of $\mathcal{A}$ we, \emph{a priori}, allow for ancient solutions with volume growth faster than quadratic.

As a consequence of the previous rigidity result we show the following: 
\begin{theorem}\label{maintheoremconvergence}
Let $(\R^{4},g(t))_{t\geq 0}$ be the maximal solution to the Ricci flow starting at some $g_{0}\in\Gk$ with mass $m > 0$ . Then $g(t)$ converges to the Taub-NUT metric of mass $m$ in the pointed Cheeger-Gromov sense as $t\nearrow \infty$.
\end{theorem}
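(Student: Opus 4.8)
The plan is to combine the immortality and a priori estimates for warped Berger Ricci flows with monotone coefficients from \cite{work2} with the rigidity Theorem \ref{maintheoremancient}, via a compactness-and-classification argument: show that along any sequence of times the flow subconverges (without rescaling) to an ancient solution lying in the class $\mathcal{A}$, invoke Theorem \ref{maintheoremancient} to identify that limit with $\nut$ of mass $m$, and then upgrade subsequential convergence to full convergence by uniqueness of the limit.

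First I would show that the defining features of $\Gk$ persist along the flow in a form uniform in time. Concretely: (a) monotonicity $\bb_{s}\geq 0$, $\cc_{s}\geq 0$ is preserved, as in \cite{work2}; (b) the Hopf-fiber stays bounded, with the mass $m^{-1}=\lim_{s\to\infty}\cc(s,t)$ independent of $t$ and $\cc\leq m^{-1}$ everywhere, the upper bound coming from a maximum principle on the evolution of $\cc$ and the invariance of the limit from the asymptotics of the warping system; (c) the curvature obeys $\bb^{2}\lvert\text{Rm}\rvert\leq C$ for a constant uniform in time, which together with smoothness at the origin (where $\bb_{s}=\cc_{s}=1$, giving a definite Euclidean-like ball and hence a uniform lower bound for $\text{inj}_{g(t)}(\origin)$) yields a uniform space-time curvature bound; and (d) the opening estimate persists as a genuine pointwise inequality $\bb_{s}\geq\delta\,(\cc/\bb)^{k}$ valid for \emph{all} $s$ and $t$, for some $\delta>0$. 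I expect (d) to require the most work: one wants a lower bound pointwise in $s$ and uniform in $t$, not merely an asymptotic statement in $s$, which I would attempt by running a maximum principle on a quantity such as $\bb_{s}(\bb/\cc)^{k}$ or $\log\bb_{s}+k\log(\bb/\cc)$, absorbing the resulting reaction terms with the curvature bound of (c).

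Next, fix $t_{j}\nearrow\infty$ and set $g_{j}(t)=g(t_{j}+t)$. By the uniform space-time curvature bound and the injectivity-radius lower bound at $\origin$, Hamilton's compactness theorem — or, equivalently in this symmetry class, parabolic estimates for the pair $(\bb,\cc)$ together with Arzel\`a--Ascoli — gives a subsequence along which $(\R^{4},g_{j}(t),\origin)$ converges in the pointed Cheeger-Gromov sense to an eternal, hence ancient, solution $(\R^{4},g_{\infty}(t),\origin)$; since the natural diffeomorphisms here are reparametrizations fixing the intrinsic distance $s$ from $\origin$, the limit is again a complete warped Berger flow with monotone coefficients, curvature uniformly bounded in space-time, and, passing (b) and (d) to the limit, $\cc_{\infty}\leq m^{-1}$ and $\bb_{\infty,s}\geq\delta\,(\cc_{\infty}/\bb_{\infty})^{k}=f(\bb_{\infty}/\cc_{\infty})/(\bb_{\infty}/\cc_{\infty})$ with $f(z)=\delta z^{1-k}\to\infty$ since $0\leq k<1$. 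Thus $g_{\infty}(t)\in\mathcal{A}$, and Theorem \ref{maintheoremancient} forces $g_{\infty}(t)$ to be the Taub-NUT metric. Its mass is exactly $m$: the Hopf-fiber length of $g_{\infty}$ at spatial infinity is $\lim_{s\to\infty}\cc_{\infty}$, and monotonicity in $s$ together with a uniform-in-time lower bound for $\cc$ near spatial infinity — again from the preserved asymptotics of the system, i.e. conservation of the mass — shows this limit equals $m^{-1}$ and not some larger value.

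Finally, since every subsequential Cheeger-Gromov limit of $g(t_{j})$, along every sequence $t_{j}\nearrow\infty$, is the same metric $\nut$ of mass $m$, the entire family $g(t)$ converges to $\nut$ of mass $m$ in the pointed Cheeger-Gromov sense as $t\nearrow\infty$, which is the claim. The main obstacle is step (d): upgrading the merely asymptotic ``faster-than-paraboloid'' opening of the initial data to a differential inequality uniform in both $s$ and $t$ that survives the non-compact, possibly collapsed, limiting process — this is precisely the hypothesis Theorem \ref{maintheoremancient} is built to consume, so the real burden is feeding it correctly. A secondary difficulty is ruling out loss of Hopf-fiber length in the limit, i.e. proving the mass is conserved rather than only bounded.
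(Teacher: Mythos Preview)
Your overall architecture is exactly the paper's: extract an ancient warped Berger limit along any $t_{j}\nearrow\infty$, verify it lies in $\mathcal{A}$, apply Theorem \ref{maintheoremancient}, and conclude by uniqueness of the limit. The two places where your sketch diverges from what the paper actually does are precisely the two you flag as the hard steps, and the differences matter.

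For (c), the paper does \emph{not} obtain a time-uniform bound on $\bb^{2}\lvert\text{Rm}\rvert$; only $\cc^{2}\lvert\text{Rm}\rvert\leq C$ is shown uniformly (Corollary \ref{curvaturecontrolledbycgk}), while $\bb^{2}\lvert\text{Rm}\rvert$ is controlled only slice-by-slice. The uniform space-time curvature bound (Proposition \ref{curvatureboundedintime}) is obtained by a separate blow-up argument: assuming it fails, one rescales, applies the warped Berger compactness of Proposition \ref{compactnessresult}, and lands on a rotationally symmetric $\kappa$-solution with positive asymptotic volume ratio, contradicting Perelman's rigidity. This step already \emph{consumes} the lower bound $\bb_{s}\uu^{-1}\geq\beta$ from (d), so the dependency is not quite as you order it.

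For (d), the paper does not preserve the power-law opening $\bb_{s}\uu^{-k}\geq\delta$ uniformly in time; in fact a direct maximum principle on $\bb_{s}\uu^{-\bar{k}}$ only yields a time-dependent lower bound (Lemma \ref{preliminarykeybound}). What survives uniformly is the weaker logarithmic estimate of Lemma \ref{keylemmaGk}: $\bb^{\lambda}(\bb_{s}\uu^{-1}-\log\bb)\geq-\alpha$, together with $\bb_{s}\uu^{-1}\geq\beta$. On the limit one then combines this with the mass lower bound $\bb^{\nu}(\cc-\gamma)\geq-\alpha$ of Lemma \ref{lemmapositivemasscase} (this is also how the exact mass $m$ is recovered) to get $\bb_{\infty,s}\uu_{\infty}^{-1}\gtrsim\log(\uu_{\infty}^{-1})$, i.e.\ $f(z)\sim\log z$ rather than your $f(z)=\delta z^{1-k}$. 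Both feed Theorem \ref{maintheoremancient} equally well, but the paper's $f$ is what one can actually prove. Finally, the compactness you invoke is not automatic in this collapsed setting: Proposition \ref{compactnessresult} requires checking that $\uu$ stays bounded away from zero along the sequence of base points, and its proof (that the Killing vectors and warping coefficients pass to the limit) is a genuine ingredient, not just Hamilton plus symmetry.
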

Again, the result is in some sense optimal because from the existence of the soliton we derive that we cannot extend the convergence to initial data in $\mathcal{G}_{1}$. Theorem \ref{maintheoremconvergence} is not a stability property: metrics in $\Gk$ are, with the exception of a subclass in $\mathcal{G}_{0}$, not asymptotically flat and indeed they have different volume growth and rate of decay of the curvature with respect to the Taub-NUT metric. In fact, we can find initial data with nonnegative sectional curvature flowing to the Ricci-flat Taub-NUT metric. While the fact that positive sectional curvature is not preserved along the flow in dimension higher than three is well known, even in the cohomogeneity-1 setting \cite{bettiol}, in the result below we prove that negative sectional curvature terms not only appear along the solution but also balance out the positive terms to yield a Ricci-flat limit in infinite time.
\begin{corollaryy}\label{maincorollary}
There exists a complete, bounded curvature warped Berger metric $g_{0}$ with $\emph{sec}(g_{0}) \geq 0$ such that the maximal Ricci flow solution starting at $g_{0}$ is immortal and converges to $\nut$ in the pointed Cheeger-Gromov sense as $t\nearrow \infty$.%  satFor all $k\in [0,1)$ there exists $g_{k}\in\Gk$ with $\emph{sec}(g_{k}) > 0$ such that the maximal Ricci flow solution starting at $g_{k}$ converges to the Taub-NUT metric in the pointed Cheeger-Gromov sense as $t\nearrow \infty$.
\end{corollaryy}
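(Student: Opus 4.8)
The plan is to exhibit an explicit warped Berger metric $g_{0}$ on $\R^{4}$ lying in one of the classes covered by the convergence theorems, while simultaneously having nonnegative sectional curvature. By Theorem \ref{maintheoremconvergence} (or Theorem \ref{maintheoremconvergenceALF}(i)) it then suffices to produce such a $g_{0}$ with positive mass; immortality and Cheeger--Gromov convergence to $\nut$ are automatic. So the entire content is a curvature computation for a suitably chosen pair of warping coefficients $(\bb,\cc)$.

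First I would recall the sectional curvature formulas for a warped Berger metric $g = ds^{2} + \bb^{2}(s)\,\pi^{\ast}g_{S^{2}(1/2)} + \cc^{2}(s)\,\sigma_{3}\otimes\sigma_{3}$ in terms of $\bb,\cc$ and their $s$-derivatives. On the six relevant coordinate $2$-planes (the radial--base plane, the radial--fiber plane, the base--base plane tangent to the $S^{2}$, the base--fiber planes, and mixed combinations) the sectional curvatures are rational expressions in $\bb,\cc,\bb_{s},\cc_{s},\bb_{ss},\cc_{ss}$ with correction terms coming from the non-abelian structure constants of SU(2) — these are the same formulas used in \cite{IKS1}, \cite{appleton2}, \cite{work2}. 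The smoothness (closing-up) conditions at the origin force $\bb(0)=\cc(0)=0$, $\bb_{s}(0)=\cc_{s}(0)=1$, $\bb_{ss}(0)=\cc_{ss}(0)=0$, with the roundness ratio $\cc/\bb \to 1$ as $s\to 0$.

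Next I would choose a concrete ansatz. The natural candidate is the Taub-NUT metric itself, or a small monotone modification of it: the Taub-NUT metric is Ricci-flat but is \emph{not} nonnegatively curved everywhere, so one must perturb. A cleaner route is to pick $\bb,\cc$ by hand so that $\bb_{s}$ decreases from $1$ to a positive limit, $\cc_{s}$ decreases from $1$ to $0$ with $\cc\nearrow m^{-1}$, $\bb_{ss}\leq 0$ and $\cc_{ss}\leq 0$ everywhere, and the roundness ratio $\cc/\bb$ is monotone decreasing; concavity of both coefficients together with $\cc\leq\bb$ is exactly the regime in which the base--base and radial curvatures are manifestly nonnegative, and one checks the base--fiber and mixed terms separately, using that the ``bad'' negative contributions in those planes are proportional to $\cc^{2}/\bb^{4}$ (an SU(2)-structure-constant term) which is dominated by the positive terms once $\cc/\bb$ is small enough and the derivatives are controlled. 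One must also verify $g_{0}\in\Gk$ for some $k$: with $\bb$ asymptotically linear and $\cc$ asymptotically constant we land in $\mathcal{G}_{0}$ with mass $m$, the curvature bound $\bb^{2}|\mathrm{Rm}|$ bounded and $\sup\cc < \infty$ following from the chosen asymptotics and concavity. (Alternatively, arrange the stronger decay (\ref{epsilondecay}) so that $g_{0}\in\Gaf$ with positive mass and invoke Theorem \ref{maintheoremconvergenceALF}.)

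The main obstacle is the curvature check in the base--fiber and radial--fiber planes near and slightly away from the origin: there the roundness ratio $\cc/\bb$ is close to $1$, so the smallness argument that kills the negative SU(2) term does not apply, and one must instead exploit cancellation against the second-derivative terms $\bb_{ss},\cc_{ss}$. Concretely I expect to need a careful Taylor expansion at $s=0$ showing that the leading-order behaviour of every sectional curvature is a nonnegative constant (this is where the choice $\bb_{ss}(0)=\cc_{ss}(0)=0$ and the next Taylor coefficients of $\bb,\cc$ must be tuned), plus a monotonicity/ODE argument propagating nonnegativity outward — for instance, by writing the worst sectional curvature as a product of a positive factor and a quantity satisfying a first-order differential inequality with the right sign at $s=0$. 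Once the explicit profile is pinned down this is a finite, if delicate, computation, and the rest of the corollary is immediate from Theorem \ref{maintheoremconvergence}.
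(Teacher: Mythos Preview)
Your overall strategy---construct an explicit metric in $\mathcal{G}_{0}$ (or $\Gaf$ with positive mass) having nonnegative sectional curvature, then invoke Theorem~\ref{maintheoremconvergence}---is exactly the paper's approach. The difference lies in the ansatz, and the paper's choice is considerably simpler than what you outline.

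The paper takes $\bb(s)=s$ \emph{exactly linear} and $\cc(s)=\int_{0}^{s}(1+y^{4})^{-1}\,dy$. The linearity of $\bb$ is the key simplification you miss: it forces $k_{01}\equiv 0$ and, since $\bb_{s}\equiv 1$, gives $k_{12}=3\bb^{-2}(1-\uu^{2})\geq 0$ for free. Concavity of $\cc$ gives $k_{03}\geq 0$. The only genuine check is $k_{13}\geq 0$, which reduces to the elementary inequality $(\cc - s(1+s^{4})^{-1/3})_{s}\geq 0$. Membership in $\mathcal{G}_{0}$ is then a short computation of $\bb^{2}|k_{ij}|$. There is no delicate near-origin analysis, no Taylor expansion, no propagation argument.

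By contrast, your plan takes $\bb_{s}$ strictly decreasing from $1$ to a positive limit, so $\bb_{ss}<0$ and you must balance several nonzero terms against one another. The ``main obstacle'' you identify near the origin, and the differential-inequality machinery you propose to handle it, are entirely artifacts of not taking $\bb$ linear. Your route is not wrong in principle, but it manufactures difficulties the paper sidesteps with a single clean choice. (A minor point: your description of the ``bad negative contribution proportional to $\cc^{2}/\bb^{4}$'' in the base--fiber plane has the sign backwards---in $k_{13}=\bb^{-2}(\uu^{2}-\bb_{s}\cc_{s}\uu^{-1})$ the term $\uu^{2}/\bb^{2}=\cc^{2}/\bb^{4}$ is the \emph{positive} one.)
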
 
\subsection*{Outline.} In Section 2 we describe the class of initial data and we comment on the assumptions. In particular, we recap a few key properties of $\nut$. In Section 3 we focus on Ricci flows starting in $\Gaf$. In the asymptotically flat setting one can control the solution at spatial infinity in a precise way and hence maximum principle arguments follow. Similarly to other cohomogeneity-1 scenarios \cite{woolgar}, \cite{IKS1}, \cite{appleton2}, \cite{work2}, we prove that the curvature is uniformly controlled whenever the principal orbits are non-degenerate. More importantly, we show that if the Hopf-fiber is bounded, then the solution always opens faster than a paraboloid in $\R^{3}$ in any space-time region where the roundness ratio $\cc/\bb$ gets small. We dedicate Section 4 to extending the analysis for asymptotically flat Ricci flows with positive mass to solutions starting in $\Gk$. In this regard, a few extra-steps are needed to prove that the initial assumptions in the Definition of $\Gk$ do imply that the behaviour of the warping coefficients at spatial infinity along the solution is known: an important ingredient is the preservation of the power law decay of the curvature along Ricci flow solutions derived in \cite{Lott}. Once we can control the solution on the parabolic boundary, we then rely on a maximum principle argument to prove that the \emph{faster than a paraboloid}-growth condition holds uniformly in any space-time region where the roundness ratio $\cc/\bb$ is small. One may then concentrate on compact time-dependent regions where the squashing factor $\cc/\bb$ is non-degenerate and, analogously to the asymptotically flat case, we prove that there are no space-time regions resembling necks in infinite-time: this is the key result to show that the curvature is uniformly bounded in time. In Section 5 we present a compactness result for a class of warped Berger solutions of the Ricci-flow on $\R^{4}$. Such property has an analogous counterpart in \cite{appleton2}, where Appleton formulates the compactness theorem under a different set of assumptions. In particular, they focus on non-collapsed sequences of Ricci flows, being interested in applying the result to the analysis of finite-time singularities. However, in our setting such assumption is not available for we wish to study infinite-time singularity models of (non-rescaled) Ricci-flows. Therefore, we prove that one can still pass to a pointed Cheeger-Gromov limit which not only preserves the symmetries but whose warping coefficients are smooth limits of the warping coefficients along the sequence, provided that the roundness ratio $\cc/\bb$ is non-degenerate at the given origins we center the solutions at. As a first application of the compactness result, we show that the curvature of any Ricci flow solution in $\Gk$ and $\Gaf$ is uniformly bounded in time so that we never need to rescale for obtaining smooth limits at infinite time. Section 6 is devoted to proving that the only complete warped Berger ancient solution with monotone coefficients, bounded curvature, bounded Hopf-fiber and opening faster than a paraboloid along the directions orthogonal to the Hopf-fiber is $\nut$. The argument follows a similar approach used by Appleton in \cite{appleton2} to derive a uniqueness result for the Eguchi-Hanson metric: we rely on the Compactness result in Section 5 to show that relevant geometric quantities always attain their critical values in the space-time, up to passing to a pointed Cheeger-Gromov limit sharing the same features of the given ancient solution. In particular, we prove that one of the hyperk\"ahler first-order quantities which vanishes identically for $\nut$ is always nonnegative on the class of ancient solutions described above: this yields that the ancient solution is Ricci-flat and hence homothetic to $\nut$. We point out that differently from the case discussed by Appleton, we \emph{cannot} use the $\kappa$-non-collapsedness of the ancient solutions, which plays an important role in their analysis. Therefore, one of the main difficulties here consists in ensuring that the roundness ratio $\cc/\bb$ stays positive along any space-time sequence we use to approximate critical values of some given geometric quantity so that the compactness result can indeed be applied. In fact, we know that on the soliton the hyperk\"ahler quantity mentioned before approaches its \emph{negative} infimum in space-time regions where the roundness ratio becomes degenerate. Finally, we rely on the uniqueness result in Section 6 to prove the convergence of immortal Ricci flows in $\Gk$ and $\Gaf$. 
\subsection*{Acknowledgements.} The author would like to thank Jason Lotay for suggesting him to work on this problem as part of his PhD, for the constant support and for many helpful conversations.
\section{Initial data for the Ricci flow}
\subsection{Warped Berger metrics on $\R^{4}$}
Let $(M,g)$ be a non-compact Riemannian manifold and let $\mathsf{G}$ be a compact Lie group acting on $(M,g)$ with cohomogeneity 1. Assume that there exists a singular orbit $\Sigma_{\text{sing}}$, alternatively the orbit space is homeomorphic to $\R$ and $M$ is hence foliated by $\mathsf{G/H}$, with $\mathsf{H}$ the principal isotropy group. Given $q\in\Sigma_{\text{sing}}$ we consider a minimal geodesic $\gamma$ starting at $q$ and meeting all the principal orbits orthogonally. Away from the singular orbit, we can write $g$ along $\gamma$ as
\[
g = ds^{2} + g_{s},
\]
\noindent for some 1-parameter family of homogeneous metrics $g_{s}$ on $\mathsf{G/H}$. We may then use the action to extend such form on any orbit and therefore on the entire principal part of the manifold. 
If there are enough isometries, then the family of metrics $g_{s}$ can be diagonalized along a fixed frame. In this case the diagonal form is preserved along the Ricci flow whenever the solution is unique in the class considered due to the diffeomorphism invariance. Before we concentrate on the cohomogeneity-1 left-action of SU(2) on $\R^{4} = \mathbb{C}^{2}$, we briefly discuss homogeneous metrics on the 3-sphere. We thank Christoph B\"ohm for suggesting the following argument to us.

Once we identify $S^{3}$ with the unit quaternions, we see that $S^{3}\times S^{3}$ acts on $S^{3}$ by conjugation. Consider the finite group $H= \{\pm 1, \pm i, \pm j, \pm k\}$. Then $S^{3}\times H$ still acts on $S^{3}$ with isotropy group at $1$ given by $\mathsf{G}_{1} = \{\mathsf{h} = (h,h)\in H\times H\}$. For any element $\mathsf{h}\in \mathsf{G}_{1}$ the map $d\mathsf{h}:T_{1}S^{3}\rightarrow T_{1}S^{3}$  acts by conjugation on the space of pure imaginary quaternions. In particular, we find that $T_{1}S^{3}$ splits as the direct sum of three inequivalent 1-dimensional representations spanned by $\{i,j,k\}$ respectively, such that none of them is acted on trivially. Since any homogeneous Riemannian metric $g$ on $S^{3}$ must be an $\text{Ad}(\mathsf{G}_{1})$-invariant inner-product on $T_{1}S^{3}$, by Schur's Lemma we deduce that $g$ is diagonal along the frame $\{i,j,k\}$. The same conclusion holds for the Ricci tensor being a bilinear symmetric $\text{Ad}(\mathsf{G}_{1})$-invariant form. We also note that one could derive the orthogonality of $g$ by simply checking that the inner product induced by $g$ on $T_{1}S^{3}$ satisfies
\[
<i,j> = < i\cdot i\cdot \bar{i}, i\cdot j \cdot \bar{i} > = - < i, j >. 
\] 

The same argument can be generalised to the cohomogeneity-1 action of SU(2) on $(\R^{4},g)$, meaning that given a basis $\{I,J,K\}$ in the Lie algebra $\mathfrak{su}(2)$, then the restriction of $g$ to any principal orbit can be diagonalized along the left-invariant extensions of $\{I,J,K\}$. From now on, we denote such extensions by $\{X_{1},X_{2},X_{3}\}$, while we let $\{Y_{1},Y_{2},Y_{3}\}$ be their right-invariant counterparts. Thus, the frame $\{Y_{i}\}$ constitutes a basis of Killing vectors for $(\R^{4},g)$. Thanks to the diffeomorphism between $S^{3}\subset \mathbb{C}^{2}$ and $\text{SU(2)}$, defined in Euler coordinates by 
\[
(e^{i(\theta + \psi)}\cos(\phi),e^{i(\theta-\psi)}\sin(\phi))\mapsto \left[ {\begin{array}{cc}
   e^{i(\theta + \psi)}\cos(\phi) & -e^{-i(\theta-\psi)}\sin(\phi) \\
   e^{i(\theta-\psi)}\sin(\phi) & e^{-i(\theta +\psi)}\cos(\phi) \\
  \end{array} } \right],
\]
\noindent where $\phi\in [0,\pi/2), \psi\in [0,\pi), \theta\in [0,2\pi)$, we may write the left-invariant frame as
\begin{align}\label{leftinvariantframe}
X_{1} &= \sin(2\theta)\partial_{\phi} - \frac{\cos(2\theta)}{\sin(2\phi)}\partial_{\psi} + \cot(\phi)\cos(2\theta)\partial_{\theta}, \notag \\ 
X_{2} &=  \cos(2\theta)\partial_{\phi} + \frac{\sin(2\theta)}{\sin(2\phi)}\partial_{\psi} -\cot(2\phi)\sin(2\theta)\partial_{\theta}, \\ \notag
X_{3} &= \partial_{\theta}.
\end{align}
%use the Maurer-Cartan formalism to write the left-invariant 1-forms $\{\sigma_{i}\}$ dual to $\{X_{1}\}$ as
%\begin{align*}
%\sigma_{1} &= \sin(2\theta)d\phi - \sin(2\phi)\cos(2\theta)d\psi, \\ 
%\sigma_{2} &=  \cos(2\theta)d\phi + \sin(2\phi)\sin(2\theta)d\psi, \\ 
%\sigma_{3} &= \cos(2\phi)d\psi + d\theta,
%\end{align*}
%noindent with dual left-invariant frame 
%\begin{align}\label{leftinvariantframe}
%X_{1} &= \sin(2\theta)\partial_{\phi} - \frac{\cos(2\theta)}{\sin(2\phi)}\partial_{\psi} + \cot(\phi)\cos(2\theta)\partial_{\theta}, \notag \\ 
%X_{2} &=  \cos(2\theta)\partial_{\phi} + \frac{\sin(2\theta)}{\sin(2\phi)}\partial_{\psi} -\cot(2\phi)\sin(2\theta)\partial_{\theta}, \\ \notag
%X_{3} &= \partial_{\theta}.
%\end{align}
\noindent According to the previous discussion, given a metric $g$ invariant under the cohomogeneity-1 action of SU(2) on $\R^{4}$, away from the origin $\origin$ we can represent $g$ by
\begin{align}\label{initialmetric}
\begin{split}
g &= \xi^{2}(x)dx\otimes dx + g_{x} \\
&= \xi^{2}(x)dx\otimes dx + a^{2}(x)\,\sigma_{1}\otimes\sigma_{1} + \bb^{2}(x)\,\sigma_{2}\otimes\sigma_{2} + \cc^{2}(x)\,\sigma_{3}\otimes\sigma_{3},
\end{split}
\end{align}
\noindent where $\xi,a,b,c:(0,+\infty)\rightarrow (0,+\infty)$ are smooth radial functions and $\{\sigma_{i}\}$ is the left-invariant dual coframe induced by $\{X_{i}\}$. If we introduce the geometric quantity $s(\cdot) = d_{g}(\origin,\cdot)$, then we may rewrite \eqref{initialmetric} as 
\begin{equation}\label{initialmetricscoordinate}
g = ds^{2} + a^{2}(s)\,\sigma_{1}\otimes \sigma_{1} +  b^{2}(s)\,\sigma_{2}\otimes \sigma_{2} + \cc^{2}(s)\,\sigma_{3}\otimes\sigma_{3}.
\end{equation}
\noindent If we also assume $g$ to be invariant under the U(1) action on the Hopf-fibres, then the vector field $X_{3}$ is Killing, thus enlarging the Lie algebra of Killing vectors to $\mathfrak{u}(2)$. This is equivalent to requiring $a = \bb$ on $\R^{4}$. Therefore, the Hopf-fibration allows us to write
\begin{equation}\label{fubini}
g = ds^{2} + \bb^{2}(s)\,\pi^{\ast}g_{S^{2}(\frac{1}{2})} + \cc^{2}(s)\,\sigma_{3}\otimes \sigma_{3},
\end{equation}
\noindent where $g_{S^{2}(\frac{1}{2})}$ is the Fubini-Study metric and $\sigma_{3}$ is the one-form dual to the vector field tangent to the Hopf-fibres. In the following we usually refer to the warping coefficient $\bb$ as the coefficient of $g$ along the $S^{2}$-direction. Similarly, we often say that the factor $\cc$ constitutes the size of the Hopf-fiber. According to \eqref{fubini}, an SU(2)U(1)-invariant metric $g$ on $\R^{4}$ is given by the formula:
\begin{equation}\label{metricr3}
g = g_{\R^{3}} + \cc^{2}(s)\,\sigma_{3}\otimes\sigma_{3},
\end{equation}
\noindent where $g_{\R^{3}}$ is the projection of $g$ on the base via the Hopf-fibration $\R^{4}\setminus\{\origin\}\rightarrow \R^{3}\setminus\{\origin\}$. In the analysis below it is important to control how fast the manifold $(\R^{3},g_{\R^{3}})$ opens up. Indeed, in this work we always discuss solutions to the Ricci flow evolving from metrics $g$ of the form \eqref{metricr3} with $g_{\R^{3}}$ opening faster than a paraboloid.
% We thus make the following:
%\begin{definition}
%We say that a metric $g$ on $\R^{4}$ of the form \eqref{metricr3} with $\sup_{\R^{4}}\cc < \infty$, \emph{opens faster than a paraboloid} if 
%\[
%\liminf_{s\rightarrow\infty}\,\bb\bb_{s} = \infty. 
%\]
%\end{definition} 

We finally focus on those SU(2)U(1)-invariant metrics $g$ on $\R^{4}$ satisfying the \emph{warped Berger} condition (see also \cite{IKS1}):
\[
\cc \leq \bb.
\]
\noindent Accordingly, any homogeneous metric $g_{s}$ on the principal orbit $\{s\}\times S^{3}$ is a Berger metric with squashing factor $\cc/\bb \leq 1$. Since such squashing factor plays an important role in the analysis and also appears in the hyperk\"ahler quantities characterizing the Taub-NUT metric, we make the following:
\begin{definition}\label{definitionu}
Given a warped Berger metric $g$, the scale-invariant roundness ratio $\cc/\bb:\R^{4}\rightarrow (0,1]$ is denoted by $\uu$.
\end{definition}

We observe that for any radial map $f$ we think of $f = f(s) = f(s(x))$ as a function of $x$ unless otherwise stated.  From \eqref{initialmetric} and \eqref{initialmetricscoordinate} we have the following relation between the two radial derivatives:
\begin{equation}\label{changevariable}
\partial_{s} = \frac{1}{\xi(x)}\partial_{x}.
\end{equation} 
\noindent The metric $g$ in \eqref{fubini} defines a smooth metric on $\R^{4}$ if and only if $\bb$ and $\cc$ are smooth odd functions of the radial variable $x$ and the condition below holds:
\begin{equation}\label{smoothnessorigin}
\lim_{s\rightarrow 0}\frac{d \bb}{ds}(s)=\lim_{s\rightarrow 0}\frac{d \cc}{ds}(s) = 1. 
\end{equation}  
\noindent It is worth noting that the smoothness conditions reflect the underlying topology and hence lead to significant variations, both in terms of results and approach, when comparing the study of SU(2)U(1)-invariant Ricci flows on different manifolds \cite{IKS1}, \cite{appleton2},\cite{work2}.
%  We note that the underlying topology plays a role in the analysis of the Ricci flow dynamics via the boundary conditions above. 
%\\We point out that, as previously observed, the Lie algebra of Killing vectors for $g$ contains a copy of $\mathfrak{u}(2)$. Indeed, any U(2)-invariant metric on $\R^{4}=\mathbb{C}^{2}$ can be written as in \eqref{initialmetricscoordinate}, up to choosing a suitable Milnor frame (see, e.g., the case $k = 1$ in Section 2.2 of \cite{appleton2}). %represents an arbitrary U(2)-invariant metric on $\R^{4}=\mathbb{C}^{2}$ hence the Ricci flow solutions an

\subsection{Curvature terms.} If $g$ is a warped Berger metric on $\R^{4}$, then from the Koszul formula we derive the vertical sectional curvatures %Given a warped Berger metric $g_{0}$ a simple application of the Koszul formula (see also \cite[Appendix A]{IKS1}) allows to compute the sectional curvatures of the vertical planes
\begin{align}
k_{12} &= \frac{1}{\bb^2}\left(4 -3\uu^{2} -\bb_{s}^{2}\right),\label{sectionalvertical12} \\
k_{13} = k_{23} &= \frac{1}{\bb^{2}}\left(\uu^{2} - \bb_{s}\cc_{s}\uu^{-1}\right), \label{sectionalvertical13} 
\end{align}
\noindent and the mixed sectional curvatures
\begin{align}\label{sectionalhorizontal01}
k_{01} = k_{02} &= -\frac{\bb_{ss}}{\bb}\, ,\\
k_{03} &= -\frac{\cc_{ss}}{\cc}\,. \label{sectionalhorizontal03}
\end{align}
\noindent Moreover, unless the isometry group extends to SO(4), we also have a \emph{non-trivial} curvature term which is not the sectional curvature of a 2-plane:
\begin{equation}\label{rmo123}
\text{Rm}_{0123} = \frac{1}{\bb^{2}}\left(\cc_{s} - \bb_{s}\uu\right) = \frac{\uu_{s}}{\bb}.
\end{equation} We finally report the formula for the scalar curvature:
\begin{equation}\label{scalarcurvature}
R_{g} = 2(k_{01} + k_{02} + k_{03} + k_{12} + k_{13} + k_{23}).
\end{equation}
\subsection{Monotone coefficients.} Since we are interested in studying the long-time behaviour of the Ricci flow, we always consider maximal solutions evolving from warped Berger metrics with coefficients $\bb$ and $\cc$ increasing in space. Namely, we make the following:
\begin{definition}\label{monotonecoefficients}
A warped Berger metric has \emph{monotone coefficients} if 
\begin{equation}\label{monotonecoefficientseq}
\bb_{s} \geq 0,\,\,\, \cc_{s}\geq 0. 
\end{equation}
\end{definition}
The reason we restrict our analysis to this subclass is twofold. We know that there exist spherically symmetric asymptotically flat initial data containing minimal 3-spheres leading to the formation of finite-time Type-I singularities along the Ricci flow \cite{work}. The monotonicity condition is meant to generalise the lack of minimal embedded spheres for the SO($n$)-invariant setting and is hence natural when the emphasis is on investigating the long-time behaviour of the Ricci flow. Indeed, in \cite{work2} we proved that the maximal complete, bounded curvature Ricci flow solution starting at some warped Berger metric with monotone coefficients and curvature decaying at spatial infinity is immortal. In fact, the result holds with assumptions weaker than the spatial monotonicity of both the coefficients $\bb$ and $\cc$. However, the stronger requirement provided in Definition \ref{monotonecoefficients} allows us to control the injectivity radius of the solution only in terms of upper bounds of the curvature. 

Once we know that according to \cite[Theorem 3]{work2} we have a large family of immortal solutions, we wish to determine for which subclass it is possible to classify the infinite-time singularity models. In particular, we aim to identify a class of initial data giving rise to solutions encountering a Type-II(b) singularity at infinite time modelled by the Taub-NUT metric. In order to do that, we first recollect a few properties of the Ricci-flat Taub-NUT metric.
\subsection{The Taub-NUT metric.}
The Taub-NUT metric is a complete gravitational instanton found on $\R^{4}$ by Hawking in \cite{hawking}. Following \cite{jason}, we describe the Taub-NUT metric $\nut$ as the complete, warped Berger metric on $\R^{4}$ of the form \eqref{fubini}, whose warping coefficients $\bb$ and $\cc$ satisfy the differential equations below: 
\begin{equation}\label{ODE1}
\ode \doteq \cc_{s} - \uu^{2} = 0,
\end{equation}
\noindent and
\begin{equation}\label{ODE2}
\oded \doteq \bb_{s} + \uu - 2 = 0.
\end{equation}
\noindent The first-order conditions define a \emph{hyperk\"ahler} structure on $(\R^{4},\nut)$, so that $\nut$ is in particular \emph{Ricci-flat}. One may solve explicitly the equations and write $\nut$ as (see also \cite{jason}):
\begin{equation}\label{explicitnut}
\nut = \frac{1}{16}\left(1 + \frac{2m^{-1}}{x}\right)\,dx^{2} + \frac{x^{2}}{4}\left(1+ \frac{2m^{-1}}{x}\right)\,\pi^{\ast}g_{S^{2}(\frac{1}{2})} + \frac{m^{-2}}{1 + \frac{2m^{-1}}{x}}\,\sigma_{3}\otimes\sigma_{3},
\end{equation}
\noindent where $m$ is a positive parameter quantifying the \emph{mass} of the magnetic monopole giving rise to the Taub-NUT metric \cite{hawking}. Since $m^{-1}$ measures the size of the Hopf-fiber at spatial infinity, we see that $\nut$ has \emph{cubic volume growth}, meaning that there exist $A\geq \alpha > 0$ such that 
\[
\alpha\,r^{3}\leq \text{Vol}_{\nut}\left(B_{\nut}(\origin,r)\right) \leq A\,r^{3},\,\,\,\,\,\, \forall r\geq 1.
\]
\noindent From the formulas of the curvature terms given above we also derive that $\nut$ is an \emph{asymptotically flat} metric satisfying 
\[
\sup_{p\in\R^{4}}\,(d_{\nut}(\origin,p))^{3}\lvert \text{Rm}_{\nut}\rvert_{\nut}(p) < \infty.
\] 
\noindent Since by \eqref{ODE2} the coefficient $\bb$ along the $S^{2}$-direction orthogonal to the Hopf-fiber grows linearly in the distance, we get
\[
\bb^{3}(s)\lvert k_{12}\rvert(s) \geq \delta > 0,
\]
\noindent for all $s\geq 1$. Namely, we find that $\bb^{3}\lvert \text{Rm}_{\nut}\rvert_{\nut}\geq \delta$ away from the unit ball with respect to $\nut$ centred at the origin. By the latter property and the uniform boundedness of the Hopf-fiber we derive that for any $\kappa > 0$ there exist $p\in\R^{4}$ and $r > 0$ such that $\nut$ is strongly $\kappa$-\emph{collapsed} at $p$ for all scales larger than $r$. According to Perelman's analysis, we can rule out the Taub-NUT metric as a possible \emph{finite-time} singularity model for the Ricci flow. One of the main goals of this work consists in showing that $\nut$ can actually appear as an \emph{infinite-time} singularity model for immortal Ricci flow solutions.  
\subsection{Asymptotically flat initial data.}
Since the curvature of the Taub-NUT metric decays at a cubic rate at spatial infinity, it is worth investigating the Ricci flow starting at a warped Berger asymptotically flat metric. It turns out that, as long as we restrict our analysis to asymptotically flat metrics with monotone coefficients, the long-time behaviour of the flow for those initial data can be entirely classified and only depends on the length of the Hopf-fiber at spatial infinity. First, we set the following:
\begin{definition}\label{ALFmetrics} The class $\Gaf$ consists of all complete warped Berger metrics $g$ on $\R^{4}$ with monotone coefficients satisfying 
\begin{equation}\label{epsilondecay}
\sup_{p\in\R^{4}}\,\left(d_{g}(\origin,p)\right)^{2+\epsilon}\lvert\text{Rm}_{g}\rvert_{g}(p) < \infty, 
\end{equation}
\noindent for some $\epsilon > 0$. A metric $g\in\Gaf$ is called \emph{asymptotically flat}.
\end{definition}
% In the following we denote the set of complete asymptotically flat warped Berger metrics on $\R^{4}$ by $\Gaf$. 
% By direct computation one may verify that the curvature of the Taub-NUT metric $g_{TN}^{m}$ decays at a cubic rate at spatial infinity. Accordingly, the class $\Gaf$ contains metrics which have zero asymptotic volume ratio and hence are \emph{not} asymptotically locally Euclidean. 
Below we provide a simple characterization of $\Gaf$. In fact, for the next result we may also drop the assumption on the monotonicity of the warping coefficients.
\begin{lemma}\label{characterizationALF}
Let $g$ be an asymptotically flat warped Berger metric on $\R^{4}$. Then either one of the following conditions is satisfied:
\begin{itemize}
\setlength\itemsep{0.5em}
\item[(i)] There exist the limits
 \[ \lim_{s(p)\rightarrow \infty}\bb_{s}(p) = 2, \,\,\,\,\,\, \lim_{s(p)\rightarrow\infty}\cc_{s}(p) = 0, \,\,\,\,\,\, \lim_{s(p)\rightarrow\infty}\cc(p) \doteq m_{g}^{-1}\in (0,\infty). 
\]
\item[(ii)] There exist the limits
\[
\lim_{s(p)\rightarrow \infty}\bb_{s}(p) = 1, \,\,\,\,\,\, \lim_{s(p)\rightarrow\infty}\cc_{s}(p) = 1.
\]
\end{itemize}

\end{lemma}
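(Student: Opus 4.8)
The plan is to analyze the asymptotic behavior of the warping coefficients $\bb$ and $\cc$ using the curvature decay hypothesis \eqref{epsilondecay}. The key observation is that asymptotic flatness forces the distance function $s$ to grow without bound and the curvature to decay faster than $s^{-2}$; I want to convert this into control on $\bb_s$ and $\cc_s$.

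First I would establish that $\bb(s)\to\infty$ as $s\to\infty$. If instead $\bb$ were bounded, then since $\uu = \cc/\bb \leq 1$ we would have $\cc$ bounded, and one checks from the expressions \eqref{sectionalvertical12}–\eqref{rmo123} that the volume of balls grows at most linearly and that $k_{12}$ cannot decay like $s^{-2-\epsilon}$ unless $\bb_s \to 2$ — but a bounded $\bb$ with $\bb_s \to 2$ is impossible; more carefully, $\bb^2 k_{12} = 4 - 3\uu^2 - \bb_s^2$ must tend to zero along a subsequence where $s\lvert\text{Rm}\rvert\to 0$, and combined with monotonicity of $\bb$ (so $\bb_s$ has controlled oscillation via $\bb_{ss}/\bb = -k_{01}$ also decaying) one gets a contradiction. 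So $\bb\to\infty$. Next, since $k_{01} = -\bb_{ss}/\bb$ satisfies $\bb^2\lvert k_{01}\rvert \to 0$ faster than $s^{-\epsilon}$ (because $\bb \leq 2s + O(1)$ grows at most linearly while $\lvert\text{Rm}\rvert \leq C s^{-2-\epsilon}$), we get $\int^\infty \lvert\bb_{ss}\rvert\,ds < \infty$ after multiplying through; hence $\bb_s$ converges to some limit $\beta_\infty \geq 0$. Similarly $\cc_{ss}/\cc$ decays and — using $\cc \leq \bb$ and the smoothness/monotonicity — $\cc_s$ converges to some $\gamma_\infty \geq 0$. Since $\bb$ is unbounded with $\bb_s\to\beta_\infty$ we must have $\beta_\infty > 0$ (otherwise $\bb_s$ would be integrable-to-bounded, contradiction is subtler; rather, unboundedness of $\bb$ together with convergence of $\bb_s$ forces $\beta_\infty \geq 0$, and ruling out $\beta_\infty = 0$ uses the vertical curvature $k_{12}$).

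The heart of the argument is then the dichotomy. Feed the limits into the vanishing vertical sectional curvatures: $\bb^2 k_{12} = 4 - 3\uu^2 - \bb_s^2 \to 0$ forces $4 - 3u_\infty^2 - \beta_\infty^2 = 0$ where $u_\infty = \lim \uu \in [0,1]$ (this limit exists since $\uu_s = \bb k_{0123}$ and $\lvert\uu_s\rvert \leq \bb\lvert\text{Rm}\rvert \leq C\bb \cdot \bb^{-2-\epsilon}$... actually more carefully $\lvert\uu_s\rvert\leq \bb\lvert\text{Rm}\rvert$ is integrable since $\bb$ grows at most linearly and $\lvert\text{Rm}\rvert\lesssim s^{-2-\epsilon}$, so $\uu$ has a limit). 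Likewise $\bb^2 k_{13} = \uu^2 - \bb_s\cc_s\uu^{-1} \to 0$ gives $u_\infty^3 = \beta_\infty\gamma_\infty$ (after multiplying by $u_\infty$), and $\cc = \uu\bb$ with $\bb$ unbounded forces either $u_\infty = 0$ (so $\cc_s = (\uu\bb)_s = \uu_s \bb + \uu\bb_s$; boundedness vs. unboundedness of $\cc$ enters here) — this is where the two cases split. If $u_\infty > 0$ then $\cc = u_\infty\bb + o(\bb)$ is unbounded, and from $u_\infty^3 = \beta_\infty\gamma_\infty$ together with $4 = 3u_\infty^2 + \beta_\infty^2$ and the constraint $\gamma_\infty = \lim\cc_s = \lim(\uu\bb)_s$; writing $\cc_s = \uu_s\bb + \uu_s \cdot$ wait — rather $\cc_s \to \gamma_\infty$ and $\cc/s \to \gamma_\infty$ by L'Hôpital-type reasoning, while $\bb/s\to\beta_\infty$, so $u_\infty = \gamma_\infty/\beta_\infty$; substituting into $u_\infty^3 = \beta_\infty\gamma_\infty$ gives $\gamma_\infty^3/\beta_\infty^3 = \beta_\infty\gamma_\infty$, i.e. $\gamma_\infty^2 = \beta_\infty^4$, so $\gamma_\infty = \beta_\infty^2$ and $u_\infty = \beta_\infty$; then $4 = 3\beta_\infty^2 + \beta_\infty^2 = 4\beta_\infty^2$ gives $\beta_\infty = 1 = \gamma_\infty = u_\infty$, which is case (ii). If $u_\infty = 0$ then $\cc_s \to 0$ (since $k_{13}\to 0$ reads $\bb_s\cc_s\uu^{-1}\to 0$, and with $\beta_\infty > 0$... one needs $\cc_s\uu^{-1}$ bounded, forcing $\cc_s\to 0$), and $4 - \beta_\infty^2 = 0$ gives $\beta_\infty = 2$; finally $\cc$ must be bounded — its limit $m_g^{-1}$ — and one shows $m_g^{-1} \in (0,\infty)$ rather than $\cc\to 0$ by noting that $\cc\to 0$ with $\cc$ smooth and $\bb\to\infty$ would make $\uu_s$ fail integrability or contradict completeness/smoothness at the origin combined with the curvature bound on $k_{03} = -\cc_{ss}/\cc$. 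That gives case (i).

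The main obstacle I expect is rigorously justifying the existence of the limits of $\bb_s$, $\cc_s$, and $\uu$ from only a pointwise curvature decay bound: a priori $\bb_s$ could oscillate. The fix is that monotonicity ($\bb_s\geq 0$, $\cc_s\geq 0$) plus the decay of $k_{01} = -\bb_{ss}/\bb$ (hence of $\bb_{ss}$, since $\bb$ grows at most linearly) makes $\bb_{ss}$ absolutely integrable in $s$, so $\bb_s = \bb_s(0) + \int_0^s \bb_{ss}$ converges; the same works for $\cc_s$ via $k_{03}$, provided $\cc$ does not decay to $0$ — and if $\cc\to 0$ one instead argues directly that $\uu = \cc/\bb \to 0$ and handles case (i) by a separate estimate. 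A secondary subtlety is ensuring $\bb\to\infty$ in the first place and that $\bb$ grows at most linearly (needed to turn the $s^{-2-\epsilon}$ curvature decay into integrability of $\bb_{ss}$); this follows because $\bb_s$ is bounded — if $\bb_s$ were unbounded then $\bb^2 k_{12} = 4 - 3\uu^2 - \bb_s^2$ would be large and negative, contradicting $\bb^2\lvert\text{Rm}\rvert \to 0$ — so once boundedness of $\bb_s$ is in hand the rest is bookkeeping.
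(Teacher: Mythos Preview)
Your approach is more self-contained than the paper's: the paper simply cites \cite{unnebrink} for the existence and values of $\lim\bb_s$ and $\lim\cc_s$, and then devotes the entire written proof to the one remaining claim, namely that in case (i) the Hopf-fiber length $\cc$ has a \emph{finite} positive limit. Your derivation of the dichotomy from the algebraic relations $\bb^2 k_{12}\to 0$ and $\bb^2 k_{13}\to 0$ is a nice alternative once the limits are in hand.

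There is, however, a genuine gap precisely where the paper does its work. In case (i) you write ``$\cc$ must be bounded'' and then worry about ruling out $\cc\to 0$. The latter is a non-issue: since you use monotonicity ($\cc_s\geq 0$) and $\cc>0$ for $s>0$, the limit $\lim\cc$ automatically lies in $(0,\infty]$. What you have \emph{not} excluded is $\cc\to\infty$. Knowing $\cc_s\to 0$, or even $\cc_s\uu^{-1}\to 0$ (i.e.\ $s\,\cc_s/\cc\to 0$), does not prevent $\cc$ from growing like $\log s$ or any slow unbounded function. The paper closes this with a bootstrap: from $\cc\leq\gamma s$ and $\lvert k_{03}\rvert\lesssim s^{-2-\epsilon}$ one bounds $s^{1+\nu}\cc_{ss}$; l'H\^opital then controls $s^{2\nu/3}\cc_s$, improving the growth of $\cc$; finally the $k_{13}$ bound and linear growth of $\bb$ give $\lvert s^{1+\nu}\cc_s/\cc\rvert\leq\alpha$, which integrates to $0<\delta\leq\cc\leq M<\infty$, and one more pass yields $\cc_s$ integrable and hence $\lim\cc$ finite.

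A smaller point: your argument that $\bb_s$ is bounded is circular as written. You infer it from $\bb^2 k_{12}\to 0$, but $\bb^2\lvert k_{12}\rvert\leq C\bb^2 s^{-2-\epsilon}$ only tends to zero once you already know $\bb$ grows at most like $s^{1+\epsilon/2}$, which needs control on $\bb_s$. This is repairable (e.g.\ via a comparison argument on $\bb_{ss}=-k_{01}\bb$), but the step as stated does not stand on its own.
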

\begin{proof}
In the following we always take $s \geq 1$ and we let $\epsilon$ and $\alpha$ be the positive number appearing in \eqref{epsilondecay} and a uniform constant that may change from line to line respectively. We first note that the asymptotic behaviour of the derivatives is a known fact \cite{unnebrink}. Therefore it only remains to show that in case (i) the warping coefficient $\cc$ admits a finite positive limit at spatial infinity. Since $\cc_{s}$ is decaying to zero at spatial infinity there exists $\gamma > 0$ such that $\cc(s) \leq \gamma\,s$. Consider the quantity $\nu = \min\{\epsilon, 3/4\}$. From \eqref{sectionalhorizontal03} and \eqref{epsilondecay} we derive
\[
\lvert s^{1+ \nu}\cc_{ss}\rvert \leq \gamma \lvert s^{2+\nu}\frac{\cc_{ss}}{\cc} \rvert \leq \alpha.
\]
\noindent We can thus apply l'H\^{o}pital formula and conclude that $s^{\frac{2}{3}\nu}\cc_{s}$ is bounded for all $s\geq 1$. It follows that 
\[
\cc(s) \leq \alpha(1 + s^{1-\frac{2}{3}\nu}),
\]
\noindent for all $s\geq 1$. Since $\bb$ grows linearly with respect to the geometric coordinate $s$, we also have
\[
s^{2 + \nu}\frac{\uu^{2}}{\bb^{2}} \leq \alpha s^{-2 + \nu}\cc^{2} \leq \alpha\,s^{-\frac{\nu}{3}}.
\]
\noindent The previous estimate, the condition $\bb_{s}\rightarrow 2$ and formula \eqref{sectionalvertical13} yield 
\[
\left \vert s^{1 + \nu}\,\frac{\cc_{s}}{\cc}\right \vert \leq \alpha.
\]
\noindent By integrating we conclude that there there exist $0 < \delta < M < \infty$ such that 
\[ 
\delta \leq \cc \leq M
\]
\noindent for any $s\geq 1$. The uniform upper bound for $\cc$ and \eqref{sectionalhorizontal03} give $\lvert \cc_{ss}\rvert \leq \alpha s^{-2-\nu}$. Integrating and using that $\cc_{s}\rightarrow 0$ at infinity we obtain $\lvert \cc_{s}\rvert \leq \alpha s^{-1-\nu}$. Therefore $\cc$ admits a limit at infinity, which by the previous analysis needs to be positive and finite.
\end{proof}
\begin{remark}
From the classification result in Lemma \ref{characterizationALF} we see that any metric $g\in\Gaf$ with vanishing asymptotic volume ratio behaves like the Taub-NUT metric at spatial infinity, in the sense that $\text{Vol}_{g}B_{g}(\origin,r) \sim r^{3}$, for $r\geq 1$. In particular, for any such $g$ the Hopf-fiber has a well defined positive and finite length at infinity. In analogy with the magnetic monopole construction of the Taub-NUT metric, we refer to the quantity $(\lim_{s(p)\rightarrow\infty}\cc(p))^{-1} \equiv m_{g}$ as the \emph{mass} of $g$. Accordingly, Lemma \ref{characterizationALF} implies that the class $\Gaf$ is the union of cubic volume growth metrics with bounded Hopf-fiber -i.e. \emph{positive mass} - and of Euclidean volume growth metrics with unbounded Hopf-fiber - i.e. \emph{zero mass}.
% In analogy with the magnetic monopole construction of the Taub-NUT metric, we refer to the inverse of the quantity $\lim_{s(p)\rightarrow\infty}\cc(p) \equiv m_{g}^{-1}$ as to the \emph{mass} of $g$. Consistently, we say that any $g\in\Gaf$ with Euclidean volume growth has zero mass.
\end{remark}
\subsection{Initial data opening faster than a paraboloid.}
%\textbf{write explicitly with equation environment that for any metric in $\Gk$ the curvature decays at some specific rate}
%\textbf{Note that once we have a limsup condition in the definition of the set $\Gk$ then the fact that $\bb_{s}$ is bounded at the initial time is immediate and does not follow from the curvature being controlled by $\bb^{2}$.}
Appleton proved that on $\R^{4}$ there exists a warped Berger gradient steady soliton with monotone coefficients which is characterized by the following asymptotics at spatial infinity \cite{appleton}:
\[
\cc(s) \sim \text{constant}, \,\,\,\,\,\, \bb(s)\sim \sqrt{s}.
\]
\noindent Therefore, the length of the Hopf-fiber approaches a positive finite quantity at spatial infinity, while the projection on the base space $\R^{3}\setminus \{\origin\}$  
\[
g_{\R^{3}}= ds^{2} + \bb^{2}(s)\,\pi^{\ast}g_{S^{2}(\frac{1}{2})}
\]
\noindent opens as fast as a paraboloid on $\R^{3}$. Thus, we derive that initial data opening at spatial infinity with arbitrary speed may fail to converge to the Taub-NUT metric in infinite time, the soliton being an explicit example for that. 

In \cite{Ivey}, Ivey showed that a family of positively curved, pinched SO(3)-invariant immortal Ricci flows on $\R^{3}$ opening (at least) as fast as a paraboloid do converge along subsequences in the pointed Cheeger-Gromov sense as $t\nearrow\infty$ \cite{Ivey}. In line with this result, one is tempted to ask whether an analogous property holds in our setting. Accordingly, we aim to determine whether the soliton provides a sort of lower barrier for the convergence property, in the sense that any solution with bounded Hopf-fiber and warping coefficient along the $S^{2}$-direction growing faster than a paraboloid in $\R^{3}$ does flow to the Taub-NUT metric in infinite time. From a slightly different angle, we  investigate whether the Taub-NUT metric is the only complete, bounded curvature warped Berger ancient Ricci flow with monotone coefficients, bounded Hopf-fiber and opening faster than the soliton.  

By the previous observations we need to characterize the property of a warped Berger metric opening faster than a paraboloid in a way that would be meaningful and hence preserved along a Ricci flow solution. The following definition is equivalent to the one given in the Introduction for the Hopf-fiber is uniformly bounded: we prefer the formulation below because it is invariant under rescaling.
\begin{definition}\label{definitionGk}
For all $0 \leq k < 1$, the class $\Gk$ consists of all complete warped Berger metrics $g$ with monotone coefficients satisfying:
\begin{align}
0 < \liminf_{s \rightarrow \infty}\,(\bb_{s}\uu^{-k})(s) &\leq \limsup_{s\rightarrow \infty}\,(\bb_{s}\uu^{-k})(s) < \infty, \label{openingfasterparaboloid} \\ 
\sup_{p\in \R^{4}}\,\left(\bb^{2}\lvert\text{Rm}_{g}\rvert_{g}\right)(p) & < \infty, \label{decaycurvature1}
\\ \sup_{p\in\R^{4}}\cc(p) &< \infty. \label{hopffiberbounded}
\end{align}
\end{definition} 
Since from \eqref{hopffiberbounded} we see that $\uu^{-1} \sim \bb$ away from the origin, we can integrate \eqref{openingfasterparaboloid} and derive that for any metric $g\in\Gk$ the warping coefficient $\bb$ along the $S^{2}$-direction satisfies $\bb(s) \sim s^{\frac{1}{k+1}}$ for all $s$ large enough, meaning that $g$ opens faster than a paraboloid. In particular, for any warped Berger metric $g\in\Gk$ the volume of geodesic balls of radius $r$ centred at the origin grows as 
\[
\text{Vol}_{g}B_{g}(\origin,r) \sim r^{\frac{2}{k+1} + 1}.
\]
\noindent By combining \eqref{openingfasterparaboloid} and \eqref{decaycurvature1} we find that the curvature of a metric $g\in\Gk$ decays at a rate
\begin{equation}\label{decaycurvaturegk}
\sup_{p\in\R^{4}}\,\left(d_{g}(\origin,p)\right)^{\frac{2}{k+1}}\lvert\text{Rm}_{g}\rvert_{g}(p) < \infty.
\end{equation}
\begin{remark}
It is worth determining which of the previous conditions are related and how.
\begin{itemize}
\setlength\itemsep{0.5em}
\item[(i)] \eqref{openingfasterparaboloid} and \eqref{decaycurvature1} $\Rightarrow$ \eqref{decaycurvaturegk}: this easily follows from integration.
\item[(ii)] \eqref{decaycurvature1} $\nRightarrow$ \eqref{openingfasterparaboloid}: it suffices to consider the following example
\[
g = ds^{2} + \arctan^{2}(s)\left(\sigma_{1}^{2} + \sigma_{2}^{2} + \sigma_{3}^{2}\right) = ds^{2} + \arctan^{2}(s)g_{S^{3}},
\]
\noindent which satisfies the condition in \eqref{decaycurvature1}, yet the metric has cylindrical asymptotics. Indeed, the maximal complete, bounded curvature Ricci flow solution starting at such $g$ encounters a finite-time Type-II singularity \cite[Theorem 1]{work2}.
\item[(iii)] \eqref{decaycurvaturegk} $\nRightarrow$ \eqref{openingfasterparaboloid}: one can take a warped Berger metric with $\cc(s) = \arctan(s)$ and $\bb(s) = s\log(s)$ for all $s\geq 1$ and find that \eqref{decaycurvaturegk} holds with $k = 0$ while the warping coefficient $\bb$ grows faster than a linear function of the distance.
\item[(iv)] \eqref{openingfasterparaboloid} $\nRightarrow$ \eqref{decaycurvature1}: the first-order constraint given by \eqref{openingfasterparaboloid} does not rule out second order terms which are not controlled by the size of the principal orbit $\bb$.
\end{itemize}
\end{remark}

By the existence of the steady soliton found by Appleton we know that Ricci flow solutions starting at initial data as in Definition \ref{definitionGk} with $k = 1$ might in general fail to converge to the Taub-NUT metric. We also note that the Euclidean metric would be included in the class $\mathcal{G}_{0}$ if we dropped the requirement on the size of the Hopf-fiber in \eqref{hopffiberbounded}.

The class of asymptotically flat warped Berger metrics with positive mass - i.e. bounded Hopf-fiber - is contained in $\mathcal{G}_{0}$. The sets $\Gk$ though allow for initial data with geometric features different from the Taub-NUT metric, beyond the rates of both decay of the curvature and growth of the volume of geodesic balls. Indeed, we now describe a metric $g\in\mathcal{G}_{0}$ with nonnegative sectional curvature.
\begin{lemma}\label{existencemetricsec}
There exists $g\in\mathcal{G}_{0}$ satisfying $\emph{sec}(g) \geq 0$.
\end{lemma}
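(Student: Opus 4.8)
The plan is to produce the example by hand, taking $\bb(s)\equiv s$. With $\bb=s$ the metric is forced to be exactly Euclidean near $\origin$, so \eqref{smoothnessorigin} is automatic; I then let $\cc$ be an odd, smooth, strictly increasing, strictly concave (on $s>0$) function with $\cc_{s}(0)=1$ and $\cc(s)\nearrow\cc_{\infty}\in(0,\infty)$, prescribed near the origin by $\cc(s)=s-\epsilon s^{3}+\delta s^{5}+O(s^{7})$ with $0<\delta<\tfrac32\epsilon^{2}$, with $\cc_{s}$ decaying superpolynomially (e.g.\ Gaussian) at infinity, and with the transition spread over a long enough interval so that $\cc_{s}\le(\cc/s)^{3}$ and $\cc_{s}\le1=\bb_{s}$ hold everywhere (the latter forcing $\cc\le\bb$, i.e.\ warped Berger). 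Membership in $\mathcal{G}_{0}$ is then immediate: $\bb_{s}\equiv1$ gives \eqref{openingfasterparaboloid} with $k=0$, boundedness of $\cc$ gives \eqref{hopffiberbounded}, and \eqref{sectionalvertical12}--\eqref{rmo123} with $\bb=s$ give $|\text{Rm}_{g}|\le Cs^{-2}=C\bb^{-2}$, which is \eqref{decaycurvature1}. (The resulting $g$ has only quadratic curvature decay, hence lies outside $\Gaf$, consistently with the remarks above.)

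The substantive point is $\text{sec}(g)\ge0$, and here the plan is first to record a usable sign criterion for warped Berger metrics and then to check it. The curvature operator of any warped Berger metric is block-diagonal in $\Lambda^{2}=\langle e_{0}\wedge e_{1},e_{2}\wedge e_{3}\rangle\oplus\langle e_{0}\wedge e_{2},e_{1}\wedge e_{3}\rangle\oplus\langle e_{0}\wedge e_{3},e_{1}\wedge e_{2}\rangle$, the three $2\times2$ blocks having diagonal entries among $k_{01},k_{13},k_{03},k_{12}$ and off-diagonal entries $\pm\text{Rm}_{0123}$ on the first two blocks and $\pm2\,\text{Rm}_{0123}$ on the third (the factor $2$ and the signs follow from the first Bianchi identity together with the reflection isometry fixing $e_{0}$ and sending $(e_{1},e_{2},e_{3})\mapsto(e_{2},e_{1},-e_{3})$). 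A $2$-plane is nonnegatively curved iff $\langle\mathcal{R}\omega,\omega\rangle\ge0$ for its unit decomposable bivector $\omega=\omega_{A}+\omega_{B}+\omega_{C}$, decomposability being the single constraint $\langle\omega,\star\omega\rangle=0$ coupling the blocks. Minimising $\langle\mathcal{R}\omega,\omega\rangle$ under this constraint by Lagrange multipliers (using that the first two blocks are conjugate and that the third carries $-2\,\text{Rm}_{0123}$) shows that $\text{sec}(g)\ge0$ is guaranteed once
\[
k_{01},\ k_{03},\ k_{12},\ k_{13}\ \ge\ 0\qquad\text{and}\qquad k_{01}+k_{03}+k_{12}+k_{13}\ \ge\ 6\,|\text{Rm}_{0123}|=6\,|\uu_{s}|/\bb .
\]
I stress that one cannot instead ask for $\mathcal{R}\ge0$: since $\cc$ is bounded, $\text{Rm}_{0123}=\uu_{s}/\bb$ decays like $\bb^{-3}$ whereas, for $\bb=s$, $k_{01}\equiv0$, so each of the first two blocks is indefinite away from $\origin$; $\text{sec}(g)\ge0$ survives only because the constrained minimum above remains nonnegative.

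It remains to verify the displayed inequalities for the constructed $g$. With $\bb=s$ one has $k_{01}=k_{02}=0$; $k_{03}=-\cc_{ss}/\cc\ge0$ by concavity of $\cc$; $k_{12}=3(1-\uu^{2})/s^{2}\ge0$ since $\uu=\cc/s\le1$; and $k_{13}=(\uu^{2}-\cc_{s}/\uu)/s^{2}\ge0$ because $\cc_{s}\le\uu^{3}$ was arranged — checked near $\origin$ (where $\uu^{2}-\cc_{s}/\uu=(3\epsilon^{2}-2\delta)s^{4}+O(s^{6})$, nonnegative by $\delta<\tfrac32\epsilon^{2}$), for large $s$ (where $\cc_{s}$ is superpolynomially small and $\uu^{3}\sim(\cc_{\infty}/s)^{3}$), and on the intermediate range by the explicit choice of $\cc$. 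For the scalar inequality: near $\origin$ both sides equal $12\epsilon$ to leading order and the $O(s^{2})$ remainder of the difference is $(6\epsilon^{2}-4\delta)s^{2}\ge0$, again by $\delta<\tfrac32\epsilon^{2}$; for $s$ large, $k_{12}\sim3/s^{2}$ comfortably dominates $6|\uu_{s}|/s\sim6\cc_{\infty}/s^{3}$; and on the intermediate range — which is where the inequality is tightest — the combined contribution of $k_{03}$ and $k_{12}$ outruns $6|\uu_{s}|/s$ provided $\cc$ is not allowed to level off too early. The two hard parts are therefore: isolating the sharp sign criterion in the second paragraph (the curvature operator being genuinely indefinite, so one must optimise over the Grassmannian of $2$-planes, not over $\Lambda^{2}$), and the intermediate region, where $k_{03},k_{12}$ and $|\text{Rm}_{0123}|$ are all comparable and the parameters $\epsilon,\delta,\cc_{\infty}$ and the width of the transition must be chosen compatibly.
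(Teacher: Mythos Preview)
Your overall plan---take $\bb(s)=s$ and build a bounded, increasing, concave $\cc$---matches the paper's (which uses the explicit $\cc(s)=\int_{0}^{s}(1+y^{4})^{-1}\,dy$), and you are right that the off-diagonal term $\text{Rm}_{0123}$ means nonnegativity of the axis curvatures $k_{ij}$ alone does not automatically give $\text{sec}\ge0$; the paper's proof simply asserts this implication without comment.

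But your replacement criterion is itself insufficient, and with it your construction fails. For $v=\cos\theta\,e_{0}-\sin\theta\,e_{2}$ and $w=\sin\theta\,e_{1}+\cos\theta\,e_{3}$ one computes directly (using $R_{0312}=-2R_{0123}$, as you observe)
\[
\text{sec}(v,w)=\sin^{2}\theta\cos^{2}\theta\,(k_{01}+k_{13})+\cos^{4}\theta\,k_{03}+\sin^{4}\theta\,k_{12}-6\sin^{2}\theta\cos^{2}\theta\,\lvert\text{Rm}_{0123}\rvert.
\]
Your scalar inequality $k_{01}+k_{03}+k_{12}+k_{13}\ge6\lvert\text{Rm}_{0123}\rvert$ is exactly the case $\theta=\pi/4$ of $\text{sec}(v,w)\ge0$: a necessary condition for one plane, not a sufficient one for all. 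Minimising over $\theta$ shows that when $k_{01}+k_{13}<6\lvert\text{Rm}_{0123}\rvert$ one actually needs the \emph{product} bound $4k_{03}k_{12}\ge\bigl(6\lvert\text{Rm}_{0123}\rvert-k_{01}-k_{13}\bigr)^{2}$. For $\bb=s$ with bounded $\cc$ and $\cc_{s}$ decaying faster than $s^{-3}$ (your superpolynomial tail, or the paper's $(1+s^{4})^{-1}$) one has at large $s$: $k_{01}=0$, $k_{13}\sim\cc_{\infty}^{2}/s^{4}$, $\lvert\text{Rm}_{0123}\rvert\sim\cc_{\infty}/s^{3}$, $k_{12}\sim3/s^{2}$, but $k_{03}=-\cc_{ss}/\cc=o(s^{-4})$; the product bound then fails and $\text{sec}(v,w)<0$ for $\sin^{2}\theta$ of order $1/s$ (for the paper's $\cc$ at $s=10$, $\sin^{2}\theta=0.1$ already gives $\text{sec}\approx-3\times10^{-4}$). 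So your Lagrange-multiplier sketch has produced a necessary condition rather than a sufficient one, and neither your $\cc$ nor the paper's explicit $\cc$ actually yields $\text{sec}\ge0$; any valid example with $\bb=s$ must have $-\cc_{ss}$ decaying no faster than $s^{-4}$.
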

\begin{proof}
Consider the warped Berger metric $g$ on $\R_{+}\times S^{3}$ defined by
\[
g = ds^{2} + s^{2}\,\pi^{\ast}g_{S^{2}(\frac{1}{2})} + \cc^{2}(s)\,\sigma_{3}\otimes \sigma_{3},
\]
\noindent where $c(s) = \int_{0}^{s}\frac{1}{1 + y^{4}}\,dy$. By the smoothness conditions in \eqref{smoothnessorigin} we see that $g$ extends to a complete warped Berger metric on $\R^{4}$ with monotone coefficients. Since $\bb$ is linear and $\cc$ is concave we have $k_{01} = 0$ and $k_{03} \geq 0$. Moreover
\[
k_{12} = \frac{1}{\bb^{2}}\left(4 - 3\uu^{2} - \bb_{s}^{2}\right) = \frac{3}{\bb^{2}}\left(1 - \uu^{2}\right) \geq 0.
\]
\noindent Finally, by direct computation we check that $(c - s(1 + s^{4})^{-1/3})_{s} \geq 0$, hence yielding $k_{13} \geq 0$. Therefore, we have shown that $\text{sec}(g)\geq 0$. In order to prove that $g\in\mathcal{G}_{0}$ it suffices to show that $\bb^{2}\lvert \text{sec}(g)\rvert$ is bounded since \eqref{openingfasterparaboloid} and \eqref{hopffiberbounded} are satisfied with $k = 0$. To this aim, we find that there exists $\alpha > 0$ such that
\begin{align*}
&\bb^{2}(s)\lvert k_{01}\rvert(s) = 0, \\
&\bb^{2}(s)\lvert k_{03}\rvert(s) = s^{2}\left\vert\frac{1}{\cc(s)}\left(-\frac{4s^{3}}{(1 + s^{4})^{2}}\right)\right\vert  \leq \alpha, \\
&\bb^{2}(s)\lvert k_{12} \rvert(s) = 3 \left \vert 1 - \uu^{2}(s) \right \vert \leq \alpha, \\
&\bb^{2}(s)\lvert k_{13}\rvert(s) = \left \vert \uu^{2} - \frac{s}{\cc(s)(1 + s^{4})}\right\vert \leq \alpha. 
\end{align*}
\end{proof}
% the example pages 39/42 of recycled paper of a metric $g_{0}\in\mathcal{G}_{0}$ with $\text{sec}(g_{0}) \geq 0$.
% for any $\Gk$ we can find $g_{k}\in\Gk$ with $\text{sec}(g_{k}) > 0$.  

%We will show below that thanks to a result in \cite{Lott} the power law decay of the curvature, which follows by combining \eqref{openingfasterparaboloid} and \eqref{decaycurvature1}, is preserved along the solution on any time-slice. We also note that 
\subsection{The Ricci flow equations.}
Given a complete, bounded curvature warped Berger metric $g_{0}$, there exists a unique maximal, complete, bounded curvature solution to the Ricci flow $(\R^{4},g(t))_{0\leq t < T}$ starting at $g_{0}$ \cite{shi},\cite{uniqueness}. From now on we omit to specify each time that any Ricci flow solution we consider is meant to be the unique complete, bounded curvature one evolving from some initial metric $g_{0}$.

If $(\R^{4},g(t))_{0\leq t < T}$ is the maximal Ricci flow starting at some complete, bounded curvature warped Berger metric $g_{0}$, then the diffeomorphism invariance and the uniqueness property of the problem in the class of complete, bounded curvature solutions ensure that $g(t)$ is still an SU(2)U(1)-invariant metric for all $t\in [0,T)$. Therefore, we can argue as in Section 2.1 to derive that $g(t)$ can be diagonalized with respect to a time-independent fixed frame. Namely, the solution has the form
\begin{equation}\label{ricciflowsolution}
g(t) = ds\otimes ds + b^{2}(s,t)\,(\sigma_{1}\otimes\sigma_{1} + \sigma_{2}\otimes\sigma_{2}) + \cc^{2}(s,t)\,\sigma_{3}\otimes\sigma_{3},
\end{equation}
\noindent where $s=s(x,t)$ is the distance from the origin with respect to the solution and hence is time-dependent. Such geometric coordinate allows us to write the Ricci flow equations as
\begin{align}\label{Ricciflowpdes1}
\bb_{t} &= \bb_{ss} + \left(\frac{\cc_{s}}{\cc} + \frac{\bb_{s}}{\bb} \right)\bb_{s} + 2\frac{\uu^{2}}{\bb} - \frac{4}{\bb} \\
\cc_{t} &= \cc_{ss} + 2\frac{\bb_{s}\cc_{s}}{\bb} - 2\frac{\uu^{3}}{\bb}. \label{Ricciflowpdes2}
\end{align}
\noindent Since the coordinate $s$ depends on time, there is a non trivial commutator between $\partial_{t}$ and $\partial_{s}$ given by
\begin{equation}\label{commutatorformula}
\left[ \frac{\partial }{\partial t}, \frac{\partial}{\partial s} \right]    = -(\text{ln}(\xi))_{t}\frac{\partial}{\partial s} = - \left(2\frac{\bb_{ss}}{\bb} + \frac{\cc_{ss}}{\cc} \right)\frac{\partial}{\partial s},
\end{equation}
\noindent where $\xi$ is defined as in \eqref{initialmetric}. We finally write down the formula for the time-dependent Laplacian along a solution of the Ricci flow. Given a smooth radial map $f: \R^{4}\rightarrow \R$ we have 
\begin{equation}\label{formulalaplacian}
\Delta f \equiv f_{ss} + \left (2\frac{\bb_{s}}{\bb} + \frac{\cc_{s}}{\cc} \right )f_{s}.
\end{equation}
\begin{remark} Throughout this work we often compute the evolution equation of geometric quantities at stationary points. To this aim, the commutator formula plays an important role. Say that we are interested in studying the sign of $\partial_{t}f$ at a local minimum point, then we report the evolution equation of $f$ at such minimum point after using the conditions $f_{ss} \geq 0$ and $f_{s} = 0$.
\end{remark}
We note that the Ricci flow preserves the class of warped Berger metrics with monotone coefficients. %In particular, if the Hopf-fiber is bounded at the initial time $t = 0$, then it stays bounded as long as the solution exists. 
\begin{lemma}\label{consistencymonotone}
Let $(\R^{4},g(t))_{0\leq t < T}$ be the maximal Ricci flow solution starting at some complete, bounded curvature warped Berger metric $g_{0}$ with monotone coefficients. Then the following properties hold:
\begin{itemize}
\item[(i)] $\uu(\cdot,t) \leq 1$ for all $t\in [0,T)$.
\item[(ii)] $\bb_{s}(\cdot,t) > 0$, $\cc_{s}(\cdot,t) > 0$ for all $t\in (0,T)$.
\item[(iii)] $\uu(\cdot,t) \geq \inf_{\R^{4}}\uu(\cdot,0)$, for all $t\in [0,T)$.%and let $\varepsilon\doteq \inf_{x\geq 0}\cc/\bb(x,0)$. Then for any $(p,t)\in\R^{4}\times [0,T)$ we have
%\[
%\varepsilon \leq \frac{\cc}{\bb}(p,t)\leq 1.$\cc(\cdot,t) \leq \sup_{\R^{4}}\cc(\cdot,0)$, for all $t\in [0,T)$.
%\item[(iii)] For any $T^{\prime} < T$ there exists $\alpha_{T^{\prime}}$ such that
%\[
%\sup_{p\in\R^{4}}\left(s_{0}^{2+\epsilon}\lvert\emph{Rm}_{g(t)}\rvert_{g(t)}(p)\right) \leq \alpha_{T^{\prime}}.
%\]
\end{itemize}
%\noindent In particular $g(t)\in \Gaf$ for all $t\in[0,T)$.
\end{lemma}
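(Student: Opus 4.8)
The plan is to deduce all three statements from maximum-principle arguments for the complete, bounded-curvature solution $g(t)$, applied to the roundness ratio $\uu=\cc/\bb$ and to the radial derivatives $\bb_{s},\cc_{s}$. The first step is to compute their evolution equations from \eqref{Ricciflowpdes1}--\eqref{Ricciflowpdes2}, being careful to account for the time dependence of $s$ through the commutator \eqref{commutatorformula}. Differentiating $\uu=\cc/\bb$ in time at a fixed point of $\R^{4}$ and substituting $\cc_{s}=\uu_{s}\bb+\uu\,\bb_{s}$ and $\cc_{ss}=\uu_{ss}\bb+2\uu_{s}\bb_{s}+\uu\,\bb_{ss}$, I expect all the higher-derivative terms to combine, yielding
\[
\uu_{t}=\uu_{ss}+3\,\frac{\bb_{s}}{\bb}\,\uu_{s}+\frac{4\uu\,(1-\uu^{2})}{\bb^{2}}.
\]
Note that, by the smoothness condition \eqref{smoothnessorigin}, which persists in time since $g(t)$ is a smooth metric on $\R^{4}$ for all $t$, one has $\uu=1$ and $\bb_{s}=\cc_{s}=1$ on the singular orbit for every $t$, so these quantities carry pinned boundary values there.

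For \textbf{(i)} I would look at $v=\uu-1$: then $v(\cdot,0)\le 0$ by the warped Berger assumption, $v$ vanishes on the singular orbit, and $v$ solves a parabolic equation whose zeroth-order coefficient $-4\uu(1+\uu)/\bb^{2}$ is nonpositive, so the maximum principle gives $v\le 0$, i.e.\ $\uu\le 1$. Granting (i), for \textbf{(iii)} the reaction term $4\uu(1-\uu^{2})/\bb^{2}$ is nonnegative, hence at an interior spatial minimum of $\uu$ (where $\uu_{s}=0$ and $\uu_{ss}\ge 0$) one gets $\uu_{t}\ge 0$; since moreover $\uu\equiv 1\ge\inf_{\R^{4}}\uu(\cdot,0)$ on the singular orbit, the minimum principle yields $\uu(\cdot,t)\ge\inf_{\R^{4}}\uu(\cdot,0)$.

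For \textbf{(ii)} I would differentiate \eqref{Ricciflowpdes1}--\eqref{Ricciflowpdes2} in $s$; using \eqref{commutatorformula} and \eqref{sectionalvertical12} to rewrite the resulting zeroth-order terms, after the cancellations this should give a weakly coupled system of the form
\[
(\bb_{s})_{t}=(\bb_{s})_{ss}+\frac{\cc_{s}}{\cc}(\bb_{s})_{s}+\Big(k_{12}-\frac{3\uu^{2}}{\bb^{2}}-\frac{\cc_{s}^{2}}{\cc^{2}}\Big)\bb_{s}+\frac{4\uu}{\bb^{2}}\,\cc_{s},
\]
\[
(\cc_{s})_{t}=(\cc_{s})_{ss}+\Big(\frac{2\bb_{s}}{\bb}-\frac{\cc_{s}}{\cc}\Big)(\cc_{s})_{s}-\Big(\frac{2\bb_{s}^{2}}{\bb^{2}}+\frac{6\uu^{2}}{\bb^{2}}\Big)\cc_{s}+\frac{8\uu^{3}}{\bb^{2}}\,\bb_{s}.
\]
The essential structural point is that the two coupling coefficients $4\uu/\bb^{2}$ and $8\uu^{3}/\bb^{2}$ are nonnegative, so $(\bb_{s},\cc_{s})$ solves a cooperative parabolic system; since $\bb_{s},\cc_{s}\ge 0$ at $t=0$, the maximum principle for cooperative systems preserves $\bb_{s}\ge 0$ and $\cc_{s}\ge 0$. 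To upgrade to strict positivity, I would use that $\bb_{s}$ is a supersolution of the scalar equation obtained by discarding the nonnegative coupling term, and symmetrically for $\cc_{s}$: an interior zero of $\bb_{s}$ (resp.\ $\cc_{s}$) at a time $t_{0}>0$ would force $\bb_{s}\equiv 0$ (resp.\ $\cc_{s}\equiv 0$) on $\{s>0\}\times[0,t_{0}]$ by the strong maximum principle, contradicting the pinned value $1$ at the origin. Hence $\bb_{s},\cc_{s}>0$ on $\R^{4}\times(0,T)$.

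The part I expect to be delicate is not the algebra but running these maximum principles on the noncompact $\R^{4}$: the drift terms $\bb_{s}/\bb$, $\cc_{s}/\cc$ and the zeroth-order terms with $1/\bb^{2}$ are singular on the singular orbit, where $\bb,\cc\to 0$, and are not a priori uniformly bounded at spatial infinity. I would deal with the origin by regarding $\uu$, $\bb_{s}$, $\cc_{s}$ as genuinely smooth SU(2)U(1)-invariant functions on $\R^{4}$, whose pinned values there dispose of the apparent coordinate degeneracy, and with spatial infinity by invoking the hypothesis that the curvature of $g(t)$ is bounded on each time slice, which controls the coefficients at large distance and lets one localise the maximum principle with a cut-off or barrier, as in \cite{IKS1,appleton2,work2}. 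The only other point needing attention is the in-time persistence of \eqref{smoothnessorigin}, which supplies the nonzero boundary data that makes the strong maximum principle effective in \textbf{(ii)}.
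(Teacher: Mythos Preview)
Your proposal is correct and follows essentially the same route as the paper, which defers (i) and (iii) to the evolution equation of $\uu$ (Lemma~2.6 in \cite{work2}) and handles (ii) via the evolution equations of the first radial derivatives (adapting Lemma~3.5 in \cite{work2} by replacing $\cc H$ with $\cc_{s}$). Your computations of the evolution equations match the paper's \eqref{equationbs}--\eqref{equationcs}, your observation that the $(\bb_{s},\cc_{s})$ system is cooperative is exactly the structural point needed, and your remarks on handling the singular orbit and spatial infinity via bounded curvature are in line with how these maximum-principle arguments are justified in the cited references.
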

\begin{proof}
The proof of (i) and (iii) follows from the same arguments in Lemma 2.6 in \cite{work2}. Similarly, one can easily adapt the proof of Lemma 3.5 in \cite{work2} by replacing the evolution equation of $\cc H$ with that of $\cc_{s}$ to show that (ii) is satisfied as long as the solution exists. %Finally, conditio we note that by \eqref{Ricciflowpdes2} the evolution equation of $\cc$ satisfies:
%%%\]
%\noindent Since $\lvert k_{03} \rvert = \lvert \cc_{ss}/\cc \rvert$ is bounded as long as the solution exists, we see that $\cc$ is exponentially bounded in space and we can hence apply the maximum principle \cite[Theorem 12.14]{ricciflowtechniques2} to deduce (iii).
% 
%\end{lemma} 
%\begin{proof}
%We first note that one can always choose the $g-$distance of hyperspheres from the origin as radial coordinate $x$. Let $\psi:\left [ 0, \infty\right )\rightarrow \mathbb{R}$ be a smooth \emph{non decreasing} function which is sufficiently flat at $0$ (say that $\lim_{x\rightarrow 0}(x^{2}\psi^{(k)}(x)) = 0$ for any $k\in\mathbb{N}$) and is identically 1 for $x\geq x_{0}$ for some sufficiently small $x_{0}$ only depending on $g$. From the boundary conditions \eqref{smoothnessorigin}, the asymptotics \eqref{cubicvolumegrowth} and \eqref{euclideanvolumegrowth}, and the finiteness of $m(g)$ it then follows that the map  $\phi \doteq x\psi + 1$ is a distance-like function satisfying the required lower bound.
%\end{proof}
\end{proof}
\begin{remark}
In the following we always implicitly use that for warped Berger Ricci flows the roundness ratio $\uu$ is bounded by 1.
\end{remark}
From the analysis in \cite{work2} we finally derive that any Ricci flow solution we consider below is in fact immortal.
\begin{corollary}\label{solutionimmortal}
Let $(\R^{4},g(t))_{0\leq t < T}$ be the maximal Ricci flow solution starting at some $g_{0}$ belonging to either $\Gk$ or $\Gaf$. Then the solution is immortal.
\end{corollary}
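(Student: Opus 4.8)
The plan is to deduce the corollary directly from the immortality result of \cite{work2} (\cite[Theorem 3]{work2}) recalled above, which asserts that the maximal complete, bounded curvature Ricci flow starting at a warped Berger metric with monotone coefficients whose curvature decays at spatial infinity is immortal. Thus the argument reduces to checking that every $g_{0}\in\Gk\cup\Gaf$ meets these hypotheses; by the very definitions of the two classes, $g_{0}$ is a complete warped Berger metric with monotone coefficients, so only the curvature conditions need to be verified --- namely that $\sup_{\R^{4}}\lvert\text{Rm}_{g_{0}}\rvert_{g_{0}}<\infty$ and that $\lvert\text{Rm}_{g_{0}}\rvert_{g_{0}}(p)\to 0$ as $d_{g_{0}}(\origin,p)\to\infty$.

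For $g_{0}\in\Gaf$ the decay assumption \eqref{epsilondecay} immediately gives $\lvert\text{Rm}_{g_{0}}\rvert_{g_{0}}(p)\leq C\,(d_{g_{0}}(\origin,p))^{-(2+\epsilon)}$, which tends to $0$ at spatial infinity and, combined with the continuity of $\lvert\text{Rm}_{g_{0}}\rvert_{g_{0}}$ on the compact ball $\bar{B}_{g_{0}}(\origin,1)$, also yields a global curvature bound. For $g_{0}\in\Gk$ one integrates \eqref{openingfasterparaboloid} and uses \eqref{hopffiberbounded} to get $\bb(s)\sim s^{\frac{1}{k+1}}\to\infty$, so that \eqref{decaycurvature1} forces $\lvert\text{Rm}_{g_{0}}\rvert_{g_{0}}\to 0$ at spatial infinity; in fact \eqref{decaycurvaturegk} holds with exponent $\frac{2}{k+1}>1$ since $k<1$, and as before the global bound follows by combining this decay with the continuity of the curvature on compact sets. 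In both cases all hypotheses of \cite[Theorem 3]{work2} are satisfied, so the maximal solution has $T=\infty$.

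There is essentially no obstacle here: the corollary is a bookkeeping consequence of the definitions of $\Gk$ and $\Gaf$ together with the cited immortality theorem. The only mildly delicate point is passing from the polynomial decay estimates to a genuine global curvature bound, which is handled by the smoothness of $g_{0}$ on $\R^{4}$ and the compactness of metric balls about the origin; one should also note that this bounded-curvature property is precisely what makes $g_{0}$ admissible initial data for the existence and uniqueness theory of Shi \cite{shi} and Chen--Zhu \cite{uniqueness}, so that the phrase ``the maximal Ricci flow solution'' is meaningful to begin with.
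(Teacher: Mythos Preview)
Your argument applies \cite[Theorem~3]{work2} directly at $t=0$, checking only the informal hypotheses (monotone coefficients, curvature decay) stated earlier in the paper. The paper's own proof, however, indicates that the cited theorem also requires \emph{positive injectivity radius} of the initial metric: it first runs the flow to some $t_{0}\in(0,T)$, uses Lemma~\ref{consistencymonotone}(ii) to get that $\bb_{s}(\cdot,t_{0})>0$ and $\cc_{s}(\cdot,t_{0})>0$ are now \emph{strictly} positive, observes that a warped Berger metric with strictly increasing coefficients has no closed geodesics, and hence (by bounded curvature) has $\text{inj}(g(t_{0}))>0$. Only then is \cite[Theorem~3]{work2} invoked, with the curvature decay of $g(t_{0})$ supplied by \cite{formationsingularities}.

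This extra step is not merely cosmetic. Metrics in $\Gk$ or $\Gaf$ are only assumed to satisfy $\bb_{s}\geq 0$ and $\cc_{s}\geq 0$, and nothing in the definitions rules out the derivatives vanishing on an interval; in such regions the orbits are totally geodesic and the injectivity radius can fail to be positive. Your verification of bounded curvature and decay is correct, but without also establishing $\text{inj}(g_{0})>0$ you have not checked all the hypotheses actually needed. The paper's device of passing to $t_{0}>0$ via the strong maximum principle is precisely what closes this gap.
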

\begin{proof}
By direct computation one may check that $(\R^{4},g)$ does not contain closed geodesics when $g$ is a warped Berger metric with warping coefficients $\bb$ and $\cc$ strictly increasing in space. Therefore, given a maximal Ricci flow solution as in the statement, by Lemma \ref{consistencymonotone} we see that we can find $t_{0} \in (0,T)$ such that $(\R^{4},g(t_{0}))$ does not contain closed geodesics. Since the curvature is bounded, we deduce that $\text{inj}(g(t_{0})) > 0$. We may then apply \cite[Theorem 3]{work2} to the initial condition $(\R^{4},g(t_{0}))$, being the decay of the curvature preserved along the flow \cite{formationsingularities}, and conclude that the Ricci flow solution exists smoothly for all positive times.%ere are no closed geodesics for all positive times. 

\end{proof}

\section{The Ricci flow in $\Gaf$}
In this section we study the Ricci flow problem in $\Gaf$. According to the characterization of asymptotically flat warped Berger metrics provided in Lemma \ref{characterizationALF}, given $g\in\Gaf$ we refer to the inverse of $\sup_{\R^{4}}\cc$ as the \emph{mass} of $g$. In particular, we recall that the set $\Gaf$ decomposes in the union of metrics with zero mass, or equivalently Euclidean volume growth, and of metrics with positive mass, or equivalently cubic volume growth. The key results of this section consist in showing that for any Ricci flow solution in $\Gaf$ the curvature is controlled by the size of the principal orbits uniformly in time and the spatial derivative $\bb_{s}$ is bounded away from zero in any space-time region where the roundness ratio $\uu$ is not degenerate, once we let the flow start.

We point out that most of the analysis could in fact be performed on a more general level, however we  prefer to focus first on the asymptotically flat case which is easier to deal with - and also includes metrics with vanishing asymptotic volume ratio - before discussing the problem for metrics in $\Gk$, for which the behaviour of the flow at spatial infinity is a priori less rigid.

We first verify that the Ricci flow acts on the class $\Gaf$ preserving the curvature decay of the initial metric \eqref{epsilondecay}. 
\begin{lemma}\label{ALFpreserved}
Let $(\R^{4},g(t))_{t\geq 0}$ be the maximal Ricci flow solution starting at some $g_{0}\in\Gaf$ and let $\epsilon > 0$ be such that $\sup_{\R^{4}}(d_{g_{0}}(\origin,\cdot))^{2 + \epsilon}\lvert\emph{Rm}_{g_{0}}\rvert_{g_{0}}(\cdot) <  \infty$. For any $T^{\prime} < \infty$ there exists $\alpha(T^{\prime})$ such that
\[
\sup_{p\in\R^{4}}\,(d_{g_{0}}(\origin,p))^{2 + \epsilon}\lvert\emph{Rm}_{g(t)}\rvert_{g(t)}(p) \leq \alpha(T^{\prime}),
\]
\noindent for all $t\in [0,T^{\prime}]$.
\end{lemma}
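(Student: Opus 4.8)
The plan is to run a weighted maximum principle for $|\text{Rm}|^{2}$, using a \emph{time‑independent} spatial weight built from the initial distance function $s_{0}:=d_{g_{0}}(\origin,\cdot)$. Since the solution is complete with bounded curvature, for each finite $T'$ one has a constant $\Lambda=\Lambda_{T'}$ with $\sup_{\R^{4}\times[0,T']}|\text{Rm}_{g(t)}|\le\Lambda$; the curvature bound gives a two‑sided comparison $C_{T'}^{-1}g_{0}\le g(t)\le C_{T'}g_{0}$ (hence $C_{T'}^{-1}s_{0}\le d_{g(t)}(\origin,\cdot)\le C_{T'}s_{0}$ and $dV_{g(t)}\le C_{T'}\,dV_{g_{0}}$), and Shi's local estimates give, for $0<t\le T'$, bounds on $|\nabla^{m}\text{Rm}_{g(t)}|$ that are integrable in $t$ down to $0$. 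These are the only external inputs. Set $w:=(1+s_{0}^{2})^{2+\epsilon}$, a smooth positive function on all of $\R^{4}$ (recall $s_{0}^{2}$ is smooth across the origin) with $w\ge 1$ and $w\asymp s_{0}^{4+2\epsilon}$ at infinity, and put $W:=w\,|\text{Rm}_{g(t)}|^{2}$.

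First I would compute the evolution of $W$. From the standard inequality $\partial_{t}|\text{Rm}|^{2}\le\Delta|\text{Rm}|^{2}-2|\nabla\text{Rm}|^{2}+c|\text{Rm}|^{3}$, the identity $\partial_{t}w=0$, and rewriting $\nabla|\text{Rm}|^{2}$ in terms of $\nabla W$ and $\nabla w$, one obtains
\[
\partial_{t}W\ \le\ \Delta W+\langle X,\nabla W\rangle_{g(t)}+\Phi\,W,
\]
where $X=-2w^{-1}\nabla w$ and $\Phi=2w^{-2}|\nabla w|^{2}-w^{-1}\Delta_{g(t)}w+c\,|\text{Rm}_{g(t)}|$, having discarded the favourable term $-2w|\nabla\text{Rm}|^{2}\le 0$ and used $c|\text{Rm}|^{3}w=c|\text{Rm}|\,W$. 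The point is that $|X|_{g(t)}$ and $\Phi$ are bounded above on $\R^{4}\times[0,T']$: one has $|\nabla w|_{g(t)}^{2}/w^{2}\le C\,|\nabla s_{0}|_{g(t)}^{2}/(1+s_{0}^{2})\le C_{T'}$ and $c|\text{Rm}_{g(t)}|\le c\Lambda$, while for the last term it suffices to bound $\Delta_{g(t)}w\ge -C_{T'}w$; for a radial function this reduces to $\Delta_{g(t)}(s_{0}^{2})\ge-C_{T'}(1+s_{0}^{2})$, which holds because the part of $\Delta_{g(t)}(s_{0}^{2})$ carrying first derivatives of the metric is controlled by the time‑integrable Shi bounds, and the genuinely second‑order part equals $2(\partial_{s}s_{0})^{2}+2s_{0}(\partial_{s}s_{0})\big(2\bb_{s}/\bb+\cc_{s}/\cc\big)\ge 0$ by the monotonicity of the coefficients, preserved along the flow by Lemma \ref{consistencymonotone}.

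Then I would invoke the parabolic maximum principle on the complete noncompact $\R^{4}$. At $t=0$, $W(\cdot,0)=w\,|\text{Rm}_{g_{0}}|^{2}\le C\big(s_{0}^{4+2\epsilon}|\text{Rm}_{g_{0}}|^{2}+|\text{Rm}_{g_{0}}|^{2}\big)$ is bounded by \eqref{epsilondecay} and the bounded curvature of $g_{0}$. On $[0,T']$ one has the a priori growth bound $W\le w\,\Lambda^{2}\le C_{T'}(1+s_{0}^{2})^{2+\epsilon}$, polynomial in $s_{0}$ and hence in $d_{g(t)}(\origin,\cdot)$; combined with $dV_{g(t)}\le C_{T'}\,dV_{g_{0}}$ and the at‑most‑quartic volume growth of $g_{0}$‑balls, $W$ satisfies the growth hypothesis needed for the noncompact maximum principle. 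Applying it (after absorbing $\Phi$ via $e^{-C_{T'}t}W$) yields $\sup_{\R^{4}}W(\cdot,t)\le e^{C_{T'}t}\sup_{\R^{4}}W(\cdot,0)$, hence $(1+s_{0}^{2})^{2+\epsilon}|\text{Rm}_{g(t)}|^{2}\le e^{C_{T'}T'}\sup_{\R^{4}}W(\cdot,0)$ on $[0,T']$. Since $(1+s_{0}^{2})^{(2+\epsilon)/2}\ge s_{0}^{2+\epsilon}$, this gives $s_{0}^{2+\epsilon}|\text{Rm}_{g(t)}|\le\alpha_{T'}$ for $s_{0}\ge1$, and on $\{s_{0}\le1\}$ the crude bound $|\text{Rm}_{g(t)}|\le\Lambda_{T'}$ suffices; enlarging $\alpha_{T'}$ finishes the proof, with the weight measured exactly in $d_{g_{0}}(\origin,\cdot)$ as claimed.

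The genuine obstacle is the bookkeeping in the second paragraph: producing a zeroth‑order coefficient $\Phi$ bounded \emph{above} uniformly on $[0,T']$, which is precisely where the monotonicity of the warping coefficients and the short‑time derivative estimates are used, and, to a lesser extent, applying the noncompact maximum principle rigorously despite not yet knowing that $|\text{Rm}_{g(t)}|$ decays (only that it is bounded). An alternative, more hands‑on route — avoiding $|\text{Rm}|^{2}$ — would be to run separate weighted maximum principles for the radial curvature components $k_{12},k_{13},k_{01},k_{03}$ and $\text{Rm}_{0123}=\uu_{s}/\bb$ via their reaction–diffusion equations in the $s$‑variable, trading the clean structure above for longer but elementary computations.
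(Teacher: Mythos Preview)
Your argument is correct in substance, and conceptually it is close to the paper's, but the execution differs. The paper does not carry out the weighted maximum principle by hand: instead it observes that $\phi=\sqrt{1+s_{0}^{2}}$ is a smooth distance-like function on $(\R^{4},g_{0})$ satisfying $\alpha^{-1}(1+s_{0})\le\phi\le\alpha(1+s_{0})$, $|\nabla_{g_{0}}\phi|\le\alpha$, and $0\le\nabla^{2}_{g_{0}}\phi\le\alpha g_{0}$; the upper Hessian bound is checked from the explicit connection terms (with $\bb_{s_{0}}/\bb$ bounded near and away from the origin), and the crucial lower bound $\nabla^{2}_{g_{0}}\phi\ge 0$ follows from $\bb_{s_{0}},\cc_{s_{0}}\ge 0$. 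With these hypotheses in hand the paper simply invokes Proposition~B.10 of \cite{Lott}, which asserts precisely that a power-law curvature decay measured against such a $\phi$ persists on compact time intervals. Your proof unpacks this black box: you work with the time-dependent Laplacian $\Delta_{g(t)}$ of the weight, and the step that costs you extra---bounding $s_{0}''=\partial_{s}(\xi_{0}/\xi)$ via the time-integral of $\partial_{s}(\mathrm{Ric}_{ss})$ and Shi's estimates---is exactly what the paper avoids by checking the Hessian only at $t=0$ and letting \cite{Lott} absorb the evolution. Both approaches use the monotonicity of the warping coefficients at the same decisive point (nonnegativity of the ``mean-curvature'' contribution to the Laplacian/Hessian of the weight). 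Your route is self-contained and makes the mechanism visible; the paper's is shorter but rests on an external reference. One remark: your phrase ``the part of $\Delta_{g(t)}(s_{0}^{2})$ carrying first derivatives of the metric'' is a little imprecise---the term $2s_{0}s_{0}''$ actually involves $\int_{0}^{t}\partial_{s}(\mathrm{Ric}_{ss})\,d\tau$, i.e.\ a time-integrated \emph{covariant derivative of curvature}; the point stands since $|\nabla\mathrm{Rm}|\le C t^{-1/2}$ is integrable in $t$, but it would be worth making that Gronwall step explicit.
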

\begin{proof}
We let $s_{0}$ be the geometric coordinate representing the distance function from the origin induced by $g_{0}$ so that we can write the initial metric $g_{0}$ as in \eqref{fubini}. We consider the smooth function $\phi: s_{0}\mapsto \sqrt{s_{0}^{2} + 1}$. From the connection terms, we see that
\[
\lvert \nabla_{g_{0}}\phi\rvert_{g_{0}} = \lvert \partial_{s_{0}}\phi \rvert \leq 1,
\]
\noindent and
\[
\nabla^{2}_{g_{0}}\phi(\partial_{s_{0}},\partial_{s_{0}}) = \partial_{s_{0}}^{2}\phi.
\]
\noindent Moreover, whenever $\bb$ is positive we have
\[
\nabla^{2}_{g_{0}}\phi(X_{1}/\lvert X_{1} \rvert_{g_{0}},X_{1}/\lvert X_{1} \rvert_{g_{0}}) =  \frac{\bb_{s_{0}}}{\bb}\frac{s_{0}}{\sqrt{1 + s_{0}^{2}}}.
\]
\noindent From \cite{work2}[Corollary 3.2] we derive that the previous quantity is bounded away from the origin. Since by the boundary conditions $\bb_{s_{0}}(\origin) = 1$ we may conclude that the bound extends at the origin as well. Similar arguments work when evaluating the Hessian of $\phi$ along $X_{3}$. Therefore we have just shown that $\phi$ is a smooth distance-like function on $(\R^{4},g_{0})$ in the sense of \cite{ricciflowtechniques2}[Lemma 12.30]. Namely, there exists $\alpha > 0$ such that
\begin{align*}
\begin{split}
\alpha^{-1}(s_{0}(p) + 1) &\leq \phi(p) \leq \alpha(s_{0}(p) + 1), \\
\lvert \nabla_{g_{0}}\phi \rvert &\leq \alpha, \\
\nabla_{g_{0}}^{2}(\phi) &\leq \alpha g_{0}.
\end{split}
\end{align*}
\noindent Since $\bb_{s_{0}}\geq 0$ and $\cc_{s_{0}}\geq 0$, the same computations yield
\[
\nabla_{g_{0}}^{2}\phi \geq 0.
\]
\noindent We may finally apply Proposition B.10 in \cite{Lott} to our setting, thus proving that the power law decay of the curvature in \eqref{epsilondecay} persists along the flow. 
\end{proof}
%\begin{lemma}\label{distance-likenice}
%Any asymptotically flat $g\in\mathfrak{WB}(\mathbb{R}^{4})$ admits a distance-like function $\phi$ satisfying 
%\[
%\phi^{-1}\emph{Hess}_{g}(\phi) \geq \gamma g,
%\]
%\noindent for some $\gamma \in \mathbb{R}$.
%Given $g_{0}\in\Gaf$, by Lemma \ref{characterizationALF} and the spatial monotonicity of $\cc(\cdot,0)$ it follows that 
%\[
%\lim_{s_{0}(p)\rightarrow \infty}\cc(p,0) \equiv \sup_{p\in\R^{4}}\cc(p,0) = m_{g_{0}}^{-1} > 0.
%\]
A simple consequence of the power law decay being preserved along the Ricci flow is the \emph{conservation of mass} along the flow.
\begin{corollary}\label{massconserved}
Let $(\R^{4},g(t))_{t\geq 0}$ be the maximal Ricci flow solution starting at some $g_{0}\in\Gaf$ with positive mass $m_{g_{0}}$. Then $m_{g(t)} = m_{g_{0}}$ for any $t\geq 0$.
\end{corollary}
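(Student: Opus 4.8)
The plan is to show that the warping coefficient $\cc(\cdot,t)$ stays bounded above by $m_{g_0}^{-1}$ for all times, so that $g(t)$ never leaves case (i) of Lemma \ref{characterizationALF}, and then that the asymptotic value of $\cc$ does not move at all. First I would record that the flow stays in $\Gaf$: the solution is immortal by Corollary \ref{solutionimmortal}, it remains a warped Berger metric with monotone coefficients by Lemma \ref{consistencymonotone}, and the decay $(d_{g_0}(\origin,\cdot))^{2+\epsilon}\lvert\text{Rm}_{g(t)}\rvert_{g(t)}\le\alpha_{T'}$ persists on every $[0,T']$ by Lemma \ref{ALFpreserved}; since the commutator formula \eqref{commutatorformula} bounds $\lvert(\ln\xi)_t\rvert$ by the curvature, $d_{g(t)}$ and $d_{g_0}$ are comparable at spatial infinity uniformly on $[0,T']$, so indeed $g(t)\in\Gaf$. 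Moreover \eqref{sectionalhorizontal03} gives $\lvert\cc_{ss}\rvert\le\alpha_{T'}\,\cc\,(d_{g_0}(\origin,\cdot))^{-2-\epsilon}$, and an ODE comparison with this $L^{1}$ potential forces $\cc=O(d_{g_0}(\origin,\cdot))$ uniformly on $[0,T']$.

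The core step is a maximum principle argument. Rewriting \eqref{Ricciflowpdes2} as
\[
\cc_t=\Delta_{g(t)}\cc-\frac{\cc_s^{2}}{\cc}-2\frac{\cc^{3}}{\bb^{4}},
\]
with $\Delta_{g(t)}$ the Laplace--Beltrami operator on radial functions, the last two terms are nonpositive because $\cc>0$, so $\cc$ is a subsolution of the heat equation. Since $\cc_s\ge0$ (Lemma \ref{consistencymonotone}) we have $\cc(\cdot,0)\le\sup_{\R^{4}}\cc(\cdot,0)=m_{g_0}^{-1}$; the curvature is bounded on $[0,T']$ and $w:=\cc-m_{g_0}^{-1}$ grows at most linearly by the previous step, so the maximum principle on the complete, bounded-curvature solution applies and gives $\cc(\cdot,t)\le m_{g_0}^{-1}$ for all $t$. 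In particular $\sup_{\R^{4}}\cc(\cdot,t)<\infty$, so $g(t)$ is of type (i) in Lemma \ref{characterizationALF}, whence $m_{g(t)}\ge m_{g_0}$, and all the type-(i) asymptotics ($\cc$ bounded, $\bb\gtrsim d_{g_0}(\origin,\cdot)$, $\cc_s,\cc_{ss}\to0$ at spatial infinity) now hold along $g(t)$ on every $[0,T']$.

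To rule out a strict increase of the mass I would then estimate each term on the right of \eqref{Ricciflowpdes2} at a fixed point $p$ far from the origin, using the type-(i) asymptotics and the curvature decay: $\cc$ is bounded, $\bb\gtrsim d_{g_0}(\origin,p)$, and integrating $\lvert\cc_{ss}\rvert\le C_{T'}(d_{g_0}(\origin,p))^{-2-\epsilon}$ from $p$ to spatial infinity yields $\cc_s=O((d_{g_0}(\origin,p))^{-1-\epsilon})$; hence $\lvert\cc_t(p,t)\rvert\le C_{T'}(d_{g_0}(\origin,p))^{-2-\epsilon}$ for $t\in[0,T']$. Integrating in $t$ at fixed $p$ gives $\lvert\cc(p,t)-\cc(p,0)\rvert\le C_{T'}\,t\,(d_{g_0}(\origin,p))^{-2-\epsilon}$, and letting $p\to\infty$ — along which $\cc(\cdot,\tau)$ increases to $m_{g(\tau)}^{-1}$ — we conclude $m_{g(t)}^{-1}=m_{g_0}^{-1}$.

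I expect the second step to be the main obstacle: one must ensure that the maximum principle is legitimate on the noncompact $\R^{4}$, which is precisely why the uniform curvature bound of Lemma \ref{ALFpreserved} (and the distance-like function built in its proof) together with the linear growth of $\cc$ are needed. Once type (i) is secured, the final step is a routine uniform estimate at spatial infinity.
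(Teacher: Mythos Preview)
Your argument is correct, but it takes a longer route than the paper's. The paper bypasses both your maximum-principle step and the term-by-term estimation of \eqref{Ricciflowpdes2} by observing that the Ricci flow equation $\partial_t g=-2\text{Ric}$ gives directly
\[
\left|\partial_t\log\cc\right|=\left|\frac{\text{Ric}_{33}}{\cc^{2}}\right|\le C\,\lvert\text{Rm}_{g(t)}\rvert_{g(t)},
\]
which by Lemma~\ref{ALFpreserved} is at most $\alpha(t_1)\,s_0^{-2-\epsilon}$. Integrating from $0$ to $t_1$ and sending $s_0(p)\to\infty$ yields $\cc(p,t_1)/\cc(p,0)\to 1$, hence $m_{g(t_1)}=m_{g_0}$ in one stroke. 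Your Step~2 then becomes unnecessary: this two-sided control on $\log\cc$ already forces $\cc(\cdot,t)$ to have a finite positive limit at infinity equal to $m_{g_0}^{-1}$, so there is no need to first secure type~(i) via a subsolution argument and a noncompact maximum principle --- the paper handles boundedness of $\cc$ with the cruder distortion estimate $\cc(p,t)\le e^{CT'}\cc(p,0)\le e^{CT'}m_{g_0}^{-1}$. Your Step~3 is essentially the paper's idea, but working with $\cc$ rather than $\log\cc$ forces you to control $\bb_s$, $\bb$ and $\cc_s$ separately and uniformly on $[0,T']$; this is achievable by tracing through the proof of Lemma~\ref{characterizationALF} with the uniform constant from Lemma~\ref{ALFpreserved}, but it is considerably more work than simply recognising that a diagonal Ricci component over the metric coefficient is itself a curvature quantity.
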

\begin{proof} 
According to Corollary \ref{solutionimmortal} we define $\alpha:[0,\infty) \rightarrow \R$ by $\alpha(t) = \sup_{\R^{4}}\lvert \text{Rm}\rvert(\cdot,t)$. Given $t > 0$, since by Lemma \ref{consistencymonotone} we know that $\cc_{s}$ stays nonnegative along the flow, we deduce that the quantity $m_{g(t)}^{-1} := \lim_{s(t)\rightarrow \infty}\cc(s(t),t)$ exists and is well defined. Moreover, the curvature is uniformly bounded in $\R^{4}\times [0,t]$ by $\alpha(t)$ and we can then rely on standard distortion estimates of the distance function along bounded curvature Ricci flow to derive 
\[
m_{g(t)}^{-1} = \lim_{s(t)\rightarrow \infty}\cc(s(t),t) = \lim_{s_{0}(p)\rightarrow \infty}\cc(p,t),
\]
\noindent where $s_{0}$ is the $g_{0}$-distance from the origin. Suppose for a contradiction that there exists $t_{1} > 0$ such that $m_{g(t_{1})} \neq m_{g_{0}}$. By the Ricci flow equations and Lemma \ref{ALFpreserved} we obtain
\[
\left \vert \partial_{t}\log(\cc(p,\cdot))\right\vert \leq \frac{\alpha(t_{1})}{s_{0}(p)^{2+\epsilon}}
\]
\noindent in $\R^{4}\times [0,t_{1}]$ up to modifying $\alpha(t_{1})$ by a constant factor. Once we integrate we find
\[
\frac{1}{t_{1}}\left\vert \log\left(\frac{\cc(p,t_{1})}{\cc(p,0)}\right)\right\vert \leq \frac{\alpha(t_{1})}{s^{2+\epsilon}_{0}(p)}.
\]
\noindent We may finally let $s_{0}(p)\rightarrow \infty$ and get a contradiction.
\end{proof}
\begin{remark} A consequence of Corollary \ref{massconserved} is given by the fact that the size of the Hopf-fiber stays uniformly bounded in the positive-mass case. 
\end{remark}
We may now focus on first order estimates. From the Ricci flow equations \eqref{Ricciflowpdes1}, \eqref{Ricciflowpdes2} and the commutator formula \eqref{commutatorformula} we derive the evolution equations for the first spatial derivatives:
\begin{equation}\label{equationbs}
\partial_{t}\bb_{s}= \Delta \bb_{s} - 2\frac{\bb_{s}\bb_{ss}}{\bb} + \frac{1}{\bb^{2}}\left (\bb_{s}\left(4 - \bb_{s}^{2} - (\cc_{s}\uu^{-1})^{2} -6\uu^{2}\right ) + 4\cc_{s}\uu\right)
\end{equation}
\noindent and
\begin{equation}\label{equationcs}
\partial_{t}\cc_{s} = \Delta \cc_{s} - 2\frac{\cc_{s}\cc_{ss}}{\cc} + \frac{1}{\bb^{2}}\left(\cc_{s}\left( -6\uu^{2} - 2\bb_{s}^{2}\right ) + 8\bb_{s}\uu^{3}\right),
\end{equation}
\noindent where the Laplacian formula is given in \eqref{formulalaplacian}.
\noindent Similarly to the analysis in the finite-time case \cite{work2}, we show that the derivatives stay uniformly bounded and that the flow becomes rotationally symmetric in space-time regions where the orbits get degenerate. 
\\In the following estimates $\alpha$ always denotes a uniform, space-time independent constant that may change from line to line, unless otherwise stated.
\begin{lemma}\label{firstorderestimates}
If $(\R^{4},g(t))_{t\geq 0}$ is the maximal Ricci flow solution evolving from some $g_{0}\in\Gaf$, then the following conditions hold:% There exists $\alpha > 0$ such that the following conditions are satisfied uniformly in the space-time:
%\begin{itemize}
%\item[(i)] $g_{0}$ is asymptotically flat.
%\item[(ii)] $\cc_{s}(\cdot,0) \geq 0$.
%\item[(iii)] $\bb_{s}(\cdot,0)\leq 2$.
%\end{itemize}
%\noindent Then the following estimates are uniformly satisfied in $\R^{4}\times [0,\infty)$:
\begin{align}
 \sup_{\R^{4}\times [0,+\infty)}&\,\,(\bb_{s} + \cc_{s}) < \infty, \label{firstderivativesbounded} \\
\sup_{\R^{4}\setminus\{\origin\}\times [0,+\infty)}&\,\,\frac{1}{\cc}(\bb_{s}^{2} - 4) < \infty, \label{bbsboundedby4} \\
\sup_{\R^{4}\times [0,+\infty)}&\,\,\frac{1}{\cc}\left(1 - u\right) < \infty, \label{rotationalsymmetryorder0} \\
\sup_{\R^{4}\times [0,+\infty)}&\,\,\frac{1}{\cc}\left\vert\cc_{s} - \bb_{s}\uu\right\vert < \infty.  \label{rotationalsymmetryorder1}
\end{align}
\end{lemma}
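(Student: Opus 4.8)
All four inequalities have the shape $\sup_{\R^4\times[0,\infty)}Q<\infty$ for a scale-invariant quantity $Q$ built from $\bb_s,\cc_s,\uu$ (divided by $\cc$ in the last three cases), and the plan is to prove each of them by the maximum principle, the uniformity in time coming from time-independent control at the origin and at spatial infinity. At the origin the smoothness conditions \eqref{smoothnessorigin} persist (forced by completeness and uniqueness), so $\bb_s,\cc_s\to1$ and $\uu\to1$ there; since $\bb,\cc$ vanish to first order in $s$ while $\bb_s^2-4$, $1-\uu$ and $\cc_s-\bb_s\uu$ vanish faster, the three ratios in \eqref{bbsboundedby4}--\eqref{rotationalsymmetryorder1} tend as $s\to0$ to $-\infty$, $0$ and $0$ respectively. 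At spatial infinity one first checks $g(t)\in\Gaf$ for all $t$: by Lemma \ref{ALFpreserved} the $g_0$-weighted curvature stays finite on every finite time interval, on which $d_{g(t)}$ and $d_{g_0}$ are comparable, so \eqref{epsilondecay} holds for $g(t)$; then Lemma \ref{characterizationALF} applies at each time and, by Corollary \ref{massconserved}, in the same case, yielding $\bb_s\to2$ or $1$, $\cc_s\to0$ or $1$, and the three ratios bounded in terms of the conserved mass. Using the curvature decay at infinity and the smoothness at the origin on finite time intervals, the supremum of each $Q$ over $\R^4\times[0,T']$ is attained at a space-time point interior in space, where the ``maximum among prior times'' device applies; so it suffices, in each case, to show $\partial_tQ<0$ at such a point once $Q$ exceeds the maximum of its initial supremum, its limiting values at the two ends, and a fixed reaction-term threshold.

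For \eqref{firstderivativesbounded} I would bound $\bb_s$ first. At an interior spatial maximum $\bb_{ss}=0$ and $\Delta\bb_s\le0$, so by \eqref{equationbs} $\partial_t\bb_s\le\bb^{-2}\big(\bb_s(4-\bb_s^2-(\cc_s\uu^{-1})^2-6\uu^2)+4\cc_s\uu\big)$; viewing $4\cc_s\uu-\bb_s(\cc_s\uu^{-1})^2$ as a downward parabola in $\cc_s$ and using $\uu\le1$ bounds it by $4/\bb_s$, hence $\partial_t\bb_s\le\bb^{-2}\big(\bb_s(4-\bb_s^2)+4/\bb_s\big)<0$ as soon as $\bb_s\ge3$, giving $\bb_s\le C_1:=\max\{3,\sup_{\R^4}\bb_s(\cdot,0)\}$. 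With this, \eqref{equationcs} and $\uu\le1$ give $\partial_t\cc_s\le\bb^{-2}\big(\uu^2(8\bb_s-6\cc_s)-2\bb_s^2\cc_s\big)$ at an interior spatial maximum of $\cc_s$, which is nonpositive whenever $\cc_s\ge\tfrac43C_1$; hence $\cc_s\le C_2:=\max\{\tfrac43C_1,\sup_{\R^4}\cc_s(\cdot,0)\}$, proving \eqref{firstderivativesbounded}.

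For the three ``rotational-symmetry'' bounds I would run the maximum principle on $\tfrac{\bb_s^2-4}{\cc}$, on $\tfrac{1-\uu}{\cc}$ and on $\pm\tfrac{\cc_s-\bb_s\uu}{\cc}$, using the evolution equation $\uu_t=\Delta\uu+(\tfrac{\bb_s}{\bb}-\tfrac{\cc_s}{\cc})\uu_s+\tfrac{4\uu(1-\uu^2)}{\bb^2}$, equations \eqref{equationbs} and \eqref{Ricciflowpdes2}, and the identities $\cc_s-\bb_s\uu=\bb\uu_s$ and $\bb_s^2-4=-3\uu^2-\bb^2k_{12}$. Writing $Q=N/\cc$, at a spatial maximum one has $N_s=Q\cc_s$ and $\Delta Q\le0$; substituting these into $\partial_tQ$ (written in divergence form) turns all gradient terms into algebraic ones, and a completion of squares in the first-order quantities supplies the sign. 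For instance the $\cc_s$-dependent part of $\partial_t\big(\tfrac{\bb_s^2-4}{\cc}\big)$ organizes, after substituting $N_s=Q\cc_s$ and $\cc=\uu\bb$, into $(\bb_s^2-4)\,\tfrac{(\cc_s-\bb_s\uu)^2-3\bb_s^2\uu^2}{\uu^3}$ plus lower-order terms; since $(\cc_s-\bb_s\uu)^2\le(C_1+C_2)^2$ by \eqref{firstderivativesbounded} while $\uu\ge\inf_{\R^4}\uu(\cdot,0)>0$ by Lemma \ref{consistencymonotone}(iii), this is negative and dominates once $\bb_s$ is large. Feeding \eqref{bbsboundedby4} and \eqref{rotationalsymmetryorder1} back in, together with $\cc$ bounded, then controls $\bb^2|k_{12}|$, $\bb^2|\text{Rm}_{0123}|$ and related curvature terms, which is what lets one close \eqref{rotationalsymmetryorder0} in the same style.

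The main obstacle is precisely this reaction-term computation for the rotational-symmetry estimates: a uniform-in-time curvature bound $\bb^2|\text{Rm}|\le\mathrm{const}$ is \emph{not} yet available --- it is among the outputs of this section --- so the negative sign has to be extracted purely from the algebraic structure of the cohomogeneity-one flow together with \eqref{firstderivativesbounded} and $\uu\ge\inf_{\R^4}\uu(\cdot,0)>0$, keeping the ``good'' negative contributions (from the $-2\bb_s\bb_{ss}/\bb$ terms and from the drift in divergence form) visible through the substitutions. A secondary subtlety is the non-compactness: one must verify, using the curvature decay on finite time intervals, that each supremum over $\R^4\times[0,T']$ is genuinely realized rather than only approached at spatial infinity before the maximum principle can be invoked.
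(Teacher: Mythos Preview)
Your overall scaffold is right --- boundary control at the origin and at spatial infinity, then maximum principle at an interior ``first-time'' extremum --- and your treatment of \eqref{firstderivativesbounded} is fine and essentially self-contained where the paper simply cites \cite{work2}. The genuine gap is in your handling of the three rotational-symmetry estimates.

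You repeatedly invoke $\uu\ge\inf_{\R^4}\uu(\cdot,0)>0$ from Lemma~\ref{consistencymonotone}(iii) and later ``$\cc$ bounded''. Neither is available in both cases of $\Gaf$: in the positive-mass case $\uu=\cc/\bb\to 0$ at spatial infinity (Lemma~\ref{characterizationALF}(i)), so $\inf_{\R^4}\uu(\cdot,0)=0$ and your coercivity from ``$\uu\ge\varepsilon>0$'' collapses; in the zero-mass case $\cc$ is unbounded (Lemma~\ref{characterizationALF}(ii)), so ``$\cc$ bounded'' fails. Your sketched reaction term for $\varphi=\cc^{-1}(\bb_s^2-4)$, which you close using the global lower bound on $\uu$, therefore does not give a sign in the most interesting (positive-mass) situation, and your proposed route to \eqref{rotationalsymmetryorder0} via ``$\cc$ bounded'' and curvature bounds fails in the other.

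The paper avoids any global control on $\uu$ or $\cc$. For $\varphi$ it simply checks that the $\cc_s$-quadratic appearing after using $\bb_s^2>4$ has negative discriminant, with no lower bound on $\uu$ needed. For \eqref{rotationalsymmetryorder0} the key extra idea you are missing is an intermediate estimate: one first bounds $\psi=\cc^{-1/2}-\bb^{-1/2}$ by the same maximum-principle scheme, using \eqref{bbsboundedby4} to write $\bb_s^2\le 4+\alpha\cc$ at the maximum; this yields $\uu^{1/2}\ge 1-\alpha\cc^{1/2}$, i.e.\ a lower bound on $\uu$ that is \emph{local in $\cc$} rather than global. At a large maximum of $f=\cc^{-1}(1-\uu)$ one has $\cc\le f^{-1}$ small, hence $\uu$ close to $1$ by the $\psi$-bound, and the reaction term $-2\uu+\alpha\cc$ is negative. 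Only then does one turn to \eqref{rotationalsymmetryorder1}. In short: replace the (false) global bound $\uu\ge\varepsilon$ by the bootstrap $\psi$-estimate, and do not use ``$\cc$ bounded'' at all.
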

\begin{proof} 
\emph{Estimate \eqref{firstderivativesbounded}.} Consider the upper bound for $\bb_{s}$. Since $\bb_{s}(\origin,t) = 1$ and by Lemmas \ref{characterizationALF} and \ref{ALFpreserved} we see that $\bb_{s}(s,t)$ converges to either 1 or 2 at infinity for any $t\geq 0$, we deduce that if $\bb_{s}$ attains a value $\bar{\alpha} > \sup \bb_{s}(\cdot,0)$, then there exists a maximum point $(p_{0},t_{0})$ among prior times where $\bb_{s}(p_{0},t_{0}) = \bar{\alpha}$ for the first time. We can then argue as in \cite[Lemma 4.3]{work2}. The same argument works for $\cc_{s}$ as well. %From the evolution equation \eqref{equationbs} we get
%\[
%(\bb_{s})_{t}(p_{0},t_{0}) \leq \frac{1}{\bb^{2}}\left( 4\bar{\alpha} - \bar{\alpha}^{3} - \bar{\alpha}\frac{\cc_{s}^{2}}{\uu^{2}} - 6\bar{\alpha}\uu^{2} + 4\uu\cc_{s}\right).
%\]
%\noindent The right hand side is always negative for any $\bar{\alpha} > 2$. Therefore $\bb_{s}$ is uniformly bounded from above. We also note that the same argument implies that if $\bb_{s}(\cdot,0) \leq 2$ then $\bb_{s}(\cdot,t)\leq 2$. The same argument works for the upper bound of $\cc_{s}$. The uniform lower bounds follow immediately from the monotonicity condition being preserved according to Lemma \ref{ALFpreserved}.
% the lower bounds actually follow from the monotonicity assumption.

\emph{Estimate \eqref{bbsboundedby4}.} Set $\varphi \doteq \cc^{-1}(\bb_{s}^{2} - 4)$. From the boundary conditions we see that $\varphi$ diverges to minus infinity at the origin uniformly in time. On the other hand, according to Lemmas \ref{characterizationALF} and \ref{ALFpreserved} we also have that $\varphi\rightarrow 0$ at spatial infinity as long as the solution exists. At any positive interior maximum point $(p_{0},t_{0})$ we have
\begin{align*}
\varphi_{t}(p_{0},t_{0}) &\leq \frac{1}{\bb^{2}\cc}\left(-(\cc_{s}\uu^{-1})^{2}\left(\bb_{s}^{2} + 4\right) + \bb_{s}\cc_{s}\left(8\uu^{-1} + 8\uu - 2\bb_{s}^{2}\uu^{-1}\right)\right) \\ &+\frac{1}{\bb^{2}\cc}\left(8\bb_{s}^{2} - 2\bb_{s}^{4} - 10\uu^{2}\bb_{s}^{2} - 8\uu^{2}\right).
\end{align*}
\noindent Since $\bb_{s}^{2} > 4$ at any positive value of $\varphi$, we get
\[
\varphi_{t}(p_{0},t_{0}) \leq \frac{1}{\bb^{2}\cc}\left(-(\cc_{s}\uu^{-1})^{2}\left(\bb_{s}^{2} + 4\right) + 8\uu\bb_{s}\cc_{s} - 10\uu^{2}\bb_{s}^{2} - 8\uu^{2}\right).
\] 
\noindent Being the $\cc_{s}$-quadratic above always negative, we conclude that $\varphi$ is uniformly bounded \emph{from above} in the space-time.

\emph{Estimate \eqref{rotationalsymmetryorder0}} We first prove that $\psi\doteq \cc^{-1/2} - \bb^{-1/2}$ is uniformly bounded in the space-time. Since the curvature is bounded, by \eqref{rmo123} we see that $\bb^{-1}\uu_{s} = O(1)$ as $s\rightarrow 0$ uniformly in time. Therefore $\psi(\origin,t) = 0$ as long as the solution exists. Moreover, from Lemma \ref{ALFpreserved} we also derive that $\psi$ is uniformly bounded at spatial infinity by the inverse of the size of the Hopf-fiber. At any interior maximum point $(p_{0},t_{0})$ we have
\begin{align*}
\psi_{t}(p_{0},t_{0}) &\leq \frac{1}{\bb^{\frac{5}{2}}}\left(\frac{\bb_{s}^{2}}{4}\left(1 - \sqrt{\uu}\right) + \uu^{2} + \uu^{\frac{3}{2}} - 2\right) \\
&\leq \frac{1}{\bb^{\frac{5}{2}}}\left(\frac{\bb_{s}^{2}}{4}\left(1 - \sqrt{\uu}\right) -2\left(1 - \sqrt{\uu}\right)\right) \\ &\leq \frac{1}{\bb^{\frac{5}{2}}}\left(1-\sqrt{\uu}\right)\left(\alpha\cc - 1\right),
\end{align*}
\noindent where $\alpha > 0$ is a uniform constant given by the estimate \eqref{bbsboundedby4}. Say that $\psi(p_{0},t_{0}) = M$. By choosing $M$ large enough we can make $\cc$ as small as we ask. Therefore, the right hand side of the evolution equation becomes strictly negative, hence showing that $\psi$ is uniformly bounded in the space-time. We may now consider $f\doteq \cc^{-1}(1 - u)$. Similarly to the case of $\psi$ above, $f$ is uniformly bounded both at the origin and at spatial infinity. At any maximum point we have (see also \cite{work2}[Lemma 4.5])
\begin{align*}
f_{t} &\leq \frac{1}{\bb^{3}}\left(\bb_{s}^{2}\left(1- \uu\right)+ 2\uu + 2\uu^{2} - 4\right)
\\ &\leq \frac{1}{\bb^{3}}\left((4 + \alpha c)\left(1- \uu\right) + 2\uu + 2\uu^{2} - 4\right) \\ &\leq \frac{1}{\bb^{3}}\left(-2\uu + 2\uu^{2} + \alpha \cc\left( 1 - \uu\right)\right) = \frac{\cc f}{\bb^{3}}\left(-2\uu + \alpha \cc \right).
\end{align*}
\noindent We have shown that $\uu^{1/2} \geq 1 - \alpha \cc^{1/2}$. Therefore, if we pick the value attained by $f$ large enough, we see that $(-2\uu + \alpha\cc)(p_{0},t_{0}) \leq -1$. That completes the proof. 
%\emph{We point out that if we further assume $\bb_{s}(\cdot,0)\leq 2$, which is then preserved along the flow, then the rotational symmetry type of estimate follows without relying on any further bound.}

\emph{Estimate \eqref{rotationalsymmetryorder1}} Again $\cc_{s}/\cc - \bb_{s}/\bb$ is uniformly bounded at the origin and at spatial infinity. Once the quantity is controlled along the parabolic boundary of the space-time, one can then argue as in \cite[Lemma 4.8]{work2}.
%  At any negative minimum point the evolution equation becomes
%\begin{align*}
%h_{t}|_{\text{min} < 0} &\geq \frac{1}{\bb^{2}}\left( \lvert h \rvert \left(\frac{\cc_{s}^{2}}{\uu^{2}} + 8\uu^{2} + 2\bb_{s}^{2}\right) - 16\frac{\bb_{s}}{\bb}\left( 1 - \uu\right)\right) \\ &\geq \frac{2}{\bb^{2}}\left(\lvert h \rvert(4\uu^{2} + \bb_{s}^{2}) - 8\alpha\uu\bb_{s}\right) \\ &\geq \frac{2}{\bb^{2}}\left(\lvert h \rvert(4\uu^{2} + \bb_{s}^{2}) - 4\alpha(\uu^{2} + \bb^{2}_{s}\right) > 0
%\end{align*}
%\noindent for $\lvert h\rvert$ large enough. We note that we have used the estimate \eqref{rotationalsymmetryorder0} in the second line. Similarly one can derive a uniform upper bound for $h$.
\end{proof}
Before we prove analogous second order estimates, we first show that in the cubic volume growth case the spatial derivative $\cc_{s}$ decays at some specific rate in space-time regions where $\uu$ is small. We recall that for the Taub-NUT metric we have $\cc_{s} = \uu^{2}$.
%We first note that the argument above shows that if $(\R^{4},g(t))$ is the maximal Ricci flow solution starting at some $g_{0}\in$ then $g(t)\in$ for any $t\geq 0$. We then prove the following preliminary lemma; we note that for the next bound to be satisfied we crucially\footnote{One might suspect that if we only assume $\lvert\text{Rm}_{g_{0}}\rvert_{g_{0}} = \mathcal{O}(s^{-3})$ then $\bb_{s}$ attains a maximum value larger than 2 at some radial coordinate that approaches infinity as $t\nearrow \infty$ so that at spatial infinity $\bb_{s}$ always converges to 2 from above, hence there is no hope for getting smooth convergence to the Taub-NUT metric.} use both the fact that the curvature of the initial data decays to zero at least at a cubic rate and that $\bb(\cdot,t)\leq 2$.
%Before we prove analogous second order estimates, we first check that we can bound the quantity $\uu^{-k}$, for some $k > 1$, by the spatial derivative $\cc_{s}$. This will allow us to control the curvature in space-time regions where the ratio $\uu$ is small. 
\begin{lemma}\label{bbccsquaredccsboundedbyk}
Let $(\R^{4},g(t))_{t\geq 0}$ be the maximal Ricci flow starting at some $g_{0}\in\Gaf$ and let $\epsilon > 0$ satisfy $\sup_{\R^{4}} (d_{g_{0}}(\origin,\cdot))^{2+\epsilon}\lvert\emph{Rm}_{g_{0}}\rvert_{g_{0}}(\cdot) <  \infty$. For any $1 < k < \min\{1+\epsilon,\sqrt{2}\}$ there exists $\alpha> 0$ independent of $k$ such that  
\[
\sup_{\R^{4}\times [0,\infty)}\cc_{s}\uu^{-k} <  \alpha.
\]
\end{lemma}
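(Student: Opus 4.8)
The goal is a space-time bound $\cc_s \uu^{-k} < \alpha$ for $1 < k < \min\{1+\epsilon, \sqrt2\}$, generalizing the soliton identity $\cc_s = \uu^2$ (which corresponds to $k = 2$, excluded here). The plan is to run a maximum principle on the quantity $\varphi \doteq \cc_s \uu^{-k}$, exactly as in the style of the preceding lemmas. First I would check the parabolic boundary: at the origin, from the smoothness conditions \eqref{smoothnessorigin} we have $\cc_s \to 1$ and $\uu \to 1$, so $\varphi(\origin, t) = 1$ uniformly in time; at spatial infinity, Lemma \ref{characterizationALF} together with Lemma \ref{ALFpreserved} forces $\cc_s \to 0$ and (in the positive-mass case) $\uu \to 0$ with $\bb \to \infty$, so one needs the curvature-decay hypothesis $\sup (d_{g_0}(\origin,\cdot))^{2+\epsilon}|\text{Rm}| < \infty$ to beat the negative power of $\uu \sim \cc/\bb \sim 1/\bb$. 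Concretely, since $|\text{Rm}| \gtrsim \bb^{-2}|\cc_s - \bb_s \uu|$ (from \eqref{rmo123}) and $\cc_s$ itself should be controlled by $\bb^{-(2+\epsilon)}\bb^2 = \bb^{-\epsilon}$ times a constant while $\uu^{-k} \sim \bb^k$, the product decays like $\bb^{k - \epsilon - (\text{something})}$; the requirement $k < 1 + \epsilon$ is what makes this go to zero (here I would be more careful and likely use the sharper decay $\cc_s \lesssim s^{-1-\nu}$ extracted in the proof of Lemma \ref{characterizationALF}, which with $\bb \sim s$ gives $\cc_s \uu^{-k} \lesssim s^{-1-\nu+k} \to 0$ once $k < 1+\nu$, and $\nu = \min\{\epsilon, 3/4\}$ explains the $\sqrt2$ as well once combined with the interior computation). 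In the zero-mass case both $\cc_s$ and $\bb_s$ tend to $1$ and $\uu \to 1$, so $\varphi$ is bounded at infinity trivially.

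The heart is the interior computation. Using \eqref{equationcs} for $\partial_t \cc_s$, the evolution equation $\partial_t \uu$ (derivable from \eqref{Ricciflowpdes1}--\eqref{Ricciflowpdes2} and the commutator \eqref{commutatorformula}, in the spirit of \eqref{rmo123}), and the product/quotient rule, I would write $\partial_t \varphi = \Delta \varphi + (\text{transport terms}) + \varphi \cdot (\text{zeroth-order terms})$, then evaluate at a putative interior maximum $(p_0, t_0)$ where $\varphi_s = 0$ (so $\cc_{ss}/\cc_s = k\,\uu_s/\uu$) and $\varphi_{ss} \leq 0$. After discarding the good Laplacian and the transport terms, the surviving zeroth-order factor should take the schematic form $\frac{1}{\bb^2}\big(\text{const} - c_k \uu^2 + (\text{lower order in }\uu)\big)$ with $c_k > 0$ precisely when $k < \sqrt2$ — this is where the exponent constraint $k^2 < 2$ enters, coming from the $-6\uu^2$ terms in \eqref{equationcs} and the $\uu^2 \cc_s \uu^{-1}$-type contributions, weighted by $k$. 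Combined with the first-order estimates of Lemma \ref{firstorderestimates} (in particular \eqref{bbsboundedby4} and \eqref{rotationalsymmetryorder0}, which let me replace $\bb_s^2$ and $(1-\uu)$ by $4 + \alpha\cc$ and $\alpha\cc$ respectively) and the already-established boundedness of $\cc_s, \bb_s$, I would show that at a sufficiently large value $M = \varphi(p_0,t_0)$ the bracket is strictly negative, contradicting that it is a maximum being created. Hence $\varphi \leq \max\{\sup_{t=0}\varphi, \text{boundary value}, M_0\}$ for some uniform $M_0$.

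The main obstacle I anticipate is twofold. First, the bookkeeping in the interior evolution equation: $\uu = \cc/\bb$ mixes both PDEs, so $\partial_t \varphi$ picks up many cross terms, and the sign of the zeroth-order factor must be tracked carefully — in particular isolating the coefficient of $\uu^2$ and verifying it is negative exactly under $k < \sqrt2$, rather than merely $k \le$ some suboptimal constant, requires keeping the $k$-dependence explicit through the $\cc_{ss}$-substitution. Second, matching the boundary decay to the exponent $k < 1+\epsilon$: one has to be honest about which power of $s$ (equivalently $\bb$) genuinely controls $\cc_s$ at infinity, and I expect the clean statement comes from re-running the l'Hôpital argument of Lemma \ref{characterizationALF} along the flow (legitimate because \eqref{epsilondecay} persists by Lemma \ref{ALFpreserved}) to get $\cc_s \lesssim s^{-1-\nu}$ with $\nu = \min\{\epsilon, 3/4\}$, whence $\varphi \to 0$ at spatial infinity whenever $k < 1 + \nu$, and $\min\{1+\epsilon, \sqrt2\}$ is exactly $\min\{1+\nu,\sqrt2\}$ when $\epsilon \le 3/4$, while for $\epsilon > 3/4$ the binding constraint is $\sqrt2 < 1 + 3/4$ from the interior. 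That the constant $\alpha$ can be taken independent of $k$ follows since the contradiction threshold $M_0$ depends only on the uniform first-order bounds and not on $k$ in the relevant range.
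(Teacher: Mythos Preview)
Your approach is correct and matches the paper's: control the parabolic boundary, then run the maximum principle on $\varphi = \cc_s\uu^{-k}$. Two points of detail are off, though neither is fatal.

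First, you misidentify where $k<\sqrt2$ enters. When you actually compute the evolution of $\cc_s\uu^{-k}$ at an interior maximum, the zeroth-order bracket is
\[
\frac{\cc_s\uu^{-k}}{\bb^2}\Bigl(\bb_s^2(k^2-2) - 4k + \uu^2(-6+4k) + (\cc_s\uu^{-1})^2(k^2-2k) + \bb_s\cc_s\uu^{-1}(-2k^2+2k)\Bigr) + \frac{8\uu^{3-k}\bb_s}{\bb^2}.
\]
The constraint $k<\sqrt2$ makes the coefficient of $\bb_s^2$ nonpositive, not the $\uu^2$ coefficient (which is negative for $k<3/2$, a weaker bound). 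Since $k>1$ the mixed and $(\cc_s\uu^{-1})^2$ coefficients are also nonpositive, so after using only \eqref{firstderivativesbounded} (not \eqref{bbsboundedby4} or \eqref{rotationalsymmetryorder0}) the bracket reduces to $-4k\,\varphi + \alpha$ with $\alpha$ independent of $k$; this is how the $k$-independence of the bound falls out.

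Second, your boundary argument via the l'H\^opital estimate $\cc_s\lesssim s^{-1-\nu}$ with $\nu=\min\{\epsilon,3/4\}$ works but is more circuitous than necessary. The paper instead reads off directly from the sectional curvature $k_{13}$ in \eqref{sectionalvertical13} that $\bb^{2+\epsilon}\lvert \uu^2/\bb^2 - \bb_s\cc_s/(\bb\cc)\rvert$ is bounded on each time slice (by Lemma \ref{ALFpreserved}), whence $\cc_s\uu^{-1}\bb^{\epsilon}$ is bounded at infinity since $\bb_s\to 2$, and so $\cc_s\uu^{-k} = (\cc_s\uu^{-1}\bb^{\epsilon})\cdot \uu^{1+\epsilon-k}/\cc^{\epsilon} + \uu^{3-k}$ is bounded at infinity for $k<1+\epsilon$. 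This gives the sharp exponent range at the boundary without passing through $\nu$.
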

\begin{proof}
From \eqref{sectionalvertical13} and Lemma \ref{ALFpreserved} we derive that for any $t\geq 0$ there exists $\alpha = \alpha(t)$ such that 
\[
\bb^{2+\epsilon}\left \vert \frac{\uu^{2}}{\bb^{2}} - \frac{\bb_{s}\cc_{s}}{\bb\cc}\right \vert \leq \alpha.
\]
\noindent Since $\bb$ is linear at infinity, we get that for $s$ large
\[
\lvert\cc_{s}\uu^{-1}\rvert\bb^{\epsilon} \leq \alpha + \uu^{2}\bb^{\epsilon}.
\]
\noindent Therefore, for any $1 < k < \min\{1+\epsilon,\sqrt{2}\}$ we have
\[
\cc_{s}\uu^{-k} \leq  \frac{1}{\cc^{\epsilon}}\left(\alpha + \uu^{2}\bb^{\epsilon}\right)\uu^{1 + \epsilon - k} \leq \frac{\alpha}{\cc^{\epsilon}}u^{1 + \epsilon - k} + \uu^{3-k},
\]
\noindent which is uniformly bounded at spatial infinity by Lemma \ref{ALFpreserved}. In particular, we see that $(\cc_{s}\uu^{-k})(s,t)$ either converges to zero (in the cubic volume growth case) or to 1 (in the Euclidean volume growth case) for any $1 < k < \min\{1+\epsilon,\sqrt{2}\}$. By the boundary conditions we derive that if $\cc_{s}\uu^{-k}$ becomes unbounded as $t\nearrow \infty$ then there exists a sequence of maxima diverging. The evolution equation of $\cc_{s}\uu^{-k}$ at a maximum point is  
\begin{align*}
\partial_{t}\left(\cc_{s}\uu^{-k}\right)|_{\text{max}} &\leq \frac{\cc_{s}\uu^{-k}}{\bb^{2}}\left( \bb_{s}^{2}(k^{2}-2) -4k + \uu^{2}(-6+4k) + (\cc_{s}\uu^{-1})^{2}(k^{2} -2k)\right)\\ &+\frac{1}{\bb^{2}}\left(\cc_{s}\uu^{-k}\left(\bb_{s}\cc_{s}\uu^{-1}(-2k^{2} + 2k)\right) + 8\uu^{3-k}\bb_{s}\right)\\ &\leq \frac{1}{\bb^{2}}\left( -4k(\cc_{s}\uu^{-k}) + \alpha \right) < \frac{1}{\bb^{2}}\left( -4(\cc_{s}\uu^{-k}) + \alpha \right),
\end{align*}
\noindent where we have used that $1 < k < \sqrt{2}$ and the estimate \eqref{firstderivativesbounded}. We conclude that for any $k\in (1, \min\{1 + \epsilon, \sqrt{2}\})$ the function $\cc_{s}\uu^{-k}$ admits a uniform upper bound independent of $k$.

%Given $k\in [1,2)$ we have that $4(k+1)(k-2) = -\delta_{k}$; therefore, by picking $M_{k}$ sufficiently large we obtain $(\bb/\cc)^{k} = M_{k}\cc_{s}^{-1} \geq \alpha M_{k}$ being $\cc_{s}$ bounded from above; therefore 
%\[
%%\]
%\noindent for $M_{k}$ large enough.
\end{proof}
We may now show that the mixed sectional curvatures are controlled in space-time regions where $\cc$ stays positive. One can compute that
\begin{align}\label{evolutionk01}
\begin{split}
(k_{01})_{t} &= \Delta k_{01} + 2k_{01}^{2} + k_{01}\left (\frac{8}{\bb^{2}} -\frac{8\uu^{2}}{\bb^{2}} - \frac{2\cc_{s}^{2}}{\cc^{2}} -\frac{4\bb_{s}^{2}}{\bb^{2}} \right ) + k_{03}\left (\frac{4\uu^{2}}{\bb^{2}} - \frac{2\bb_{s}\cc_{s}}{\bb\cc}\right) \\  &-\frac{4\cc_{s}^{2}}{\bb^{4}} + \frac{24\bb_{s}\cc_{s}\uu}{\bb^{4}} -\frac{2\bb_{s}\cc_{s}^{3}}{\bb\cc^{3}} -\frac{24\bb_{s}^{2}\uu^{2}}{\bb^{4}} + \frac{8\bb_{s}^{2}}{\bb^{4}} -\frac{2\bb_{s}^{4}}{\bb^{4}}
\end{split}
\end{align}
\noindent and
\begin{equation}\label{evolutionk03}
\begin{split}
(k_{03})_{t} &= \Delta k_{03} + 2k_{03}^{2} - 4k_{03}\left (\frac{\bb_{s}^{2}}{\bb^{2}} +\frac{\uu^{2}}{\bb^{2}}\right ) + 4k_{01}\left (\frac{2\uu^{2}}{\bb^{2}} - \frac{\bb_{s}\cc_{s}}{\bb\cc}\right) \\  & + \frac{12\cc_{s}^{2}}{\bb^{4}} +\frac{40\bb_{s}^{2}\uu^{2}}{\bb^{4}} -\frac{48\bb_{s}\cc_{s}\uu}{\bb^{4}} - \frac{4\bb_{s}^{3}\cc_{s}}{\bb^{3}\cc}.
\end{split}
\end{equation}
\noindent Once we control the ratio $\uu$ from below by $\cc_{s}$, we can prove the following:
\begin{lemma}\label{secondderivativeslemma}
If $(\R^{4},g(t))_{t\geq 0}$ is the maximal Ricci flow solution starting at some $g_{0}\in\Gaf$, then
\[
\sup_{\R^{4}\times [0, + \infty)}\,\cc^{2}\left(\lvert k_{01}\rvert + \lvert k_{03}\rvert\right) <  \infty.
\]
% and let $\epsilon > 0$ satisfy $\sup_{\R^{4}}(s_{0}^{2+\epsilon}\lvert\emph{Rm}_{g_{0}}\rvert_{g_{0}})\leq \alpha$. 
\end{lemma}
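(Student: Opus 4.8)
The plan is to establish the bound via the parabolic maximum principle applied to the weighted curvatures $\cc^{2}k_{01}$ and $\cc^{2}k_{03}$ on the slab $\R^{4}\times[0,\infty)$: one controls these signed quantities on the parabolic boundary (the origin and spatial infinity) and then shows that their reaction terms force a contradiction at any interior space-time point where the quantity is large. Working with the signed quantities $\pm\cc^{2}k_{01},\pm\cc^{2}k_{03}$ rather than $\cc^{2}(|k_{01}|+|k_{03}|)$ avoids the non-smoothness of the absolute value, and the final statement is the sum of the four resulting one-sided bounds.

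First I would pin down the parabolic boundary. At the origin $\bb$ and $\cc$ are smooth odd functions of $x$ with unit first derivative, so $k_{01},k_{03}$ remain bounded while $\cc\to 0$, hence $\cc^{2}k_{0i}\to 0$ there uniformly on finite time intervals. At spatial infinity I would use Lemma \ref{ALFpreserved}: for each $T'<\infty$ one has $|\text{Rm}_{g(t)}|_{g(t)}(p)\leq\alpha_{T'}(d_{g_{0}}(\origin,p))^{-2-\epsilon}$ on $[0,T']$, while the monotonicity of the coefficients (Lemma \ref{consistencymonotone}) and the standard distortion estimates keep $\cc(\cdot,t)$ comparable to the fixed coordinate $s_{0}$ at infinity in the zero-mass case and bounded in the positive-mass case; in either case $\cc^{2}k_{0i}\to 0$ at spatial infinity, locally uniformly in time. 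Therefore on each finite slab the supremum of each signed quantity is controlled either by its value at $t=0$ or by an interior space-time maximum, and it suffices to rule out that the supremum over $\R^{4}\times[0,\infty)$ is $+\infty$ by examining a hypothetical sequence of interior first-time maxima at diverging levels.

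Next, the interior computation. Using the commutator formula \eqref{commutatorformula}, the evolution equations \eqref{evolutionk01}, \eqref{evolutionk03} for $k_{01},k_{03}$, the relations $k_{01}=-\bb_{ss}/\bb$, $k_{03}=-\cc_{ss}/\cc$, and $\cc_{t}=\cc_{ss}+2\bb_{s}\cc_{s}/\bb-2\uu^{3}/\bb$ from \eqref{Ricciflowpdes2}, one writes $\partial_{t}(\cc^{2}k_{0i})$ as a reaction-diffusion equation in $\Delta_{g(t)}$. The clean case is $\cc^{2}k_{03}$: the time derivative of the weight $\cc^{2}$ produces exactly a $-2\cc^{2}k_{03}^{2}$ term which cancels the bad reaction term $+2\cc^{2}k_{03}^{2}$ in \eqref{evolutionk03}, and at an interior maximum, after substituting $\nabla k_{03}=-2(\cc_{s}/\cc)k_{03}\,\partial_{s}$ coming from $\nabla(\cc^{2}k_{03})=0$ into the Laplacian and weight terms, all remaining terms are $O(1)$ or bounded multiples of $\cc^{2}k_{03}$ once one uses that $\bb_{s},\cc_{s}$ are bounded and $\cc^{-1}(\bb_{s}^{2}-4),\cc^{-1}(1-\uu),\cc^{-1}|\cc_{s}-\bb_{s}\uu|$ are bounded above (Lemma \ref{firstorderestimates}) together with $\cc_{s}\lesssim\uu^{k}$ for some $k>1$ (Lemma \ref{bbccsquaredccsboundedbyk}); this yields a two-sided bound on $\cc^{2}k_{03}$. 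For $\cc^{2}k_{01}$ the same manipulation leaves the term $2\cc^{2}k_{01}^{2}-2\cc^{2}k_{01}k_{03}$, and while the cross term is absorbed using the already established bound on $\cc^{2}k_{03}$, the genuinely quadratic term $2\cc^{2}k_{01}^{2}$ survives; here I would combine the maximum principle with the evolution equation \eqref{equationbs} for $\bb_{s}$ — a persistently large positive $k_{01}=-\bb_{ss}/\bb$ is incompatible with the uniform upper bound \eqref{firstderivativesbounded} on $\bb_{s}$ — and, for the remaining lower bound on $\cc^{2}k_{01}$, note that the quadratic term has the favourable sign, to close the estimate as in \cite[Lemma 4.5]{work2} and \cite{appleton2}.

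The main obstacle is precisely this quadratic curvature reaction $2\cc^{2}k_{01}^{2}$ (and its analogue before the $\cc^{2}$-weight cancellation in \eqref{evolutionk03}): unlike the first-order terms handled in Lemma \ref{firstorderestimates}, it is of the same homogeneity as $(\cc^{2}k_{01})^{2}/\cc^{2}$ and is not pointwise subordinate to the quantity being controlled, so absorbing it requires exploiting both the exact cancellation produced by the $\cc^{2}$ weight and the fact that in the space-time region where $\cc^{2}k_{01}$ is large the quantities $\cc_{s}$ and $1-\uu$ are small — the same interplay between the conformal weight and the near-degeneracy of the roundness ratio that underlies the corresponding estimates in \cite{work2} and \cite{appleton2}.
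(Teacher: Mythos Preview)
Your observation that the time derivative of the weight $\cc^{2}$ cancels the reaction term $2k_{03}^{2}$ in \eqref{evolutionk03} is correct at the level of the PDE, but it does not survive the maximum principle. At an interior extremum of $Q\doteq\cc^{2}k_{03}$ one still has to pass from $\cc^{2}\Delta k_{03}$ to $\Delta Q$, and the identity
\[
\cc^{2}\Delta k_{03}=\Delta Q-2\nabla(\cc^{2})\cdot\nabla k_{03}-k_{03}\,\Delta(\cc^{2})
\]
reintroduces the quadratic through $(\cc^{2})_{ss}=2\cc\cc_{ss}+2\cc_{s}^{2}=-2\cc^{2}k_{03}+2\cc_{s}^{2}$: the term $-k_{03}\cdot(-2\cc^{2}k_{03})=2\cc^{2}k_{03}^{2}$ appears with the bad sign at a maximum. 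So the ``clean case'' is not clean, and after this conversion you are back to facing $2Q^{2}/\cc^{2}$ with nothing to absorb it. The same issue arises for $\cc^{2}k_{01}$, where in addition your proposed fix --- that a persistently large $k_{01}$ would contradict the boundedness of $\bb_{s}$ --- is an integrated heuristic, not a pointwise inequality one can feed into $\partial_{t}$ at a first-time maximum. Finally, even granting the $k_{03}$ bound, the evolution \eqref{evolutionk03} of $k_{03}$ contains a cross term $4k_{01}(2\uu^{2}/\bb^{2}-\bb_{s}\cc_{s}/(\bb\cc))$ which, after multiplying by $\cc^{2}$, is $O(|\cc\bb_{ss}|/\bb^{2})$ and thus requires a prior bound on $\cc\bb_{ss}$; your order (do $k_{03}$ first, then $k_{01}$) is therefore the wrong one.

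The paper's proof supplies exactly the missing mechanism. Rather than weighting by $\cc^{2}$, it works with
\[
f=\cc\,\bb_{ss}-2\bb_{s}^{2}-\cc_{s}^{2},\qquad h=\cc\,\cc_{ss}-2\cc_{s}^{2}-\bb_{s}^{2},
\]
following \cite{IKS2}. The first-order correction terms are the point: their time derivatives, via the commutator \eqref{commutatorformula}, produce the \emph{genuinely coercive} quadratics $2\bb_{ss}^{2}(2-\uu)$ and $2(\cc\cc_{ss})^{2}/\cc^{2}$ in the evolution at an extremum, and these dominate the linear and $O(\cc^{-2})$ terms once $|f|$ or $|h|$ is large. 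With this device one bounds $\cc\bb_{ss}$ first (the argument is self-contained), and only then $\cc\cc_{ss}$, using the already obtained bound on $|\cc\bb_{ss}|$ to control the $\bb_{ss}$ cross term. Since $\cc^{2}|k_{01}|=\uu\,|\cc\bb_{ss}|$ and $\cc^{2}|k_{03}|=|\cc\cc_{ss}|$, the statement follows.
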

\begin{proof}
First, we prove that $-\bb\cc k_{01} = \cc\bb_{ss}$ has a uniform lower bound in the space-time. In analogy with \cite{IKS2}, we consider the quantity $f\doteq \cc\bb_{ss} - 2\bb_{s}^{2} - \cc_{s}^{2}$ which we see to be uniformly bounded at the origin and at spatial infinity by the boundary conditions and Lemma \ref{ALFpreserved} respectively. A long yet straightforward computation yields that whenever $f$ attains some negative minimum value its evolution equation becomes
\begin{align*}
\partial_{t}f|_{f_{\text{min}} < 0} &\geq 2\bb^{2}_{ss}(2 - \uu) + 2\cc_{ss}^{2} + \cc_{ss}\left(4\frac{\uu^{2}}{\bb} - 2\frac{\bb_{s}\cc_{s}}{\cc} - 4\frac{\bb_{s}\cc_{s}}{\bb}\right) + \\ &+ \frac{\uu\bb_{ss}}{\bb}\left(4 - 3\bb_{s}^{2} - 8\uu^{2} - \cc_{s}^{2} + 2\bb_{s}\cc_{s}\uu^{-1}\left(1 -4\uu^{-1}\right)\right) 
\\ &+ \frac{1}{\cc^{2}}\left(2\bb_{s}\cc_{s}^{3} + 4\cc_{s}^{2}\uu^{3}(1 + 3\uu) + 4\bb_{s}^{2}\cc_{s}^{2}(1 + \uu^{2}) - 24\bb_{s}\cc_{s}\uu^{4} + 2\bb_{s}^{4}\uu^{2}(2 + \uu)\right) \\ &+  \frac{1}{\cc^{2}}\left(24\bb_{s}^{2}\uu^{4}(1 + \uu) - 8\bb_{s}^{2}\uu^{2}(2 + \uu) -8\bb_{s}\cc_{s}\uu^{3}(2 + 3\uu)\right).
\end{align*}
\noindent Since the first order spatial derivatives are uniformly bounded we may assume that $\cc\bb_{ss} < f_{\text{min}}/2$ provided that $\lvert f_{\text{min}} \rvert$ is large enough. By applying Cauchy-Schwarz to the coefficients of $\cc_{ss}$ and again using \eqref{firstderivativesbounded} we get
\[
\partial_{t}f|_{f_{\text{min}} < 0} \geq 2\bb^{2}_{ss} + \frac{\cc_{ss}^{2}}{2} + \frac{\uu\bb_{ss}}{\bb}\left(4 - 3\bb_{s}^{2} - 8\uu^{2} - \cc_{s}^{2} + 2\bb_{s}\cc_{s}\uu^{-1}\left(1 -4\uu^{-1}\right)\right)  - \frac{\alpha}{\cc^{2}},
\]
\noindent for some uniform constant $\alpha > 0$. By the monotonicity of $\bb_{s}$ and $\cc_{s}$ we finally obtain
\[
\partial_{t}f|_{f_{\text{min}} < 0} \geq \frac{1}{\cc^{2}}\left( 2(\bb_{ss}\cc)^{2} + 4\uu^{2}\bb_{ss}\cc - \alpha\right) \geq \frac{1}{\cc^{2}}\left( \frac{f_{\text{min}}^{2}}{2} + 4f_{\text{min}} - \alpha\right) > 0,
\]
\noindent for $\lvert f_{\text{min}} \rvert$ large enough. The existence of a uniform upper bound for $\cc\bb_{ss}$ follows from the similar arguments.
\\We now show that $-\cc^{2} k_{03} = \cc\cc_{ss}$ has a uniform lower bound as long as the solution exists. We proceed as before. We define $h = \cc\cc_{ss} - 2\cc_{s}^{2} -\bb_{s}^{2}$, which by the boundary conditions and Lemma \ref{ALFpreserved} is uniformly bounded at the origin and at spatial infinity. Suppose that $h$ attains a negative minimum. According to \eqref{firstderivativesbounded} we find that $\cc\cc_{ss}\leq h_{\text{min}}/2$, whenever $\lvert h_{\text{min}} \rvert$ is sufficiently large. At such point we can write the evolution equation of $h$ as
\[
\partial_{t}h|_{h_{\text{min}} < 0}\geq \frac{1}{\cc^{2}}\left(2(\cc\cc_{ss})^{2} + \alpha\cc\cc_{ss} + 2\bb_{ss}^{2}\cc^{2} - \alpha\lvert\bb_{ss}\cc\rvert -\alpha\right), 
\]
\noindent where $\alpha$ is a uniform positive constant given by $\bb_{s}$ and $\cc_{s}$ being positive and bounded along the flow - and by $\uu$ being bounded by 1. Since we have just checked that $\lvert \cc\bb_{ss}\rvert$ is uniformly bounded, the right hand side is positive once we pick $\lvert h_{\text{min}}\rvert$ and hence $\lvert \cc\cc_{ss}\rvert$ large enough. Analogously one may check that $\cc\cc_{ss}$ is uniformly bounded from above in the space-time.
\end{proof}
From the estimates \eqref{firstderivativesbounded} and \eqref{rotationalsymmetryorder0} and the previous Lemma we deduce that the curvature is uniformly controlled in time in regions where the orbits do not become degenerate. 
\begin{corollary}\label{corollarycccurvature}
If $(\R^{4},g(t))_{t\geq 0}$ is the maximal Ricci flow solution starting at some $g_{0}\in\Gaf$, then 
\[
\sup_{\R^{4}\times [0, + \infty)}\,\left(\cc^{2}\lvert\emph{Rm}_{g(t)}\rvert_{g(t)}\right) < \infty.
\]
\end{corollary}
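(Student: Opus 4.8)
The plan is to reduce the statement to the scalar estimates already proven, since for a warped Berger metric the full curvature norm is comparable to the size of its components. Concretely, in the $g(t)$-orthonormal frame $\{\partial_{s}, X_{1}/\bb, X_{2}/\bb, X_{3}/\cc\}$ the curvature operator of a metric of the form \eqref{fubini} is diagonal except for the single off-diagonal entry $\text{Rm}_{0123}$ (this is exactly the curvature computation recalled in Section 2.2, where $\text{Rm}_{0123}$ is singled out as the only curvature term that is not a sectional curvature of a coordinate $2$-plane). Hence there is a dimensional constant $C$ with
\[
C^{-1}\lvert\text{Rm}_{g(t)}\rvert_{g(t)} \le \lvert k_{01}\rvert + \lvert k_{02}\rvert + \lvert k_{03}\rvert + \lvert k_{12}\rvert + \lvert k_{13}\rvert + \lvert k_{23}\rvert + \lvert\text{Rm}_{0123}\rvert \le C\lvert\text{Rm}_{g(t)}\rvert_{g(t)},
\]
so it suffices to bound $\cc^{2}$ times each of the seven terms on the right, uniformly on $\R^{4}\times[0,\infty)$.

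First I would dispose of the mixed sectional curvatures: $k_{01}=k_{02}$ and $k_{03}$ are handled directly by Lemma \ref{secondderivativeslemma}, which already gives $\sup_{\R^{4}\times[0,\infty)}\cc^{2}\bigl(\lvert k_{01}\rvert + \lvert k_{03}\rvert\bigr) < \infty$. For the remaining three terms I would multiply through by $\cc^{2}=\uu^{2}\bb^{2}$ and use \eqref{sectionalvertical12}, \eqref{sectionalvertical13} and \eqref{rmo123}:
\[
\cc^{2}k_{12} = \uu^{2}\bigl(4 - 3\uu^{2} - \bb_{s}^{2}\bigr), \qquad \cc^{2}k_{13} = \cc^{2}k_{23} = \uu^{4} - \uu\,\bb_{s}\cc_{s}, \qquad \cc^{2}\,\text{Rm}_{0123} = \uu^{2}\bigl(\cc_{s} - \bb_{s}\uu\bigr).
\]
By Lemma \ref{consistencymonotone} we have $0\le\uu\le 1$ and $\bb_{s},\cc_{s}\ge 0$, and estimate \eqref{firstderivativesbounded} provides a uniform space-time bound on $\bb_{s}+\cc_{s}$, hence on $\bb_{s}$, $\cc_{s}$ and $\bb_{s}\cc_{s}$ individually. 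Each right-hand side above is therefore a fixed nonnegative power of $\uu$ times a polynomial in $\bb_{s}$, $\cc_{s}$, $\uu$ with uniformly bounded coefficients, so all three are uniformly bounded in space-time.

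Combining the mixed-curvature bound from Lemma \ref{secondderivativeslemma} with these three bounds and feeding the result back into the comparison above yields $\sup_{\R^{4}\times[0,\infty)}\cc^{2}\lvert\text{Rm}_{g(t)}\rvert_{g(t)}<\infty$. There is no genuine analytic obstacle here: the only point worth checking is the first display, i.e. that no curvature components other than the listed ones occur for a warped Berger metric, which is the content of Section 2.2; everything else is the bookkeeping of assembling \eqref{firstderivativesbounded}, the bound $\uu\le 1$, and Lemma \ref{secondderivativeslemma}.
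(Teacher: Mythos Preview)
The proposal is correct and essentially matches the paper's own argument: the paper states the corollary as an immediate consequence of the first-order estimates in Lemma \ref{firstorderestimates} (in particular \eqref{firstderivativesbounded}) together with Lemma \ref{secondderivativeslemma}, and your write-up simply makes this explicit by listing the curvature components from Section 2.2 and checking that $\cc^{2}$ times each one is a bounded expression in $\uu\le 1$, $\bb_{s}$, $\cc_{s}$. The paper additionally cites \eqref{rotationalsymmetryorder0}, but as your computation shows this estimate is not actually needed here, so your version is if anything slightly cleaner.
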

We dedicate the end of this section to proving that the spatial derivative $\bb_{s}$ has a uniform lower bound in the space-time domain where the squashing factor $\uu$ stays positive. %We first focus on the positive-mass case. Below we will then check that the very same arguments work for the zero-mass case as well. 
We start by showing that in the bounded Hopf-fiber setting $\bb_{s}\uu^{-1}$ always diverges when $\uu^{-1}$, and hence $\bb$ by \eqref{rotationalsymmetryorder0}, is large. This estimate will play a key role in characterizing the possible infinite-time singularity models and turns out to be satisfied by Ricci flows in $\Gk$ as well.% cannot approach zero in a space-time region where $\bb$ is large. The following also turns out to be a useful estimate when proving convergence to the Taub-NUT metric.
\begin{lemma}\label{keyquantity}
Let $(\R^{4},g(t))_{t\geq 0}$ be the maximal Ricci flow solution starting at some $g_{0}\in\Gaf$ with bounded Hopf-fiber. There exist $\alpha, \lambda > 0$ such that
\[
\bb^{\lambda}(\bb_{s}\uu^{-1} - \log(\bb))\geq -\alpha,
\]
\noindent uniformly in the space-time.
\end{lemma}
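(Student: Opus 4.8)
The plan is to run a maximum-principle argument on the quantity $\Phi \doteq \bb^{\lambda}\bigl(\bb_{s}\uu^{-1} - \log(\bb)\bigr)$ for suitable small $\lambda > 0$, aiming to show it cannot drop below a space-time-independent constant $-\alpha$. First I would check the parabolic boundary: near the origin, the smoothness conditions \eqref{smoothnessorigin} give $\bb_s \to 1$, $\uu \to 1$, and $\bb \to 0$, so $\bb^{\lambda}(\bb_s\uu^{-1} - \log\bb) \to 0^{+} \cdot (+\infty)$; one has to be slightly careful, but since $\bb\log\bb \to 0$ the term $-\bb^{\lambda}\log\bb$ is controlled, and $\bb^{\lambda}\bb_s\uu^{-1} \to 0$, so $\Phi$ extends continuously to $0$ at the origin uniformly in time. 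At spatial infinity, Lemma \ref{characterizationALF}(i) together with Lemma \ref{ALFpreserved} gives $\bb_s \to 2$ and, since the Hopf-fiber is bounded, $\uu \to 0$ while $\bb \to \infty$; thus $\bb_s\uu^{-1}$ grows like $2\bb$ (using $\uu^{-1}\sim \text{const}\cdot\bb$ from bounded $\cc$ and linear $\bb$), which dominates $\log\bb$, so $\Phi \to +\infty$ at spatial infinity. Hence if $\Phi$ ever becomes very negative, it does so at an interior minimum, and it suffices to compute $\partial_t\Phi$ there.

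Next I would derive the evolution equation for $w \doteq \bb_s\uu^{-1}$, or equivalently work with $\bb_s, \uu$ separately using \eqref{equationbs} and the evolution of $\uu$ (obtained from \eqref{Ricciflowpdes1}--\eqref{Ricciflowpdes2} and the commutator \eqref{commutatorformula}), then fold in the factor $\bb^{\lambda}$ and the correction $-\log\bb$, whose evolution comes from $\bb_t = \Delta\bb + (\text{lower order})$ as in \eqref{Ricciflowpdes1}. At an interior minimum of $\Phi$ we have $\Phi_s = 0$ and $\Delta\Phi \geq 0$, which converts spatial-derivative terms into algebraic relations; the key structural input is that along the flow the good reaction term in the $\bb_s$-equation, namely $\tfrac{4\cc_s\uu}{\bb^2}$ together with the $\tfrac{4\bb_s}{\bb^2}$ piece of $\bb_s(4-\dots)/\bb^2$, produces a term comparable to $\tfrac{w}{\bb^2}$ with favorable sign, while the bad terms are lower order once we use the first-order bounds \eqref{firstderivativesbounded}, \eqref{rotationalsymmetryorder0}, \eqref{rotationalsymmetryorder1} and the decay estimate $\cc_s\uu^{-k} \leq \alpha$ from Lemma \ref{bbccsquaredccsboundedbyk}. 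The role of $\lambda > 0$ is to absorb the cross-terms coming from $\Delta(\bb^\lambda w)$ versus $\bb^\lambda \Delta w$: choosing $\lambda$ small enough keeps the sign of the dominant term intact, and the $-\log\bb$ correction is chosen precisely so that the marginal case (where $w$ is comparable to $\log\bb$ and $\bb$ is large) still gives a strictly positive right-hand side.

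The main obstacle I expect is the bookkeeping at the interior minimum: the quantity $w = \bb_s\uu^{-1}$ involves $\uu^{-1}$, which is unbounded, so the reaction terms in its evolution — particularly those containing $(\cc_s\uu^{-1})^2$ and $\bb_s\cc_s\uu^{-1}$ — are a priori not obviously controlled. Resolving this is exactly where Lemma \ref{bbccsquaredccsboundedbyk} enters: it gives $\cc_s\uu^{-1} \leq \alpha\uu^{k-1} \to 0$, so $\cc_s\uu^{-1}$ is in fact bounded (even small where $\uu$ is small), taming those terms. A secondary subtlety is that the argument must be uniform in time with no rescaling, so every constant must be traced to the time-independent bounds of Lemmas \ref{firstorderestimates} and \ref{bbccsquaredccsboundedbyk} and to the mass conservation Corollary \ref{massconserved}; I would be careful that the threshold value of $|\Phi_{\min}|$ beyond which $\partial_t\Phi|_{\min} > 0$ depends only on those uniform constants and on $\lambda$, and then pick $\alpha$ to be the maximum of that threshold and $\sup_{\R^4}|\Phi(\cdot,0)|$.
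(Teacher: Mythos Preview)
Your proposal is correct and follows essentially the same approach as the paper: define $\chi = \bb^{\lambda}(\bb_{s}\uu^{-1} - \log \bb)$, verify $\chi \to 0$ at the origin and $\chi \to +\infty$ at spatial infinity (using Lemma~\ref{characterizationALF}(i) and Lemma~\ref{ALFpreserved}), and then compute the evolution at a putative large negative interior minimum, using Lemma~\ref{bbccsquaredccsboundedbyk} to kill the dangerous $\bb_{s}\cc_{s}\uu^{-2}$ term and choosing $\lambda$ small. One small sharpening worth noting: the paper exploits that at a negative minimum with $\bb$ large one has $\bb_{s}\uu^{-1} < \log \bb$, which together with $\cc \leq m_{g_0}^{-1}$ forces $\bb_{s} \lesssim (\log \bb)/\bb \to 0$; this is what makes the terms $\chi\,\bb_{s}^{2}(\lambda^{2}+4\lambda)$ and $-4\bb_{s}^{2}\bb^{\lambda}$ negligible, and the paper's choice $\lambda \leq 1$ is then driven specifically by needing $\bb^{\lambda}(\log \bb)^{2}\uu^{k} \to 0$ (with $k>1$ from Lemma~\ref{bbccsquaredccsboundedbyk}), rather than by cross-terms from the Laplacian.
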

\begin{proof}
We let $\chi \doteq \bb^{\lambda}(\bb_{s}\uu^{-1} - \log(\bb))$ be defined smoothly on $\R^{4}\setminus \{\origin\}\times [0, + \infty)$ and we extend it continuously at the origin. From the boundary conditions and Lemma \ref{ALFpreserved} we see that $\chi(\origin,t) = 0$ and $\chi(s,t)\rightarrow \infty$ as $s\rightarrow \infty$ for all positive times. Assume that $\chi$ attains some large negative value at a minimum point $(p_{0},t_{0})$ among prior times. The evolution of $\chi$ at $(p_{0},t_{0})$ becomes
\begin{align}
\partial_{t}\chi(p_{0},t_{0})|_{\chi_{\text{min}} < 0}& \geq \frac{1}{\bb^{2}}\left(\chi\left(\bb_{s}^{2}(\lambda^{2} + 4\lambda) - 2\lambda\bb_{s}\cc_{s}\uu^{-1} + 2\lambda\uu^{2} - 4\lambda\right) + 4\cc_{s}\bb^{\lambda}\right) \\  &+ \frac{1}{\bb^{2}}\left(\bb^{\lambda}\bb_{s}\uu^{-1}(2\bb_{s}^{2} - 4\bb_{s}\cc_{s}\uu^{-1}-2\uu^{2})+ \bb^{\lambda}(4 - 2\uu^{2} - 4\bb_{s}^{2} + 2\bb_{s}\cc_{s}\uu^{-1}) \right) 
\end{align}
\noindent Since $\lvert \chi_{\text{min}}\rvert = \bb^{\lambda}(\log(\bb) - \bb_{s}\uu^{-1}) \leq \bb^{\lambda}\log(\bb)$, we see that $\bb$ can be taken as large as we want once we pick $\lvert \chi_{\text{min}}\rvert$ large. Similarly, at any negative minimum of $\chi$ the derivative $\bb_{s}$ is small whenever the value of $\bb$ is sufficiently large, being $\cc$ uniformly bounded from above. Thus, whenever $\lvert \chi_{\text{min}}\rvert$ is large enough, we may write the evolution equation of $\chi$ as
\begin{equation}\label{equationuseful2}
\partial_{t}\chi(p_{0},t_{0})|_{\chi_{\text{min}}} \geq \frac{1}{\bb^{2}}\left(\lambda\lvert \chi_{\text{min}}\rvert + \bb^{\lambda}\bb_{s}\uu^{-1}(-4\bb_{s}\cc_{s}\uu^{-1} - 2\uu^{2}) + \bb^{\lambda}\right).
\end{equation}
\noindent Finally, we note that according to Lemma \ref{bbccsquaredccsboundedbyk} we can find $ k > 1$ such that
\[
4\bb^{\lambda}\bb_{s}^{2}\cc_{s}\uu^{-2} \leq 4\alpha\bb^{\lambda}\bb_{s}^{2}\uu^{-2 + k} \leq \alpha \bb^{\lambda}(\log(b))^{2}\uu^{k},
\] 
\noindent where again we have used that $\chi(p_{0},t_{0}) < 0$. We may then choose $\lambda \leq 1$ and conclude that 
\[
\partial_{t}\chi(p_{0},t_{0})|_{\chi_{\text{min}} < 0} \geq \frac{1}{\bb^{2}}\left(\lambda\lvert \chi_{\text{min}}\rvert  + \frac{\bb^{\lambda}}{2}\right) > 0.
\]
\end{proof}
We now show that $\bb_{s}$ cannot become degenerate in space-time regions where the quantity $\uu$ is bounded away from zero. On the one hand this control is necessary for the compactness result we rely on for proving that symmetries are preserved on any pointed Cheeger-Gromov limit. On the other, we see that if we were in a rotationally-symmetric setting, the solution would have positive asymptotic volume ratio. The latter observation will be crucial when showing that any Ricci flow in $\Gaf$ has curvature uniformly bounded in the space-time.%
\\We recall that the mean curvature of the Euclidean 3-sphere $S(\origin,z)$ with respect to the solution $g(t)$ is given by
\[
H(z,t) = \left(2\frac{\bb_{s}}{\bb} + \frac{\cc_{s}}{\cc}\right)(z,t).
\]
%\noindent The strategy consists in proving that $\cc H(\cdot,t)$ is bounded away from zero whenever $\uu$ is not small and then combining that with Lemma \ref{lowerboundforCA}.
% cannot become degenerate in space-time regions where $\uu$ is bounded away from zero.
%since both $\bb_{s}$ ad $\uu$ are scale-invariant, the uniform lower bound passes to any Cheeger-Gromov limit
%We may now prove the following
\begin{lemma}\label{lowerboundforCA}
If $(\R^{4},g(t))_{t\geq 0}$ is the maximal Ricci flow solution starting at some $g_{0}\in\Gaf$, then there exists $\beta > 0$ such that 
\[
\inf_{\R^{4}}\,\left(\bb_{s}\uu^{-1}\right)(\cdot,t) \geq \beta,
\]
\noindent for all times $t\geq 1$.% \babbo $g_{0}\in\Gaf$. There exists $\nu > 0$ such that $\bb_{s} \geq \nu\uu$ in the space-time.
\end{lemma}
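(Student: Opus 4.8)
The plan is to prove the bound by a minimum‑principle argument applied to $q:=\bb_{s}\uu^{-1}$.

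First I would record the behaviour of $q$ on the parabolic boundary. Since monotonicity of the coefficients is preserved (Lemma~\ref{consistencymonotone}) and the curvature decay is preserved (Lemma~\ref{ALFpreserved}), $g(t)\in\Gaf$ for every $t\ge0$, so Lemma~\ref{characterizationALF} applies at each time slice: the smoothness conditions at the origin force $q(\origin,t)=1$, and at spatial infinity $q(\cdot,t)\to1$ in the zero‑mass case while $q(\cdot,t)\to+\infty$ in the positive‑mass case (there $\uu\to0$ and $\bb_{s}\to2$). Consequently, for each $t$ any value of $q(\cdot,t)$ strictly below $1$ is attained at an interior point of $\R^{4}$. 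Moreover $\bb_{s}(\cdot,1)>0$ by Lemma~\ref{consistencymonotone}(ii), so $q(\cdot,1)$ is a continuous positive function with positive limits at both ends, whence $c_{1}:=\inf_{\R^{4}}q(\cdot,1)>0$.

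The second step is the evolution equation. Writing $\uu=\cc/\bb$ and combining \eqref{Ricciflowpdes1}, \eqref{Ricciflowpdes2} and the commutator formula \eqref{commutatorformula}, one obtains a reaction–diffusion equation $\partial_{t}q=\Delta q+(\text{terms in }q_{s})+(\text{reaction})$. At an interior spatial minimum $(p_{0},t_{0})$ of $q$ one has $q_{s}=0$ and $q_{ss}\ge0$, so $\Delta q\ge0$ and the gradient terms vanish; the constraint $q_{s}=0$ forces $\bb_{ss}=q\,\uu_{s}$, and \eqref{rmo123} gives $\uu_{s}=\bb^{-1}(\cc_{s}-\bb_{s}\uu)$. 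Substituting these identities, the right‑hand side collapses after cancellation to the clean inequality
\[
\partial_{t}q\big|_{(p_{0},t_{0})}\ \ge\ \frac{1}{\bb^{2}}\,(1-q^{2})\,\bigl(4\cc_{s}-2q\uu^{2}\bigr)\ =\ \frac{2}{q\,\bb^{2}}\,(1-q^{2})\,\bigl(\bb_{s}^{2}+2\bb\bb_{ss}\bigr).
\]
In the regime $q<1$ the factor $1-q^{2}$ is positive, so what is needed is that $4\cc_{s}>2q\uu^{2}$, equivalently $2\cc_{s}>\bb_{s}\uu$, equivalently $\bb_{s}^{2}+2\bb\bb_{ss}>0$ (i.e.\ $s\mapsto\bb^{3/2}$ is convex), at the minimum point. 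Granting this for $q$ below a threshold $\beta_{\ast}$, one concludes as follows: set $\beta:=\min\{c_{1},\tfrac12,\beta_{\ast}\}$ and suppose $q$ first reaches the value $\beta$ at some $(p_{0},t_{0})$ with $t_{0}\ge1$; then $(p_{0},t_{0})$ is an interior spatial minimum with $q<1$, so $\partial_{t}q|_{(p_{0},t_{0})}\le0$, contradicting the displayed inequality. Hence $\bb_{s}\uu^{-1}\ge\beta$ for all $t\ge1$.

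The main obstacle is exactly the sign statement $2\cc_{s}\ge\bb_{s}\uu$ at a minimum of $q$: the trivial bounds $\uu\le1$ and $\cc_{s}>0$ do not suffice, since $\cc_{s}$ can be very small. I would extract it from the estimates already in hand. In the positive‑mass case Lemma~\ref{keyquantity} gives $q\ge\log\bb-\alpha\bb^{-\lambda}$, which exceeds $1$ as soon as $\bb\ge B_{0}$ for a suitable $B_{0}$; so any minimum of $q$ with value below $1$ lies in $\{\bb\le B_{0}\}$, a region on which $\uu$ stays bounded below (by \eqref{rotationalsymmetryorder0}, $1-\uu\le\alpha\cc$, so $\uu$ degenerates only where $\cc\to0$, i.e.\ near the origin, where $q\to1$) and on which the second‑order bounds of Lemma~\ref{secondderivativeslemma} and Corollary~\ref{corollarycccurvature} control $\bb\bb_{ss}$ through $\cc$; the required inequality then follows from a quantitative form of these controls together with the monotonicity built into $H=2\bb_{s}/\bb+\cc_{s}/\cc\ge0$. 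The zero‑mass case is similar but easier, since there $\uu\to1$ at infinity and $\uu\ge\inf_{\R^{4}}\uu(\cdot,0)>0$ uniformly. The genuinely delicate point throughout is that the radial coordinate $s$ moves with the flow, so ``near the degenerate orbit'' must be quantified purely in terms of $\bb$ and $\cc$, which is precisely what the preceding first‑ and second‑order estimates are designed to permit.
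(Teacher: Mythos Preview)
Your overall architecture is the same as the paper's: analyse $q=\bb_{s}\uu^{-1}$ at an interior spatial minimum, use Lemma~\ref{keyquantity} to confine any small minimum to a region where $\bb$ (hence $\uu$, via \eqref{rotationalsymmetryorder0}) is controlled, and show the reaction term is positive there. Your evolution inequality
\[
\partial_{t}q\big|_{\min}\ \ge\ \frac{1}{\bb^{2}}\,(1-q^{2})\bigl(4\cc_{s}-2q\uu^{2}\bigr)
\]
is correct; it is just the factored form of the paper's
\[
\partial_{t}(\bb_{s}\uu^{-1})\big|_{\min}\ \ge\ \frac{1}{\bb^{2}}\Bigl(\bb_{s}\uu^{-1}\bigl(2\bb_{s}^{2}-4\bb_{s}\cc_{s}\uu^{-1}-2\uu^{2}\bigr)+4\cc_{s}\Bigr).
\]

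The genuine gap is the step you flag yourself: establishing $4\cc_{s}>2q\uu^{2}$ at a minimum with $q$ small. Your proposed justification does not close. The inequality $H\ge 0$ is trivial for monotone coefficients and says nothing quantitative; what is needed is a \emph{positive uniform lower bound} on $\cc H=2\bb_{s}\uu+\cc_{s}$ in the region $\{\bb\le B_{0}\}$. The second-order bounds from Lemma~\ref{secondderivativeslemma} and Corollary~\ref{corollarycccurvature} only give $\lvert\bb\bb_{ss}\rvert\le\alpha$ in that region (since there $\cc$ and $\bb$ are comparable), so when $q$ and hence $\bb_{s}$ are small one cannot infer a sign for $\bb_{s}^{2}+2\bb\bb_{ss}$: the second-order term could be negative of order one while $\bb_{s}^{2}$ is negligible.

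The paper supplies exactly this missing ingredient by running a separate minimum-principle argument for $\cc H$ on the time-dependent ball $B(\origin,r(t))=\R^{4}\setminus V(t)$. One checks $\cc H=3$ at the origin and $\cc H\ge 2\varepsilon^{2}$ on $\partial V(t)$ (where $\bb_{s}\uu^{-1}=1$ and $\uu\ge\varepsilon$), and at an interior minimum
\[
\partial_{t}(\cc H)\big|_{\min}\ \ge\ \frac{1}{\bb^{2}}\Bigl(2\cc H\,(\uu^{2}-\bb_{s}^{2})+16\bb_{s}\uu(1-\uu^{2})\Bigr)>0
\]
once $\cc H$ is small (since $\uu\ge\varepsilon$ forces $\bb_{s}\uu^{-1}\le 1$ there). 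This yields $\cc H\ge\tilde\varepsilon$ uniformly, whence at a minimum of $q$ with $q$ small one gets $\cc_{s}\ge\tilde\varepsilon/2$, and then $4\cc_{s}-2q\uu^{2}\ge 2\tilde\varepsilon-2q>0$ for $q<\tilde\varepsilon$. In the zero-mass case the same argument runs on all of $\R^{4}$ since $\cc H\to 3$ at infinity. This intermediate $\cc H$ bound is the piece your outline is missing.
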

\begin{proof}
\emph{Case (i): Positive mass.} We consider the maximal immortal Ricci flow solution evolving from $g_{0}$ for times $t \geq 1$ so that $\bb_{s}(\cdot,t)$ is positive everywhere by the strong maximum principle (see Lemma \ref{consistencymonotone}). Given $\alpha,\lambda > 0$ as in Lemma \ref{keyquantity}, we see that $\bb_{s}\uu^{-1}\geq 1$ in the time-dependent region $V(t) = \{p\in\R^{4}:\,\log(\bb(p,t)) - \alpha/\bb^{\lambda}(p,t) \geq 1\}$. We note that since $\bb$ is monotone we may identify $V(t)$ with the complement of some time-dependent Euclidean ball $B(\origin,r(t))$, with $t\mapsto r(t)$ a continuous function. From the estimate \eqref{rotationalsymmetryorder0} we derive that $\uu(\cdot,t)\geq \varepsilon$ in $B(\origin,r(t))$ for all $t\geq 1$ for some $\varepsilon > 0$, being $\bb(\cdot,t)$ uniformly bounded in $B(\origin,r(t))$. % Since the evolution equation of $\log(\uu)$ is given by
%\[
%\partial_{t}\log(\uu) = \Delta \log(\uu) + \frac{1}{\bb^{2}}\left(1 - \uu^{2}\right),
%\]
%\noindent we see that $\uu$ cannot approach zero along a sequence of interior minima belonging to $B(\origin,r(t))$. Therefore, there exists $\tilde{\varepsilon} > 0$ such that $\uu\geq \tilde{\varepsilon}$ in $B(\origin,r(t))$ for all $t\geq 0$.  
In particular, we deduce that 
\[
\cc H (r(t),t) \geq 2\bb_{s}\uu(r(t),t) \geq 2 \varepsilon^{2} > 0, 
\]
\noindent for all $t\geq 1$. Similarly, $\cc H(\origin,t) = 3$ for all times according to the boundary conditions. Therefore, if $\cc H$ attains some value $\tilde{\varepsilon} > 0$ small enough in $B(\origin,r(t))$ for the first time, then this must happen at an interior minimum point $(p_{0},t_{0})$ and we have
\[
\partial_{t}(\cc H)(p_{0},t_{0})|_{\tilde{\varepsilon}}\geq \frac{1}{\bb^{2}}\left(2\cc H \left(\uu^{2} - \bb_{s}^{2}\right) + 16\bb_{s}\uu(1 - \uu^{2}) \right).
\]
\noindent Since $\uu\geq \varepsilon$ in $B(\origin,r(t))$, we see that if $\tilde{\varepsilon}$ is small enough, then $\bb_{s}\uu^{-1}(p_{0},t_{0}) \leq 1$, which hence yields $\partial_{t}(\cc H)(p_{0},t_{0}) > 0$. We conclude that $\cc H$ is uniformly bounded from below in $B(\origin,r(t))$, for all times $t \geq 1$. Since $\bb_{s}\uu^{-1} \geq 1$ in $V(t)$, if the quantity attains some value $\beta$ sufficiently small for some time $t_{1} > 1$, then there exists an interior minimum point $(p_{0},t_{0})$ in $B(\origin,r(t_{0}))$ among times $t\in (1,t_{1}]$. The evolution equation of $\bb_{s}\uu^{-1}$ at such minimum point is 
\[
\partial_{t}\,(\bb_{s}\uu^{-1})(p_{0},t_{0}) \geq \frac{1}{\bb^{2}}\left(\bb_{s}\uu^{-1}(2\bb_{s}^{2} - 4\bb_{s}\cc_{s}\uu^{-1} - 2\uu^{2}) + 4\cc_{s} \right).
\]
\noindent From the estimate $\cc H \geq \tilde{\varepsilon}$ we conclude that $\cc_{s}(p_{0},t_{0})\geq \tilde{\varepsilon}/2$ whenever $\beta$ is small enough. Therefore, the right hand side of the evolution equation is positive and hence $\bb_{s}\uu^{-1} \geq \beta > 0$ for all times $t\geq 1$.

\emph{Case (ii): Zero mass.} In this case $\cc H(\cdot,t)\rightarrow 3$ at spatial infinity as long as the solution exists. Thus, we can argue as above using that $\uu \geq \delta$ in the space-time, for some $\delta > 0$, as follows from (iii) in Lemma \ref{consistencymonotone}.% if $\cc H$ attains some small value a at any interior minimum point By the same argument above in the Euclidean volume growth setting $\cc H\geq \beta > 0$ everywhere in the space-time, for $\cc H(\cdot,t)\rightarrow 3$ at spatial infinity as long as the solution exists. We may then argue as in the positive-mass setting.

\end{proof}

\section{The Ricci flow in $\Gk$}
 
In this section we extend the analysis of asymptotically flat warped Berger Ricci flows to solutions in $\Gk$. One of the main difficulty consists in controlling the flow in the space-time region where the roundness ratio $\uu$ is small. In the asymptotically flat case the stronger than quadratic decay of the curvature determines the behaviour of the warping coefficients at spatial infinity precisely. Once such decay is preserved along the flow, one can then rely on maximum principle arguments to derive time-independent bounds. On the contrary, for the case of $\Gk$ some extra work is needed to control the solution along the parabolic boundary of the space-time and hence ensure that the condition of opening faster than a paraboloid is indeed preserved. 

First, we note that one can argue as in Lemma \ref{ALFpreserved} to prove that the power law decay of the curvature in $\Gk$ persists along the solution.
\begin{lemma}\label{Gkpreserved}
Let $(\R^{4},g(t))_{t\geq 0}$ be the maximal Ricci flow solution starting at some $g_{0}\in\Gk$. For any $T^{\prime} < \infty$ there exists $\alpha(T^{\prime}) > 0$ such that 
\[
\sup_{p\in\R^{4}}(d_{g_{0}}(\origin,p))^{\frac{2}{k + 1}}\lvert\emph{Rm}_{g(t)}\rvert_{g(t)}(p) \leq \alpha(T^{\prime}),
\]
\noindent for all $t\in [0,T^{\prime}]$.
\end{lemma}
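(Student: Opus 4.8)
\emph{Proof proposal.} The plan is to repeat the proof of Lemma~\ref{ALFpreserved} almost verbatim, since by \eqref{decaycurvaturegk} a metric $g_{0}\in\Gk$ satisfies $\sup_{p}(d_{g_{0}}(\origin,p))^{\frac{2}{k+1}}\lvert\text{Rm}_{g_{0}}\rvert_{g_{0}}(p)<\infty$, i.e.\ a polynomial curvature decay of exactly the type treated there, only with exponent $a=\frac{2}{k+1}\in(1,2]$ in place of $2+\epsilon$. Writing $s_{0}$ for the $g_{0}$-distance from the origin, so that $g_{0}=ds_{0}^{2}+\bb^{2}(s_{0})\,\pi^{\ast}g_{S^{2}(\frac{1}{2})}+\cc^{2}(s_{0})\,\sigma_{3}\otimes\sigma_{3}$, I would set $\phi\colon s_{0}\mapsto\sqrt{s_{0}^{2}+1}$ and verify that $\phi$ is a smooth distance-like function on $(\R^{4},g_{0})$ in the sense of \cite{ricciflowtechniques2}[Lemma 12.30] with the additional property $\nabla^{2}_{g_{0}}\phi\geq0$. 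The conclusion then follows from Proposition B.10 in \cite{Lott}, which propagates a bound $\lvert\text{Rm}_{g_{0}}\rvert_{g_{0}}\leq\alpha\,\phi^{-a}$ to $\lvert\text{Rm}_{g(t)}\rvert_{g(t)}\leq\alpha_{T^{\prime}}\,\phi^{-a}$ on any bounded interval $[0,T^{\prime}]$, after rewriting the estimate in terms of $d_{g_{0}}$ via $d_{g_{0}}(\origin,\cdot)=s_{0}\leq\sqrt{2}\,\phi$.

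For the verification I would proceed exactly as in Lemma~\ref{ALFpreserved}: the bounds $\lvert\nabla_{g_{0}}\phi\rvert_{g_{0}}=\lvert\partial_{s_{0}}\phi\rvert\leq1$, $\alpha^{-1}(s_{0}+1)\leq\phi\leq\alpha(s_{0}+1)$ and $\nabla^{2}_{g_{0}}\phi(\partial_{s_{0}},\partial_{s_{0}})=\partial^{2}_{s_{0}}\phi\in[0,1]$ are immediate, while along the orbit directions $\nabla^{2}_{g_{0}}\phi(X_{1}/\lvert X_{1}\rvert_{g_{0}},X_{1}/\lvert X_{1}\rvert_{g_{0}})=\frac{s_{0}}{\sqrt{1+s_{0}^{2}}}\frac{\bb_{s_{0}}}{\bb}$ and $\nabla^{2}_{g_{0}}\phi(X_{3}/\lvert X_{3}\rvert_{g_{0}},X_{3}/\lvert X_{3}\rvert_{g_{0}})=\frac{s_{0}}{\sqrt{1+s_{0}^{2}}}\frac{\cc_{s_{0}}}{\cc}$, which are nonnegative by the monotonicity of the coefficients. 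For the upper bounds: integrating \eqref{openingfasterparaboloid} and using that \eqref{hopffiberbounded} together with $\cc_{s}\geq0$ forces $\cc$ to lie between two positive constants away from the origin (so $\uu^{-1}\sim\bb$) gives $\bb(s_{0})\sim s_{0}^{\frac{1}{k+1}}$, whence $\frac{\bb_{s_{0}}}{\bb}s_{0}$ is bounded for $s_{0}\geq1$; near the origin \eqref{smoothnessorigin} gives $\frac{\bb_{s_{0}}}{\bb}\sim\frac{\cc_{s_{0}}}{\cc}\sim s_{0}^{-1}$, so both Hessian terms stay bounded there as well; and for $s_{0}\geq1$ the formula \eqref{sectionalhorizontal03} and the curvature bound \eqref{decaycurvature1} give $\lvert\cc_{s_{0}s_{0}}\rvert=\cc\lvert k_{03}\rvert\leq\alpha\,\bb^{-2}\leq\alpha\,s_{0}^{-\frac{2}{k+1}}$, which is integrable precisely because $k<1$, so $\cc_{s_{0}}$ is bounded and, $\cc$ being bounded below, so is $\frac{\cc_{s_{0}}}{\cc}$. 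Altogether $0\leq\nabla^{2}_{g_{0}}\phi\leq\alpha g_{0}$, and $\lvert\text{Rm}_{g_{0}}\rvert_{g_{0}}\leq\alpha\,\phi^{-\frac{2}{k+1}}$ by \eqref{decaycurvaturegk}, the curvature being smooth and hence bounded on the $g_{0}$-bounded region near the origin.

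Applying Proposition B.10 in \cite{Lott} with $a=\frac{2}{k+1}$ then yields $\lvert\text{Rm}_{g(t)}\rvert_{g(t)}\leq\alpha_{T^{\prime}}\,\phi^{-\frac{2}{k+1}}$ for all $t\in[0,T^{\prime}]$, and multiplying by $(d_{g_{0}}(\origin,\cdot))^{\frac{2}{k+1}}\leq(\sqrt{2}\,\phi)^{\frac{2}{k+1}}$ gives the stated bound. The only point that is not completely mechanical is checking that $\phi$ is distance-like, which reduces to the asymptotics $\bb\sim s_{0}^{\frac{1}{k+1}}$ (from \eqref{openingfasterparaboloid}–\eqref{hopffiberbounded}) and to the integrability of $\bb^{-2}$ at spatial infinity — this is exactly where the restriction $k<1$ enters — together with confirming that the decay exponent $\frac{2}{k+1}\leq2$ lies in the admissible range of the propagation result of \cite{Lott}; neither is a genuine obstacle, the whole argument being structurally identical to the asymptotically flat case of Lemma~\ref{ALFpreserved}.
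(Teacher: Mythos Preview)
Your proposal is correct and matches the paper's own argument, which simply states that one argues exactly as in Lemma~\ref{ALFpreserved}. You have in fact supplied more detail than the paper does: where the paper cites \cite{work2}[Corollary 3.2] to bound $\bb_{s_{0}}/\bb$ and $\cc_{s_{0}}/\cc$ away from the origin, you instead derive these bounds directly from the $\Gk$ asymptotics $\bb\sim s_{0}^{1/(k+1)}$ and the integrability of $\lvert\cc_{s_{0}s_{0}}\rvert\leq\alpha\,\bb^{-2}$, which is a perfectly valid alternative and makes the role of the constraint $k<1$ explicit.
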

As a simple consequence of Lemma \ref{Gkpreserved} we derive that the volume growth rate of metrics in $\Gk$ is preserved along the solution as well as a conservation mass principle. We recall that given $g\in\Gk$ we call \emph{mass} (of $g$) the quantity $(\lim_{s\rightarrow \infty}\cc(s))^{-1}$ and we denote such positive finite number by $m_{g}$.
\begin{corollary}\label{conservationmassGk}
Let $(\R^{4},g(t))_{t\geq 0}$ be the maximal Ricci flow solution starting at some $g_{0}\in\Gk$. For any $t\geq 0$ there exist $B(t) > \beta(t) > 0$ such that 
\[
\beta(t)s_{0}^{\frac{1}{k + 1}} \leq \bb(s_{0},t) \leq B(t)s_{0}^{\frac{1}{k+1}}.
\]
\noindent Moreover, we have $m_{g(t)} = m_{g_{0}}$ for all $t\geq 0$.
\end{corollary}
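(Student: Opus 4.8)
The plan is to mimic the asymptotically flat argument of Corollary~\ref{massconserved}, the only new ingredient being Lemma~\ref{Gkpreserved} combined with the standard Ricci flow distance-distortion estimates. Fix $T' < \infty$. Since the maximal solution is smooth, complete and of bounded curvature, Lemma~\ref{Gkpreserved} bounds $\lvert\text{Rm}_{g(t)}\rvert_{g(t)}$ uniformly on $\{d_{g_{0}}(\origin,\cdot)\geq 1\}\times[0,T']$, while interior regularity bounds it on the complementary compact space-time region; hence there is $K_{T'}>0$ with $\lvert\text{Ric}_{g(t)}\rvert_{g(t)}\leq K_{T'}$ on $\R^{4}\times[0,T']$. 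Integrating $\partial_{t}g=-2\text{Ric}_{g(t)}$ then gives, for every $x$ and every tangent vector $V$,
\[
e^{-2K_{T'}t}\,g_{0}(V,V)\leq g(t)(V,V)\leq e^{2K_{T'}t}\,g_{0}(V,V),\qquad 0\leq t\leq T',
\]
so that $d_{g(t)}(\origin,\cdot)$ and $d_{g_{0}}(\origin,\cdot)=s_{0}$ are comparable on $[0,T']$ (with a $t$-dependent constant), and, evaluating the metrics on the fixed time-independent frame,
\[
e^{-K_{T'}t}\,\bb(x,0)\leq \bb(x,t)\leq e^{K_{T'}t}\,\bb(x,0),\qquad \cc(x,t)\leq e^{K_{T'}t}\,\cc(x,0).
\]

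For the first assertion, recall that $g_{0}\in\Gk$, so that integrating \eqref{openingfasterparaboloid} and using that \eqref{hopffiberbounded} forces $\uu^{-1}\sim\bb$ as $s_{0}\to\infty$ yields constants $0<\beta_{0}\leq B_{0}$ with $\beta_{0}\,s_{0}^{\frac{1}{k+1}}\leq \bb(x,0)\leq B_{0}\,s_{0}^{\frac{1}{k+1}}$ whenever $s_{0}=d_{g_{0}}(\origin,x)\geq 1$. Combining this with the pointwise comparison of $\bb(\cdot,t)$ and $\bb(\cdot,0)$ displayed above produces $\beta(t)\,s_{0}^{\frac{1}{k+1}}\leq \bb(s_{0},t)\leq B(t)\,s_{0}^{\frac{1}{k+1}}$ with $\beta(t)=e^{-K_{T'}t}\beta_{0}$ and $B(t)=e^{K_{T'}t}B_{0}$; since $T'$ is arbitrary this holds for all $t\geq 0$. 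The preservation of the volume growth rate is then immediate from the distance comparison $d_{g(t)}\asymp d_{g_{0}}$.

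For the conservation of mass, note first that $\cc(\cdot,t)$ is non-decreasing along the $g(t)$-geodesic from the origin by Lemma~\ref{consistencymonotone} and is bounded above by the estimate just obtained, so the limit $m_{g(t)}^{-1}:=\lim_{s_{0}(x)\to\infty}\cc(x,t)$ exists for each $t$. Writing $\cc^{2}(x,t)=g(t)(X_{3},X_{3})(x)$ with $X_{3}$ the fixed vector field tangent to the Hopf-fibres, we have $\partial_{t}\log\cc^{2}=-2(k_{03}+k_{13}+k_{23})$, whence $\lvert\partial_{t}\log\cc^{2}(x,t)\rvert\leq 6\lvert\text{Rm}_{g(t)}\rvert_{g(t)}(x)\leq 6\alpha_{T'}\,s_{0}(x)^{-\frac{2}{k+1}}$ on $[0,T']$ by Lemma~\ref{Gkpreserved}. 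Integrating in time gives
\[
\left\vert\log\frac{\cc^{2}(x,t)}{\cc^{2}(x,0)}\right\vert\leq 6\,\alpha_{T'}\,t\;s_{0}(x)^{-\frac{2}{k+1}},\qquad 0\leq t\leq T',
\]
and letting $s_{0}(x)\to\infty$ forces $m_{g(t)}^{-1}=m_{g_{0}}^{-1}$, i.e. $m_{g(t)}=m_{g_{0}}$ for all $t\geq 0$.

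The argument is essentially soft, so I do not expect a serious obstacle; the one point deserving attention is that Lemma~\ref{Gkpreserved} is phrased in terms of the frozen initial distance $d_{g_{0}}$, whereas the warping coefficient $\bb$ and the mass are intrinsically tied to the evolving distance $d_{g(t)}$. This mismatch is exactly what the distortion estimate $d_{g(t)}\asymp_{T'}d_{g_{0}}$ repairs, so that the power-law bounds transfer from $t=0$ to later times. One should also verify, as indicated above, that the limit defining $m_{g(t)}$ exists, which is immediate from the monotonicity of $\cc$ together with its $t$-dependent upper bound.
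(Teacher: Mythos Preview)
Your proof is correct and follows essentially the same route as the paper: both arguments hinge on Lemma~\ref{Gkpreserved} and the Ricci flow distortion estimate $\lvert\partial_{t}\log\bb\rvert\leq C\lvert\text{Rm}\rvert$ to transfer the $t=0$ growth rate $\bb(\cdot,0)\sim s_{0}^{1/(k+1)}$ to later times, and the conservation-of-mass computation is identical to the paper's. The only cosmetic difference is that the paper phrases the $\bb$-bound as a contradiction (assuming $\bb(p_{j},t_{0})s_{0}(p_{j})^{-1/(k+1)}\to 0$ along a sequence with $s_{0}(p_{j})\to\infty$ and deriving a contradiction from the decaying bound of Lemma~\ref{Gkpreserved}), whereas you first upgrade to a global curvature bound $K_{T'}$ via interior regularity and then apply the uniform distortion estimate directly; both yield the same constants up to harmless factors.
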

\begin{proof}
Suppose for a contradiction that there exist a sequence $p_{j}$ and $t_{0}$ such that $s_{0}(p_{j})\rightarrow \infty$ and $\bb(p_{j},t_{0})(s_{0}(p_{j}))^{-\frac{1}{k + 1}} \rightarrow 0$. Then, from the decay of the curvature in Lemma \ref{Gkpreserved} we get
\[
\frac{\log\left(\frac{\bb(p_{j},t_{0})}{\bb(p_{j},0)}\right)}{t_{0}} \leq \alpha(t_{0})s_{0}^{-\frac{2}{k+1}}.
\]
\noindent Since by integrating \eqref{openingfasterparaboloid} we see that $\bb(s_{0},0) \geq \beta_{0}s_{0}^{\frac{1}{k+1}}$ for $s_{0}$ large enough, the contradiction follows. Similar arguments work for the upper bound while for the conservation of mass the proof is the same as in the asymptotically flat case (see Lemma \ref{massconserved}).
\end{proof}
\begin{remark}\label{remarkbbrm} We point out that according to Lemma \ref{Gkpreserved} and Corollary \ref{conservationmassGk} we deduce that for any $t\geq 0$ there exists some positive constant $\alpha(t)$ such that $\bb^{2}\lvert \text{Rm}\rvert(\cdot,t)\leq \alpha(t)$ on the time-slice $\R^{4}\times \{t\}$.
\end{remark}
Next, we show that the first order derivatives are uniformly bounded in the space-time. Since we cannot a priori control the behaviour of $\bb_{s}$ at spatial infinity on any time-slice, the proof requires an extra step when compared to its asymptotically flat counterpart. We recall that by Lemma \ref{consistencymonotone} the derivatives $\bb_{s}$ and $\cc_{s}$ are positive as soon as the flow starts.
\begin{lemma}\label{firstderivativesboundedGk}
Let $(\R^{4},g(t))_{t\geq 0}$ be the maximal Ricci flow solution starting at some $g_{0}\in\Gk$. Then
\[
\sup_{\R^{4}\times [0,\infty)}\,\left(\bb_{s} + \cc_{s}\right) < \infty.
\]
\end{lemma}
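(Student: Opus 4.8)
The plan is to mimic the argument for Lemma \ref{firstorderestimates}, estimate \eqref{firstderivativesbounded}, but with an extra preliminary step handling the behaviour of $\bb_{s}$ at spatial infinity, which is no longer pinned down by the asymptotic-flatness hypothesis. First I would observe that, exactly as in \cite{work2}[Lemma 4.3], the quantity $\cc_{s}$ is easy: at the origin $\cc_{s}=1$ by \eqref{smoothnessorigin}, and from \eqref{equationcs} and Remark \ref{remarkbbrm} one checks that at a first spatial-time maximum point $(p_{0},t_{0})$ where $\cc_{s}$ exceeds $\max\{2,\sup\cc_{s}(\cdot,0)\}$ one has $\partial_{t}\cc_{s}<0$, provided $\cc_{s}>2\bb_{s}$ there, together with the parabolic boundary control at spatial infinity obtained from Remark \ref{remarkbbrm} (so that $\bb^{2}\lvert\mathrm{Rm}\rvert$, hence $\bb^{2}k_{13}$, is finite on each time slice, forcing $\cc_{s}\uu^{-1}$ to stay controlled there). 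So the heart of the matter is the upper bound for $\bb_{s}$.

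For $\bb_{s}$ I would first establish a \emph{slice-wise} upper bound at spatial infinity that is allowed to depend on $t$: from \eqref{openingfasterparaboloid}, \eqref{hopffiberbounded} and Lemma \ref{Gkpreserved}/Corollary \ref{conservationmassGk} we know $\bb(s_{0},0)\sim s_{0}^{1/(k+1)}$ and $\bb(\cdot,t)\sim s_{0}^{1/(k+1)}$ with $t$-dependent constants, and $\uu^{-1}\sim\bb$; feeding this into $\bb^{2}k_{12}=4-3\uu^{2}-\bb_{s}^{2}$ together with Remark \ref{remarkbbrm} shows $\bb_{s}^{2}(\cdot,t)\le 4 + o(1)$ as $s_{0}\to\infty$, so in particular $\limsup_{s_{0}\to\infty}\bb_{s}(\cdot,t)\le 2$ on each fixed time slice. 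Combined with $\bb_{s}(\origin,t)=1$, this means that if $\bb_{s}$ ever attains a value $\bar\alpha>\max\{2,\sup_{\R^{4}}\bb_{s}(\cdot,0)\}$ there is a first such time and it is attained at an interior space-time point $(p_{0},t_{0})$, where $\bb_{s}=\bar\alpha$, $(\bb_{s})_{s}=0$, $(\bb_{s})_{ss}\le 0$.

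At that point I would evaluate \eqref{equationbs}: the Laplacian term is $\le 0$, the transport term $-2\bb_{s}\bb_{ss}/\bb$ vanishes since $\bb_{ss}=0$ there, and what remains is
\[
\partial_{t}\bb_{s}\big|_{(p_{0},t_{0})}\le \frac{1}{\bb^{2}}\Bigl(\bb_{s}\bigl(4-\bb_{s}^{2}-(\cc_{s}\uu^{-1})^{2}-6\uu^{2}\bigr)+4\cc_{s}\uu\Bigr).
\]
Since $\bb_{s}^{2}=\bar\alpha^{2}>4$, the factor $4-\bb_{s}^{2}-(\cc_{s}\uu^{-1})^{2}-6\uu^{2}<0$, so the first term is strictly negative; for the remaining $4\cc_{s}\uu$ I would use that $\cc_{s}$ is already bounded (the $\cc_{s}$ half of the lemma, proved first) and that $\uu\le 1$, then absorb it by taking $\bar\alpha$ large enough. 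Concretely $-\bar\alpha(\bar\alpha^{2}-4)+4\cc_{s}\uu<0$ once $\bar\alpha>\max\{2,\sup\cc_{s},\sup\bb_{s}(\cdot,0)\}+1$, say. This contradicts $\bb_{s}$ reaching a new maximum, so $\bb_{s}$ stays bounded by that constant for all time, and the lemma follows.

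The main obstacle I anticipate is purely the first step — making rigorous that on each fixed time slice $\bb_{s}(\cdot,t)$ cannot blow up at spatial infinity, i.e.\ that the only mechanism producing large $\bb_{s}$ is an interior bubble — since in $\Gk$, unlike in $\Gaf$, we have no a priori pointwise asymptotic limit for $\bb_{s}$ and must extract it from the preserved curvature decay (Lemma \ref{Gkpreserved}, Corollary \ref{conservationmassGk}, Remark \ref{remarkbbrm}) via the algebraic identity for $k_{12}$. Once that parabolic-boundary control is in hand, the maximum principle step is routine and essentially identical to \cite{work2}[Lemma 4.3].
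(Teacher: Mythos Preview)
Your overall strategy and the final interior maximum-principle computation for $\bb_{s}$ are correct, and you have correctly identified the real difficulty as controlling $\bb_{s}$ at spatial infinity on each time slice. However, the step where you claim to extract this control has a genuine gap. From the identity $\bb^{2}k_{12}=4-3\uu^{2}-\bb_{s}^{2}$ together with Remark \ref{remarkbbrm} you only obtain $|\bb_{s}^{2}-4+3\uu^{2}|\le \alpha(t)$ with a \emph{time-dependent} constant $\alpha(t)=\sup_{\R^{4}}\bb^{2}|\mathrm{Rm}|(\cdot,t)$: Lemma \ref{Gkpreserved} gives $|\mathrm{Rm}|\le \alpha_{T'}s_{0}^{-2/(k+1)}$ and Corollary \ref{conservationmassGk} gives $\bb\le B(t)s_{0}^{1/(k+1)}$, so $\bb^{2}|\mathrm{Rm}|\le B(t)^{2}\alpha_{T'}$, which is merely bounded and not $o(1)$ as $s_{0}\to\infty$, and may in principle grow with $t$. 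Hence you cannot conclude $\limsup_{s_{0}\to\infty}\bb_{s}(\cdot,t)\le 2$, and the first-time-hitting argument for a fixed level $\bar\alpha$ cannot be launched as stated.

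The paper closes this gap by a different mechanism. It first derives a linear differential inequality $\partial_{t}\bb_{s}\le\Delta\bb_{s}+\alpha_{t_{0}}(1+\bb_{s})$ on $\R^{4}\times[0,t_{0}]$ and applies a noncompact maximum principle to get a $t$-dependent bound $\bb_{s}(\cdot,t)\le\sup\bb_{s}(\cdot,0)+A_{t_{0}}$. With $\bb_{s}$ now known to be bounded on each slice, Shi's derivative estimates combined with the preserved curvature decay and Corollary \ref{conservationmassGk} yield $|\partial_{t}\bb_{s}|\le\alpha_{T'}(s_{0}+1)^{-1/(k+1)}$ for $t\in[1,T']$; integrating in time then gives $\bb_{s}(s_{0},t)\le\sup_{\R^{4}\times[0,1]}\bb_{s}+\varepsilon$ for $s_{0}\ge r(t,\varepsilon)$. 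This is precisely the time-uniform spatial-infinity barrier your argument was missing, after which the interior maximum-principle computation you wrote is indeed the correct finish (the paper defers it to \cite{work2}[Lemma 4.3]). For $\cc_{s}$ the paper's route is also cleaner than yours: $\cc_{s}(\cdot,t)$ is integrable since $\cc$ is bounded and increasing, and $|\cc_{ss}|\le\alpha(t)m_{g_0}^{-1}$ from $k_{03}$, forcing $\cc_{s}\to 0$ at infinity directly; going through $k_{13}$ instead would require a lower bound on $\bb_{s}$ at infinity that is not yet available.
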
 
\begin{proof}
Since by Lemmas \ref{consistencymonotone} and \ref{conservationmassGk} $\cc$ is uniformly bounded and spatially increasing, we see that $\cc_{s}(\cdot,t)$ is integrable for all $t\geq 0$. Moreover, from \eqref{sectionalhorizontal03} we derive that $\lvert \cc_{ss}\rvert \leq \alpha(t) \cc(s,t) \leq \alpha(t) m_{g_{0}}^{-1}$ in the space-time being the flow smooth for all positive times. Therefore $\cc_{s}(s,t)\rightarrow 0$ as $s\rightarrow \infty$ for all $t\geq 0$. We can then argue exactly as in \cite[Lemma 4.3]{work2} to prove that $\cc_{s}$ is uniformly bounded.
\\For what concerns $\bb_{s}$, we note that the evolution equation \eqref{equationbs} can be written as 
\[
\partial_{t}\,\bb_{s} = \Delta \bb_{s} + \frac{1}{\bb^{2}}\left(\bb_{s}(4 - \bb_{s}^{2} - (\cc_{s}\uu^{-1})^{2} - 6\uu^{2} - 2\bb_{ss}\bb) + 4\cc_{s}\uu\right).
\]
\noindent From the boundary conditions and the curvature being bounded we derive that given $t_{0} > 0$ there exist $r_{t_{0}}$ and $\alpha(t_{0})$ positive such that
\[
\partial_{t}\,\bb_{s} \leq \Delta \bb_{s} + \alpha(t_{0}),
\]
\noindent in $B_{g_{0}}(\origin,r_{t_{0}})\times [0,t_{0}]$. Since the curvature is uniformly bounded for all times in $[0,t_{0}]$, in the complement region $\R^{4}\setminus B_{g_{0}}(\origin,r_{t_{0}})\times [0,t_{0}]$ we can rely on \cite[Corollary 3.2, Lemma 3.4]{work2} to bound the evolution equation of $\bb_{s}$ by
\[
\partial_{t}\,\bb_{s}\leq \Delta \bb_{s} + \frac{1}{\bb^{2}}\left(4\bb_{s} + 4\cc_{s}\uu\right) - 2\frac{\bb_{s}\bb_{ss}}{\bb} \leq \Delta \bb_{s} + \frac{\alpha(t_{0})}{\bb} - 2\frac{\bb_{s}\bb_{ss}}{\bb} \leq \Delta \bb_{s} + \frac{\alpha(t_{0})}{\bb} + \alpha(t_{0})\bb_{s},
\] 
\noindent where we have also used that $\lvert k_{01}\rvert$ is uniformly bounded for $0\leq t \leq t_{0}$. Finally, since $\bb$ is monotone and the flow is smooth we can combine the estimates in the two space-time regions and conclude that there exists $\alpha(t_{0}) > 0$ such that
\[
\partial_{t}\,\bb_{s} \leq \Delta \bb_{s} + \alpha(t_{0})(1 + \bb_{s}),
\]
\noindent in $\R^{4}\times [0,t_{0}]$. Thus, since $\bb_{s}$ is exponentially bounded as we derive from $\lvert k_{01}\rvert$ being bounded, we may apply the maximum principle in \cite[Theorem 12.14]{ricciflowtechniques2} to deduce that for any $t_{0} > 0$ there exists $A_{t_{0}} > 0$ such that
\[
\bb_{s}(\cdot,t)\leq \sup_{\R^{4}}\,\bb_{s}(\cdot,0) + A_{t_{0}}
\]
\noindent in $\R^{4}\times [0,t_{0}]$. From Shi's derivative estimates \cite[Theorem 14.13]{ricciflowtechniques2} and the decay of the curvature in Lemma \ref{Gkpreserved} it follows that $\lvert \nabla \text{Rm}\rvert(s_{0},t) = \mathcal{O}(s_{0}^{-2/k+1})$ for all $t > 0$. Therefore, from the commutator formula we get
\[
\left \vert \partial_{t}\,\bb_{s}\right \vert = \left \vert \partial_{s}(-\text{Ric}_{11}\bb) + \text{Ric}_{ss}\bb_{s}\right \vert \leq \alpha\left( \lvert \nabla \text{Rm}\rvert \bb + \lvert \text{Rm}\rvert \bb_{s}\right).
\]
\noindent Since we have previously shown that $\bb_{s}(\cdot,t)$ is bounded on any time-slice we may apply Corollary \ref{conservationmassGk} and derive that for any $T^{\prime} >  1$ there exists $\alpha(T^{\prime})$ such that
\[
\lvert \partial_{t}\,\bb_{s}\rvert \leq \alpha(T^{\prime})(s_{0} + 1)^{-\frac{1}{k + 1}},
\]
\noindent in $\R^{4}\times [1,T^{\prime}]$. Therefore we have proved that for any $\varepsilon > 0$ and for any $t > 1$ there exists $r(t,\varepsilon)$ such that
\[
\bb_{s}(s_{0},t) \leq (\sup_{\R^{4}\times [0,1]}\,\bb_{s}) + \varepsilon,
\]
\noindent whenever $s_{0} \geq r(t,\varepsilon)$. Once we know that $\bb_{s}$ is uniformly bounded at spatial-infinity on any time-slice we can rely on the same argument in \cite[Lemma 4.3]{work2} to prove that in fact $\bb_{s}$ is uniformly bounded everywhere in the space-time.
\end{proof}
Thanks to Corollary \ref{conservationmassGk} and Lemma \ref{firstderivativesboundedGk} we can immediately extend the rotational symmetry type of bounds to Ricci flow solutions starting in $\Gk$. Namely, we have:
\begin{corollary}\label{rotationalsymmetryboundsGk}
If $(\R^{4},g(t))_{t\geq 0}$ is the maximal Ricci flow solution evolving from some $g_{0}\in\Gk$, then the following conditions hold:% There exists $\alpha > 0$ such that the following conditions are satisfied uniformly in the space-time:
%\begin{itemize}
%\item[(i)] $g_{0}$ is asymptotically flat.
%\item[(ii)] $\cc_{s}(\cdot,0) \geq 0$.
%\item[(iii)] $\bb_{s}(\cdot,0)\leq 2$.
%\end{itemize}
%\noindent Then the following estimates are uniformly satisfied in $\R^{4}\times [0,\infty)$:
\begin{align}
\sup_{\R^{4}\setminus\{\origin\}\times [0,+\infty)}&\,\,\frac{1}{\cc}(\bb_{s}^{2} - 4) < \infty, \notag \\
\sup_{\R^{4}\times [0,+\infty)}&\,\,\frac{1}{\cc}\left(1 - u\right) < \infty, \notag \\
\sup_{\R^{4}\times [0,+\infty)}&\,\,\frac{1}{\cc}\left\vert\cc_{s} - \uu\bb_{s}\right\vert < \infty.  \notag
\end{align}
\end{corollary}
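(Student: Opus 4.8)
The plan is to re-run, essentially verbatim, the maximum-principle arguments that proved \eqref{bbsboundedby4}, \eqref{rotationalsymmetryorder0} and \eqref{rotationalsymmetryorder1} in Lemma \ref{firstorderestimates}. Those arguments relied on only four facts that were specific to the asymptotically flat class: the control of the three quantities at the origin coming from the smoothness conditions \eqref{smoothnessorigin}; the control of the same quantities at spatial infinity on every time-slice coming from Lemmas \ref{characterizationALF} and \ref{ALFpreserved}; the uniform space-time bound \eqref{firstderivativesbounded} on $\bb_s+\cc_s$; and a uniform upper bound on $\cc$. The last two are now available from Lemma \ref{firstderivativesboundedGk} and Corollary \ref{conservationmassGk}, so the real work is to re-establish the parabolic-boundary behaviour, after which the interior-extremum computations — whose coefficients involve only $\bb_s,\cc_s,\cc$ and $\uu\leq 1$, all uniformly bounded — carry over word for word.

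At the origin nothing changes: since $\bb_s(\origin,\cdot)=\cc_s(\origin,\cdot)=1$ we have $\cc^{-1}(\bb_s^2-4)\to-\infty$, while $\cc^{-1}(1-\uu)$ and $\cc^{-1}\lvert\cc_s-\uu\bb_s\rvert=\lvert\cc_s/\cc-\bb_s/\bb\rvert$ stay bounded near $\origin$ because $\text{Rm}_{0123}=\uu_s/\bb$ is $O(1)$ there, the curvature being bounded. At spatial infinity I would use Corollary \ref{conservationmassGk}: for each fixed $t$ one has $\bb(\cdot,t)\to\infty$ and $\cc(\cdot,t)\to m_{g_{0}}^{-1}\in(0,\infty)$, hence $\uu(\cdot,t)\to0$ and $\bb^{-1}\bb_s(\cdot,t)\to0$ (recall $\bb_s$ is globally bounded by Lemma \ref{firstderivativesboundedGk}); moreover $\cc_s(\cdot,t)\to0$ as $s\to\infty$ because $\cc(\cdot,t)$ is bounded, monotone and has bounded second derivative by \eqref{sectionalhorizontal03} and Remark \ref{remarkbbrm} — this last fact is already extracted in the proof of Lemma \ref{firstderivativesboundedGk}. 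It then follows that on every time-slice each of $\cc^{-1}(\bb_s^2-4)$, $\cc^{-1}(1-\uu)$ and $\cc^{-1}\lvert\cc_s-\uu\bb_s\rvert$ has a spatial-infinity limit bounded above by a constant independent of $t$ (a multiple of $m_{g_{0}}$ in each case, using $\sup_{\R^{4}\times[0,\infty)}\bb_s<\infty$ for the first).

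With the parabolic boundary under control, I would argue exactly as in Lemma \ref{firstorderestimates}, and in the same order: first $\varphi=\cc^{-1}(\bb_s^2-4)$, then $\psi=\cc^{-1/2}-\bb^{-1/2}$ followed by $f=\cc^{-1}(1-\uu)$, and finally $\cc_s/\cc-\bb_s/\bb$ along the lines of \cite[Lemma 4.8]{work2}. In each step, if the quantity ever exceeded a sufficiently large constant it would do so for the first time at an interior space-time point, where the computed evolution equation is strictly negative, a contradiction; the auxiliary constant produced by the bound on $\cc^{-1}(\bb_s^2-4)$ and fed into the estimates for $\psi$ and $f$ is uniform in time, exactly as in $\Gaf$.

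The main delicate point, as opposed to mere transcription, is the spatial-infinity analysis: unlike $\Gaf$, the class $\Gk$ has no sharp asymptotic expansion at infinity (Lemma \ref{characterizationALF} has no $\Gk$-analogue), so one has to read off the qualitative statements $\uu\to0$, $\cc_s\to0$, $\bb^{-1}\bb_s\to0$ directly from Corollary \ref{conservationmassGk} and Lemma \ref{firstderivativesboundedGk}. Once that is done there is no further obstruction and the proof reduces to the asymptotically flat case.
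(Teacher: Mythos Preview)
Your proposal is correct and follows the same approach as the paper, which simply asserts that ``thanks to Corollary \ref{conservationmassGk} and Lemma \ref{firstderivativesboundedGk} we can immediately extend the rotational symmetry type of bounds'' without giving further details. Your write-up faithfully identifies and supplies the ingredients needed to transplant the argument from Lemma \ref{firstorderestimates}---the parabolic-boundary control via $\cc\to m_{g_0}^{-1}$, $\cc_s\to 0$, $\uu\to 0$ and the uniform first-derivative bounds---so there is nothing to add.
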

Similarly, the decay of the curvature being preserved as in Lemma \ref{Gkpreserved} and the control on the asymptotic behaviour of the warping coefficients as in Corollary \ref{conservationmassGk} ensure that second order estimates analogous to the asymptotically flat case still hold for Ricci flows evolving from initial data in $\Gk$. For example, since $\bb_{ss}/\bb = \mathcal{O}(s_{0}^{-2/k+1})$ we see that $\bb_{ss}\cc$ decays as $s_{0}^{-1/k+1}$ and one can hence apply maximum principle arguments as in Lemma \ref{secondderivativeslemma} once we know that the first order derivatives are uniformly bounded. In particular, the curvature of the solution is again controlled by the size of the Hopf-fiber:
\begin{corollary}\label{curvaturecontrolledbycgk}
If $(\R^{4},g(t))_{t\geq 0}$ is the maximal Ricci flow solution starting at some $g_{0}\in\Gk$, then 
\[
\sup_{\R^{4}\times [0, + \infty)}\,\left(\cc^{2}\lvert\emph{Rm}_{g(t)}\rvert_{g(t)}\right) < \infty.
\]
\end{corollary}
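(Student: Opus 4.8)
The plan is to follow the proof of Lemma~\ref{secondderivativeslemma}, replacing the two ingredients that relied on asymptotic flatness --- Lemma~\ref{ALFpreserved} and the exact linear growth of $\bb$ --- by their $\Gk$ analogues, Lemma~\ref{Gkpreserved} and Corollary~\ref{conservationmassGk}. Since by \eqref{sectionalvertical12}, \eqref{sectionalvertical13}, \eqref{sectionalhorizontal01}, \eqref{sectionalhorizontal03} and \eqref{rmo123} the Riemann tensor of a warped Berger metric is assembled from the six sectional curvatures together with the single off-diagonal term $\text{Rm}_{0123}$, it is enough to bound $\cc^{2}$ times each of these uniformly in space-time. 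For the vertical curvatures and the off-diagonal term this is immediate from $\uu\leq 1$, the uniform bound $\cc\leq m_{g_{0}}^{-1}$ from Corollary~\ref{conservationmassGk}, the uniform bound on $\bb_{s},\cc_{s}$ from Lemma~\ref{firstderivativesboundedGk}, and the third estimate of Corollary~\ref{rotationalsymmetryboundsGk}: indeed $\cc^{2}k_{12}=\uu^{2}(4-3\uu^{2}-\bb_{s}^{2})$, $\cc^{2}k_{13}=\uu^{4}-\uu\,\bb_{s}\cc_{s}$ and $\cc^{2}\,\text{Rm}_{0123}=\uu^{2}(\cc_{s}-\uu\bb_{s})$ with $\lvert\cc_{s}-\uu\bb_{s}\rvert\leq\alpha\cc$. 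The whole content is therefore concentrated in the mixed curvatures $k_{01}=-\bb_{ss}/\bb$ and $k_{03}=-\cc_{ss}/\cc$, and since $\cc^{2}\lvert k_{01}\rvert=\uu\,\lvert\cc\bb_{ss}\rvert\leq\lvert\cc\bb_{ss}\rvert$ and $\cc^{2}\lvert k_{03}\rvert=\lvert\cc\cc_{ss}\rvert$, it suffices to show that $\cc\bb_{ss}$ and $\cc\cc_{ss}$ are uniformly bounded in space-time.

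For this I would reuse verbatim the barrier quantities of Lemma~\ref{secondderivativeslemma}, namely $f\doteq\cc\bb_{ss}-2\bb_{s}^{2}-\cc_{s}^{2}$ and $h\doteq\cc\cc_{ss}-2\cc_{s}^{2}-\bb_{s}^{2}$, first proving a uniform lower bound for $\cc\bb_{ss}$ through $f$, then a uniform lower bound for $\cc\cc_{ss}$ through $h$ (which at a negative minimum uses the two-sided bound on $\cc\bb_{ss}$ just obtained), and finally the symmetric arguments for the upper bounds. The interior evolution computations at a negative minimum of $f$ (resp.\ $h$) are exactly the ones already carried out in Lemma~\ref{secondderivativeslemma}: since $\bb_{s},\cc_{s}$ are uniformly bounded by Lemma~\ref{firstderivativesboundedGk} and $\uu\leq 1$, once $\lvert f_{\min}\rvert$ (resp.\ $\lvert h_{\min}\rvert$) is sufficiently large one gets $\cc\bb_{ss}<f_{\min}/2$ (resp.\ $\cc\cc_{ss}<h_{\min}/2$), and then the right-hand side of the evolution equation is strictly positive, a contradiction. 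The only genuinely new point, which I would check carefully, is the control of $f$ and $h$ along the parabolic boundary of the space-time. At the origin the smoothness conditions \eqref{smoothnessorigin} together with $\cc\to 0$ and $\lvert\bb_{ss}\rvert\leq\alpha\bb$, $\lvert\cc_{ss}\rvert\leq\alpha\cc$ (bounded curvature) give $f,h\to-3$; at $t=0$ they are bounded since $g_{0}\in\Gk$; and at spatial infinity, Lemma~\ref{Gkpreserved} yields $\bb_{ss}/\bb,\cc_{ss}/\cc=\mathcal{O}(s_{0}^{-2/(k+1)})$ on each time slice, so combining with $\cc\leq m_{g_{0}}^{-1}$ and $\beta(t)s_{0}^{1/(k+1)}\leq\bb\leq B(t)s_{0}^{1/(k+1)}$ from Corollary~\ref{conservationmassGk} one gets $\cc\bb_{ss}=\mathcal{O}(s_{0}^{-1/(k+1)})\to 0$ and $\cc\cc_{ss}=\mathcal{O}(s_{0}^{-2/(k+1)})\to 0$. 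Hence $f\to -2\bb_{s}^{2}-\cc_{s}^{2}$ and $h\to -2\cc_{s}^{2}-\bb_{s}^{2}$ at spatial infinity, and these limits are bounded \emph{uniformly in time} by Lemma~\ref{firstderivativesboundedGk} (indeed $\cc_{s}\to 0$ at spatial infinity, as recorded in its proof). Assembling the bounds on $\cc^{2}k_{01}$ and $\cc^{2}k_{03}$ with those above gives $\sup_{\R^{4}\times[0,\infty)}(\cc^{2}\lvert\text{Rm}_{g(t)}\rvert_{g(t)})<\infty$.

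The main obstacle is ensuring this boundary control at spatial infinity is uniform in time. In the asymptotically flat setting of Lemma~\ref{secondderivativeslemma}, $\bb$ is genuinely linear at infinity with a time-independent slope tending to $2$ or $1$, so $f$ and $h$ tend to \emph{fixed} values there; in $\Gk$ the constants $\beta(t),B(t)$ depend on $t$, so one instead argues that the limiting values $-2\bb_{s}^{2}-\cc_{s}^{2}$ and $-2\cc_{s}^{2}-\bb_{s}^{2}$ are still bounded uniformly in $t$, which is exactly what the uniform bound on $\bb_{s}$ from Lemma~\ref{firstderivativesboundedGk} and the spatial decay $\cc_{s}\to 0$ provide. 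A secondary technical point worth highlighting is that one genuinely needs $\cc$ bounded --- condition \eqref{hopffiberbounded}, preserved as $\cc\leq m_{g_{0}}^{-1}$ by Corollary~\ref{conservationmassGk} --- to convert the $s_{0}^{-2/(k+1)}$ curvature decay of Lemma~\ref{Gkpreserved} into genuine decay of $\cc\bb_{ss}$ and $\cc\cc_{ss}$ at spatial infinity.
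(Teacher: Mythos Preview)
Your proposal is correct and follows exactly the route the paper intends: the paper states this corollary without an explicit proof, merely observing in the preceding paragraph that since $\bb_{ss}/\bb=\mathcal{O}(s_{0}^{-2/(k+1)})$ and hence $\cc\bb_{ss}=\mathcal{O}(s_{0}^{-1/(k+1)})$ one can rerun the maximum-principle arguments of Lemma~\ref{secondderivativeslemma} once the first derivatives are uniformly bounded (Lemma~\ref{firstderivativesboundedGk}). Your writeup is a faithful and more explicit unpacking of that sketch, including the correct observation that although the constants in Lemma~\ref{Gkpreserved} and Corollary~\ref{conservationmassGk} are time-dependent, the spatial limits of $f$ and $h$ are controlled uniformly in time via Lemma~\ref{firstderivativesboundedGk}.
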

Next, we prove that for Ricci flows starting in $\Gk$ the quantity $\bb_{s}\uu^{-1}$ is controlled from below in any region where $\uu$ becomes degenerate exactly as for the asymptotically flat case. If a Ricci flow solution in $\Gk$ has curvature bounded uniformly in time, then such estimate implies that any infinite-time singularity model must open up along the $S^{2}$-direction faster than a paraboloid in $\R^{3}$. However, differently from the asymptotically flat case, for solutions in $\Gk$ we need a preliminary bound to make sure that $\bb_{s}\uu^{-1}$ does indeed diverge at spatial infinity on any time-slice. 
\begin{lemma}\label{preliminarykeybound}
Let $(\R^{4},g(t))_{t\geq 0}$ be the maximal Ricci flow solution starting at some $g_{0}\in\Gk$ and let $k < \bar{k} < 1$ and $\delta\in (0,1-\bar{k})$.  For any $t\geq 0$ there exists $\alpha(t) > 0$ such that:
\[
\uu^{-\delta}\left(\bb_{s}\uu^{-\bar{k}} - 1\right) \geq -\alpha(t) > -\infty.
\]
\end{lemma}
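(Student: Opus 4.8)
The plan is to run a space--time maximum-principle argument, in the spirit of Lemma~\ref{keyquantity}, for the scale-invariant quantity
\[
\chi \doteq \uu^{-\delta}\bigl(\bb_{s}\uu^{-\bar{k}} - 1\bigr) \;=\; \bb_{s}\,\uu^{-(\bar{k}+\delta)} - \uu^{-\delta},
\]
extended continuously to the origin. Fixing $T' < \infty$, it is enough to produce $\alpha(T') > 0$ with $\chi \geq -\alpha(T')$ on $\R^{4}\times[0,T']$.

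First I would control $\chi$ on the parabolic boundary. At the origin the smoothness conditions \eqref{smoothnessorigin} give $\bb_{s},\cc_{s}\to 1$ and $\uu\to 1$, so $\chi(\origin,t)=0$ for every $t$. On the initial slice, since $g_{0}\in\Gk$ there is $c_{0}>0$ with $\bb_{s}\uu^{-k}\geq c_{0}$, whence $\bb_{s}\uu^{-\bar{k}}\geq c_{0}\uu^{-(\bar{k}-k)}$; on the set where $\bb_{s}\uu^{-\bar{k}}<1$ this forces $\uu > c_{0}^{1/(\bar{k}-k)}$, so $\uu$ is bounded below and $\chi\geq -c_{0}^{-\delta/(\bar{k}-k)}$ there, while $\chi\geq 0$ elsewhere. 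For a fixed slice $t\in(0,T']$ and $s_{0}\to\infty$: Corollary~\ref{conservationmassGk} and conservation of mass give $\bb(\cdot,t)\sim s_{0}^{1/(k+1)}\to\infty$ with $\cc(\cdot,t)$ bounded, so $\uu\to 0$; Lemma~\ref{Gkpreserved} and Remark~\ref{remarkbbrm} force the curvature to decay at the corresponding scale-invariant rate, so in particular $\bb_{ss}\to 0$ there; combining these one checks that $\bb_{s}\uu^{-\bar{k}}\to\infty$ along the slice, so $\chi\to+\infty$ at spatial infinity.

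It then remains to rule out $\chi$ attaining a large negative value at an interior space--time minimum $(p_{0},t_{0})$ among prior times in $[0,T']$. At such a point $\bb_{s}\uu^{-\bar{k}}<1$, so $\bb_{s}$ is small, and since $|\chi_{\min}| < \uu^{-\delta}$ the ratio $\uu$ is small as well. Using the commutator identity \eqref{commutatorformula}, the evolution \eqref{equationbs} of $\bb_{s}$ and the evolution equation for $\uu$ (derived from \eqref{Ricciflowpdes1}--\eqref{Ricciflowpdes2} together with \eqref{rmo123}), one writes $\partial_{t}\chi = \Delta\chi + \mathcal{R}$; at $(p_{0},t_{0})$ the conditions $\chi_{s}=0$, $\chi_{ss}\geq 0$ and $\partial_{t}\chi\leq 0$ reduce the task to showing $\mathcal{R}(p_{0},t_{0})>0$. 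In the degenerate regime $\uu,\bb_{s}$ small one expects $\mathcal{R}$ to be dominated by a favourable multiple of $|\chi_{\min}|$ coming from the weight $\uu^{-(\bar{k}+\delta)}$ (playing the role of the prefactor $\bb^{\lambda}$ in Lemma~\ref{keyquantity}), with the remaining terms absorbed using Lemma~\ref{firstderivativesboundedGk} and Corollary~\ref{rotationalsymmetryboundsGk}; the hypothesis $\delta\in(0,1-\bar{k})$ enters precisely to make $\bar{k}+\delta<1$, which gives the right sign to the quadratic-in-exponent contributions produced when the weight is differentiated. Once $\mathcal{R}(p_{0},t_{0})>0$ is established for $|\chi_{\min}|$ large, the contradiction closes the argument.

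I expect the main obstacle to be the boundary control at spatial infinity: unlike the asymptotically flat case, where Lemma~\ref{characterizationALF} pins the asymptotics $\bb_{s}\to 1$ or $2$, here the divergence of $\bb_{s}\uu^{-\bar{k}}$ at spatial infinity must be extracted on each time-slice from the preserved volume-growth rate and curvature decay alone, and controlling the oscillation of $\bb_{s}$ there (via $\bb_{ss}\to 0$) is delicate; this is also exactly why the estimate is only per-slice, with $\alpha$ depending on $t$. A secondary difficulty is bookkeeping the signs of the many reaction terms in the regime $\uu,\bb_{s}\to 0$, where the admissible range of $\delta$ must be exploited carefully.
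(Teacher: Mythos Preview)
Your outline follows the template of Lemma~\ref{keyquantity}: control $\chi$ on the parabolic boundary and then argue at an interior space--time minimum. You correctly flag the spatial-infinity boundary as the main obstacle, but the sentence ``combining these one checks that $\bb_{s}\uu^{-\bar{k}}\to\infty$ along the slice'' does not close it, and this is a genuine gap. On a later slice the only information available is $\bb(\cdot,t)\sim s_{0}^{1/(k+1)}$ (Corollary~\ref{conservationmassGk}) and $\lvert\bb_{ss}\rvert\leq\alpha(t)\bb^{-1}=O(s_{0}^{-1/(k+1)})$; for $k>0$ the latter is \emph{not} integrable, so the oscillation of $\bb_{s}$ over an interval of length comparable to $s_{0}$ can be of order $s_{0}^{k/(k+1)}$, which dwarfs the typical size $s_{0}^{-k/(k+1)}$ of $\bb_{s}$ itself. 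Nothing you cite rules out $\bb_{s}\uu^{-\bar{k}}$ staying bounded along a subsequence. In the $\Gaf$ case Lemmas~\ref{characterizationALF} and~\ref{ALFpreserved} pin $\bb_{s}$ at infinity, but there is no analogue here; indeed Lemma~\ref{preliminarykeybound} is precisely the input that later \emph{produces} such control (it feeds Lemma~\ref{decayofcsGk} and Lemma~\ref{keylemmaGk}), so assuming it at this stage is circular.

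The paper sidesteps the issue entirely: it never asserts $\chi\to+\infty$ at spatial infinity for $t>0$, and it does not run a sign argument at a minimum. Instead it writes out the full evolution of $F_{\bar{k},\delta}\equiv\chi$ and observes that every reaction and cross term (including the $\bb_{ss}$ contributions) carries a factor $\uu^{-\bar{k}-\delta}/\bb$ or better, which by Corollary~\ref{conservationmassGk} decays like $s_{0}^{-(1-\bar{k}-\delta)/(k+1)}$. This is where the hypothesis $\delta\in(0,1-\bar{k})$ is actually used---to make these coefficients \emph{bounded} on each finite time interval---rather than for sign control of quadratic-in-exponent terms at a critical point. One then gets the crude global inequality $\partial_{t}\chi\geq\Delta\chi-\alpha_{T'}$ on $\R^{4}\times[0,T']$, and the non-compact maximum principle \cite[Theorem~12.14]{ricciflowtechniques2}, which needs only growth bounds and not boundary values at infinity, yields $\chi(\cdot,t)\geq\inf_{\R^{4}}\chi(\cdot,0)-\alpha(t)$ directly. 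The $t$-dependence of the constant comes from integrating this drift, not from per-slice boundary control.
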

\begin{proof}
We set $F_{\bar{k},\delta} \doteq \uu^{-\delta}(\bb_{s}\uu^{-\bar{k}} - 1)$. By the boundary conditions we see that $F_{\bar{k},\delta}(\origin,t) = 0$ for all $t\geq 0$. Moreover, from the definition of $\Gk$ it follows that $F_{\bar{k},\delta}(s_{0},0)\rightarrow \infty$ as $s_{0}\rightarrow \infty$, meaning that $\inf F_{\bar{k},\delta}(\cdot,0) > -\infty$. We now argue as for the proof of Lemma \ref{firstderivativesboundedGk}. First, the evolution equation of $F_{\bar{k},\delta}$ is given by
\begin{align*}
\partial_{t}F_{\bar{k},\delta} &= \Delta F_{\bar{k},\delta} - 2\delta\uu^{-\delta}\left(\frac{\bb_{s}}{\bb} - \frac{\cc_{s}}{\cc}\right)\left( \bb_{ss}\uu^{-\bar{k}} + \bar{k}\bb_{s}\uu^{-\bar{k}}\left(\frac{\bb_{s}}{\bb} - \frac{\cc_{s}}{\cc}\right)\right) \\ &+ \uu^{-\delta}\left(-2(\bar{k} + 1)\frac{\bb_{s}\bb_{ss}\uu^{-\bar{k}}}{\bb} + 2\bar{k}\frac{\cc_{s}\bb_{ss}\uu^{-\bar{k}}}{\cc}\right) \\
&+ \frac{\uu^{-\delta - \bar{k}}}{\bb^{2}}\left(\bb_{s}\left(4(1-\bar{k}) -(1 + \bar{k}^{2})(\bb_{s}^{2} + \left(\frac{\cc_{s}}{\uu}\right)^{2}) + 2\bar{k}^{2}\bb_{s}\frac{\cc_{s}}{\uu} + \uu^{2}(4\bar{k}- 6)\right) + 4\cc_{s}\uu\right) \\
&+ \frac{F_{\bar{k},\delta}}{\bb^{2}}\left(-\delta^{2}(\bb_{s} - \cc_{s}\uu^{-1})^{2} - 4\delta(1-\uu^{2})\right).
\end{align*}
\noindent Since the curvature is bounded, from the boundary conditions one can check that for any $t_{0} > 0$ there exists $r_{t_{0}} > 0$ and $\alpha(t_{0}) > 0$ such that
\[
\partial_{t}F_{\bar{k},\delta} \geq \Delta F_{\bar{k},\delta} - \alpha(t_{0}),
\]
\noindent in $B_{g_{0}}(\origin,r_{t_{0}})\times [0,t_{0}]$. For analysing the terms in the evolution equation for radii larger than $r_{t_{0}}$ we first note that by Corollary \ref{conservationmassGk}
\begin{equation}\label{equation1useful}
\frac{\uu^{-\delta -\bar{k}}}{\bb} = \mathcal{O}\left( s_{0}^{-\frac{1}{k+1}(1 - \delta - \bar{k})}\right),
\end{equation}
\noindent which hence decays at spatial infinity on any time-slice because $\bar{k} + \delta < 1$. From \eqref{equation1useful} and Lemma \ref{Gkpreserved} we derive that $\uu^{-\bar{k}-\delta}\lvert\bb_{ss}\rvert$ decays at spatial infinity as long as the solution exists. Since by Lemma \ref{firstderivativesboundedGk} the first derivatives $\bb_{s}$ and $\cc_{s}$ are bounded, we find that all the second order terms in the evolution equation of $F_{\bar{k},\delta}$ decay to zero at the rate given in \eqref{equation1useful} for all $t\geq 0$. Similarly, from Lemma \ref{Gkpreserved} and Corollary \ref{conservationmassGk} we see that $\bb^{2}\lvert \text{Rm}\rvert(\cdot,t)$ is bounded on any time-slice, meaning that $\lvert \bb_{s}\cc_{s}\uu^{-1}\rvert \leq \alpha(t)$. Thus any term of the form $\bb_{s}\cc_{s}\uu^{-\bar{k} - \delta}$ decays at the same rate given by \eqref{equation1useful}. To sum up, in the region $\R^{4}\setminus B_{g_{0}}(\origin,r_{t_{0}})\times [0,t_{0}]$ we can then write the evolution equation of $F_{\bar{k},\delta}$ as 
\[
\partial_{t}F_{\bar{k},\delta} \geq \Delta F_{\bar{k},\delta} - \alpha(t_{0}) + \frac{F_{\bar{k},\delta}}{\bb^{2}}\left(-\delta^{2}(\bb_{s} - \cc_{s}\uu^{-1})^{2} - 4\delta(1-\uu^{2})\right),
\]
\noindent for some $\alpha(t_{0})$. Finally, we note that
\begin{align*}
\frac{F_{\bar{k},\delta}}{\bb^{2}}\left(-\delta^{2}(\bb_{s} - \cc_{s}\uu^{-1})^{2} - 4\delta(1-\uu^{2})\right) &\geq \frac{\bb_{s}\uu^{-\bar{k} - \delta}}{\bb^{2}}\left(-\delta^{2}(\bb_{s} - \cc_{s}\uu^{-1})^{2} - 4\delta(1-\uu^{2}) \right) \\ &\geq \frac{\bb_{s}\uu^{-\bar{k} - \delta}}{\bb^{2}}\left(-\delta^{2}(\bb_{s}^{2} + (\cc_{s}\uu^{-1})^{2}) - 4\delta) \right),
\end{align*}
\noindent and the last terms are again bounded away from the origin on any time-slice as observed above. Therefore, for any  $t_{0} > 0$ there exists $\alpha(t_{0}) > 0$ such that 
\[
\partial_{t}F_{\bar{k},\delta} \geq \Delta F_{\bar{k},\delta} - \alpha(t_{0}) .
\]
\noindent From the maximum principle \cite[Theorem 12.14]{ricciflowtechniques2} we conclude that
\[
F_{\bar{k},\delta}(\cdot,t) \geq \inf_{\R^{4}}\,F_{\bar{k},\delta}(\cdot,0) - \alpha(t) > -\infty,
\]
\noindent for all positive times.
\end{proof}
We finally need to check that the spatial derivative $\cc_{s}$ decays at some rate in any space-time region where $\uu$ is small. 
\begin{lemma}\label{decayofcsGk}
Let $(\R^{4},g(t))$ be the maximal Ricci flow solution evolving from some $g_{0}\in\Gk$. For any $\hat{k}\in (k,1)$ we have
\[
\sup_{\R^{4}\times [0,\infty)}\,\left(\cc_{s}\uu^{-1 + \hat{k}}\right) < \infty.
\]
\end{lemma}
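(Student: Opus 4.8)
The plan is to run a space-time maximum principle for $G:=\cc_{s}\uu^{\hat k-1}$, in the spirit of Lemma \ref{bbccsquaredccsboundedbyk}, but with the two adjustments forced by the $\Gk$ setting: the behaviour of $G$ at spatial infinity has to be controlled by hand on each time-slice (unlike the asymptotically flat case, $\bb_{s}$ need not be bounded below at infinity), and the first-order gradient term in the evolution equation should be kept non-positive rather than expanded as for the exponent $k>1$ in Lemma \ref{bbccsquaredccsboundedbyk}.

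\emph{Parabolic boundary.} At the origin $G(\origin,t)=1$ by the boundary conditions. For $s\to\infty$, fix $\bar k\in(k,\hat k)$ and $\delta\in(0,1-\bar k)$; since $\uu\to0$ at spatial infinity on every time-slice (by Corollary \ref{conservationmassGk}, $\bb$ diverges while $\cc$ stays bounded), Lemma \ref{preliminarykeybound} gives $\bb_{s}\geq\frac{1}{2}\uu^{\bar k}$ for $s$ large. Combining this with the time-slice curvature bound $\bb^{2}\lvert\text{Rm}\rvert(\cdot,t)\leq\alpha(t)$ (Remark \ref{remarkbbrm}) and the identity $\bb_{s}\cc_{s}\uu^{-1}=\uu^{2}-\bb^{2}k_{13}$ from \eqref{sectionalvertical13} yields $\cc_{s}\leq 2(1+\alpha(t))\uu^{1-\bar k}$ for $s$ large, hence $G=\cc_{s}\uu^{\hat k-1}\leq 2(1+\alpha(t))\uu^{\hat k-\bar k}\to0$ as $s\to\infty$. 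The same computation with \eqref{openingfasterparaboloid} in place of Lemma \ref{preliminarykeybound} shows $G(\cdot,0)$ is bounded. With these facts (locally uniform in $t$) one gets $\sup_{\R^{4}\times[0,T]}G<\infty$ for each $T<\infty$, and on each slice the spatial supremum of $G$ is attained at an interior point.

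\emph{Evolution at a maximum.} If $\sup_{\R^{4}\times[0,\infty)}G=\infty$, a first-time argument produces a spatially interior point $(p_{0},t_{0})$ with $G(p_{0},t_{0})$ as large as we wish, $G_{s}=0$, $G_{ss}\leq0$, $\partial_{t}G\geq0$, so $\partial_{t}G-\Delta G\geq0$ there. I would compute $\partial_{t}G-\Delta G$ at such a point from \eqref{equationcs} and the evolution equation of $\uu$ derived from \eqref{Ricciflowpdes1}--\eqref{Ricciflowpdes2}, substituting $\cc_{ss}=q\cc_{s}\uu^{-1}\uu_{s}$ (from $G_{s}=0$) with $q:=1-\hat k\in(0,1)$ and $\uu_{s}=(\cc_{s}-\bb_{s}\uu)/\bb$. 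Two features make the estimate close: the $\bb_{s}\cc_{s}\uu^{-1}$-terms produced by $-2\cc_{s}\cc_{ss}/\cc$ and by the $\uu$-equation cancel exactly, and the gradient contribution $q(q-1)\cc_{s}\uu^{-q-2}\uu_{s}^{2}$ is non-positive because $q<1$, hence is discarded. What remains is
\[
\partial_{t}G-\Delta G\big|_{(p_{0},t_{0})}\leq\frac{1}{\bb^{2}}\Big(G\big(-q(\cc_{s}\uu^{-1})^{2}+(4q-6)\uu^{2}+(q-2)\bb_{s}^{2}-4q\big)+8\bb_{s}\uu^{3-q}\Big),
\]
where every coefficient of $G$ is negative since $0<q<1$ and $\uu\leq1$, and $8\bb_{s}\uu^{3-q}\leq 8\sup\bb_{s}=:\alpha<\infty$ by Lemma \ref{firstderivativesboundedGk}. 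Thus $\partial_{t}G-\Delta G<0$ once $G>\alpha/(4q)$, contradicting $\partial_{t}G-\Delta G\geq0$ at a space-time maximum; therefore $\sup_{\R^{4}\times[0,\infty)}G\leq\max\{\alpha/(4q),\sup_{\R^{4}\times\{0\}}G\}<\infty$.

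\emph{Main obstacle.} The delicate part is the first step: without the quantitative lower bound on $\bb_{s}$ at spatial infinity one cannot even guarantee that $G$ attains an interior maximum, which is precisely why Lemma \ref{preliminarykeybound} was established beforehand. The rest — the bookkeeping in the evolution computation, in particular verifying the cancellation of the $\bb_{s}\cc_{s}\uu^{-1}$-terms and the legitimacy of discarding the gradient term — is routine once $q\in(0,1)$ is used.
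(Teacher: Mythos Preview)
Your proof is correct and follows essentially the same route as the paper: first use Lemma \ref{preliminarykeybound} together with the slice-wise curvature bound (Remark \ref{remarkbbrm}) to force $G=\cc_{s}\uu^{\hat k-1}\to 0$ at spatial infinity on every time-slice, and then run a first-time maximum principle on the evolution of $G$.

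There is one genuine, if minor, difference in the bookkeeping. You discard the gradient-squared contribution $q(q-1)\cc_{s}\uu^{-q-2}\uu_{s}^{2}$ at the outset using $q=1-\hat k\in(0,1)$, which leaves every coefficient of $G$ in your displayed inequality manifestly negative and yields the clean bound $\partial_{t}G-\Delta G\leq \bb^{-2}(-4qG+\alpha)$. The paper instead keeps that term expanded, so the cross term $\bb_{s}\cc_{s}\uu^{-1}$ survives with coefficient $2q(1-q)>0$; the paper then closes by observing that at a large maximum $\cc_{s}\uu^{-1}$ dominates $\bb_{s}$, making the combination $(\cc_{s}\uu^{-1})^{2}(q^{2}-2q)+\bb_{s}\cc_{s}\uu^{-1}\cdot 2q(1-q)$ negative. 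Your simplification is cleaner and exploits precisely the feature $q<1$ that distinguishes this lemma from its asymptotically flat analogue Lemma \ref{bbccsquaredccsboundedbyk} (where the exponent exceeds $1$ and the gradient term has the opposite sign). Note also that the paper writes the evolution for $\cc_{s}\uu^{-\hat k}$ rather than for $\cc_{s}\uu^{-(1-\hat k)}$; your choice of exponent matches the lemma statement directly and is the one actually used downstream in Lemma \ref{keylemmaGk}.
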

\begin{proof}
Given $\hat{k} > k$, let $\bar{k}\in (k,\hat{k})$ and $\delta$ be defined so that Lemma \ref{preliminarykeybound} holds. As observed in Remark \ref{remarkbbrm}, from \eqref{sectionalvertical12} we see that for any $t\geq 0$ there exists $A(t) > 0$ such that $\lvert \bb_{s}\cc_{s}\uu^{-1}\rvert \leq A(t)$. Thus, by Lemma \ref{preliminarykeybound} we get
\[
A(t) \geq \lvert \bb_{s}\cc_{s}\uu^{-1}\rvert \geq \left \vert \cc_{s}(-\alpha(t)\uu^{\delta} + 1)\uu^{-1 + \bar{k}}\right \vert.
\]
\noindent Therefore, we find that $\lim_{s_{0}\rightarrow \infty}(\cc_{s}\uu^{-1 + \hat{k}})(s_{0},t) = 0$ for all $t\geq 0$. The evolution equation of $\cc_{s}\uu^{-\hat{k}}$ at any positive maximum is given by 
\begin{align*}
\partial_{t}\left(\cc_{s}\uu^{-\hat{k}}\right)|_{\text{max}} &\leq \frac{\cc_{s}\uu^{-\hat{k}}}{\bb^{2}}\left( \bb_{s}^{2}(\hat{k}^{2}-2) -4\hat{k} + \uu^{2}(-6+4\hat{k}) + (\cc_{s}\uu^{-1})^{2}(\hat{k}^{2} -2\hat{k})\right)\\ &+\frac{1}{\bb^{2}}\left(\cc_{s}\uu^{-\hat{k}}\left(\bb_{s}\cc_{s}\uu^{-1}(-2\hat{k}^{2} + 2\hat{k})\right) + 8\uu^{3-\hat{k}}\bb_{s}\right)
\end{align*}
\noindent Since $\hat{k} < 1$ and $\bb_{s}$ is uniformly bounded by Lemma \ref{firstderivativesboundedGk}, we get
\[
\partial_{t}\left(\cc_{s}\uu^{-\hat{k}}\right)|_{\text{max}} \leq \frac{1}{\bb^{2}}\left(\cc_{s}\uu^{-\hat{k}}\left(-4\hat{k} + (\cc_{s}\uu^{-1})^{2}(\hat{k}^{2} -2\hat{k})+ \bb_{s}\cc_{s}\uu^{-1}(-2\hat{k}^{2} + 2\hat{k})\right) + \alpha \right).
\]
\noindent Finally, if the value of the maximum is large enough, then we find that
\[
(\cc_{s}\uu^{-1})^{2}(\hat{k}^{2} -2\hat{k})+ \bb_{s}\cc_{s}\uu^{-1}(-2\hat{k}^{2} + 2\hat{k}) \leq \hat{k}\cc_{s}\uu^{-1}(-\cc_{s}\uu^{-1} + 2\bb_{s}) < 0.
\]
\noindent Thus, we have shown that $\partial_{t}(\cc_{s}\uu^{-\hat{k}}) < 0$ at any maximum value large enough. That completes the proof.
\end{proof}
We may now complete the section by noting that, as in the asymptotically flat case, for any Ricci flow in $\Gk$ the warping coefficient in the directions orthogonal to the Hopf-fiber grows faster than a paraboloid in $\R^{3}$.
\begin{lemma}\label{keylemmaGk}
Let $(\R^{4},g(t))_{t\geq 0}$ be the maximal Ricci flow solution starting at some $g_{0}\in\Gk$. There exist $\alpha, \lambda > 0$ such that
\[
\bb^{\lambda}(\bb_{s}\uu^{-1} - \log(\bb))\geq -\alpha,
\]
\noindent uniformly in the space-time. Moreover, there exists $\beta > 0$ such that
\[
\inf_{\R^{4}}\,\left(\bb_{s}\uu^{-1}\right)(\cdot,t) \geq \beta,
\]
\noindent for all times $t\geq 1$.% \babbo $g_{0}\in\Gaf$. There exists $\nu > 0$ such that $\bb_{s} \geq \nu\uu$ in the space-time.
\end{lemma}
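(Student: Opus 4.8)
The plan is to run the maximum-principle arguments of Lemma \ref{keyquantity} and Lemma \ref{lowerboundforCA}(i), substituting the asymptotically flat inputs used there by the $\Gk$-counterparts established earlier in this section. For the first estimate, set $\chi\doteq\bb^{\lambda}(\bb_{s}\uu^{-1}-\log\bb)$, defined smoothly on $(\R^{4}\setminus\{\origin\})\times[0,\infty)$ and extended continuously by $0$ at $\origin$ (using $\bb^{-1}\uu_{s}=\mathrm{O}(1)$ near $\origin$ from boundedness of the curvature, as in Lemma \ref{keyquantity}). The one genuinely new point is controlling $\chi$ along the parabolic boundary: in the asymptotically flat case $\bb_{s}\to 2$ was forced by the curvature decay, whereas here I would invoke Lemma \ref{preliminarykeybound}. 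Fixing $k<\bar{k}<1$ and $\delta\in(0,1-\bar{k})$, it gives $\bb_{s}\uu^{-\bar{k}}\geq 1-\alpha(t)\uu^{\delta}$ on each time-slice; since $\uu=\cc/\bb\to 0$ at spatial infinity at the rate $s_{0}^{-1/(k+1)}$ by Corollary \ref{conservationmassGk} (with $\cc$ bounded by $m_{g_{0}}^{-1}$), writing $\bb_{s}\uu^{-1}=(\bb_{s}\uu^{-\bar{k}})\,\uu^{\bar{k}-1}$ with $\bar{k}-1<0$ shows $\bb_{s}\uu^{-1}\to+\infty$ at a power rate in $s_{0}$ that dominates $\log\bb$. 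Hence $\chi(\cdot,t)\to+\infty$ at spatial infinity for every $t\geq 0$, while $\inf_{\R^{4}}\chi(\cdot,0)>-\infty$ follows directly from \eqref{openingfasterparaboloid}. Consequently, if $\chi$ ever attains a large negative value $-N$, it does so first at an interior spatial minimum $(p_{0},t_{0})$ with $t_{0}>0$.

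At such a point the evolution equation for $\chi$ is exactly the one displayed in the proof of Lemma \ref{keyquantity}. As there, $|\chi_{\mathrm{min}}|\leq\bb^{\lambda}\log\bb$ forces $\bb$ to be large, hence (by $\bb_{s}<\uu\log\bb\leq m_{g_{0}}^{-1}\bb^{-1}\log\bb$ at a negative minimum) forces $\bb_{s}$ small; the dominant positive terms are $\bb^{-2}\big(4\lambda|\chi_{\mathrm{min}}|+4\bb^{\lambda}\big)$, and the only dangerous term is $-4\bb^{\lambda}\bb_{s}^{2}\cc_{s}\uu^{-2}/\bb^{2}$. To absorb it I would replace Lemma \ref{bbccsquaredccsboundedbyk} by Lemma \ref{decayofcsGk}: for $\hat{k}\in(k,1)$ one has $\cc_{s}\leq\alpha\,\uu^{1-\hat{k}}\leq\alpha\, m_{g_{0}}^{-(1-\hat{k})}\bb^{-(1-\hat{k})}$, so $4\bb^{\lambda}\bb_{s}^{2}\cc_{s}\uu^{-2}\leq\alpha\,(\log\bb)^{2}\,\bb^{\lambda-(1-\hat{k})}$, which divided by $\bb^{2}$ is $\mathrm{o}(\bb^{\lambda-2})$ precisely because $\hat{k}<1$. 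Taking $\lambda\leq 1$, and using Corollary \ref{conservationmassGk}, Lemma \ref{firstderivativesboundedGk}, Corollary \ref{rotationalsymmetryboundsGk} and Corollary \ref{curvaturecontrolledbycgk} wherever Lemma \ref{keyquantity} used their asymptotically flat versions, one gets $\partial_{t}\chi(p_{0},t_{0})>0$, contradicting minimality. This proves $\chi\geq-\alpha$ throughout space-time.

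The second estimate follows Lemma \ref{lowerboundforCA}(i) essentially verbatim; there is no zero-mass subcase, as every metric in $\Gk$ has finite positive mass with $m_{g(t)}=m_{g_{0}}$ by Corollary \ref{conservationmassGk}. With $\alpha,\lambda$ as above, $V(t)=\{\,\log\bb(\cdot,t)-\alpha\bb^{-\lambda}(\cdot,t)\geq 1\,\}$ is, by monotonicity of $\bb$ and of $z\mapsto\log z-\alpha z^{-\lambda}$, the complement of a ball $B(\origin,r(t))$ on which $\bb_{s}\uu^{-1}\geq 1$, while $\bb\leq C_{0}$ on $B(\origin,r(t))$ for a universal $C_{0}$. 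A point of $B(\origin,r(t))$ at which $\uu$ is tiny would, by $1-\uu\leq\alpha\cc$ (Corollary \ref{rotationalsymmetryboundsGk}) together with $\cc=\uu\bb$, force $\cc\gtrsim\alpha^{-1}$ and hence $\bb=\cc/\uu>C_{0}$, a contradiction; so $\uu\geq\varepsilon$ on $B(\origin,r(t))$ uniformly in $t$. One then runs the maximum principle for $\cc H=2\uu\bb_{s}+\cc_{s}$ on $B(\origin,r(t))$ — its boundary values are $3$ at $\origin$ and $\geq 2\varepsilon^{2}$ on $\partial B(\origin,r(t))$, and at a would-be small interior minimum $\bb_{s}\uu^{-1}\leq 1$ makes $\bb^{-2}\big(2\cc H(\uu^{2}-\bb_{s}^{2})+16\bb_{s}\uu(1-\uu^{2})\big)$ positive — to obtain $\cc H\geq\tilde\varepsilon$ for $t\geq 1$; and then the same argument for $\bb_{s}\uu^{-1}$, using $\cc_{s}=\cc H-2\uu\bb_{s}\geq\tilde\varepsilon/2$ at a small interior minimum to render $\bb^{-2}\big(\bb_{s}\uu^{-1}(2\bb_{s}^{2}-4\bb_{s}\cc_{s}\uu^{-1}-2\uu^{2})+4\cc_{s}\big)$ positive. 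This yields $\inf_{\R^{4}}(\bb_{s}\uu^{-1})(\cdot,t)\geq\beta$ for all $t\geq 1$.

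The main obstacle is the boundary control of $\chi$ at spatial infinity on each time-slice: unlike in $\Gaf$ it is not automatic, and one must verify that the chain Lemma \ref{preliminarykeybound} $\Rightarrow$ Lemma \ref{decayofcsGk} simultaneously forces $\uu^{\bar{k}-1}\sim\bb^{1-\bar{k}}$ to beat $\log\bb$ (so that $\chi\to+\infty$) and makes $\cc_{s}$ decay fast enough to absorb the $\bb_{s}^{2}\cc_{s}\uu^{-2}$ term in the evolution equation — which is exactly the place where the hypothesis $k<1$ is used.
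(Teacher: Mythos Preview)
Your proposal is correct and follows essentially the same route as the paper: you use Lemma~\ref{preliminarykeybound} to control $\chi$ at spatial infinity, invoke Lemma~\ref{decayofcsGk} in place of Lemma~\ref{bbccsquaredccsboundedbyk} to absorb the $\bb_{s}^{2}\cc_{s}\uu^{-2}$ term, and then reproduce the positive-mass case of Lemma~\ref{lowerboundforCA} verbatim for the second estimate. The only difference is cosmetic---you convert $\uu^{1-\hat{k}}$ to $\bb^{-(1-\hat{k})}$ via $\cc\leq m_{g_{0}}^{-1}$, whereas the paper leaves it as $(\log\bb)^{2}\uu^{1-\hat{k}}\to 0$---but the mechanism is identical.
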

\begin{proof}
Given $\chi \doteq \bb^{\lambda}(\bb_{s}\uu^{-1} - \log(\bb))$, we note that $\chi$ vanishes at the origin for all times and that, according to Lemma \ref{preliminarykeybound}, $\chi(s_{0},t)\rightarrow \infty$ as $s_{0}\rightarrow \infty$ for all positive times. Thus we can consider the evolution equation of $\chi$ at some negative minimum point $(p_{0},t_{0})$ and arguing as in the proof of Lemma \ref{keyquantity} for the asymptotically flat setting, we deduce that whenever $\lvert \chi_{\text{min}}\rvert$ is large enough, then the evolution equation of $\chi$ satisfies \eqref{equationuseful2}. Namely, we have
\[
\partial_{t}\chi(p_{0},t_{0})|_{\chi_{\text{min}}} \geq \frac{1}{\bb^{2}}\left(\lambda\lvert \chi_{\text{min}}\rvert + \bb^{\lambda}\bb_{s}\uu^{-1}(-4\bb_{s}\cc_{s}\uu^{-1} - 2\uu^{2}) + \bb^{\lambda}\right)
\]
\noindent From $\chi(p_{0},t_{0}) < 0$ we derive 
\[
\bb^{\lambda}(1 - 4\bb_{s}^{2}\cc_{s}\uu^{-2}) \geq \bb^{\lambda}(1 - 4(\log(\bb))^{2}\cc_{s}).
\]
\noindent If we pick $\hat{k} \in (k,1)$ and $\alpha_{\hat{k}}$ so that Lemma \ref{decayofcsGk} holds, then the last term can be bounded as:
\[
\bb^{\lambda}(1 - 4(\log(\bb))^{2}\cc_{s}) \geq \bb^{\lambda}(1 - 4\alpha_{\hat{k}}(\log(\bb))^{2}\uu^{1 - \hat{k}}),
\]
\noindent which is positive whenever $\lvert \chi_{\text{min}}\rvert$ and hence $\uu^{-1}$ are large enough. Therefore, if we let $\lambda \leq 1$, then we obtain:
\[
\partial_{t}\chi(p_{0},t_{0})|_{\chi_{\text{min}}} > 0.
\]
\noindent Once we know that $\chi$ is uniformly bounded from below in the space-time, one can argue exactly as in the positive mass-case of Lemma \ref{lowerboundforCA}.
\end{proof}

\section{Compactness of warped Berger Ricci flows}
In this section we show that a class of complete, bounded curvature warped Berger Ricci flows with monotone coefficients is compact under the pointed Cheeger-Gromov topology. The main step in the argument consists in proving that the Killing vectors generating the SU(2)U(1) symmetry pass to the limit without becoming degenerate. Once we know that the Cheeger-Gromov limit is a warped Berger Ricci flow, the control on the curvature and the monotonicity conditions allow us to prove smooth convergence of the warping functions $\bb$ and $\cc$ up to diffeomorphisms. The next result plays a central role in the convergence argument because it allows to work directly at the level of the metric coefficients. In particular, this will be of importance when classifying ancient solutions to the Ricci flow because whenever the compactness result holds one can always apply maximum principle arguments to geometric quantities by passing to pointed Cheeger-Gromov limits where critical values are indeed achieved in the space-time. %We first state the result before commenting on the assumptions and on the key differences with an analogous result for U(2)-invariant Ricci flows on the blow up of $\mathbb{C}^{2}/\mathbb{Z}_{k}$ proved in \cite{appleton2}.
%In fact, in a few points we only need to simply adapt the arguments in \cite{appleton2} to our different topology. We point out that the change in topology also leads to work with different assumptions to ensure that the symmetries do not become degenerate when passing to the limit. Finally, we observe that depending on the information one has on the actual sequence of warped Berger Ricci flows a few of the assumptions may be weakened. In the following we present a self-contained general form of the compactness property.
%Let me write the more general version of the compactness argument.
\begin{proposition}\label{compactnessresult}
Let $(\R^{4},g_{j}(t),p_{j})_{t\in I}$ be a sequence of pointed complete warped Berger solutions to the Ricci flow with monotone coefficients defined on $I\ni 0$. If 
\begin{align}
\sup_{j}\left(\sup_{\R^{4}\times I}\,\lvert \emph{Rm}_{g_{j}(t)}\rvert_{g_{j}(t)}\right)&< \infty, \label{curvatureboundedCA} \\
\sup_{j}\left(\sup_{\R^{4}\times I}\,(\bb_{j})_{s} + (\cc_{j})_{s}\right) &< \infty, \label{derivativesboundedCA} \\
\liminf_{j\rightarrow \infty}\left(\bb_{j}(p_{j},0)\right) + \liminf_{j\rightarrow\infty}\left((\bb_{j})_{s}(p_{j},0)\right) &> 0, \label{lowerboundsweetCA}\\
\limsup_{j\rightarrow \infty}\,\left(\bb_{j}(p_{j},0)\right) &< \infty, \label{bbjboundedCA} \\
\liminf_{j\rightarrow \infty}\,\left(u_{j}(p_{j},0)\right)&> 0, \label{uujboundedCA}
\end{align}
\noindent then $(\R^{4},g_{j}(t),p_{j})$ subsequentially converges in the pointed Cheeger-Gromov sense to a complete \emph{SU(2)U(1)} invariant Ricci flow solution $(M_{\infty},g_{\infty}(t),p_{\infty})_{t\in I}$ satisfying:
\begin{itemize}
\setlength\itemsep{0.6em}
\item[(i)] $M_{\infty} = \R^{4}$ or $M_{\infty} = \R \times S^{3}$.
\item[(ii)] There exist warping coefficients $\xi_{\infty},\bb_{\infty},\cc_{\infty}$ such that $g_{\infty}(t)$ can be written as
\[
g_{\infty}(t) = \xi_{\infty}(x_{\infty},t)dx^{2}_{\infty} + \bb^{2}_{\infty}(x_{\infty},t)\,(\sigma_{1}^{2} + \sigma_{2}^{2}) + \cc^{2}_{\infty}(x_{\infty},t)\,\sigma_{3}^{2},
\]
\noindent where $x_{\infty}(\cdot) = d_{g_{\infty}(0)}(\origin_{\infty},\cdot)$ if $M_{\infty} = \R^{4}$, and $x_{\infty}(\cdot) = d_{g_{\infty}(0)}(\Sigma_{p_{\infty}},\cdot)$, with $\Sigma_{p_{\infty}}$ the principal orbit passing through $p_{\infty}$, if $M_{\infty} = \R\times S^{3}$.
\item[(iii)] There exist radial functions $s_{j}$ such that $\xi_{j}(s_{j},t)$, $\bb_{j}(s_{j},t)$, $\cc_{j}(s_{j},t)$ converge smoothly on compact sets to $\xi_{\infty}(x_{\infty},t)$, $\bb_{\infty}(x_{\infty},t)$, $\cc_{\infty}(x_{\infty},t)$ respectively.

\end{itemize}
\end{proposition}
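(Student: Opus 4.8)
The plan is to feed Hamilton's compactness theorem for Ricci flows with the bounds \eqref{curvatureboundedCA}--\eqref{uujboundedCA} and then upgrade the resulting $C^{\infty}_{\mathrm{loc}}$-limit to a warped Berger solution by passing the symmetry through. After extracting a subsequence we may assume $\bb_{j}(p_{j},0)\to\bar\bb$, $(\bb_{j})_{s}(p_{j},0)\to\bar\bb_{s}$ and $\uu_{j}(p_{j},0)\to\bar\uu>0$, with $\bar\bb<\infty$ by \eqref{bbjboundedCA} and $\bar\bb+\bar\bb_{s}>0$ by \eqref{lowerboundsweetCA}; since $\cc_{j}\leq\bb_{j}$ we also get $\cc_{j}(p_{j},0)\to\bar\cc=\bar\uu\bar\bb\in[0,\bar\bb]$. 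Two regimes occur. If $\bar\bb>0$, the orbit through $p_{j}$ is a Berger $3$-sphere of definite size. If $\bar\bb=0$, then by \eqref{sectionalhorizontal01} and \eqref{curvatureboundedCA} one has $\lvert(\bb_{j})_{ss}\rvert\leq C_{0}\bb_{j}\leq C_{0}\Lambda s$ on a fixed radial interval $[0,\delta_{0}]$ (here $C_{0}$ and $\Lambda$ are the bounds in \eqref{curvatureboundedCA} and \eqref{derivativesboundedCA}), so the boundary conditions \eqref{smoothnessorigin} give $(\bb_{j})_{s},(\cc_{j})_{s}\in[\tfrac12,\tfrac32]$ and hence $\bb_{j}(s),\cc_{j}(s)\in[\tfrac{s}{2},\tfrac{3s}{2}]$ for $s\leq\delta_{0}$, which forces $s(p_{j})=d_{g_{j}(0)}(\origin,p_{j})\to 0$; in this case the basepoint limits onto the singular point-orbit.

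In either regime I claim a uniform non-collapsing bound $\mathrm{Vol}_{g_{j}(0)}B_{g_{j}(0)}(p_{j},r_{0})\geq v_{0}>0$ for a fixed small $r_{0}$: since $\cc_{j}\leq\bb_{j}$ it suffices to bound $\int\bb_{j}^{2}\cc_{j}$ over a radial tube around $p_{j}$, and by the monotonicity $\bb_{s},\cc_{s}\geq 0$ together with the derivative bound \eqref{derivativesboundedCA} the coefficients stay comparable to their values at $p_{j}$ along a radial interval of definite length (when $\bar\bb>0$), respectively stay $\geq s/2$ (when $\bar\bb=0$), and integrating gives the volume lower bound. Combined with \eqref{curvatureboundedCA}, the Cheeger--Gromov--Taylor estimate yields $\mathrm{inj}_{g_{j}(0)}(p_{j})\geq\iota_{0}>0$. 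Hamilton's compactness theorem then produces a subsequence, an exhaustion $\{U_{j}\}$ of a complete manifold $M_{\infty}$, and pointed diffeomorphisms $\Phi_{j}$ from $U_{j}$ onto open subsets of $\R^{4}$ with $\Phi_{j}(p_{\infty})=p_{j}$, such that $\Phi_{j}^{\ast}g_{j}(t)\to g_{\infty}(t)$ in $C^{\infty}_{\mathrm{loc}}$ on $M_{\infty}\times I$, where $(M_{\infty},g_{\infty}(t),p_{\infty})$ is a complete bounded-curvature Ricci flow; Shi's estimates supply the higher derivative bounds needed for convergence at all $t\in I$.

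It remains to pass the symmetry to the limit, which is the main point. Let $Y_{1}^{(j)},Y_{2}^{(j)},Y_{3}^{(j)},X_{3}^{(j)}$ be the Killing fields generating the $\mathrm{SU}(2)\mathrm{U}(1)$ action on $(\R^{4},g_{j}(t))$. A Killing field $K$ of a Ricci flow time-slice is determined by the pair $(K,\nabla K)$ at a point, and this pair satisfies a linear first-order ODE along geodesics with coefficients controlled by $\text{Rm}$, so with \eqref{curvatureboundedCA} in force the size of $(K,\nabla K)$ grows at most exponentially in the distance from $p_{j}$; it therefore suffices to control the $1$-jets at $p_{j}$. In the Milnor frame $\lvert Y_{i}^{(j)}\rvert_{g_{j}}(p_{j})$ and $\lvert X_{3}^{(j)}\rvert_{g_{j}}(p_{j})$ are algebraic expressions in $\bb_{j}(p_{j},0),\cc_{j}(p_{j},0)$, and $\lvert\nabla Y_{i}^{(j)}\rvert_{g_{j}}(p_{j}),\lvert\nabla X_{3}^{(j)}\rvert_{g_{j}}(p_{j})$ involve in addition $(\bb_{j})_{s}(p_{j},0),(\cc_{j})_{s}(p_{j},0)$ and $\uu_{j}(p_{j},0)\leq 1$, all bounded by \eqref{derivativesboundedCA}--\eqref{bbjboundedCA}; moreover \eqref{lowerboundsweetCA} and \eqref{uujboundedCA} — respectively the near-Euclidean picture near $\origin$ when $\bar\bb=0$ — keep the infinitesimal action non-degenerate at $p_{j}$. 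Pulling back by $\Phi_{j}$ and passing to a further subsequence, $\Phi_{j}^{\ast}Y_{i}^{(j)},\Phi_{j}^{\ast}X_{3}^{(j)}$ converge in $C^{\infty}_{\mathrm{loc}}$ to Killing fields of $g_{\infty}(t)$ spanning an effective cohomogeneity-one $\mathrm{SU}(2)\mathrm{U}(1)$ infinitesimal action whose orbit through $p_{\infty}$ is a non-degenerate Berger sphere. By the diagonalization discussion of Section 2, $g_{\infty}(t)$ is then warped Berger; it has monotone coefficients and $\uu_{\infty}\leq 1$ as $C^{\infty}_{\mathrm{loc}}$-limits of functions with these properties, $M_{\infty}=\R^{4}$ when a point-orbit persists — equivalently $d_{g_{j}(0)}(\origin,p_{j})$ stays bounded — and $M_{\infty}=\R\times S^{3}$ otherwise, and $x_{\infty}$ is the $g_{\infty}(0)$-distance to the singular orbit, respectively to $\Sigma_{p_{\infty}}$. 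Finally, $\xi_{j},\bb_{j},\cc_{j}$ are smooth functions of $g_{j}(t)$ and of the Killing fields (the norms of $X_{3}^{(j)}$ and of $Y_{1}^{(j)}$ along the orbit, and the radial line element of the orthogonal geodesics), so after reparametrizing by suitable radial functions $s_{j}$ their $C^{\infty}_{\mathrm{loc}}$-convergence to $\xi_{\infty},\bb_{\infty},\cc_{\infty}$ follows from that of the metrics and Killing fields, giving (iii). The crux is exactly the non-degeneracy used in the last extraction: without the lower bounds in \eqref{lowerboundsweetCA} and \eqref{uujboundedCA} the limiting action could drop to lower cohomogeneity or collapse the Hopf fibre — the very failure exhibited by Appleton's steady soliton — and this is what the $\uu$-estimates of the following sections are designed to prevent.
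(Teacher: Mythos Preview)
Your outline follows the paper's strategy closely: apply Hamilton compactness, pass the Killing fields to the limit, and then read off the warped Berger structure. Two technical choices differ but are equally valid. For the injectivity radius the paper observes that monotone coefficients rule out closed geodesics, which together with \eqref{curvatureboundedCA} gives $\mathrm{inj}(g_{j}(0))>0$ directly; your volume argument via $\int \bb_{j}^{2}\cc_{j}$ and Cheeger--Gromov--Taylor also works but is more effort. For the Killing fields the paper uses the pointwise sandwich $\cc_{j}\leq \lvert Y_{i}\rvert_{g_{j}}\leq \bb_{j}$ and the Killing equation $\lvert\nabla^{2}Y_{i}\rvert\leq\alpha\lvert Y_{i}\rvert\lvert\text{Rm}\rvert$ to get $C^{1}$-convergence on balls, rather than controlling the $1$-jet at $p_{j}$ and propagating via the Jacobi-type ODE; both arguments produce the same limit Killing fields.

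There is one genuine omission. To conclude that $M_{\infty}$ is either $\R^{4}$ or $\R\times S^{3}$ you need the limit to be \emph{simply connected}; otherwise a cohomogeneity-one $\mathrm{SU}(2)$ action with a point singular orbit could yield $\R^{4}/\mathbb{Z}_{k}$, and with no singular orbit $\R\times(S^{3}/\Gamma)$. The paper handles this separately: \eqref{bbjboundedCA} bounds the diameter of the orbit through $p_{j}$, and one then argues (as in \cite[Lemma~4.1]{work}) that the limit is simply connected. Your exposition jumps from ``cohomogeneity-one action exists'' to the dichotomy in (i) without this step. A related loose end is your sentence that the orbit through $p_{\infty}$ is ``a non-degenerate Berger sphere'', which contradicts your own earlier (correct) remark that when $\bar\bb=0$ the basepoint lands on the point-orbit; in that regime the non-triviality of the limit action must be checked at a \emph{nearby} point, as the paper does, or via the non-vanishing of $\nabla Y_{i,\infty}$ at $p_{\infty}$.
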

\noindent\textbf{Remark on the assumptions.} The uniform bound on the curvature and the monotonicity of the coefficients guarantee that Hamilton's compactness result can be applied to the sequence of solutions. The control on the first order derivatives along with \eqref{bbjboundedCA} ensure that the Killing vectors are bounded in any geodesic ball. Proposition \ref{compactnessresult} has a counterpart for warped Berger Ricci flows on the blow up of $\mathbb{C}^{2}/\mathbb{Z}_{k}$ satisfying a few first and second-order estimates \cite{appleton2}. However, for the topologies analysed in \cite{appleton2} the compactness property for complete solutions is formulated in the case where the Ricci flows are $\kappa$-non-collapsed at some sequence of scales diverging to infinity so that the resulting limit is $\kappa$-non-collapsed (for all scales) \cite[Corollary 8.2]{appleton2}. Such assumption is natural when studying \emph{finite-time singularity models} for the Ricci flow, which arise via a blow-up procedure. In fact, Perelman proved that any finite-time singularity model of the Ricci flow is $\kappa$-non-collapsed. On the other hand, this assumption is not available in our setting for we are interested in proving long-time convergence of the Ricci flow to \emph{infinite-time singularity models} which are, in the case of the Taub-NUT metric, collapsed for some sufficiently large scale. The latter represents a key difference and accounts for the conditions \eqref{lowerboundsweetCA} and \eqref{uujboundedCA}, which ensure that the Killing vectors do not become degenerate when passing to the limit.
\begin{remark} We point out that the structure of the proof below mainly follows from adapting the analysis in Section 5 of \cite{work2} and especially the analogous local result in \cite{appleton2}. Since the compactness property derived in \cite{appleton2} relies on a different set of assumptions and works on the topology of the blow-up of $\mathbb{C}^{2}/\mathbb{Z}_{k}$, we still present a full argument in detail. 
\end{remark}
% Indeed, while in \cite{appleton2} the non-collapsed property implies that the squashing factor cannot become degenerate, in our setting the application of  the compactness result to classify ancient solutions opening faster than a paraboloid relies on the  one needs to verify that the roundness ratio $\uu$ stays non-degenerate along the sequence of points $p_{j}$, while in the non-collapsed case such control is always implied, meaning that 
%The assumption 
%The remaining conditions are meant to make sure that the symmetries do pass to the limit as well. Properties (v), and (vi) imply that the Killing vector fields generating the SU(2)$\times$U(1) action are $C^{0}$-bounded on compact regions. One can then use the Killing equations and the bounds on the curvature to actually prove that there exist limit Killing vectors on $(M_{\infty},g_{\infty}(t))$. From the assumptions (iii), (iv) and (vii) one can finally derive that such limit Killing vectors are non-trivial, so that the full group action is recovered in the limit. 
\begin{proof}
%We point out that the structure of the proof below mainly follows from adapting the analogous local result in \cite{appleton2}. Nonetheless, the compactness result we present here  Since  we work on a different topology and we consider a different class of warped Berger solutions to the Ricci flow, we 
% and indeed in some points the proof follows from adapting the analogous local result in
First, we recall that the monotonicity of the warping functions implies that $(\R^{4},g_{j}(0))$ does not have closed geodesics. Since the curvature is uniformly bounded, we see that $\inf_{j}\text{inj}(g_{j}(0)) > 0$ and we may then apply Hamilton's compactness theorem and extract a subsequence converging in the pointed Cheeger-Gromov sense to a Ricci flow solution $(M_{\infty},g_{\infty}(t),p_{\infty})_{t\in I}$.

In the following we denote by $\Phi_{j}$ the diffeomorphisms given by the Cheeger-Gromov convergence. We also observe that by \eqref{bbjboundedCA} we can rely on the same argument in \cite{work}[Lemma 4.1] to prove that the limit manifold is \emph{simply connected}.
\\Consider the Killing vector fields $\{Y_{1},Y_{2},Y_{3},X_{3}\}$ generating the SU(2)U(1) symmetries. Since we have
\[
\cc^{2}_{j}(\cdot,t)g_{S^{3}}\leq \bb^{2}_{j}(\cdot,t)(\sigma_{1}^{2}+\sigma_{2}^{2}) + \cc^{2}_{j}(\cdot,t)\sigma_{3}^{2} \leq \bb^{2}_{j}(\cdot,t)g_{S^{3}},
\]
\noindent with $g_{S^{3}}$ the bi-invariant constant curvature 1 metric on the 3-sphere, and $\{Y_{i}\}$ are orthonormal with respect to $g_{S^{3}}$, we deduce that 
\begin{equation}\label{nicecontrolbiinvariance}
\cc_{j}(\cdot,t) \leq \lvert Y_{i} \rvert_{j}(\cdot,t) \leq \bb_{j}(\cdot,t),
\end{equation}
\noindent for all $t\in I$ and for $i = 1,2,3$. Let $\nu > 0$ and $q\in B_{g_{j}(0)}(p_{j},\nu)$. From the conditions \eqref{derivativesboundedCA} and \eqref{bbjboundedCA} we derive that there exists $\alpha > 0$ such that
\[
\bb_{j}(q,0) \leq \bb_{j}(p_{j},0) + \left(\sup_{B_{g_{j}(0)}(p_{j},\nu)}(\bb_{j})_{s}\right)\nu \leq \alpha(1 + \nu).
\]
\noindent We may then extend such bounds to other times by using \eqref{curvatureboundedCA}, for given $t\in I$ we have
\[
\lvert \partial_{t}\log \bb_{j}\rvert(\cdot,t) \lvert \leq \lvert\text{Ric}_{j}\rvert_{j}(\cdot,t) \leq \alpha.
\]
\noindent Therefore, for any $\nu > 0$ and $t\in I$ there exists a constant $\alpha = \alpha(t,\nu)$ such that
\begin{equation}\label{c0controlkillingvectors}
\sup_{B_{g_{j}(0)}(p_{j},\nu)}\lvert Y_{i}\rvert_{j}(\cdot,t) \leq \alpha(t,\nu),
\end{equation}
\noindent for all $i = 1,2,3$. The Killing equation also implies
\[
\lvert \nabla^{2}_{j} Y_{i}\rvert_{j}(\cdot,t) \leq \alpha \lvert Y_{i}\rvert_{j}\lvert\text{Rm}_{j}\rvert_{j}(\cdot,t),
\]
\noindent for all times $t\in I$. By the Cheeger-Gromov convergence we deduce that, up to passing to a subsequence, there exist $C^{1}$-limits $\{Y_{i,\infty}\}$ defined on $B_{g_{\infty}(0)}(p_{\infty},1)$. The $C^{1}$-convergence implies that $\{Y_{i,\infty}\}$ are $g_{\infty}(t)$-Killing vector fields. We may then proceed as in \cite{work2}[Lemma 5.8] to derive that $\{Y_{i,\infty}\}$ extend to smooth Killing vectors on $M_{\infty}$. Similar conclusions apply to $X_{3}$, which converges to a $g_{\infty}(t)$-Killing vector $X_{3,\infty}$ up to pulling back by $\Phi_{j}$. 
\\We now show that the Killing vectors are not degenerate. Namely, we have
\vspace{0.1in}
\begin{claim}\label{cohomogen1action}
\emph{SU(2)} acts on $(M_{\infty},g_{\infty}(t))$ with cohomogeneity-\emph{1}.
\end{claim}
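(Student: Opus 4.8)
The plan is to exhibit a single point $q\in M_\infty$ at which the three limiting Killing fields $Y_{1,\infty},Y_{2,\infty},Y_{3,\infty}$ are linearly independent. Since these are the fundamental vector fields of the SU(2)-action on $(M_\infty,g_\infty(t))$ constructed above, such a $q$ then lies on a three-dimensional orbit; as the set of points with three-dimensional orbit is open and nonempty (stabiliser dimension being upper semicontinuous) while SU(2) can only act with orbits of dimension at most $3$, the principal orbits are three-dimensional, i.e.\ the action is cohomogeneity-$1$. The identity that converts this non-degeneracy statement into one about the warping functions is
\[
\lvert Y_1\wedge Y_2\wedge Y_3\rvert_{g_j}=\lvert X_1\wedge X_2\wedge X_3\rvert_{g_j}=\bb_j^{2}\,\cc_j ,
\]
which holds on $\R^4\setminus\{\origin\}$ and hence everywhere by continuity: $\{Y_i\}$ and $\{X_i\}$ are $g_{S^3}$-orthonormal frames of the same (orbit) $3$-plane, so they differ by an $\mathrm{O}(3)$-valued map, while $\{X_i\}$ is $g_j$-orthogonal with lengths $\bb_j,\bb_j,\cc_j$. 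Since $\Phi_j^{*}g_j\to g_\infty$ and $\Phi_j^{*}Y_i\to Y_{i,\infty}$ in $C^1_{\mathrm{loc}}$, and $\lvert v_1\wedge v_2\wedge v_3\rvert_{g}^{2}=\det\bigl(g(v_a,v_b)\bigr)$ is algebraic in the vectors and the metric, we get
\[
\lvert Y_{1,\infty}\wedge Y_{2,\infty}\wedge Y_{3,\infty}\rvert_{g_\infty}(q,0)=\lim_{j\to\infty}\bigl(\bb_j^{2}\cc_j\bigr)\bigl(\Phi_j(q),0\bigr),\qquad q\in M_\infty,
\]
so it is enough to find $q$ making this limit positive.

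I would distinguish two cases according to whether the basepoints run into the singular orbit. If $\liminf_j\bb_j(p_j,0)>0$, then \eqref{uujboundedCA} gives $\liminf_j\cc_j(p_j,0)=\liminf_j(u_j\bb_j)(p_j,0)>0$ as well, hence $\liminf_j(\bb_j^{2}\cc_j)(p_j,0)>0$, and $q=p_\infty$ works. Otherwise $\liminf_j\bb_j(p_j,0)=0$, so along a subsequence $\bb_j(p_j,0)\to 0$ (and $\liminf_j(\bb_j)_s(p_j,0)>0$ by \eqref{lowerboundsweetCA}). Here I would first record a uniform lower bound near the origin: from \eqref{sectionalhorizontal01}, \eqref{sectionalhorizontal03} and \eqref{curvatureboundedCA} one has $\lvert\bb_{j,ss}\rvert\le\Lambda\bb_j$ and $\lvert\cc_{j,ss}\rvert\le\Lambda\cc_j$ with $\Lambda$ independent of $j$, while \eqref{derivativesboundedCA} gives $\bb_j(s,0),\cc_j(s,0)\le\Lambda_0 s$; feeding this into the smoothness conditions \eqref{smoothnessorigin} ($\bb_j(0,0)=\cc_j(0,0)=0$, $(\bb_j)_s(0,0)=(\cc_j)_s(0,0)=1$) and integrating twice yields a radius $r_0\in(0,1)$, independent of $j$, with $\bb_j(s,0)\ge s/2$ and $\cc_j(s,0)\ge s/2$ for $s\in[0,r_0]$. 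By monotonicity of $\bb_j$ and $\bb_j(p_j,0)\to 0$, for $j$ large $d_{g_j(0)}(\origin,p_j)\le 2\bb_j(p_j,0)\to 0$. Fixing now any $q\in M_\infty$ with $0<\rho:=d_{g_\infty(0)}(p_\infty,q)<r_0$, the triangle inequality together with $d_{g_j(0)}(\origin,p_j)\to 0$ and $d_{g_j(0)}(p_j,\Phi_j(q))\to\rho$ (Cheeger--Gromov convergence) gives $d_{g_j(0)}(\origin,\Phi_j(q))\to\rho\in(0,r_0)$, whence for $j$ large $\bigl(\bb_j^{2}\cc_j\bigr)(\Phi_j(q),0)\ge\tfrac18\,d_{g_j(0)}(\origin,\Phi_j(q))^{3}\to\rho^{3}/8>0$, and so $\lvert Y_{1,\infty}\wedge Y_{2,\infty}\wedge Y_{3,\infty}\rvert_{g_\infty}(q,0)>0$.

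In either case $M_\infty$ carries a three-dimensional SU(2)-orbit, which as noted above forces the action to be cohomogeneity-$1$. The step I expect to be the main obstacle is the second case: one must make sure that the bounds $\bb_j,\cc_j\ge s/2$ near the origin are genuinely uniform in $j$ — this is precisely where \eqref{curvatureboundedCA} and \eqref{derivativesboundedCA} enter — and that recentring the Cheeger--Gromov limit at the singular orbit rather than at $p_j$ is harmless because $d_{g_j(0)}(\origin,p_j)\to 0$. Everything else is bookkeeping on top of the convergence of the Killing fields already established.
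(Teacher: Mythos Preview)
Your proof is correct and follows the same broad outline as the paper (a case split on whether the basepoints $p_j$ collapse towards the singular orbit), but the execution differs in two useful ways.

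First, you package the linear-independence statement into the single identity $\lvert Y_1\wedge Y_2\wedge Y_3\rvert_{g_j}=\bb_j^{2}\cc_j$, so that finding one point $q$ with $\liminf_j(\bb_j^{2}\cc_j)(\Phi_j(q),0)>0$ immediately gives a three-dimensional orbit. The paper instead proceeds in two steps: it first uses the cruder bound $\lvert Y_i\rvert_{g_j}\ge\cc_j$ from \eqref{nicecontrolbiinvariance} to show the limit Killing fields are nontrivial at some $q_\infty$, and only afterwards argues linear independence there via the same inequality and $g_{S^3}$-orthonormality of the $Y_i$. Your wedge-product formulation is more direct.

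Second, in the degenerate case $\bb_j(p_j,0)\to 0$ the paper works locally around $p_j$: it invokes \eqref{lowerboundsweetCA} to get $(\bb_j)_s(p_j,0)\ge 2\beta$, then controls $(\bb_j)_{ss}$ and $(\uu_j)_s$ via the curvature bound to propagate positivity of $\bb_j$ and $\uu_j$ on a small ball around $p_j$, producing points $q_j$ with $\cc_j(q_j,0)$ bounded below. You instead go through the origin of each $(\R^4,g_j)$: the boundary conditions $(\bb_j)_s(\origin,0)=(\cc_j)_s(\origin,0)=1$, together with $\lvert\bb_{j,ss}\rvert\le\Lambda\bb_j$ and $\lvert\cc_{j,ss}\rvert\le\Lambda\cc_j$, yield a $j$-independent radius $r_0$ on which $\bb_j,\cc_j\ge s/2$; monotonicity then forces $d_{g_j(0)}(\origin,p_j)\to 0$, so any $q$ at fixed $g_\infty$-distance $\rho\in(0,r_0)$ from $p_\infty$ pulls back to points at controlled distance from the origin. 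This argument is clean, and in fact makes the hypothesis \eqref{lowerboundsweetCA} redundant for this particular claim: the smoothness conditions at $\origin$ combined with \eqref{curvatureboundedCA}, \eqref{derivativesboundedCA} and monotonicity already force $(\bb_j)_s(p_j,0)\to 1$ whenever $\bb_j(p_j,0)\to 0$.
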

\begin{proof}[Proof of Claim \ref{cohomogen1action}]
We first show that the limit Killing vectors are not trivial.

 If there exists $\varepsilon > 0$ such that $\cc_{j}(p_{j},0) \geq \varepsilon$ along a subsequence, then from \eqref{nicecontrolbiinvariance} we see that $Y_{i,\infty}$ do not vanish at $p_{\infty}$. 

We may then assume that $\cc_{j}(p_{j},0)\rightarrow 0$. According to \eqref{uujboundedCA} we also have $\bb_{j}(p_{j},0)\rightarrow 0$. Therefore, from \eqref{lowerboundsweetCA} it follows that $(\bb_{j})_{s}(p_{j},0)\geq 2\beta > 0$, for some constant $\beta$ independent of $j$. Since the curvature is uniformly bounded we see that $\lvert (\bb_{j})_{ss}\rvert(\cdot,t) \leq \alpha$, which implies $(\bb_{j})_{s}(\cdot,0)\geq \beta$ in $B_{g_{j}(0)}(p_{j},r)$ for some $r$ small enough. Similarly, by \eqref{rmo123} and \eqref{uujboundedCA} we get that $\uu_{j}(\cdot,0)$ is uniformly bounded from below in $B_{g_{j}(0)}(p_{j},\tilde{r})$ for some radius sufficiently small. We can then pick $\hat{r} = \min\{r,\tilde{r}\}$ and conclude that there exist points $q_{j}\in B_{g_{j}(0)}(p_{j},\hat{r})$ such that $\cc_{j}(q_{j},0)$ admits a positive lower bound. Therefore we find $q_{\infty}\in B_{g_{\infty}(0)}(p_{\infty},\hat{r})$ such that $Y_{i,\infty}$ are not trivial at $q_{\infty}$.  
%If $\uu_{j}\geq \nu > 0$ in $B_{g_{j}(0)}(p_{j},1)$, then from (iv) we derive that $(\bb_{s})_{j}\geq \tilde{\nu} > 0$ in $B_{g_{j}(0)}(p_{j},1)$, which then implies that there exists a sequence $q_{j}\in S_{g_{j}(0)}(p_{j},1)$ such that $\bb_{j}(q_{j},0) \geq \tilde{\nu}/2$ for all $j$ large enough. In particular, the estimate (vii) implies that $\cc_{j}(q_{j},0)$ must be bounded away from zero as well. Therefore, we find $q_{\infty}\in S_{g_{\infty}(0)}(p_{\infty},1)$ such that $Y_{i,\infty}$ are not trivial at $q_{\infty}$. 
%If instead $\uu_{j}(z_{j},0)$ is converging to zero along a sequence of points $z_{j}\in B_{g_{j}(0)}(p_{j},1)$, then by the rotational symmetry type of estimates (vii) we deduce that $\bb_{j}$ and $\cc_{j}$ are bounded away from zero along the sequence and we can then conclude that there exists some $z_{\infty}\in B_{g_{\infty}(0)}(p_{\infty},1)$ where the Killing vectors $Y_{i,\infty}$ do not vanish. The same argument applies to $X_{3,\infty}$.

Once we know that $\{Y_{i,\infty}\}$ are non-trivial, we may deduce that there exists a non-degenerate copy of $\mathfrak{su}[2]$ in the Lie algebra of Killing vector fields $\mathfrak{iso}(M_{\infty},g_{\infty}(t))$ because the Lie brackets pass to the limit. Since the limit is complete we can integrate the Lie algebra action and obtain that SU(2) acts on $(M_{\infty},g_{\infty}(t))$. Consider $q\in M_{\infty}$ such that $Y_{i,\infty}$ do not vanish at $q$. Suppose that there exist coefficients $\alpha_{i}$ such that the vector $Y_{\infty} = \sum_{i}\alpha_{i}Y_{i,\infty}$ vanishes at $q$. 
A diagonal argument yields
\[
0 = \lvert Y_{\infty}\rvert^{2}_{g_{\infty}(0)}(q) = \lim_{j\rightarrow \infty} \left\vert \sum_{i} \alpha_{i}Y_{i} \right\vert^{2}_{g_{j}(0)}(\Phi_{j}(q)). 
\]
\noindent As observed above, for any spherical vector field $Y$ we have $\lvert Y \rvert_{g_{j}(0)}(p) \geq \cc_{j}(p,0)\lvert Y \rvert_{g_{S^{3}}}$. Since the right-invariant vector field $Y_{i}$ are orthonormal with respect to the round metric on the 3-sphere and $\cc_{j}(\Phi_{j}(q),0)$ is bounded away from zero being the Killing vectors $Y_{i,\infty}$ non-trivial at $q$, we conclude that the limit above is zero if and only if $\alpha_{i} = 0$. Thus, there exists an SU(2)-orbit of codimension 1, which is exactly the claim.
\end{proof}
\noindent Since the limit manifold $M_{\infty}$ is non-compact and simply connected, from the cohomogeneity 1 SU(2) action we derive that either $M_{\infty}$ is foliated by principal orbits, in this case $M_{\infty} = \R\times S^{3}$, or there exists a singular orbit $\Sigma_{\text{sing}}$. From now on we assume that the action admits a singular orbit for the case of the cylinder can be dealt with similarly. As discussed in Section 2.1, we can diagonalize the Ricci flow limit and use the extra-degree of symmetry provided by the Killing vector field $X_{3,\infty}$ to write the solution as% along  write the Ricci flow limit as%hen there are no singul orbits, or $M_{\infty} = \R^{4}$ whenever there exists a singular orbit as follows from (vii). 
%We may then write the limit solution as
%\[
%g_{\infty}(t) = dy_{t}\times dy_{t} + g_{y_{t}},
%\]
%\noindent for some 1-parameter family of left-invariant metrics $g_{y_{t}}$ on $S^{3}$.
%\textbf{As proved in the general setting above}, using the fourth Killing vector $X_{3,\infty}$ we deduce that $(M_{\infty},g_{\infty}(t))$ admits an SU(2)$\times$U(1) cohomogeneity 1 action and we can write the limit solution as
\[
g_{\infty}(t) = \xi_{\infty}(x_{\infty},t)dx^{2}_{\infty} + \bb^{2}_{\infty}(x_{\infty},t)\,(\sigma_{1}\otimes \sigma_{1} + \sigma_{2}\otimes \sigma_{2}) + \cc^{2}_{\infty}(x_{\infty},t)\,\sigma_{3}\otimes \sigma_{3},
\]
\noindent where $\{\sigma_{i}\}$ is the coframe dual to the Milnor frame $\{X_{i,\infty}\}$ of left-invariant vectors induced by the right-invariant Killing vectors $\{Y_{i,\infty}\}$, while $x_{\infty}(\cdot) = d_{g_{\infty}(0)}(\Sigma_{\text{sing}},\cdot)$. In particular, we have $\cc_{\infty} = \lvert X_{3,\infty}\rvert_{\infty}$ in the space-time. Thus $\cc_{j}\rightarrow \cc_{\infty}$ on compact sets and one obtain an analogous conclusion for $\bb_{j}\rightarrow \bb_{\infty}$ once the points $p_{j}$ are chosen of the form $((s_{0})_{j},e)$, with $e$ the identity in SU(2).
 
Let $z_{\infty}\in\Sigma_{\text{sing}}$ and let $z_{j} = \Phi_{j}(z_{\infty})$. If, for a contradiction, there exists $\varepsilon > 0$ such that $d_{g_{j}(0)}(\origin,z_{j}) \geq 2\varepsilon$, then we could find points $\tilde{z}_{j}$ satisfying $d_{g_{j}(0)}(z_{j},\tilde{z}_{j}) = \varepsilon$ with $\cc_{j}(\tilde{z}_{j},0) \leq \cc_{j}(z_{j},0)$. Then by the monotonicity of the warping coefficients it follows that on $M_{\infty}$ there exists a point $\tilde{z}_{\infty}\in S_{g_{\infty}(0)}(\Sigma_{\text{sing}},\varepsilon)$ such that $\cc_{\infty}(\tilde{z}_{\infty},0) \leq 0$, which is a contradiction. Thus both $\bb_{\infty}$ and $\cc_{\infty}$ vanish at $\Sigma_{\text{sing}}$, meaning that $M_{\infty} = \R^{4}$ and hence $\Sigma_{\text{sing}} = \origin_{\infty}$. From the same argument we deduce that the  radial coordinates
\[
s_{j}(\cdot) \doteq d_{g_{j}(0)}(\origin,\Phi_{j}(\cdot)).
\]
\noindent converge to $x_{\infty}$ in $C^{0}$ on compact sets.

%finally prove that we have convergence of the warping functions on compact sets. From now on we assume that $M_{\infty} = \R^{4}$, for the case $M_{\infty} = \R\times S^{3}$ is dealt with similarly. We denote by $\origin_{\infty}$ the trivial singular orbit in $M_{\infty}$ and we let $o_{j} = \Phi_{j}(\origin_{\infty})$. If, for a contradiction, there exists $\varepsilon > 0$ such that $d_{g_{j}(0)}(\origin,o_{j}) \geq 2\varepsilon$, then we could find points $\tilde{o}_{j}$ satisfying $d_{g_{j}(0)}(o_{j},\tilde{o}_{j}) = \varepsilon$ with $\cc_{j}(\tilde{o}_{j},0) \leq \cc_{j}(o_{j},0)$. Then by the monotonicity conditions in (iii), on the limit manifold $M_{\infty}$ there would be a point $\tilde{\origin}_{\infty}\in S_{g_{\infty}(0)}(\origin_{\infty},\varepsilon)$ such that $\cc_{\infty}(\tilde{\origin},0) \leq 0$, which is a contradiction. We may then define the radial coordinates:
%\[
%s_{j}(\cdot) \doteq d_{g_{j}(0)}(\origin,\Phi_{j}(\cdot)).
%\]
%\noindent According to the Cheeger-Gromov convergence $s_{j}$ converges to $x_{\infty}$ in $C^{0}$ on compact sets. 

We know that $\bb_{j}$ and $\cc_{j}$ converge to $\bb_{\infty}$ and $\cc_{\infty}$ respectively in $C^{0}$ on compact sets. Consider $0 < \delta < D$. Once again by \eqref{derivativesboundedCA} we see that $\bb_{j}(\cdot,0)$ is uniformly bounded from above on $B_{g_{j}(0)}(\origin,D)$. The cohomogeneity-1 action implies that $\bb_{\infty}$ is bounded away from zero in $B_{g_{\infty}(0)}(\origin_{\infty},D)\setminus B_{g_{\infty}(0)}(\origin_{\infty},\delta)$ thus yielding 
\[
\inf_{B_{g_{j}(0)}(\origin,D)\setminus B_{g_{j}(0)}(\origin,\delta)}\bb_{j}(\cdot,0)\geq \alpha(\delta,D) > 0.
\]
\noindent A similar estimate holds for $\cc_{j}$ as well. The latter bounds, along with \eqref{curvatureboundedCA} and Shi's derivative estimates yield
\[
\sup_{B_{g_{\infty}(0)}(\origin_{\infty},D)\setminus B_{g_{\infty}(0)}(\origin_{\infty},\delta)}\left\vert \nabla^{k}_{g_{\infty}(0)}s_{j}\right\vert \leq \alpha(k,\delta,D) < \infty,
\]
\noindent for any positive integer $k$. Therefore $s_{j}\rightarrow x_{\infty}$ smoothly on compact sets away from the origin. By a similar argument the $C^{0}$-convergence of $\cc_{j}\circ s_{j}$ to $\cc_{\infty}$ and $\bb_{j}\circ s_{j}$ to $\bb_{\infty}$ respectively are in fact smooth on compact sets away from the origin. One can finally check that $\xi_{j}\circ s_{j}$ converges smoothly on compact sets away from the origin to $\xi_{\infty} = \lvert \partial_{x_{\infty}}\rvert_{g_{\infty}}$. The boundary conditions at the origin and the uniform bounds on the curvature allow to extend the convergence at the singular orbit $\origin_{\infty}$ as well.
\end{proof}
\begin{remark}
According to the compactness property, whenever a family of warped Berger Ricci flows satisfies the assumptions as in the statement, then the pointed Cheeger-Gromov limit is attained by parametrizing the radial distance function.% and hence one can only work at the level of convergence of the warping functions as in \cite{appleton2}.
\end{remark}
We dedicate the end of this section to proving that as a consequence of the rotational symmetry type of estimates in Lemma \ref{firstorderestimates}, Corollary \ref{rotationalsymmetryboundsGk}, the scale-invariant lower bounds for the spatial derivative $\bb_{s}$ in Lemmas \ref{lowerboundforCA}, \ref{keylemmaGk} and the compactness result in Proposition \ref{compactnessresult} any Ricci flow solution considered so far has curvature uniformly bounded in the space-time.
\begin{proposition}\label{curvatureboundedintime}
If $(\R^{4},g(t))_{t\geq 0}$ is the maximal Ricci flow solution evolving from some $g_{0}$ belonging to either $\Gk$ or $\Gaf$, then 
\[
\limsup_{t\nearrow\infty}\left(\sup_{\R^{4}}\lvert \emph{Rm}_{g(t)}\rvert_{g(t)} \right) < \infty.
\]
\end{proposition}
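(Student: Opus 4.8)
The plan is to argue by contradiction, exploiting the scale-invariant bound $\cc^{2}\lvert\mathrm{Rm}_{g(t)}\rvert_{g(t)}\le C$ from Corollary \ref{corollarycccurvature} (resp. Corollary \ref{curvaturecontrolledbycgk}), which forces any blow-up of the curvature to occur where the Hopf-fiber degenerates, together with the compactness result of Proposition \ref{compactnessresult}; the outcome is an ancient blow-up limit which is forced to be rotationally symmetric and to have Euclidean volume growth, and this is incompatible with its non-flatness.

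Suppose $\limsup_{t\nearrow\infty}\sup_{\R^{4}}\lvert\mathrm{Rm}_{g(t)}\rvert_{g(t)}=\infty$. By Lemma \ref{ALFpreserved} (resp. Lemma \ref{Gkpreserved}) and local smoothness of the flow we have $\sup_{\R^{4}\times[0,T']}\lvert\mathrm{Rm}\rvert<\infty$ for every finite $T'$, so the first-hitting times $t_{n}^{*}:=\inf\{t:\sup_{\R^{4}}\lvert\mathrm{Rm}_{g(t)}\rvert_{g(t)}\ge n\}$ are finite, increasing, and diverge. Since the curvature decays spatially on each time-slice, at $t_{n}^{*}$ the supremum $Q_{n}:=\sup_{\R^{4}}\lvert\mathrm{Rm}_{g(t_{n}^{*})}\rvert_{g(t_{n}^{*})}\ge n$ is attained at some $p_{n}$, and $\lvert\mathrm{Rm}\rvert\le Q_{n}$ on $\R^{4}\times[0,t_{n}^{*}]$. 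I then pass to the rescaled, time-shifted flows $g_{n}(t):=Q_{n}\,g(t_{n}^{*}+Q_{n}^{-1}t)$, again complete warped Berger solutions with monotone coefficients, satisfying $\lvert\mathrm{Rm}_{g_{n}(t)}\rvert_{g_{n}(t)}\le1$ on the expanding intervals $[-Q_{n}t_{n}^{*},0]$ and $\lvert\mathrm{Rm}_{g_{n}(0)}\rvert_{g_{n}(0)}(p_{n})=1$; note $\bb_{n}=\sqrt{Q_{n}}\,\bb$, $\cc_{n}=\sqrt{Q_{n}}\,\cc$, while $(\bb_{n})_{s}=\bb_{s}$ and $\uu_{n}=\uu$ are scale-invariant.

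Next I verify the hypotheses of Proposition \ref{compactnessresult} for $(\R^{4},g_{n}(t),p_{n})$ on any fixed $I=[-\tau,0]$ (legitimate once $Q_{n}t_{n}^{*}>\tau$): \eqref{curvatureboundedCA} holds with bound $1$, and \eqref{derivativesboundedCA} holds since $\bb_{s}+\cc_{s}$ is uniformly bounded by Lemma \ref{firstorderestimates} (resp. Lemma \ref{firstderivativesboundedGk}) and scale-invariant. For the last three: $Q_{n}\cc^{2}(p_{n},t_{n}^{*})=\cc^{2}(p_{n},t_{n}^{*})\lvert\mathrm{Rm}\rvert(p_{n})\le C$, so $\cc_{n}(p_{n},0)\le\sqrt{C}$ and $\cc(p_{n},t_{n}^{*})\le\sqrt{C/Q_{n}}\to0$; then estimate \eqref{rotationalsymmetryorder0} (resp. Corollary \ref{rotationalsymmetryboundsGk}) gives $\uu(p_{n},t_{n}^{*})\to1$, hence \eqref{uujboundedCA} holds and $\bb_{n}(p_{n},0)=\cc_{n}(p_{n},0)/\uu(p_{n},t_{n}^{*})\le2\sqrt{C}$ for large $n$, giving \eqref{bbjboundedCA}. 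Finally, since $t_{n}^{*}\to\infty$, Lemma \ref{lowerboundforCA} (resp. Lemma \ref{keylemmaGk}) yields $(\bb_{n})_{s}(p_{n},0)=\bb_{s}(p_{n},t_{n}^{*})\ge\beta\,\uu(p_{n},t_{n}^{*})\ge\beta/2$ for large $n$, which gives \eqref{lowerboundsweetCA}. A diagonal argument over $\tau\to\infty$ produces a complete warped Berger ancient Ricci flow $(M_{\infty},g_{\infty}(t),p_{\infty})_{t\le0}$, $M_{\infty}=\R^{4}$ or $\R\times S^{3}$, arising as the smooth pointed Cheeger-Gromov limit, with $\lvert\mathrm{Rm}_{g_{\infty}}\rvert\le1$ and $\lvert\mathrm{Rm}_{g_{\infty}(0)}\rvert_{g_{\infty}(0)}(p_{\infty})=1$, so $g_{\infty}$ is not flat. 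The rotational-symmetry-type estimates, once divided by the divergent factor $\sqrt{Q_{n}}$, collapse in the limit: from $1-\uu\le C'\cc=C'\cc_{n}/\sqrt{Q_{n}}$ and $\bb_{s}^{2}-4\le C'\cc$ (estimates \eqref{rotationalsymmetryorder0}, \eqref{bbsboundedby4}, resp. Corollary \ref{rotationalsymmetryboundsGk}) we get $\uu_{\infty}\equiv1$ and $(\bb_{\infty})_{s}\le2$, so $g_{\infty}$ is SO$(4)$-invariant, i.e. $\cc_{\infty}\equiv\bb_{\infty}$; the scale-invariant lower bound of Lemmas \ref{lowerboundforCA}, \ref{keylemmaGk} passes to $(\bb_{\infty})_{s}\ge\beta>0$, which on $\R\times S^{3}$ would force $\bb_{\infty}\to-\infty$, so $M_{\infty}=\R^{4}$; and $\cc^{2}\lvert\mathrm{Rm}\rvert\le C$ passes to $\bb_{\infty}^{2}\lvert\mathrm{Rm}_{g_{\infty}}\rvert_{g_{\infty}}\le C$. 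Hence $g_{\infty}$ is a non-flat, complete, rotationally symmetric ancient Ricci flow on $\R^{4}$ with bounded quadratically-decaying curvature and, from $(\bb_{\infty})_{s}\in[\beta,2]$, positive asymptotic volume ratio.

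The main obstacle is the concluding rigidity: no such $g_{\infty}$ exists. By B.-L. Chen's theorem any complete ancient Ricci flow with bounded curvature has $R\ge0$, and the strong maximum principle then gives $R_{g_{\infty}}>0$, since $R_{g_{\infty}}\equiv0$ would make $g_{\infty}$ Ricci-flat and hence flat (a rotationally symmetric Ricci-flat metric on $\R^{4}$ solves $\bb_{ss}\equiv0$). It remains to contradict the positive asymptotic volume ratio. I would do this via a dimension-reduction argument: $v_{\infty}:=(\bb_{\infty})_{s}$ satisfies a scalar parabolic equation whose zeroth-order reaction term is $2v_{\infty}(1-v_{\infty}^{2})/\bb_{\infty}^{2}$, driving $v_{\infty}\to1$, together with $v_{\infty}(\origin,\cdot)=1$ and the conical control $\bb_{\infty}(s)\le s$ (a consequence of $R_{g_{\infty}}>0$); an asymptotic-shrinking-soliton analysis of the $\kappa$-non-collapsed (because of positive AVR) ancient solution $g_{\infty}$ then identifies its blow-down with the flat Gaussian shrinker on $\R^{4}$, forcing $g_{\infty}$ to be flat — a contradiction. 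Equivalently, in the Ricci-flat sub-case one may invoke positive-mass-type rigidity for scalar-flat asymptotically flat metrics on $\R^{4}$. This contradiction establishes the uniform curvature bound.
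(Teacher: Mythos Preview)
Your overall architecture is exactly the paper's: blow up at a Type-II(b) sequence, feed the rescaled flows into Proposition~\ref{compactnessresult}, use the rotational-symmetry-type estimates (which gain a factor $Q_n^{-1/2}$) to force $\uu_\infty\equiv 1$, and then use the scale-invariant lower bound $\bb_s\uu^{-1}\ge\beta$ to give the rotationally symmetric limit positive asymptotic volume ratio, contradicting non-flatness. The verification of \eqref{curvatureboundedCA}--\eqref{uujboundedCA} is correct, and your first-hitting-time point-picking is a legitimate variant of the standard one.

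The gap is in the concluding rigidity, where your argument becomes a sketch. The paper closes it in two clean citations: since the limit is a rotationally symmetric, $\kappa$-non-collapsed ancient solution with bounded curvature, a result of Zhang shows it has nonnegative curvature operator and is therefore a $\kappa$-solution in Perelman's sense; then Perelman's Proposition~11.4 (any non-compact $\kappa$-solution has vanishing asymptotic volume ratio) finishes the contradiction. Your ``asymptotic-shrinking-soliton'' route is morally the same theorem, but Perelman's structure theory for ancient solutions needs nonnegative curvature operator as input, and Chen's $R\ge 0$ alone does not supply it --- so as written you are invoking the machinery without having earned the hypothesis. The aside about ``positive-mass-type rigidity'' and the parabolic reaction-term heuristic for $v_\infty=(\bb_\infty)_s$ are unnecessary once you quote Zhang and Perelman.
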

\begin{proof} In the following we provide the details for the case $g_{0}\in\Gk$. The proof in the asymptotically flat setting only requires to replace the analysis of Section 4 with its counterpart in Section 3. Assume for a contradiction that the curvature becomes unbounded. The solution then develops a Type-II(b) singularity. Following \cite{IRF}[Chapter 8] we deduce that there exists a space-time sequence $(p_{j},t_{j})$, with $t_{j}\rightarrow \infty$, such that for all $\varepsilon > 0$ the Ricci flows $g_{j}(t) \doteq \lambda_{j}g(t_{j} + \lambda_{j}^{-1}t)$, where $\lambda_{j}=\lvert \text{Rm}\rvert(p_{j},t_{j})$, satisfy
\begin{equation}\label{adhoccurvaturebound}
\sup_{\R^{4}\times I}\lvert \text{Rm}_{g_{j}(t)}\rvert_{g_{j}(t)}\leq 1 + \varepsilon,
\end{equation}
\noindent for all $j\geq j_{0}(\varepsilon, I)$, with $I$ some interval. In particular, if the curvature diverges along some sequence of times, then we can choose space time points $(p_{j},t_{j})$ such that $\lambda_{j}\nearrow \infty$. We now check that we can apply the compactness result to the sequence of Ricci flows on a given interval $I\ni 0$. 
\\By Lemma \ref{consistencymonotone} the sequence of solutions has monotone coefficients. From \eqref{adhoccurvaturebound} and Lemma \ref{firstderivativesboundedGk} we also see that \eqref{curvatureboundedCA} and \eqref{derivativesboundedCA} respectively are satisfied. From Corollary \ref{curvaturecontrolledbycgk} it follows that $\cc(p_{j},t_{j})\rightarrow 0$ because $\lambda_{j}\rightarrow \infty$. Thus, we can use Corollary \ref{rotationalsymmetryboundsGk} to deduce that $\bb(p_{j},t_{j}) \rightarrow 0$ and hence that
\[
\bb^{2}_{j}(p_{j},0) = \lambda_{j}\bb^{2}(p_{j},t_{j}) = \lambda_{j}\cc^{2}(p_{j},t_{j})\uu^{-2}(p_{j},t_{j}) \leq \alpha(1 + \alpha\bb)^{2}(p_{j},t_{j}) \leq \alpha,
\]
\noindent for some $\alpha > 0$. Since the roundness ratio is scale invariant and $\cc(p_{j},t_{j})$ is converging to zero, we can again apply Corollary \ref{rotationalsymmetryboundsGk} to derive that $\uu(p_{j},t_{j}) \rightarrow 1$. Finally, Lemma \ref{keylemmaGk} implies that $\bb_{s}(p_{j},t_{j}) \geq \beta/2$ for $j$ large enough. 
\\Therefore, the assumptions in Proposition \ref{compactnessresult} are satisfied and we can hence apply a diagonal argument and pick a subsequence converging to an ancient Ricci flow solution $(M_{\infty},g_{\infty}(t),p_{\infty})_{t\leq 0}$ of the form
\[
g_{\infty}(t) = \xi_{\infty}(x_{\infty},t)dx^{2}_{\infty} + \bb^{2}_{\infty}(x_{\infty},t)\,(\sigma_{1}\otimes \sigma_{1} + \sigma_{2}\otimes \sigma_{2}) + \cc^{2}_{\infty}(x_{\infty},t)\,\sigma_{3}\otimes \sigma_{3}.
\]
\noindent The convergence of the warping coefficients and Corollary \ref{rotationalsymmetryboundsGk} yield:
\begin{align*}
\frac{1}{\cc_{\infty}}\left(1 - \uu_{\infty}\right)(q,t) &= \lim_{j\rightarrow \infty}\frac{1}{\cc_{j}}\left(1 - \uu_{j}\right)(s_{j}(q), t) \\ &= \lim_{j\rightarrow \infty}\frac{1}{\sqrt{\lambda_{j}}\cc}\left(1 - \uu\right)(s_{j}(q), t_{j} + \lambda_{j}^{-1}t)\leq \lim_{j\rightarrow\infty}\frac{\alpha}{\sqrt{\lambda_{j}}} = 0,
\end{align*}
\noindent for any $q\in M_{\infty}$ and $t\leq 0$. Therefore $(M_{\infty},g_{\infty}(t))$ is a $\kappa$-non collapsed - being the limit of a blow-up sequence - rotationally symmetric ancient solution to the Ricci flow and hence a $\kappa$-solution by \cite{zhang}. In particular, from Lemma \ref{keylemmaGk} we get
\[
(\bb_{\infty})_{s}(q,t) = \lim_{j\rightarrow \infty}(\bb_{s}\uu^{-1})(s_{j}(q),t_{j} + \lambda_{j}^{-1}t) \geq \beta > 0,
\]
\noindent where we have used that by the rotational symmetry of the limit the scale-invariant quantity $\uu(s_{j}(\cdot),t)$ is converging to 1. We conclude that the limit ancient Ricci flow is a non-compact $\kappa$-solution with positive asymptotic volume ratio. A rigidity result of Perelman \cite[Proposition 11.4]{pseudolocality} implies that $g_{\infty}(t)$ needs to be flat, which contradicts the choice of the factors $\lambda_{j}$.
% ancient solution to the Ricci flow positive asymptotic volume ratio.  
%Since by the monotonicity condition the Ricci flow does not contain closed geodesics, we see that the sequence $(\R^{4},g_{j}(t),p_{j})$ is non-collapsed. 

\end{proof}

\section{Ancient solutions opening faster than a paraboloid}
In this section we prove that the only complete warped Berger ancient solution with monotone coefficients, curvature uniformly bounded in the space-time, bounded Hopf-fiber and opening faster than a paraboloid in the directions orthogonal to the Hopf-fiber is the Taub-NUT metric. The main idea consists in showing that for any ancient solution belonging to the class just described the warping coefficient $\bb$ is actually a linear function of the distance in any region where the squashing factor $\uu$ is small. In other words, the ancient solution behaves exactly as the Taub-NUT metric along the $S^{2}$-directions whenever $\bb$ is large. Once such control is available, one can consider scale-invariant first-order quantities derived from the hyperk\"ahler odes \eqref{ODE1}, \eqref{ODE2}, and prove that they have a sign on the ancient solution, the aim being to finally show that \eqref{ODE2} is in fact satisfied in the space-time.

It is worth outlining a strategy we often use in the following which was adopted in \cite{appleton2} to prove a uniqueness result for the Eguchi-Hanson metric, with a substantial difference given by the assumption of $\kappa$-non-collapsedness as we describe below. 

Suppose that we are given a geometric scale-invariant quantity $\mathcal{L}$ and that we want to prove that $\mathcal{L} \geq 0$ in the space-time $\R^{4}\times (-\infty,0]$. Once we know that $\mathcal{L}$ is uniformly bounded and that, say, $\partial_{t}\mathcal{L} > 0$ at any negative minimum, one might try to apply a maximum principle argument to the evolution equation of $\mathcal{L}$. In general though, the infimum of $\mathcal{L}$ may not be achieved. In this case, one could consider a space-time sequence $(p_{j},t_{j})$ such that $\mathcal{L}(p_{j},t_{j}) \rightarrow \inf \mathcal{L} < 0$ and define the Ricci flow sequence $g_{j}(t) = g(t_{j} + t)$, centred at $p_{j}$. \emph{If} the compactness result in Proposition \ref{compactnessresult} holds, then on the limit ancient Ricci flow the analogous quantity $\mathcal{L}_{\infty}$ achieves its (negative) infimum in the space-time, thus allowing to rely on maximum principle arguments to obtain the contradiction. 

This is exactly the strategy used in \cite{appleton2}. Since in \cite{appleton2} the ancient solutions analysed are all \emph{non}-collapsed, the roundness ratio is \emph{a priori} controlled from below away from the singular orbit uniformly in time and hence the compactness property can be applied for any sequence of Ricci flows as above. On the contrary, in our \emph{collapsed} setting we always need to verify that the squashing factor $\uu$ stays away from zero along the space-time sequence $(p_{j},t_{j})$ used to approximate the infimum of $\mathcal{L}$ so that the compactness result can indeed be used. 

The latter represents the main difficulty when analysing the collapsed case and the requirement on the ancient solution to open up faster than a paraboloid along the $S^{2}$-directions allows us to bypass this issue. In fact, the application of the compactness result in Proposition \ref{compactnessresult} yields that for any ancient solution opening faster than a paraboloid the hyperk\"ahler quantity $\oded$ given in \eqref{ODE2} is nonnegative. On the gradient steady soliton found by Appleton instead we find that $\oded$ approaches its infimum -2 at spatial infinity on any time-slice, thus along space-time sequences where the squashing factor $\uu$ become degenerate. 

In order to ease the notations, we give the following:
\vspace{0.1in}
\begin{definition}\label{definitionancientpro} Let $m > 0$. The class $\mathcal{A}$ consists of all complete, warped Berger ancient solutions to the Ricci flow with monotone coefficients and curvature uniformly bounded in the space-time, satisfying
\begin{align}
\inf_{\R^{4}\times (-\infty,0]}\,\,\frac{\bb_{s}\uu^{-1}}{f(\uu^{-1})} &> 0,  \label{conditionforrigidity}
\\ \sup_{\R^{4}\times (-\infty,0]}\cc = m^{-1}, \label{boundedhopffberdefn}
\end{align}
\noindent for some continuous positive function $f$ such that $f(z)\rightarrow\infty$ as $z\rightarrow\infty$.
\end{definition}
\begin{remark} We again point out that \eqref{conditionforrigidity} means that the warping coefficient $\bb$ opens faster than a paraboloid in $\R^{3}$ on any time-slice. In particular, the volume of geodesic balls $B_{g(t)}(\origin,r)$ grows faster than $r^{2}$. However, \emph{a priori} there is no upper bound for the volume growth.
\end{remark}
We start by proving that the first order derivatives are uniformly bounded. In fact, from the following estimate we also derive that $\cc_{s}$ decays at some rate in any space-time region where the squashing factor is small.  
\begin{lemma}\label{firstderivativesancientbounded} If $(\R^{4},g(t))_{t\leq 0}$ is an ancient solution in $\mathcal{A}$, then 
\[
2\bb_{s} + \cc_{s}\uu^{-1} - 4 \leq 0.
\]
\end{lemma}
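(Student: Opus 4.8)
The plan is to run a maximum‑principle argument for the scale‑invariant quantity
\[
\mathcal{Q}\doteq 2\bb_{s}+\cc_{s}\uu^{-1}-4 \;=\; 2\,\oded+\uu^{-1}\ode-\uu ,
\]
which therefore equals $-\uu$ on $\nut$. First I would record the behaviour of $\mathcal{Q}$ along the parabolic boundary: by the smoothness conditions \eqref{smoothnessorigin} we have $\bb_{s}=\cc_{s}=\uu=1$ at $\origin$, so $\mathcal{Q}(\origin,t)=-1$ for every $t\leq 0$. Moreover, $\cc_{s}$ is already uniformly bounded in the space-time: since $\cc\leq m^{-1}$ by \eqref{boundedhopffberdefn} and the curvature is uniformly bounded, \eqref{sectionalhorizontal03} gives $\lvert\cc_{ss}\rvert\leq\alpha\cc\leq\alpha m^{-1}$, and integrating this against the finite total variation $\int_{0}^{\infty}\cc_{s}\,ds\leq m^{-1}$ forces a uniform bound on $\cc_{s}$ and shows $\cc_{s}\to 0$ at spatial infinity on each time-slice.

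Assume, for contradiction, that $\mathcal{Q}$ attains a positive value somewhere. I would then choose a sequence $(p_{j},t_{j})$ with $\mathcal{Q}(p_{j},t_{j})\to\sup\mathcal{Q}>0$, form the translated flows $g_{j}(t)\doteq g(t_{j}+t)$ centred at $p_{j}$, and aim to apply Proposition \ref{compactnessresult}. The decisive point — exactly the difficulty flagged at the start of this section — is to keep $\uu(p_{j},t_{j})$ bounded away from zero. Along the sequence $\cc_{s}\uu^{-1}(p_{j},t_{j})=\mathcal{Q}(p_{j},t_{j})+4-2\bb_{s}(p_{j},t_{j})$ stays controlled from above (using $\bb_{s}\geq 0$ by Lemma \ref{consistencymonotone}), so $\uu(p_{j},t_{j})$ cannot collapse without $\cc_{s}(p_{j},t_{j})\to 0$; combined with the opening condition \eqref{conditionforrigidity}, $\bb_{s}\uu^{-1}\geq c\,f(\uu^{-1})$ with $f\to\infty$, and the boundedness of $\bb_{s}$ forced by $\mathcal{Q}$ being near its supremum, this should rule out degeneration of $\uu$ along the maximizing sequence; should $\uu$ still collapse, one replaces $p_{j}$ by the point nearest $\origin$ along the minimal geodesic at which $\uu$ equals a fixed small value, which keeps $\bb=\cc/\uu\leq\alpha m^{-1}$ bounded and $\uu$ non-degenerate, and one checks $\mathcal{Q}$ remains close to its supremum there. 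With \eqref{curvatureboundedCA}--\eqref{uujboundedCA} verified one extracts, by Proposition \ref{compactnessresult}, a subsequential pointed Cheeger–Gromov limit $(M_{\infty},g_{\infty}(t),p_{\infty})_{t\leq 0}$, a complete warped Berger ancient solution with monotone coefficients, on which the corresponding quantity $\mathcal{Q}_{\infty}$ is a smooth function of the limiting warping coefficients, satisfies $\mathcal{Q}_{\infty}\leq\sup\mathcal{Q}$, and attains the positive value $\sup\mathcal{Q}$ at $(p_{\infty},0)$.

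The last step is the evolution computation. Starting from \eqref{equationbs}, \eqref{equationcs} and the commutator formula \eqref{commutatorformula} one derives a parabolic equation $\partial_{t}\mathcal{Q}_{\infty}=\Delta\mathcal{Q}_{\infty}+\langle\,\cdot\,,\nabla\mathcal{Q}_{\infty}\rangle+\mathcal{R}$, and at the interior space-time point where $\mathcal{Q}_{\infty}$ attains its positive maximum one substitutes $\partial_{s}\mathcal{Q}_{\infty}=0$ and $\Delta\mathcal{Q}_{\infty}\leq 0$; the claim is that the reaction term $\mathcal{R}$ is then strictly negative whenever $\mathcal{Q}_{\infty}>0$. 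This is a long but routine manipulation: one uses $\uu\leq 1$ (Lemma \ref{consistencymonotone}), $\bb_{s},\cc_{s}\geq 0$, and the inequality $\mathcal{Q}_{\infty}>0$ itself — equivalently $\cc_{s}\uu^{-1}>4-2\bb_{s}$, hence $\bb_{s}<2$ — to dominate the sign-indefinite cross terms by the manifestly negative quadratic ones, completing the square in $\cc_{s}\uu^{-1}$ exactly as in the proof of \eqref{bbsboundedby4}. The strict negativity contradicts the maximum being attained at an interior time of the ancient solution, and hence $\mathcal{Q}\leq 0$ on all of $\R^{4}\times(-\infty,0]$; in particular $\bb_{s}\leq 2$ and $\cc_{s}\leq\cc_{s}\uu^{-1}\leq 4\uu$, which is the asserted decay of $\cc_{s}$ where $\uu$ is small. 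I expect the main obstacle to be the second paragraph — arranging that the sequence realizing $\sup\mathcal{Q}$ (and ruling out $\sup\mathcal{Q}=\infty$) lies in a region where $\uu$ is non-degenerate, so that the compactness theorem applies; the reaction-term computation, while lengthy, should follow the pattern used repeatedly in Sections 3 and 4.
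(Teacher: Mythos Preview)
Your approach is both more complicated than the paper's and has a genuine gap. The paper gives a short, purely spatial argument that avoids Proposition \ref{compactnessresult} and any evolution computation: since $h\doteq 2\bb_{s}+\cc_{s}\uu^{-1}-4$ equals $-1$ at the origin, if $h>0$ somewhere then there is a point $(p,t)$ with $h(p,t)>0$ \emph{and} $h_{s}(p,t)>0$. Expanding $h_{s}>0$ and substituting into the scalar curvature formula \eqref{scalarcurvature} gives
\[
R(p,t)<\frac{2}{\bb^{2}}\Bigl(-(\cc_{s}\uu^{-1})^{2}-\bb_{s}\cc_{s}\uu^{-1}-\bb_{s}^{2}+4-\uu^{2}\Bigr)(p,t)
= \frac{2}{\bb^{2}}\Bigl(-(\bb_{s}+\tfrac12\cc_{s}\uu^{-1})^{2}-\tfrac34(\cc_{s}\uu^{-1})^{2}+4-\uu^{2}\Bigr)(p,t),
\]
and $h>0$ means precisely $\bb_{s}+\tfrac12\cc_{s}\uu^{-1}>2$, so $R(p,t)<0$. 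This contradicts Chen's result \cite{chen2009} that complete ancient solutions have $R\geq 0$. No compactness, no evolution equation, no control of $\uu$ along sequences is needed.

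The gap in your scheme is the second paragraph. You never establish that $\sup\mathcal{Q}<\infty$: at this stage of Section~6 there is no a priori bound on $\bb_{s}$ for solutions in $\mathcal{A}$ --- indeed Lemma \ref{firstderivativesancientbounded} \emph{is} that bound, and every subsequent estimate uses it. Even granting $\sup\mathcal{Q}<\infty$, your argument that $\uu$ cannot degenerate along the maximizing sequence does not close: knowing $\bb_{s}\leq C$ and $\cc_{s}\uu^{-1}\leq C$ along the sequence does not prevent $\uu\to 0$, since \eqref{conditionforrigidity} only yields $\bb_{s}\uu^{-1}\to\infty$, which is compatible with $\bb_{s}$ bounded and $\uu\to 0$. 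Your fallback of moving $p_{j}$ inward to where $\uu$ equals a fixed small value gives no reason $\mathcal{Q}$ stays near its supremum there. In short, the compactness route is not available here precisely because this lemma is the bootstrap that makes later applications of Proposition \ref{compactnessresult} possible; the paper sidesteps this by the scalar curvature trick.
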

\begin{proof} Let $h$ denote the quantity $2\bb_{s} + \cc_{s}\uu^{-1} - 4$. By the boundary conditions we see that $h(\origin,t) = -1$. Thus, if $h$ is positive somewhere in the space-time, then there exist $p\in \R^{4}$ and $t\leq 0$ such that $h(p,t) > 0$ and $h_{s}(p,t) > 0$. The latter condition implies 
\[
\left(2\bb_{ss} + \cc_{ss}\uu^{-1} + \cc_{s}\left(\frac{\bb_{s}}{\cc} - \frac{\cc_{s}\uu^{-1}}{\cc}\right)\right)(p,t) > 0.
\]
\noindent Therefore, the scalar curvature \eqref{scalarcurvature} is bounded from above as follows:
\[
R(p,t) < \frac{2}{\bb^{2}}\left(-\bb_{s}\cc_{s}\uu^{-1} - (\cc_{s}\uu^{-1})^{2} + 4 -\uu^{2} -\bb_{s}^{2} \right)(p,t).
\]
\noindent Since $h (p,t) > 0$, we get
\[
R(p,t) < \frac{2}{\bb^{2}}\left(- 4 -\frac{3}{4}(\cc_{s}\uu^{-1})^{2} + 4 -\uu^{2}\right)(p,t) < 0.
\]
\noindent However, according to \cite{chen2009} any complete ancient solution to the Ricci flow has nonnegative scalar curvature. We conclude that $h \leq 0$ in the space-time. 
\end{proof}
Since we aim to prove that $\oded \geq 0$ for any ancient solution in $\mathcal{A}$, let us consider the evolution equation of $\oded = \bb_{s} + \uu - 2$ at any negative minimum:
\begin{align*}
\partial_{t}\,\oded|_{\text{min} < 0} &\geq \frac{1}{\bb^{2}}\left(-(\cc_{s}\uu^{-1})^{2}(2 + \oded) + \cc_{s}(8 + 4\oded) \right) \\ &+ \frac{1}{\bb^{2}}\left(-\oded^{3} - 6\oded^{2} -8\oded - 3\oded\uu^{2} - 6\uu^{2} \right).
\end{align*}
\noindent We simplify the evolution equation by introducing $z = \oded + 2 > 0$. Then we obtain
\begin{equation}\label{evolutionodednice}
\partial_{t}\,\oded|_{\text{min} < 0} \geq -\frac{z}{\bb^{2}}\left((\cc_{s}\uu^{-1})^{2} - 4\cc_{s} + z^{2} - 4 + 3\uu^{2}\right).
\end{equation}
\noindent In order to prove that the $\cc_{s}$-quadratic is always negative along an ancient solution in $\mathcal{A}$, one needs to control $\cc_{s}$ in terms of $\bb_{s}$ and $\uu$ in a precise way everywhere in the space-time. To this aim, we first show that $\cc_{s}$ decays to zero at the same rate given by the hyperk\"ahler quantity \eqref{ODE1} for any solution in $\mathcal{A}$.
\begin{lemma}\label{lemmaforARF1}
If $(\R^{4},g(t))_{t\leq 0}$ is an ancient solution in $\mathcal{A}$, then 
\[
\cc_{s} - 2\uu^{2} \leq 0.
\]
\end{lemma}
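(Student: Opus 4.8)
The plan is to apply a parabolic maximum principle to the scale-invariant quantity $\mathcal{L}\doteq\cc_s-2\uu^{2}$, using the compactness result of Proposition~\ref{compactnessresult} to handle the case in which the supremum of $\mathcal{L}$ is not attained on the flow. The first step is to record the boundary behaviour: $\mathcal{L}(\origin,t)=1-2=-1$ by \eqref{smoothnessorigin}, while Lemma~\ref{firstderivativesancientbounded} gives $\cc_s\uu^{-1}\le 4-2\bb_s\le 4$, hence $\mathcal{L}\le\cc_s\le 4\uu\le 4$. Therefore $\delta\doteq\sup_{\R^{4}\times(-\infty,0]}\mathcal{L}$ is finite, and I argue by contradiction assuming $\delta>0$.

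The key ingredient is the evolution equation of $\mathcal{L}$ at a point where it attains a positive value that is maximal among all prior times. From \eqref{Ricciflowpdes1}--\eqref{Ricciflowpdes2} one checks that the second-order terms cancel in the evolution of the roundness ratio, giving $\partial_t\uu=\uu_{ss}+\frac{3\bb_s}{\bb}\uu_s+\frac{4\uu}{\bb^{2}}(1-\uu^{2})$ and hence $\partial_t\uu^{2}=(\uu^{2})_{ss}-2\uu_s^{2}+\frac{3\bb_s}{\bb}(\uu^{2})_s+\frac{8\uu^{2}}{\bb^{2}}(1-\uu^{2})$. Forming $\partial_t\mathcal{L}=\partial_t\cc_s-2\partial_t\uu^{2}$ from \eqref{equationcs} and this identity, substituting $\bb\uu_s=\cc_s-\bb_s\uu$ from \eqref{rmo123}, and using the first-order relation $\cc_{ss}=2(\uu^{2})_s=4\uu\uu_s$ (valid because $\mathcal{L}_s=0$ there) to eliminate $\cc_{ss}$ together with the identity $\cc_s=\delta+2\uu^{2}$ (valid because $\mathcal{L}=\delta$ there), a routine computation yields
\[
\partial_t\mathcal{L}=\Delta\mathcal{L}+\frac{1}{\bb^{2}}\left(4\uu^{2}(\bb_s-\uu)^{2}-16\uu^{2}-\delta\,(2\bb_s^{2}+8\bb_s\uu+6\uu^{2})\right)
\]
at such a point. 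Now $\Delta\mathcal{L}\le 0$ at a spatial maximum; since the coefficients are monotone, $\uu\le 1$, and $\bb_s\le 2$ by Lemma~\ref{firstderivativesancientbounded}, one has $(\bb_s-\uu)^{2}\le 4$, so $4\uu^{2}(\bb_s-\uu)^{2}-16\uu^{2}\le 0$; finally $\delta(2\bb_s^{2}+8\bb_s\uu+6\uu^{2})\ge 6\delta\uu^{2}>0$. Hence $\partial_t\mathcal{L}\le-6\delta\uu^{2}/\bb^{2}<0$, which contradicts the maximality over prior times.

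It remains to manufacture such a point. If $\delta$ is attained on the flow at some $(p_0,t_0)$, then $p_0\ne\origin$ (because $\mathcal{L}(\origin,\cdot)=-1<\delta$), and the computation applies directly. If not, I select $(p_j,t_j)$ with $\mathcal{L}(p_j,t_j)\to\delta$ and pass to the translated ancient flows $g_j(t)=g(t_j+t)$ based at $p_j$. Conditions \eqref{curvatureboundedCA}--\eqref{derivativesboundedCA} hold because the solution lies in $\mathcal{A}$ and $\bb_s+\cc_s\le 6$ by Lemma~\ref{firstderivativesancientbounded}. For the remaining conditions: $\cc_s(p_j,t_j)\ge\mathcal{L}(p_j,t_j)\ge\delta/2$ for $j$ large, so $\cc_s\uu^{-1}\le 4$ forces $\uu(p_j,t_j)\ge\delta/8$; then $\bb(p_j,t_j)=\cc/\uu\le 8m^{-1}/\delta$ by \eqref{boundedhopffberdefn}, and by \eqref{conditionforrigidity} $\bb_s(p_j,t_j)\ge c_0\,f(\uu^{-1})\uu\ge c_0\big(\min_{[1,8/\delta]}f\big)(\delta/8)>0$, so \eqref{lowerboundsweetCA}, \eqref{bbjboundedCA} and \eqref{uujboundedCA} are satisfied. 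Proposition~\ref{compactnessresult} then produces a warped Berger ancient limit $(M_\infty,g_\infty(t),p_\infty)$ whose warping coefficients are smooth limits of $\bb,\cc,\xi$, on which $\mathcal{L}_\infty\doteq(\cc_\infty)_s-2\uu_\infty^{2}\le\delta$ with equality at $(p_\infty,0)$, and on which monotonicity, $\uu_\infty\le 1$ and the conclusion of Lemma~\ref{firstderivativesancientbounded} persist. Since $p_\infty$ lies away from the singular orbit (if any) and $\uu_\infty(p_\infty,0)\ge\delta/8>0$, the computation above applied to $\mathcal{L}_\infty$ at $(p_\infty,0)$ gives the contradiction.

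The step I expect to be the main obstacle is the application of Proposition~\ref{compactnessresult} along the maximizing sequence, i.e.\ showing that the roundness ratio $\uu$ stays bounded away from zero there so that the $\mathrm{SU}(2)\mathrm{U}(1)$ symmetry survives in the limit. This is precisely where the hypothesis \eqref{conditionforrigidity} that the ancient solution opens faster than a paraboloid is used, together with the a priori bound $\cc_s\uu^{-1}\le 4$ of Lemma~\ref{firstderivativesancientbounded}, which converts the lower bound on $\cc_s$ coming from $\mathcal{L}(p_j,t_j)\to\delta>0$ into a lower bound on $\uu$. The remaining steps — the evolution-equation computation and the persistence of the various bounds under Cheeger-Gromov limits — are routine.
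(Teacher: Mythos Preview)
Your proposal is correct and follows essentially the same strategy as the paper: compute the evolution of $\psi=\cc_s-2\uu^{2}$ at a positive spatial maximum, show $\partial_t\psi<0$ there using $0\le\bb_s\le2$ from Lemma~\ref{firstderivativesancientbounded}, and then invoke Proposition~\ref{compactnessresult} along a maximizing sequence after checking $\uu(p_j,t_j)\ge\delta/8$ via $\cc_s\uu^{-1}\le4$. Your final expression for the evolution equation is exactly the paper's after the substitution $\cc_s=\delta+2\uu^{2}$, and your identification of the main obstacle (control of $\uu$ along the sequence) matches the paper's emphasis.
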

\begin{proof}
Let $\psi \doteq \cc_{s} - 2\uu^{2}$ and let $\Psi_{\infty} \doteq \sup_{\R^{4}\times (-\infty,0]}\psi$. From Lemma \ref{firstderivativesancientbounded} it follows that $\Psi_{\infty}$ is bounded and we can hence assume for a contradiction that $\Psi_{\infty} > 0$. By direct computation we check that the evolution equation of $\psi$ at any positive maximum is given by
\begin{align*}
\partial_{t}\,\psi|_{\text{max}} \leq \frac{1}{\bb^{2}}\left(-\cc_{s}\left(6\uu^{2} + 2\bb_{s}^{2}\right) + 8\bb_{s}\uu^{3} + \uu^{2}\left(8\bb_{s}^{2} - 8\bb_{s}\cc_{s}\uu^{-1} -16(1 - \uu^{2})\right)\right).
\end{align*}
\noindent Since $\cc_{s} > 2\uu^{2}$ at any positive maximum of $\psi$, we find
\begin{align*}
\partial_{t}\,\psi|_{\text{max}} & < \frac{1}{\bb^{2}}\left(4\bb_{s}^{2}\uu^{2} - 8\bb_{s}\uu^{3} - 16\uu^{2} + 4\uu^{4}\right) \\ &\leq \frac{4\uu^{2}}{\bb^{2}}\left(\bb_{s}^{2} -2\bb_{s}\uu - 4 + \uu^{2}\right).
\end{align*}
\noindent Finally, we note that the quadratic on the right hand side is always negative because $0 \leq \bb_{s} \leq 2$ by Lemma \ref{firstderivativesancientbounded}. Therefore, we have shown that $\partial_{t}\psi < 0$ at any positive maximum. Let now $(p_{j},t_{j})$ be a space-time sequence satisfying
$\psi(p_{j},t_{j}) \rightarrow \Psi_{\infty}$. From Lemma \ref{firstderivativesancientbounded} we derive
\[
4 \geq \cc_{s}\uu^{-1}(p_{j},t_{j}) \geq \frac{\Psi_{\infty}}{2}\uu^{-1}(p_{j},t_{j}),
\]
\noindent for any $j$ large enough. Thus $\uu(p_{j},t_{j}) \geq \Psi_{\infty}/8$ for $j$ large enough. Since $\cc$ is uniformly bounded in the space-time, the latter also yields $\bb(p_{j},t_{j}) \leq 8m^{-1}/\Psi_{\infty}$. Moreover, by the uniform lower bound for $\uu$ and \eqref{conditionforrigidity} we obtain
$\bb_{s}(p_{j},t_{j}) \geq \varepsilon$, for some $\varepsilon > 0$. We can then apply the compactness result in Proposition \ref{compactnessresult} to the sequence $(\R^{4},g_{j}(t),p_{j})_{t\leq 0}$, with $g_{j}(t) \doteq g(t_{j} + t)$, and deduce that there exists a warped Berger ancient solution $(M_{\infty},g_{\infty}(t),p_{\infty})_{t\leq 0}$, with $M_{\infty} = \R^{4}$ or $M_{\infty} = \R\times S^{3}$, satisfying:
\[
\psi_{\infty}(p_{\infty},0) = ((\cc_{\infty})_{s} - 2\uu_{\infty}^{2})(p_{\infty},0) = \sup_{M_{\infty}\times (-\infty,0]}\,\psi_{\infty} = \Psi_{\infty} > 0.
\] 
\noindent However, from the previous calculations we see that 
\[
\partial_{t}\,\psi_{\infty}|_{\Psi_{\infty}} < 0,
\]
\noindent hence arriving to a contradiction.
% By direct computation we see that the last polynomial is negative whenever $\bb_{s}\in (y_{1},y_{2})$ with 
%\begin{align*}
%y_{1} &= \frac{28\frac{\cc}{\bb} - \sqrt{576 + 640\frac{\cc^{2}}{\bb^{2}}}}{12} \leq \frac{28\frac{\cc}{\bb} - \sqrt{784\frac{\cc^{2}}{\bb^{2}}}}{12} \leq 0 
%\\ y_{2} &= \frac{28\frac{\cc}{\bb} + \sqrt{576 + 640\frac{\cc^{2}}{\bb^{2}}}}{12} > \frac{\sqrt{576}}{12} = 2.
%\end{align*}
%\noindent Therefore the right hand side of the evolution equation is always negative whenever $0 \leq \bb_{s}(\cdot,t) \leq 2$.
\end{proof}
While the previous Lemma gives us a precise upper bound for $\cc_{s}$ in terms of $\uu$, in order to control \eqref{evolutionodednice} at any negative minimum we also need a lower bound for $\cc_{s}\uu^{-1}$. Thus, we consider the difference between $\oded$ and $\ode\uu^{-1}$.
\begin{lemma}\label{lemmaforARF2}
If $(\R^{4},g(t))_{t\leq 0}$ is an ancient solution in $\mathcal{A}$, then 
\[
\oded - \ode\uu^{-1} = \bb_{s} -\cc_{s}\uu^{-1} -2(1 - \uu) \leq 0.
\]
\end{lemma}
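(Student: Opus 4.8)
The plan is to run the same scheme as in Lemma \ref{lemmaforARF1}, with the quantity $\Theta\doteq\oded-\ode\uu^{-1}=\bb_{s}-\cc_{s}\uu^{-1}-2(1-\uu)$, defined smoothly on $(\R^{4}\setminus\{\origin\})\times(-\infty,0]$ and extended continuously at the origin. First I would record two boundary-type facts: by the smoothness conditions ($\bb_{s}=\cc_{s}=1$ and $\uu\to1$ at $\origin$) one has $\Theta(\origin,t)=0$ for all $t\le0$, and since $\uu\le1$ and $0\le\bb_{s}\le2$ by Lemma \ref{firstderivativesancientbounded}, the quantity $\Theta\le\bb_{s}\le2$ is bounded above in the space-time. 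Arguing by contradiction I would set $\Theta_{\infty}\doteq\sup_{\R^{4}\times(-\infty,0]}\Theta$ and assume $\Theta_{\infty}>0$.

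The first technical step is to show $\partial_{t}\Theta<0$ at every positive interior spatial maximum. I would compute the evolution equation of $\Theta$ from \eqref{equationbs}, \eqref{equationcs} and the evolution equation of $\uu$ obtained by combining \eqref{Ricciflowpdes1} and \eqref{Ricciflowpdes2} (equivalently, by differentiating the hyperk\"ahler quantities $\ode,\oded$ along the flow). Evaluated at a point where $\Theta_{s}=0$ and $\Theta_{ss}\le0$, the reaction term should reduce to $\bb^{-2}$ times a polynomial in $\bb_{s}$, $\cc_{s}\uu^{-1}$ and $\uu$; using $0\le\bb_{s}\le2$ from Lemma \ref{firstderivativesancientbounded} and $0\le\cc_{s}\uu^{-1}\le2\uu$ from Lemma \ref{lemmaforARF1}, this polynomial should be forced to be strictly negative wherever $\Theta>0$. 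Casting the reaction term in a form whose negativity is transparent from the two preceding lemmas is the computational heart of the proof, and the step I expect to be the main obstacle.

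The second step realises the supremum on a limit. Pick $(p_{j},t_{j})$ with $\Theta(p_{j},t_{j})\to\Theta_{\infty}$. The point is that positivity of $\Theta_{\infty}$ by itself supplies the non-degeneracy data required by the \emph{collapsed} compactness result: for $j$ large, $\cc_{s}\uu^{-1}\ge0$ and $1-\uu\ge0$ give $\bb_{s}(p_{j},t_{j})\ge\Theta(p_{j},t_{j})>\Theta_{\infty}/2$; combining with $\bb_{s}\le2$ gives $2(1-\uu)(p_{j},t_{j})\le\bb_{s}(p_{j},t_{j})-\Theta(p_{j},t_{j})\le2-\Theta_{\infty}/2$, hence $\uu(p_{j},t_{j})\ge\Theta_{\infty}/4$; and then $\bb(p_{j},t_{j})=\cc(p_{j},t_{j})\,\uu(p_{j},t_{j})^{-1}\le4m^{-1}/\Theta_{\infty}$ by \eqref{boundedhopffberdefn}. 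Together with the uniform curvature bound built into $\mathcal{A}$ and the uniform first-derivative bound ($\bb_{s}\le2$, $\cc_{s}=\uu\,(\cc_{s}\uu^{-1})\le4$) from Lemma \ref{firstderivativesancientbounded}, hypotheses \eqref{curvatureboundedCA}--\eqref{uujboundedCA} hold for the shifted flows $g_{j}(t)\doteq g(t_{j}+t)$ centred at $p_{j}$. Proposition \ref{compactnessresult} then yields a pointed Cheeger--Gromov limit $(M_{\infty},g_{\infty}(t),p_{\infty})_{t\le0}$, with $M_{\infty}=\R^{4}$ or $M_{\infty}=\R\times S^{3}$, on which — by the smooth convergence of the warping coefficients — the corresponding quantity satisfies $\Theta(p_{\infty},0)=\sup_{M_{\infty}\times(-\infty,0]}\Theta=\Theta_{\infty}>0$.

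To finish, since $\Theta$ vanishes identically along a singular orbit, the equality $\Theta(p_{\infty},0)=\Theta_{\infty}>0$ forces $p_{\infty}$ to lie in the principal part, so $(p_{\infty},0)$ is a positive interior spatial maximum of $\Theta$ on the limit; by the first step $\partial_{t}\Theta(p_{\infty},0)<0$, and therefore $\Theta$ exceeds $\Theta_{\infty}$ at times slightly before $0$, contradicting maximality. Hence $\Theta_{\infty}\le0$, i.e.\ $\oded-\ode\uu^{-1}\le0$ on the original ancient solution. I note that, in contrast to Lemma \ref{lemmaforARF1}, condition \eqref{conditionforrigidity} is not needed to verify the compactness hypotheses here, because positivity of $\sup\Theta$ already controls $\bb_{s}$, $\uu$ and $\bb$ directly along the approximating sequence.
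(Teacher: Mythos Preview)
Your approach is the paper's: show $\partial_t\Theta<0$ at any positive interior maximum, then use Proposition~\ref{compactnessresult} on a sequence approximating $\sup\Theta$ to realise the maximum on a limit ancient solution and reach a contradiction. The computation you flag as the main obstacle does go through; the paper organises the reaction term (writing $\phi$ for your $\Theta$) as
\[
\partial_t\phi\big|_{\max}\leq \frac{4(1-\uu)}{\bb^{2}}\Bigl(-(\cc_s\uu^{-1}-\uu)^2\Bigr)+\frac{\phi}{\bb^{2}}\Bigl(-\phi^2-\phi(6+2\cc_s\uu^{-1})-2(\cc_s\uu^{-1})^2-8\cc_s\uu^{-1}+4\cc_s-8-2\uu^{2}\Bigr),
\]
which is strictly negative for $\phi>0$ using only $\uu\le1$ and $\cc_s\ge0$; in particular Lemma~\ref{lemmaforARF1} is not needed for this step, contrary to your expectation.

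There is one genuine difference worth recording. To verify the compactness hypotheses along the approximating sequence, the paper rewrites $\phi=(\bb_s+\tfrac12\cc_s\uu^{-1}-2)-\tfrac32\cc_s\uu^{-1}+2\uu$, applies Lemma~\ref{firstderivativesancientbounded} to bound $\uu$ below, and then invokes condition~\eqref{conditionforrigidity} to obtain a lower bound on $\bb_s$. Your route is more direct: from $\Theta=\bb_s-\cc_s\uu^{-1}-2(1-\uu)$ you read off $\bb_s\ge\Theta$ and $2(1-\uu)\le\bb_s-\Theta\le2-\Theta_\infty/2$ immediately, so neither~\eqref{conditionforrigidity} nor the algebraic rearrangement is needed. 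Your final remark that~\eqref{conditionforrigidity} plays no role here is therefore correct and a small sharpening of the paper's argument.
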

\begin{proof}
Let $\phi$ denote $\oded - \ode\uu^{-1}$. By Lemma \ref{firstderivativesancientbounded} we get that $\phi$ is uniformly bounded in the space-time. Suppose for a contradiction that $\Phi_{\infty}\doteq \sup_{\R^{4}\times (-\infty,0]}\phi$ is positive. Given a space-time sequence $(p_{j},t_{j})$ such that $\phi(p_{j},t_{j})\rightarrow \Phi_{\infty}$, from Lemma \ref{firstderivativesancientbounded} we find
\begin{align*}
\frac{\Phi_{\infty}}{2} \leq \phi(p_{j},t_{j}) &= \left(\bb_{s} + \frac{\cc_{s}\uu^{-1}}{2} - 2 -\frac{3}{2}\cc_{s}\uu^{-1} + 2\uu\right)(p_{j},t_{j}) \\ &\leq \left(-\frac{3}{2}\cc_{s}\uu^{-1} + 2\uu\right)(p_{j},t_{j})
\end{align*}
\noindent Thus $\uu(p_{j},t_{j})$ is uniformly bounded along the sequence. We may then use \eqref{conditionforrigidity} as in the proof of Lemma \ref{lemmaforARF1} to deduce that the compactness result applies to the sequence of ancient Ricci flows translated by times $t_{j}$ and centred at $p_{j}$. In particular, there exists a limit warped Berger ancient Ricci flow $(M_{\infty},g_{\infty}(t),p_{\infty})_{t\leq 0}$ such that 
\[
\phi_{\infty}(p_{\infty},0) = \left((\bb_{\infty})_{s} -(\cc_{\infty})_{s}\uu_{\infty}^{-1} -2(1 - \uu_{\infty})\right)(p_{\infty},0)  = \sup_{M_{\infty}\times (-\infty,0]}\,\phi_{\infty} = \Phi_{\infty} > 0.
\]
\noindent Thus, it remains to check that $\phi_{\infty}$ cannot achieve a positive supremum along a warped Berger ancient solution as in $\mathcal{A}$. In fact, we show that for any complete, warped Berger ancient Ricci flow with monotone coefficients $\phi_{\infty}$ never attains its positive supremum in the space-time.
\\We compute the evolution equation of $\phi_{\infty}$ at a positive maximum and we drop the $\infty$-subscript from the notation:
\begin{align*}
\partial_{t}\phi|_{\text{max}} &\leq \frac{1}{\bb^{2}}\left(\bb_{s}\left(4 - \bb_{s}^{2} - (\cc_{s}\uu^{-1})^{2} - 14\uu^{2}\right)\right) \\  &+ \frac{1}{\bb^{2}}\left(\cc_{s}\uu^{-1}\left(\bb_{s}^{2}+ (\cc_{s}\uu^{-1})^{2} + 4 + 6\uu^{2}\right)\right) \\
&+ \frac{1}{\bb^{2}}\left(2\uu\left(-3\bb_{s}^{2} -(\cc_{s}\uu^{-1})^{2} + 4\bb_{s}\cc_{s}\uu^{-1} + 4(1-\uu^{2})\right)\right).
\end{align*}
\noindent At any positive maximum of $\phi$ we can bound the evolution equation by
\begin{align*}
\partial_{t}\phi|_{\text{max}}&\leq \frac{1}{\bb^{2}}\left(4(1-\uu)\left(-\left(\cc_{s}\uu^{-1} - \uu\right)^{2}\right)\right) \\ & + \frac{\phi}{\bb^{2}}\left(-\phi^{2} -\phi(6 + 2\cc_{s}\uu^{-1}) - 2(\cc_{s}\uu^{-1})^{2} -8\cc_{s}\uu^{-1} + 4\cc_{s} -8 -2\uu^{2} \right) < 0.
\end{align*}
\noindent Therefore, given a warped Berger ancient solution with monotone coefficients the geometric quantity $\phi$ cannot achieve its supremum in the space-time. This completes the proof.
\end{proof}
We may now go back to the evolution equation of $\oded$ at a negative minimum \eqref{evolutionodednice}. The roots of the $\cc_{s}$-quadratic are
\[
y_{\pm} = \uu^{2}\left(2 \pm \sqrt{1 + \uu^{-2}(4 - z^{2})}\right).
\] 
\noindent From Lemma \ref{lemmaforARF1} we immediately derive that
\[
\cc_{s} \leq 2\uu^{2} < y_{+},
\]
\noindent for any Ricci flow in $\mathcal{A}$. According to Lemma \ref{lemmaforARF2}, in order to prove that $\cc_{s} > y_{-}$ everywhere in the space-time, it suffices to show that
\begin{equation}\label{quadratic}
y_{-} < \bb_{s}\uu -2\uu(1-\uu).
\end{equation}
\noindent The latter is equivalent to 
\[
1 + \uu^{-1}(2-z) < \sqrt{1 + \uu^{-2}(4 - z^{2})}.
\]
\noindent After taking the square of the equation and rearranging the terms, we see that \eqref{quadratic} holds if and only if
\[
2\uu^{-1}(2 - z)(1 - z\uu^{-1}) < 0,
\]
\noindent which is indeed satisfied because by definition $z\uu^{-1} = (\bb_{s} + \uu)\uu^{-1} > 1$ and at any negative minimum of $\oded$ we have $z < 2$. Therefore, we conclude that $\cc_{s}\in (y_{-},y_{+})$ in the space-time. 

To sum up, we have shown that:
\begin{lemma}\label{lemmastationaryoded}
Given an ancient solution to the Ricci flow in $\mathcal{A}$, the evolution equation of $\oded$ at any negative minimum satisfies
\[
\partial_{t}\,\oded|_{\text{min} < 0} > 0.
\] 
\end{lemma}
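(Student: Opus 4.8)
The plan is to read the statement off the evolution inequality \eqref{evolutionodednice} at a negative minimum of $\oded$, combined with the three a priori bounds established in Lemmas \ref{firstderivativesancientbounded}, \ref{lemmaforARF1} and \ref{lemmaforARF2}. At such a point set $z := \oded + 2$, so that $z = \bb_s + \uu$ and the condition $\oded < 0$ means $z \in (0,2)$. Since $z/\bb^2 > 0$, \eqref{evolutionodednice} reduces the claim to showing that the quadratic
\[
Q(\cc_s) := (\cc_s\uu^{-1})^2 - 4\cc_s + z^2 - 4 + 3\uu^2
\]
is strictly negative at the minimum. As $z < 2$, the discriminant $1 + \uu^{-2}(4 - z^2)$ is positive, $Q$ is an upward parabola in $\cc_s$ (leading coefficient $\uu^{-2}>0$), and $Q < 0$ precisely on the open interval with endpoints $y_\pm = \uu^2\bigl(2 \pm \sqrt{1 + \uu^{-2}(4-z^2)}\bigr)$. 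So the whole task is to squeeze $y_- < \cc_s < y_+$ at the minimum.

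For the upper bound, Lemma \ref{lemmaforARF1} gives $\cc_s \le 2\uu^2$, and $2\uu^2 < \uu^2\bigl(2 + \sqrt{1+\uu^{-2}(4-z^2)}\bigr) = y_+$ since the discriminant is positive. For the lower bound, Lemma \ref{lemmaforARF2} rearranges to $\cc_s \ge \uu\bb_s - 2\uu(1-\uu)$, so it suffices to prove \eqref{quadratic}, i.e. $y_- < \uu\bb_s - 2\uu(1-\uu)$; writing out $y_-$, clearing the square root (both sides being positive because $2-z>0$), squaring and cancelling collapses \eqref{quadratic} to the single inequality $z\uu^{-1} > 1$. Since $\oded(\origin,\cdot)\equiv 0$, a negative minimum of $\oded$ lies away from the origin, where Lemma \ref{consistencymonotone} gives $\bb_s > 0$; hence $z\uu^{-1} = \bb_s\uu^{-1} + 1 > 1$, which closes the argument. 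Feeding $Q(\cc_s) < 0$ back into \eqref{evolutionodednice} yields $\partial_t\,\oded|_{\mathrm{min}<0} > 0$.

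I do not expect a genuine obstacle: all the analytic work has been front-loaded into Lemmas \ref{firstderivativesancientbounded}--\ref{lemmaforARF2} and into the maximum-principle computation producing \eqref{evolutionodednice}, so this lemma is a synthesis of the quadratic analysis already outlined above. The only point that deserves care is strictness — one must verify that $\cc_s$ lands strictly inside $(y_-,y_+)$ rather than on the boundary, which is exactly where the strict bound $\bb_s > 0$ (equivalently $z\uu^{-1} > 1$) at a non-origin point, together with the strict inequality $z < 2$ at a negative minimum of $\oded$, is used.
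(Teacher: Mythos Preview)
Your proposal is correct and follows essentially the same route as the paper: reduce \eqref{evolutionodednice} to showing $\cc_s\in(y_-,y_+)$, get $\cc_s<y_+$ from Lemma~\ref{lemmaforARF1}, and get $\cc_s>y_-$ from Lemma~\ref{lemmaforARF2} via the reduction of \eqref{quadratic} to $z\uu^{-1}>1$. One small remark: for ancient solutions in $\mathcal{A}$ the strict bound $\bb_s>0$ is most cleanly read off the defining condition \eqref{conditionforrigidity} rather than Lemma~\ref{consistencymonotone}, which as stated concerns forward solutions from an initial metric.
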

The final and most difficult step consists in proving that, up to passing to a subsequence, the hyperk\"ahler quantity $\oded$ always attains its infimum on any ancient solution in $\mathcal{A}$. We emphasize that any conclusion achieved so far does extend to the steady soliton found by Appleton in \cite{appleton2}. Explicitly, one may check that in order to apply the compactness result for proving Lemmas \ref{lemmaforARF1} and \ref{lemmaforARF2} it suffices to require $\bb_{s}\uu^{-1}$ to be uniformly bounded from below in the space-time, which holds along the soliton because it opens as fast as a paraboloid at spatial infinity. However, along the soliton the infimum of $\oded$  is $-2$ and is never attained in the space-time. Indeed, along any space-time sequence $(p_{j},t_{j})$ satisfying $\oded(p_{j},t_{j})\rightarrow -2$ the roundness ratio $\uu$  becomes degenerate. Thus, we see once again that in the collapsed setting the main issue consists in verifying that geometric quantities do attain their infimum (supremum) in regions of the space-time where the squashing factor stays positive. This is where the condition \eqref{conditionforrigidity}, with $f$ diverging to infinity when $\uu^{-1}\rightarrow \infty$, plays a role via a sort of approximation method to show that, in fact, for any solution in $\mathcal{A}$ we have
\[
\lim_{\uu^{-1}\rightarrow \infty}\oded = 0.
\]
\noindent Equivalently, below we prove that for any ancient solution in $\mathcal{A}$ the warping coefficient $\bb$ in the directions orthogonal to the Hopf-fiber grows linearly in space, meaning that the volume of geodesic balls $B_{g(t)}(\origin,r)$ behaves like $r^{3}$ for any radius $r$ large enough and on any time-slice. 
\begin{proposition}\label{keypropositionancient}
If $(\R^{4},g(t))_{t\leq 0}$ is an ancient solution to the Ricci flow in $\mathcal{A}$, then there exists $\delta > 0$ satisfying
\[
\inf_{\R^{4}\times (-\infty, 0]}\,\uu^{-\delta}(\bb_{s} - 2) > -\infty.
\]
\end{proposition}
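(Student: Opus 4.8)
The plan is to prove the equivalent statement $\oded\geq -C\uu^{\delta}$ in the space-time for suitable $\delta,C>0$: indeed $\oded=b_{s}+\uu-2=\uu-(2-b_{s})$, so $\uu^{-\delta}(b_{s}-2)=\uu^{1-\delta}-\uu^{-\delta}(2-b_{s})$, and since $b_{s}\leq 2$ by Lemma \ref{firstderivativesancientbounded} and $0<\uu\leq 1$, the asserted lower bound is equivalent to an upper bound on the nonnegative scale-invariant quantity $q\doteq \uu^{-\delta}(2-b_{s})$. I first record the structural input from the earlier lemmas: $0<b_{s}\leq 2$, $\cc_{s}\leq 2\uu^{2}$ and hence $\cc_{s}\uu^{-1}\leq 2\uu$ (Lemmas \ref{firstderivativesancientbounded}, \ref{lemmaforARF1}), and $b_{s}\uu^{-1}\geq\beta_{0}>0$, where $\beta_{0}=(\inf_{[1,\infty)}f)\cdot\inf\frac{b_{s}\uu^{-1}}{f(\uu^{-1})}$ is positive because $f$ is continuous, positive and diverges at infinity while $\uu^{-1}\geq 1$. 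From this last bound, on any time slice $b$ must be spatially unbounded --- if $b\leq B$ then $b_{ss}=-k_{01}b$ is bounded, forcing $b_{s}\to 0$ with $\uu=\cc/b$ bounded below, contradicting $b_{s}\uu^{-1}\geq\beta_{0}$ --- so $b\to\infty$ and $\uu\to 0$ as $s\to\infty$ on every slice.

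For the maximum-principle step I would compute the evolution of $q$ from \eqref{equationbs} for $\partial_{t}b_{s}$, the flow equations \eqref{Ricciflowpdes1}--\eqref{Ricciflowpdes2} and the commutator formula \eqref{commutatorformula} for $\partial_{t}\uu$, eliminating $b_{ss}$ at a spatial maximum through $q_{s}=0$, $q_{ss}\leq 0$. The point is that the dominant reaction term in $\partial_{t}b_{s}$ is $b_{s}(4-b_{s}^{2})/b^{2}$, which is nonnegative and pushes $b_{s}$ towards $2$; the remaining reaction terms are $\mathcal{O}(\uu^{2}/b^{2})$ by $\cc_{s}\leq 2\uu^{2}$, and differentiating the weight $\uu^{-\delta}$ produces contributions proportional to $\delta$ and $\delta^{2}$. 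Choosing $\delta>0$ small enough, I expect $\partial_{t}q<0$ at any sufficiently large positive spatial maximum; since $q(\origin,t)=1$ by the boundary conditions, this is what will forbid $q$ from taking large values at interior points.

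The genuinely hard part is the spatial-infinity behavior, which is precisely the collapsed-setting obstruction highlighted in this section: along any space-time sequence realising $\sup q=+\infty$ one necessarily has $\uu\to 0$ at the base points (because $2-b_{s}\leq 2$), so Proposition \ref{compactnessresult} cannot be applied with those centres. Following the scheme of Lemmas \ref{lemmaforARF1}--\ref{lemmaforARF2}, the plan is to relocate: on each time slice $b$ is monotone, so any level $\{\uu=\uu_{\ast}\}$ with $\uu_{\ast}\in(0,1)$ is realised at a point where $b=\cc/\uu_{\ast}\leq m^{-1}/\uu_{\ast}$ and $b_{s}\in[\beta_{0}\uu_{\ast},2]$ are bounded, so Proposition \ref{compactnessresult} does apply to time-translates centred there, and the pointed Cheeger--Gromov limit is again an ancient solution in $\mathcal{A}$ with the same $f$: monotonicity, the curvature bound, $\cc\leq m^{-1}$ and the pointwise inequality \eqref{conditionforrigidity} all pass to the limit by the smooth convergence of the warping coefficients in Proposition \ref{compactnessresult} and the continuity of $f$. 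The divergence $f(z)\to\infty$ is exactly what makes this ``approximation'' work --- it excludes Appleton's soliton, along which $b_{s}\uu^{-1}$ stays bounded and $\oded\to-2$ at spatial infinity, and it is the ingredient that should prevent the blow-up of $q$ from persisting in a region where $\uu$ stays away from $0$; combined with the evolution computation above this yields the contradiction. I expect transferring the failure of the bound from the collapsed region to a non-degenerate pointed limit --- where both Proposition \ref{compactnessresult} and the maximum principle are available --- to be the main obstacle.

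Finally, once $q\leq C$ is established the intended consequence is immediate: $\oded=(b_{s}-2)+\uu$ obeys $|\oded|\leq C\uu^{\delta}+\uu$, hence $\oded\to 0$ as $\uu^{-1}\to\infty$, the property this proposition feeds into the subsequent proof that $\oded\geq 0$.
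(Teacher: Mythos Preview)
Your diagnosis of the difficulty is exactly right, but the proposal does not contain the idea that resolves it. You correctly observe that along any sequence with $q=\uu^{-\delta}(2-b_{s})\to+\infty$ one has $\uu\to 0$, so Proposition \ref{compactnessresult} is unavailable at those base points. Your remedy --- ``relocate'' to a level set $\{\uu=\uu_{\ast}\}$ and pass to a pointed limit there --- does not produce a contradiction: the limit is again an ancient solution in $\mathcal{A}$, and nothing prevents $q_{\infty}$ from being just as unbounded on it as $\uu_{\infty}\to 0$. Relocation buys you compactness, but it loses the information that $q$ was large, since $q$ need not be large at the relocated centres. You acknowledge this yourself in the last sentence of that paragraph; that is precisely the gap.

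The paper closes this gap by a penalisation trick which you have not found. Instead of $q$, one studies
\[
\omega_{\delta,\epsilon}\;=\;\uu^{-\delta}\bigl(\epsilon\,b_{s}\uu^{-1}+b_{s}-2\bigr),\qquad \epsilon>0.
\]
The added term $\epsilon\,b_{s}\uu^{-1}$ is bounded below by $\epsilon\beta f(\uu^{-1})$ thanks to \eqref{conditionforrigidity}, and since $f(z)\to\infty$ this forces $\omega_{\delta,\epsilon}\to+\infty$ in any region where $\uu\to 0$. Hence, for each fixed $\epsilon$, the infimum $\Omega_{\delta,\epsilon}$ is finite and any approximating sequence has $\uu$ bounded away from zero by a constant depending on $\epsilon$, so Proposition \ref{compactnessresult} applies directly at those points. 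The evolution computation --- essentially the one you sketch, with the extra $\epsilon$-terms handled using $\epsilon b_{s}\uu^{-1}<2$ at a negative minimum and $\cc_{s}\uu^{-1}\leq 2\uu$ --- then yields $\Omega_{\delta,\epsilon}\geq -\Omega_{\delta}$ with $\Omega_{\delta}$ \emph{independent of} $\epsilon$. Letting $\epsilon\searrow 0$ gives $\uu^{-\delta}(b_{s}-2)\geq -\Omega_{\delta}$. So the divergence of $f$ is not used via relocation, but to make the penalised quantity coercive; that is the missing mechanism.
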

\begin{proof} For any $\delta,\epsilon > 0$ we define 
\[
\omega_{\delta,\epsilon} = \uu^{-\delta}\left(\epsilon\bb_{s}\uu^{-1} + \bb_{s} - 2\right), 
\]
\noindent and 
\[
\Omega_{\delta,\epsilon} = \inf_{\R^{4}\times (-\infty,0]}\,\omega_{\delta,\epsilon}.
\]
\noindent In the following we always take $\Omega_{\delta,\epsilon}$ to be \emph{negative}. From \eqref{conditionforrigidity} we see that there exists $\beta > 0$ such that
\[
\omega_{\delta,\epsilon} \geq \uu^{-\delta}\left(\epsilon(\beta f(\uu^{-1})) + \bb_{s} - 2\right),
\]
\noindent with $f(\uu^{-1})\rightarrow \infty$ as $\uu^{-1}\rightarrow \infty$. Thus, given $\delta$ and $\epsilon$ positive constants, the quantity $\Omega_{\delta,\epsilon}$ is finite. Given a negative minimum point for $\omega_{\delta,\epsilon}$, the evolution equation becomes
\begin{align*}
\partial_{t}\omega_{\delta,\epsilon}|_{\text{min} < 0} &\geq \frac{\omega_{\delta,\epsilon}}{\bb^{2}}\left( \bb_{s}^{2}(\delta^{2} + 2\delta +2\delta\frac{\epsilon\uu^{-1}}{1 + \epsilon\uu^{-1}}) + (\cc_{s}\uu^{-1})^{2}(\delta^{2} + 2\delta\frac{\epsilon\uu^{-1}}{1 + \epsilon\uu^{-1}})\right)\\ & + \frac{\omega_{\delta,\epsilon}}{\bb^{2}}\left(\bb_{s}\cc_{s}\uu^{-1}(-2\delta^{2} -2\delta -4\delta\frac{\epsilon\uu^{-1}}{1 + \epsilon\uu^{-1}}) -4\delta(1-\uu^{2})\right) \\ &+ \frac{\epsilon\bb_{s}\uu^{-1-\delta}}{\bb^{2}}\left(2\bb_{s}^{2}\frac{\epsilon\uu^{-1}}{1 + \epsilon\uu^{-1}} + (\cc_{s}\uu^{-1})^{2}(-2 + 2\frac{\epsilon\uu^{-1}}{1 + \epsilon\uu^{-1}})\right) \\ &+\frac{\epsilon\bb_{s}\uu^{-1-\delta}}{\bb^{2}}(-4\frac{\epsilon\uu^{-1}}{1 + \epsilon\uu^{-1}}\bb_{s}\cc_{s}\uu^{-1} -2\uu^{2}) \\& +\frac{1}{\bb^{2}}\left(4\epsilon\cc_{s}\uu^{-\delta} + 4\cc_{s}\uu^{1-\delta} + \bb_{s}\uu^{-\delta}(4 - \bb_{s}^{2} -(\cc_{s}\uu^{-1})^{2} -6\uu^{2})\right).
\end{align*}
\noindent Since $\bb_{s}$ and $\cc_{s}$ are nonnegative and $\omega_{\delta,\epsilon} < 0$, we may bound the right hand side by:
\begin{align*}
\partial_{t}\omega_{\delta,\epsilon}|_{\text{min} < 0} &\geq \frac{\delta\omega_{\delta,\epsilon}}{\bb^{2}}\left( \bb_{s}^{2}(\delta + 2 +2\frac{\epsilon\uu^{-1}}{1 + \epsilon\uu^{-1}}) + (\cc_{s}\uu^{-1})^{2}(\delta + 2\frac{\epsilon\uu^{-1}}{1 + \epsilon\uu^{-1}}) -4(1-\uu^{2})\right)\\ &+ \frac{\epsilon\bb_{s}\uu^{-1-\delta}}{\bb^{2}}\left(-2(\cc_{s}\uu^{-1})^{2} -4\frac{\epsilon\uu^{-1}}{1 + \epsilon\uu^{-1}}\bb_{s}\cc_{s}\uu^{-1} -2\uu^{2}\right) \\& +\frac{1}{\bb^{2}}\left(\bb_{s}\uu^{-\delta}(4 - \bb_{s}^{2} -(\cc_{s}\uu^{-1})^{2} -6\uu^{2})\right).
\end{align*}
\noindent We note that 
\[
\bb_{s}\uu^{-\delta}(4-\bb_{s}^{2}) = (\bb_{s}^{2} + 2\bb_{s})(-\omega_{\delta,\epsilon} +\epsilon\bb_{s}\uu^{-1-\delta}) \geq -(\bb_{s}^{2} + 2\bb_{s})\omega_{\delta,\epsilon}.
\]
\noindent Since by Lemma \ref{firstderivativesancientbounded} $\bb_{s}\leq 2$, if we take $\delta$ positive such that $2-\delta^{2}-4\delta \geq 0$, then we can write
\begin{align*}
\partial_{t}\omega_{\delta,\epsilon}|_{\text{min} < 0} &\geq \frac{\delta\lvert\omega_{\delta,\epsilon}\rvert}{\bb^{2}}\left((\cc_{s}\uu^{-1})^{2}(-\delta - 2\frac{\epsilon\uu^{-1}}{1 + \epsilon\uu^{-1}}) + 4(1-\uu^{2})\right)\\ &+ \frac{\epsilon\bb_{s}\uu^{-1-\delta}}{\bb^{2}}\left(-2(\cc_{s}\uu^{-1})^{2} -4\frac{\epsilon\uu^{-1}}{1 + \epsilon\uu^{-1}}\bb_{s}\cc_{s}\uu^{-1} -2\uu^{2}\right) \\& +\frac{1}{\bb^{2}}\left(-\bb_{s}\uu^{-\delta}((\cc_{s}\uu^{-1})^{2} + 6\uu^{2})\right).
\end{align*}
\noindent By Lemma \ref{lemmaforARF1} we see that $\lvert \cc_{s}\uu^{-1}\rvert \leq 2\uu$. Moreover, whenever $\omega_{\delta,\epsilon} < 0$ we have $\epsilon\bb_{s}\uu^{-1} < 2$. Since by the condition we set previously $\delta < 1$, we can bound the evolution equation by
\begin{align*}
\partial_{t}\omega_{\delta,\epsilon}|_{\text{min} < 0} &\geq \frac{\delta\lvert\omega_{\delta,\epsilon}\rvert}{\bb^{2}}\left(-12 \uu^{2} + 4(1-\uu^{2})\right)\\ &+ \frac{2\uu^{-\delta}}{\bb^{2}}\left(-8\uu^{2} -16\uu -2\uu^{2}\right) \\& +\frac{1}{\bb^{2}}\left(-2\uu^{-\delta}(4\uu^{2} + 6\uu^{2})\right).
\end{align*}
\noindent Finally, we observe that $\lvert \omega_{\delta,\epsilon}\rvert \leq 2\uu^{-\delta}$ at any negative minimum. Therefore, from the evolution equation we deduce that if $\lvert \omega_{\delta,\epsilon} \rvert = \Omega_{\delta}$, for some $\Omega_{\delta}$ large enough and \emph{independent} of $\epsilon$, then 
\[
\partial_{t}\omega_{\delta,\epsilon}|_{\text{min} = -\Omega_{\delta} < 0} \geq \frac{1}{\bb^{2}}\left(\delta\Omega_{\delta} - 1\right) > 0.
\]
\noindent Let $(p_{j},t_{j})$ be a space-time sequence satisfying $\omega_{\delta,\epsilon}(p_{j},t_{j})\rightarrow \Omega_{\delta,\epsilon} < 0$. According to \eqref{conditionforrigidity} the squashing factor is bounded away from zero by some positive quantity depending on $\epsilon$ along the given sequence. From \eqref{boundedhopffberdefn} we then derive that $\bb(p_{j},t_{j}) \leq \alpha(\epsilon) < \infty$. Again by the constraint in \eqref{conditionforrigidity} the spatial derivative $\bb_{s}$ has a uniform lower bound along the sequence because $\uu$ is non-degenerate. Since the ancient solution belongs to $\mathcal{A}$ we deduce that we may apply the compactness result in Proposition \ref{compactnessresult} to the sequence $(\R^{4},g(t_{j} + t),p_{j})_{t\leq 0}$ and conclude that there exists a warped Berger ancient solution $(M_{\infty},g_{\infty}(t),p_{\infty})_{t\leq 0}$ such that
\[
(\omega_{\delta,\epsilon})_{\infty}(p_{\infty},0) = \inf_{M_{\infty}\times (-\infty,0]}\,(\omega_{\delta,\epsilon})_{\infty} = \Omega_{\delta,\epsilon} < 0.
\]
\noindent From the previous analysis we derive that there exists $\Omega_{\delta} > 0$ such that 
\[
\partial_{t}(\omega_{\delta,\epsilon})_{\infty}(p_{\infty},0) > 0,
\]
\noindent when $(\omega_{\delta,\epsilon})_{\infty}(p_{\infty},0) < -\Omega_{\delta}$.

Thus, fixed $\delta$ as above, we see that $\Omega_{\delta,\epsilon} \geq -\lvert \Omega_{\delta}\rvert$, for all $\epsilon > 0$. We then let $\epsilon\searrow 0$ and conclude that there exists $\delta > 0$ such that
\[
\omega_{\delta,0} = \uu^{-\delta}(\bb_{s} - 2) \geq -\lvert \Omega_{\delta}\rvert > -\infty.
\]
\end{proof}
We now have all the ingredients to prove Theorem \ref{maintheoremancient}. 
%Namely, we show the following:
%\begin{theoremm}\label{rigiditytheoremsection6}
%If $(\R^{4},g(t))_{t\leq 0}$ is an ancient solution to the Ricci flow in $\mathcal{A}$, then $(\R^{4},g(t))$ is the Taub-NUT metric of mass $m$.
%\end{theoremm}
\begin{proof}[Proof of Theorem \ref{maintheoremancient}] By Proposition \ref{keypropositionancient} we know that there exist $\delta, \Omega_{\delta} > 0$ such that
\begin{equation}\label{lowerboundoded}
\oded \geq \uu -\Omega_{\delta}\,\uu^{\delta}.
\end{equation}
\noindent Let us set the notation $\inf\,\oded \doteq \mathcal{J}_{2}$ and assume for a contradiction that $\mathcal{J}_{2} < 0$. Given $(p_{j},t_{j})$ such that $\oded(p_{j},t_{j})\rightarrow \mathcal{J}_{2}$, , by \eqref{lowerboundoded} we see that 
\[
\uu(p_{j},t_{j})^{\delta} \geq \frac{\lvert \mathcal{J}_{2}\rvert}{2\Omega_{\delta}},
\] 
\noindent for all $j$ large enough. We may then argue as for the proof of Proposition \ref{keypropositionancient} and derive that there exists a limit warped Berger solution $(M_{\infty},g_{\infty}(t),p_{\infty})_{t\leq 0}$ such that
\[
(\oded)_{\infty}(p_{\infty},0) = \inf_{M_{\infty}\times (-\infty,0]}(\oded)_{\infty} = \mathcal{J}_{2} < 0.
\]
\noindent However, Lemma \ref{lemmastationaryoded} implies that $\partial_{t}(\oded)_{\infty}(p_{\infty},0) > 0$. Therefore, for any ancient solution in $\mathcal{A}$ we have $\oded \geq 0$.
\begin{claim}\label{claimricciflatnessj2}
Let $(\R^{4},g(t))_{t\leq 0}$ be a complete, bounded curvature warped Berger ancient solution to the Ricci flow. Assume that there exist $r > 0$, $t_{0}\leq 0$ such that
\[
\oded(\cdot,t_{0})|_{B_{g(t_{0})}(\origin,r)} \geq 0.
\]
\noindent Then $g(t) \equiv g$ is Ricci-flat.
\end{claim}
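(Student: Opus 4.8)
The plan is to run the strong maximum principle on the hyperk\"ahler quantity $\oded = \bb_{s} + \uu - 2$ and then bootstrap to Ricci-flatness. First I would note that the smoothness conditions at the origin ($\bb_{s}\to 1$ and $\cc_{s}\to 1$, hence $\uu = \cc/\bb\to 1$ as $s\to 0$) force $\oded(\origin,t)\equiv 0$ for all $t\leq 0$; since $\oded$ is a smooth even function of the radial variable it descends to a genuine smooth function on $\R^{4}\times(-\infty,0]$. Thus under the hypothesis $\oded$ attains the value $0$ at the interior space-time point $(\origin,t_{0})$ while being nonnegative on a spatial neighbourhood.

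Next I would recast $\oded$ as a nonnegative supersolution with an admissible reaction term. Writing its evolution as $\partial_{t}\oded = \Delta\oded + \langle V,\nabla\oded\rangle + R$ and, following \eqref{evolutionodednice}, decomposing $R = R_{0} + h\,\oded$ with $R_{0} = -2\bb^{-2}((\cc_{s}\uu^{-1})^{2} - 4\cc_{s} + 3\uu^{2})$, the inner bracket is an upward parabola in $\cc_{s}$ with roots $\cc_{s} = \uu^{2}$ and $\cc_{s} = 3\uu^{2}$. Lemma \ref{lemmaforARF1} gives $\cc_{s}\leq 2\uu^{2}$, while Lemma \ref{lemmaforARF2} rearranges to $\cc_{s}\geq \uu^{2} + \uu\,\oded$; hence on $\{\oded\geq 0\}$ one has $\uu^{2}\leq\cc_{s}\leq 2\uu^{2}$, so $R_{0}\geq 0$ there and $\partial_{t}\oded \geq \Delta\oded + \langle V,\nabla\oded\rangle + h\,\oded$. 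The coefficient $h$ blows up like $\bb^{-2}$ near the origin, but $\oded$ vanishes to at least second order there, so $h\,\oded$ extends to a bounded function on $\R^{4}$ and the equation is legitimate for the strong maximum principle across $\origin$.

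I would then upgrade the hypothesis to nonnegativity of $\oded$ on the whole space-time; this is available from $\partial_{t}\oded|_{\text{min}<0} > 0$ (Lemma \ref{lemmastationaryoded}) together with the compactness result (Proposition \ref{compactnessresult}), exactly as in the proof of Theorem \ref{maintheoremancient}. The strong minimum principle applied to the supersolution $\oded\geq 0$ with $\oded(\origin,\cdot)\equiv 0$ then forces $\oded\equiv 0$ on $\R^{4}\times(-\infty,0]$. On this vanishing set $\nabla\oded = \partial_{t}\oded = \Delta\oded = 0$, so the evolution equation collapses to $R_{0} = 0$, i.e.\ $\cc_{s}\in\{\uu^{2},3\uu^{2}\}$; combined with $\cc_{s}\leq 2\uu^{2}$ (Lemma \ref{lemmaforARF1}) this yields $\cc_{s} = \uu^{2}$, that is $\ode\equiv 0$ as well. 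Hence both hyperk\"ahler identities $\ode = \oded = 0$ hold throughout $\R^{4}\times(-\infty,0]$: differentiating them expresses $\bb_{ss}$ and $\cc_{ss}$ in terms of $\bb,\cc,\uu$, and substituting into \eqref{sectionalvertical12}--\eqref{rmo123} gives $\mathrm{Ric}_{g(t)}\equiv 0$. Being Ricci-flat the solution is static, so $g(t)\equiv g$, and integrating $\ode = \oded = 0$ with the boundary data at $\origin$ identifies $g$ with the Taub-NUT metric; in particular $g$ is Ricci-flat.

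The delicate step is the strong maximum principle at the origin: since the warped-Berger frame degenerates there one must verify that $\oded,\cc_{s},\uu$ assemble into smooth functions on $\R^{4}$ and that the reaction coefficient is admissible across $\origin$ — which is precisely what the cancellation $h\,\oded\in L^{\infty}$ (from $\oded$ vanishing to second order) supplies. The other point requiring care is the upgrade to global nonnegativity of $\oded$, handled by the blow-up/compactness scheme already used in this section.
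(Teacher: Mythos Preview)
Your approach has a genuine gap: the Claim is stated for an \emph{arbitrary} complete, bounded curvature warped Berger ancient solution, with no assumption that it lies in $\mathcal{A}$ (nor even that it has monotone coefficients). Yet your argument repeatedly invokes Lemma~\ref{lemmaforARF1}, Lemma~\ref{lemmaforARF2}, and Lemma~\ref{lemmastationaryoded}, all of which are proved only for solutions in $\mathcal{A}$; and your ``upgrade to global nonnegativity of $\oded$'' step needs Proposition~\ref{keypropositionancient} to keep $\uu$ away from zero along the approximating sequence, which again requires the $\mathcal{A}$ hypotheses. So as written your argument does not establish the Claim in the generality it is stated. There is also a second, more structural issue: you import the global inequality $\oded\geq 0$ from ``the proof of Theorem~\ref{maintheoremancient}'', but in the paper that inequality is derived \emph{before} the Claim is applied, and the whole point of isolating the Claim is that it needs only the \emph{local} hypothesis $\oded\geq 0$ on a small ball at one time.

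The paper's proof is both simpler and strictly more general. Its key observation is the identity $(\oded)_{ss}(\origin,t)=-R(\origin,t)/12$ (together with $(\oded)_{s}(\origin,t)=0$), obtained from the boundary expansions at the singular orbit; local nonnegativity of $\oded$ at $t_{0}$ then forces $R(\origin,t_{0})\leq 0$, and Chen's $R\geq 0$ for ancient solutions combined with the strong maximum principle for the scalar curvature yields Ricci-flatness immediately. This uses nothing beyond the smoothness conditions at $\origin$ and Chen's theorem, so it applies to any complete bounded curvature warped Berger ancient solution. By contrast, your route---running the strong minimum principle directly on $\oded$---would require globalizing the sign of the reaction term and handling the $\bb^{-2}$ singularity of the coefficients at $\origin$, both of which you can only close using the $\mathcal{A}$ machinery. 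Even if one restricts to $\mathcal{A}$, your argument ends up reproving the stronger statement $\oded\equiv\ode\equiv 0$, which the paper obtains \emph{after} the Claim by a short direct computation using Ricci-flatness; the Claim itself asks only for Ricci-flatness, and the scalar-curvature trick delivers exactly that with no appeal to Section~6 lemmas.
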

\begin{proof}[Proof of Claim \ref{claimricciflatnessj2}] 
Since the curvature is bounded, we may apply l'H\^opital rule and find that the scalar curvature at the origin is given by:
\[
R(\origin,t) = -4(\cc_{sss}(\origin,t) + 2\bb_{sss}(\origin,t)).
\]
\noindent In particular, we derive that
\[
(\uu)_{ss}(\origin,t) = \frac{1}{3}\left(\cc_{sss}(\origin,t) - \bb_{sss}(\origin,t)\right),
\]
\noindent Therefore, we get
\[
(\oded)_{s}(\origin,t) = 0, \,\,\,\,\, (\oded)_{ss}(\origin,t) = \frac{1}{3}\left(2\bb_{sss}(\origin,t) + \cc_{sss}(\origin,t)\right) = -\frac{R(\origin,t)}{12}.
\]
\noindent From the assumption we deduce that the origin must be a local minimum for $\oded(\cdot,t_{0})$, meaning that $(\oded)_{ss}(\origin,t_{0}) \geq 0$ and hence $R(\origin,t_{0}) \leq 0$. By Chen \cite{chen2009} and a standard application of the strong maximum principle to the Ricci flow we conclude that the ancient solution is in fact a stationary Ricci flat metric.
% uniqueness result we know that any ancient solution has nonnegative scalar curvature; we may then apply the maximum principle and conclude that the Ricci tensor vanishes everywhere in the space-time.
\end{proof}
By Claim \ref{claimricciflatnessj2} any ancient solution in $\mathcal{A}$ is Ricci-flat. Accordingly, we drop the time-dependence and we simply write $g$. Suppose for a contradiction that $\oded > 0$ somewhere. From the boundary conditions we get that there exists $p\in\R^{4}$ such that $\oded(p) > 0$ and $(\oded)_{s}(p) > 0$. Thus
\[
\left(\bb_{ss} + \frac{\cc_{s}}{\bb} -\bb_{s}\frac{\uu}{\bb}\right)(p) > 0,
\] 
\noindent and $(\bb_{s} + \uu)(p) > 2$. We then obtain
\begin{align*}
\text{Ric}\left(X_{1},X_{1}\right)(p) &= (-\bb\bb_{ss} -\bb_{s}\cc_{s}\uu^{-1} -\bb_{s}^{2} -2\uu^{2} + 4)(p) \\ & < (\cc_{s} -\bb_{s}\uu -\bb_{s}\cc_{s}\uu^{-1} -\bb_{s}^{2} - 2\uu^{2} + 4)(p) \\ & < (2\cc_{s}(1 - \uu^{-1}) + 2\uu(1-\uu))(p) < 0,  
\end{align*}
\noindent where the last inequality follows from Lemma \ref{lemmaforARF2}. Therefore $\oded = 0$ everywhere in the space-time. Again, by Lemma \ref{lemmaforARF2} we see that $\ode \geq 0$ and a similar argument yields $\ode = 0$. Since the Hopf-fiber is uniformly bounded for any ancient solution in $\mathcal{A}$ and the differential equations \eqref{ODE1} and \eqref{ODE2} are satisfied in the space-time, we can conclude that $g$ is the Taub-NUT metric of mass $m$, with $m$ given by \eqref{boundedhopffberdefn}.
\end{proof}
% if $\Omega_{\delta,\epsilon} < 0$ with $\lvert\Omega_{\delta,\epsilon}\rvert > \Omega_{\delta}$, with $\Omega_{\delta}
%\begin{proof}[Proof of Theorem \ref{maintheoremancient}]
%We first prove that the first order derivatives are uniformly bounded. In fact, from the following estimate we also derive that $\cc_{s}$ decays at some rate in any space-time region where the squashing factor is small. 
%\begin{claim} $2\bb_{s} + \cc_{s}\uu^{-1} - 4 \leq 0$ in $\R^{4}\times (-\infty,1)$.
%\end{claim}
%\begin{proof}[Proof of Claim 

%\end{proof}

%\end{proof}
 
\section{Long-time behaviour of the Ricci flow}
In this section we apply the compactness property in Proposition \ref{compactnessresult} and the rigidity result in Theorem \ref{maintheoremancient} to study the long-time behaviour of Ricci flow solutions in $\Gk$ and in $\Gaf$. In particular, we show that any solution in $\Gk$ encounters a Type-II(b) singularity in infinite-time, which is modelled by the Taub-NUT metric in a precise way. Namely, any solution in $\Gk$ converges to $\nut$ in the Cheeger-Gromov sense.

We recall that an immortal Ricci flow solution $(M,g(t))_{t\geq 0}$ converges to a stationary Ricci-flat metric $(M_{\infty},g_{\infty})$ in the pointed Cheeger-Gromov sense in infinite time if there exist $p\in M$ and $p_{\infty}\in M_{\infty}$ such that for any sequence $t_{j}\nearrow \infty$ the pointed sequence $(M,g(t_{j}+t),p)$ converges to $(M_{\infty},g_{\infty},p_{\infty})$ in the Cheeger-Gromov sense. We point out that for this notion of convergence we do \emph{not} rescale the immortal solution, so that if $g_{\infty}$ is \emph{non}-flat, then $g(t)$ develops a Type-II(b) singularity in infinite-time.

\subsection{The positive mass case.} In the following we focus the attention on Ricci flow solutions starting in $\Gk$. We recall that any metric in $\Gaf$ with positive-mass belongs to $\mathcal{G}_{0}$. We first prove that the mass of any Ricci flow solution in $\Gk$ is in fact preserved in any region where $\bb$ is large, uniformly in time. 
\begin{lemma}\label{lemmapositivemasscase}
 Let $(\R^{4},g(t))_{t\geq 0}$ be the maximal Ricci flow solution evolving from some $g_{0}\in\Gk$ with mass $m_{g_{0}}$. There exists $\nu > 0$ such that for all $\gamma < (m_{g_{0}})^{-1}$ we have
\[
\inf_{\R^{4}\times [0, + \infty)}\,\bb^{\nu}(\cc - \gamma) > -\infty.
\]
\end{lemma}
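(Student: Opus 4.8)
The plan is to run a maximum principle argument on the quantity $\Theta_{\nu} \doteq \bb^{\nu}(\cc - \gamma)$ for a suitably small exponent $\nu > 0$, showing that once $\Theta_\nu$ achieves a large negative value it must be strictly increasing in time, which (together with control on the parabolic boundary of the space-time) forces a uniform lower bound. First I would check the boundary behaviour. At the origin, the smoothness conditions in \eqref{smoothnessorigin} give $\bb(\origin,t) = 0$, so $\Theta_\nu(\origin,t) = 0$ for all $t \geq 0$; in particular the quantity is controlled there. At spatial infinity, Corollary \ref{conservationmassGk} gives $\cc(s,t) \to m_{g(t)}^{-1} = m_{g_0}^{-1} > \gamma$, so $\cc - \gamma$ is bounded below by a positive constant for $s$ large while $\bb \to \infty$; hence $\Theta_\nu(s,t) \to +\infty$ as $s \to \infty$ on any time-slice, and it remains only to rule out a diverging sequence of interior negative minima. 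On the initial slice, since $g_0 \in \Gk$ has $\cc(s,0) \to m_{g_0}^{-1} > \gamma$, the quantity $\Theta_\nu(\cdot,0)$ is bounded below. Thus if $\Theta_\nu$ ever fails to be bounded below uniformly in time, there must be a space-time sequence of interior minima $(p_j,t_j)$ with $\Theta_\nu(p_j,t_j) \to -\infty$ and $\bb(p_j,t_j)$ staying bounded (because $\cc \leq m_{g_0}^{-1}$, forcing $\bb^\nu(\cc-\gamma)$ to be bounded below whenever $\bb$ is bounded) — in fact $\Theta_\nu$ negative forces $\cc < \gamma$ there, so $\cc - \gamma \geq -\gamma$ and hence $\bb(p_j,t_j) \to \infty$.

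\textbf{The evolution equation at a negative minimum.} Next I would compute $\partial_t \Theta_\nu$ at an interior spatial minimum with $\Theta_\nu < 0$, using the commutator formula \eqref{commutatorformula}, the PDEs \eqref{Ricciflowpdes1}--\eqref{Ricciflowpdes2}, and the conditions $(\Theta_\nu)_s = 0$, $(\Theta_\nu)_{ss} \geq 0$. Schematically,
\[
\partial_t \Theta_\nu = \Delta\Theta_\nu + \bb^\nu\left(\cc_t - \Delta\cc\right) + (\cc - \gamma)\left(\partial_t - \Delta\right)\bb^\nu + (\text{cross terms from }(\Theta_\nu)_s = 0),
\]
and at the minimum the cross terms combine to produce a term of the form $-\tfrac{\nu}{\bb^{2}}\,\Theta_\nu\cdot(\text{stuff})$ plus lower order. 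The key input here is that at a negative minimum $\cc < \gamma$, so the driving terms $-2\uu^3/\bb$ in the $\cc$-equation are $O(\bb^{-1}\cc\cdot\uu^2)$ and, crucially, the Laplacian/Hessian sign condition combined with $\bb_s, \cc_s$ being uniformly bounded (Lemma \ref{firstderivativesboundedGk}) and $\cc^2|\text{Rm}| = O(1)$ (Corollary \ref{curvaturecontrolledbycgk}) means all the ``bad'' terms are controlled by $\bb^{-2}$ times a bounded quantity, while the good term is $\nu\,|\Theta_\nu|\,\bb^{-2}$ times something bounded below. Choosing $\nu$ small enough that the coefficient of $|\Theta_\nu|$ stays positive, and then $|\Theta_\nu|$ large enough to dominate the bounded error, yields $\partial_t \Theta_\nu > 0$ at any sufficiently negative interior minimum. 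This rules out the diverging sequence of minima and gives the claimed uniform bound, after an application of the maximum principle in the form of \cite[Theorem 12.14]{ricciflowtechniques2} on each finite interval combined with the observation that the obstruction must be a genuine space-time infimum.

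\textbf{Main obstacle.} The delicate point is ensuring that the positive ``good'' term genuinely dominates: when $\cc < \gamma$ at the minimum, one has $\uu = \cc/\bb \to 0$ (since $\bb \to \infty$), so the term $2\uu^2/\bb$ appearing in the $\bb$-evolution and the curvature terms all decay, but one must verify that no surviving term is of the same order as $\nu|\Theta_\nu|\bb^{-2}$ with the wrong sign. The reason this works is that $|\Theta_\nu| = \bb^\nu(\gamma - \cc) \leq \gamma\,\bb^\nu$, so $|\Theta_\nu|\bb^{-2} \sim \gamma\,\bb^{\nu - 2}$, and every genuinely dangerous term (for instance $\bb^\nu \cdot \uu^3/\bb = \bb^\nu \cc^3\bb^{-4}$, or curvature contributions $\bb^\nu \cc \cdot \cc^2|\text{Rm}| \cdot \bb^{-2}$) carries an extra positive power of $\cc$ or of $\bb^{-1}$ and is therefore lower order provided $\nu < 1$; the leading competition is genuinely between $+\nu|\Theta_\nu|\bb^{-2}$ and a term like $C\bb^{\nu-2}$ with $C$ a uniform constant coming from the bounded first derivatives, so taking $|\Theta_\nu| > C/\nu$ closes the estimate. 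I expect the bookkeeping of which powers of $\cc$ versus $\bb$ each term carries to be the only real subtlety; once the ansatz $\nu$ small, $|\Theta_\nu|$ large is in place, the argument parallels the proofs of Lemma \ref{keylemmaGk} and the positive-mass case of Lemma \ref{lowerboundforCA}.
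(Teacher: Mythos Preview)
Your proposal follows essentially the same route as the paper: apply a maximum principle to $\Theta_\nu=\bb^\nu(\cc-\gamma)$, using that it vanishes at the origin and tends to $+\infty$ at spatial infinity (Corollary \ref{conservationmassGk}), and show $\partial_t\Theta_\nu>0$ at any sufficiently negative interior minimum by first taking $\nu$ small and then $|\Theta_\nu|$ large.

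One point you gloss over: when you actually carry out the computation at a negative minimum, the evolution inequality produces a term of the form $\nu^{2}\bb_{s}^{2}\,\dfrac{\gamma}{\cc}\,\dfrac{|\Theta_\nu|}{\bb^{2}}$ (equivalently, the factor $1-\tfrac{\cc-\gamma}{\cc}=\gamma/\cc$ in the paper's display). The coefficient $\gamma/\cc$ is \emph{not} controlled by the first-derivative bound in Lemma \ref{firstderivativesboundedGk} or by $\cc^{2}|\mathrm{Rm}|\leq\alpha$; it requires knowing that $\cc$ is bounded \emph{below} at the minimum. The paper obtains this from the rotational-symmetry estimate $\cc^{-1}(1-\uu)\leq\alpha$ in Corollary \ref{rotationalsymmetryboundsGk}: once $|\Theta_\nu|$ (hence $\bb$) is large, that bound forces $\cc\geq\bb/(1+\alpha\bb)$ and hence $\cc$ bounded away from zero. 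Your ``Main obstacle'' paragraph asserts that every dangerous term carries an extra positive power of $\cc$ or $\bb^{-1}$, but this particular term carries a \emph{negative} power of $\cc$, so without the lower bound on $\cc$ the two-step choice of $\nu$ then $|\Theta_\nu|$ would not close. With Corollary \ref{rotationalsymmetryboundsGk} inserted, your argument goes through exactly as the paper's does.
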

\begin{proof} Given $\nu > 0$ and $\gamma < (m_{g_{0}})^{-1}$ we let $f = \bb^{\nu}(\cc - \gamma)$. From Lemma \ref{conservationmassGk} we derive that $f(s,t) \rightarrow \infty$ at spatial infinity on any time-slice. Assume that there exist $p_{0}\in\R^{4}$ and $t_{0} > 0$ such that $f$ attains a negative minimum at $(p_{0},t_{0})$. The evolution equation of $f$ is
\[
\partial_{t}f(p_{0},t_{0}) \geq \frac{1}{\bb^{2}}\left(f\left(\nu^{2}\bb_{s}^{2}(1 - \frac{c - \gamma}{c}) + 2\nu\uu^{2} -4\nu\right) -2\uu^{2}\bb^{\nu}\cc\right).
\]
\noindent If $\lvert f(p_{0},t_{0})\rvert$ is large, then $\bb(p_{0},t_{0})$ is large too. Thus, from the rotational symmetry type of bounds in Corollary \ref{rotationalsymmetryboundsGk} we see that 
\[
\frac{\lvert c - \gamma\rvert}{\cc}(p_{0},t_{0}) \leq 1 + \frac{m_{g_{0}}^{-1}}{\cc}(p_{0},t_{0}) \leq \alpha < \infty.
\]
\noindent Therefore, we obtain
\[
\partial_{t}f(p_{0},t_{0}) \geq \frac{\lvert f \rvert}{\bb^{2}}\left(-(1 +\alpha)\nu^{2}\bb_{s}^{2} - \frac{2}{\lvert f \rvert}\uu^{2}\bb^{\nu}\gamma + 2\nu\right).
\]
\noindent Since by Lemma \ref{firstderivativesboundedGk} the derivative $\bb_{s}$ is uniformly bounded in time, we may pick $\nu > 0$ small enough such that
\[
\partial_{t}f(p_{0},t_{0}) \geq \frac{\lvert f \rvert}{\bb^{2}}\left(\nu - 2\frac{\uu (m_{g_{0}})^{-2}}{\lvert f \rvert}\right).
\] 
\noindent Finally, once we let $\lvert f \rvert$ be sufficiently large depending on the choice of $\nu$ and on the value of $m_{g_{0}}$, we conclude that $\partial_{t}f(p_{0},t_{0}) > 0$, which completes the proof.
\end{proof}
We may now prove that any Ricci flow in $\Gk$ converges to $\nut$ in infinite-time.
\begin{proof}[Proof of Theorem \ref{maintheoremconvergence}] Let $t_{j}\nearrow \infty$ and consider the pointed sequence of Ricci flow solutions $(\R^{4},g_{j}(t),\origin)_{t\in[-t_{j},0]}$, with $g_{j}(t) = g(t_{j} + t)$. According to Proposition \ref{curvatureboundedintime}, the curvature is uniformly bounded along the sequence. Moreover, the first order derivatives are controlled in the space-time by Lemma \ref{firstderivativesboundedGk}. From the boundary conditions we also derive that conditions \eqref{lowerboundsweetCA}, \eqref{bbjboundedCA} and \eqref{uujboundedCA} are satisfied by the sequence given above. Therefore, after a diagonal argument we deduce that $(\R^{4},g_{j}(t),\origin)$ converges to an ancient solution $(\R^{4},g_{\infty}(t),\origin)_{t\leq 0}$ as in Proposition \ref{compactnessresult}. In particular, $(\R^{4},g_{\infty}(t))_{t\leq 0}$ is a complete, warped Berger ancient solution with monotone coefficients and curvature uniformly bounded in the space-time. Moreover, from the convergence of the warping coefficients given by Proposition \ref{compactnessresult} we find that $\cc_{\infty} \leq m_{g_{0}}^{-1}$. By Lemma \ref{keylemmaGk} and the bound on the Hopf-fiber we know that $\bb_{\infty}$ diverges at spatial infinity on any time-slice. According to Lemma \ref{keylemmaGk} and Lemma \ref{lemmapositivemasscase} we may pick $\lambda > 0$ small enough and $\alpha > 0$ such that
\[
(\bb_{\infty})_{s}\uu_{\infty}^{-1}\geq \log(\bb_{\infty}) - \frac{\alpha}{\bb_{\infty}^{\lambda}}
\]
\noindent and
\[
\cc_{\infty} \geq \frac{m_{g_{0}}^{-1}}{2} - \frac{\alpha}{\bb_{\infty}^{\lambda}}.
\]
\noindent Let $V_{\lambda}$ be the space time region where $\bb_{\infty}^{\lambda}\geq 4(\alpha + 1)(m_{g_{0}} + 1)$. Thus, in $V_{\lambda}$ we get
\[
(\bb_{\infty})_{s}\uu_{\infty}^{-1}\geq \log(\uu_{\infty}^{-1}) + \log(\cc_{\infty}) - \frac{1}{4} \geq\log(\uu_{\infty}^{-1}) - \log(m_{g_{0}}) - \log(4) - \frac{1}{4}.
\]
\noindent Since by Lemma \ref{keylemmaGk} we have $(\bb_{\infty})_{s}\uu_{\infty}^{-1} \geq \beta$, for some $\beta > 0$, we conclude that there exists a continuous function $f:[1, \infty) \rightarrow \R_{> 0}$, with $f(z)\rightarrow \infty$ as $z\rightarrow\infty$, such that
\[
(\bb_{s})_{\infty}\uu_{\infty}^{-1} \geq f(\uu_{\infty}^{-1})
\]
\noindent in $\R^{4}\times (-\infty,0]$. Therefore, the limit ancient Ricci flow $(\R^{4},g_{\infty}(t))$ belongs to $\mathcal{A}$ (see Definition \ref{definitionancientpro}). By the rigidity property in Theorem \ref{maintheoremancient} we see that $g_{\infty}(t)$ is the Taub-NUT metric $\nut$ of mass \emph{exactly} $m_{g_{0}}$ as follows from Lemma \ref{lemmapositivemasscase}.
\end{proof}
\begin{remark} We note again that the argument above works for any asymptotically flat Ricci flow with positive mass, hence proving (i) of Theorem \ref{maintheoremconvergenceALF} as well.
\end{remark}
We also get:
\begin{proof}[Proof of Corollary \ref{maincorollary}] Let $g_{0}\in\mathcal{G}_{0}$ be as in Lemma \ref{existencemetricsec}. We can then apply Corollary \ref{solutionimmortal} and Theorem \ref{maintheoremconvergence} to derive that the maximal complete, bounded curvature Ricci flow solution starting at $g_{0}$ is immortal and converges to $\nut$ in the pointed Cheeger-Gromov sense as $t\nearrow \infty$.
\end{proof}
% Since we proved in Section 2.5 that there exists $g_{0}\in\Gk$ with $\text{sec}(g_{0}) \geq 0$, one can immediately deduce from Theorem \ref{maintheoremconvergence} that there exists an immortal warped Berger Ricci flow starting at some initial metric with nonnegative sectional curvature and converging to the Ricci-flat Taub-NUT metric in infinite-time, which is exactly the statement reported in Corollary \ref{maincorollary}.
\subsection{The zero mass case.} In order to prove that any asymptotically flat Ricci flow with Euclidean volume growth, or equivalently zero mass, encounters a Type-III singularity in infinite-time, we first show that the roundness ratio converges to 1 uniformly in any space-time region where $\bb$ is large.
\begin{lemma}\label{lemma1zeromasscase} If $(\R^{4},g(t))_{t\geq 0}$ is the maximal Ricci flow solution starting at some $g_{0}\in\Gaf$ with zero mass, then there exists $\nu > 0$ such that
\[
\sup_{\R^{4}\times [0, \infty)}\,\bb^{\nu}(1 - \uu) < \infty.
\]
\end{lemma}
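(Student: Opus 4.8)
The plan is to run a maximum-principle argument on the scale-sensitive quantity $f\doteq\bb^{\nu}(1-\uu)$ for a sufficiently small $\nu>0$, in the spirit of Lemma~\ref{lemmapositivemasscase}.

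First I would set up the boundary control. The smoothness conditions \eqref{smoothnessorigin} give $\bb\to 0$ and $\uu\to 1$ at the origin, so $f(\origin,t)=0$ for all $t$. Since $g_{0}$ has zero mass, $\uu(\cdot,0)\to 1$ at spatial infinity by Lemma~\ref{characterizationALF}, while $\uu(\cdot,0)>0$ everywhere; hence $\delta_{0}\doteq\inf_{\R^{4}}\uu(\cdot,0)>0$, and by Lemma~\ref{consistencymonotone}(iii) we get $\uu\geq\delta_{0}$ on all of $\R^{4}\times[0,\infty)$. In particular each $g(t)$ lies in $\Gaf$ (monotone coefficients by Lemma~\ref{consistencymonotone}, curvature decay preserved on compact time intervals by Lemma~\ref{ALFpreserved}), and the uniform bound $\uu\geq\delta_{0}$ rules out the positive-mass alternative of Lemma~\ref{characterizationALF}, so $\uu(\cdot,t)\to 1$ at spatial infinity for every $t\geq 0$. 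To make this quantitative, combine \eqref{rmo123} with Lemma~\ref{ALFpreserved} and \eqref{firstderivativesbounded}: since $\bb\leq Bs$ and (using the distance comparison from the curvature being bounded on $[0,T']$) $\lvert\text{Rm}_{g(t)}\rvert_{g(t)}\leq\alpha_{T'}s^{-2-\epsilon}$ there, we get $\lvert\uu_{s}\rvert=\bb\,\lvert\text{Rm}_{0123}\rvert\leq\alpha_{T'}s^{-1-\epsilon}$ for $s$ large, so integrating from $s$ to $\infty$ yields $1-\uu(\cdot,t)\leq\alpha_{T'}\epsilon^{-1}s^{-\epsilon}$, hence $f(\cdot,t)\leq C_{T'}s^{\nu-\epsilon}\to 0$ at spatial infinity once $\nu<\epsilon$. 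The same estimate at $t=0$ gives $\sup_{\R^{4}}f(\cdot,0)<\infty$; thus for each $t$ the quantity $\sup_{\R^{4}}f(\cdot,t)$ is either $0$ or attained at an interior radial maximum.

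Next I would compute the evolution of $f$. Using \eqref{Ricciflowpdes1}--\eqref{Ricciflowpdes2} and the commutator formula one obtains
\[
\partial_{t}\uu=\Delta\uu+\Bigl(\tfrac{\bb_{s}}{\bb}-\tfrac{\cc_{s}}{\cc}\Bigr)\uu_{s}+\tfrac{4\uu(1-\uu^{2})}{\bb^{2}},\qquad\partial_{t}\bb^{\nu}=\Delta\bb^{\nu}-\nu^{2}\bb^{\nu-2}\bb_{s}^{2}+\nu\bb^{\nu-2}(2\uu^{2}-4).
\]
Combining these, and substituting the first-order identity $\uu_{s}=\nu(1-\uu)\bb_{s}/\bb$ valid at a radial maximum of $f$, the $\bb_{s}^{2}$-contributions collapse to a term with coefficient $\nu(\nu-1)\leq 0$, and one is left with
\[
\partial_{t}f|_{\max}\leq\Delta f+\bb^{\nu-2}(1-\uu)\Bigl(\nu(\nu-1)\bb_{s}^{2}+\nu(2\uu^{2}-4)+\nu\tfrac{\bb_{s}\cc_{s}}{\uu}-4\uu(1+\uu)\Bigr).
\]
Since $\bb_{s},\cc_{s}$ are bounded by \eqref{firstderivativesbounded}, $\uu\leq 1$, and $\uu\geq\delta_{0}$, the parenthesis is at most $\nu(B^{2}\delta_{0}^{-1}-2)-4\delta_{0}(1+\delta_{0})$, which is strictly negative for $\nu$ small. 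Fixing such a $\nu<\epsilon$ and noting that at an interior maximum $\Delta f\leq 0$ and $1-\uu\geq 0$, we conclude $\partial_{t}f|_{\max}\leq 0$; hence $t\mapsto\sup_{\R^{4}}f(\cdot,t)$ is non-increasing and $\sup_{\R^{4}\times[0,\infty)}f\leq\sup_{\R^{4}}f(\cdot,0)<\infty$, as claimed.

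The main obstacle I expect is the evolution-equation bookkeeping: one must check the cancellation among the several $\bb_{s}^{2}$-terms (from $\Delta\bb^{\nu}$, the drift in $\partial_{t}\bb^{\nu}$, the cross term $2\langle\nabla\bb^{\nu},\nabla(1-\uu)\rangle$, and the substitution of $\uu_{s}$) so that the surviving term has the harmless sign, and track the favourable reaction term $-4\uu(1+\uu)(1-\uu)\bb^{-2}$ produced by the $\uu$-equation, which is exactly what makes the parenthesis negative for small $\nu$. A minor subtlety is that the decay constant $\alpha_{T'}$ of Lemma~\ref{ALFpreserved} is not uniform in $t$; this is immaterial, since one only needs $f(\cdot,t)\to 0$ at spatial infinity on each slice (with rate uniform on each compact time interval) to attain the supremum and run the maximum principle.
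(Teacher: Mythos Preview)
Your proposal is correct and follows essentially the same approach as the paper: you choose $\nu<\epsilon$ so that $\bb^{\nu}(1-\uu)\to 0$ at spatial infinity on each time-slice (the paper phrases this via l'H\^opital and \eqref{rmo123}, you via integrating $\lvert\uu_{s}\rvert$), and then run the maximum principle on $\bb^{\nu}(1-\uu)$, using the uniform lower bound $\uu\geq\delta_{0}$ from Lemma~\ref{consistencymonotone}(iii) and the boundedness of first derivatives to make the reaction term negative for small $\nu$. Your displayed evolution inequality differs slightly in form from the paper's (the paper obtains $\nu^{2}\bb_{s}^{2}\uu^{-1}-4\nu+2\nu\uu^{2}-4\uu(1+\uu)$ inside the bracket), but both organize the same computation and both are dominated by the favourable $-4\uu(1+\uu)$ term when $\nu$ is small, so the conclusion is the same.
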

\begin{proof} Let $\epsilon > 0$ satisfy $\sup_{\R^{4}}(\,d_{g_{0}}(\origin,\cdot))^{2 + \epsilon}\lvert \text{Rm}\rvert_{g_{0}}(\cdot) < \infty$. If we pick $0 < \nu < \epsilon$, then we may apply l'H\^opital formula to $\bb^{\nu}(1-\uu)$ and use \eqref{rmo123} to derive that $\bb^{\nu}(1-\uu)$ converges to zero at spatial infinity. Since by Lemma \ref{ALFpreserved} the decay of the curvature persists in time, such conclusion holds along the solution. At any positive maximum point we have
\[
\partial_{t}(\bb^{\nu}(1-\uu))|_{\text{max}} \leq \frac{\bb^{\nu}(1-\uu)}{\bb^{2}}\left(\nu^{2}\bb_{s}^{2}\uu^{-1} -4\nu + 2\nu\uu^{2} -4\uu(1+\uu) \right).
\]
\noindent We may take $\nu < 1$ and hence write
\[
\partial_{t}(\bb^{\nu}(1-\uu))|_{\text{max}} \leq \frac{\bb^{\nu}(1-\uu)}{\bb^{2}}\left(\nu^{2}\bb_{s}^{2}\uu^{-1} -4\nu -4\uu\right).
\]
\noindent We note that from Lemma \ref{characterizationALF} it follows that $\uu(\cdot,0) \geq \varepsilon > 0$ in the zero-mass case. Thus, we may apply (iii) in Lemma \ref{consistencymonotone} and Lemma \ref{firstorderestimates} to write
\[
\partial_{t}(\bb^{\nu}(1-\uu))|_{\text{max}} \leq \frac{\bb^{\nu}(1-\uu)}{\bb^{2}}\left(\nu^{2}\bb_{s}^{2}\varepsilon^{-1} -4\nu -4\varepsilon\right) \leq \frac{\bb^{\nu}(1-\uu)}{\bb^{2}}\left(\alpha\nu^{2}\varepsilon^{-1} -4\nu -4\varepsilon\right),
\] 
\noindent which is negative whenever $\nu$ is sufficiently small depending on $\varepsilon$ and $\alpha$.
\end{proof}
As a consequence of the previous result, we prove that the squashing factor $\uu$ converges to 1 as time goes to infinity.
\begin{corollary}\label{corollaryzeromasscase} If $(\R^{4},g(t))_{t\geq 0}$ is the maximal Ricci flow solution starting at some $g_{0}\in\Gaf$ with zero mass, then there exist $\beta > 0$ and $\delta > 0$ such that
\[
(1 - \uu)(\cdot,t) \leq \frac{\beta}{\left(1 + \beta t\right)^{\delta}},
\]
\noindent for all times $t\geq 0$.
\end{corollary}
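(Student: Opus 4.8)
The statement is a quantitative improvement of Lemma~\ref{lemma1zeromasscase}: we upgrade the uniform-in-time bound $\bb^{\nu}(1-\uu)\leq C$ to a decay rate in $t$. The plan is to run a maximum-principle argument directly on the quantity $1-\uu$ (not weighted by $\bb$), using that in the zero-mass case the warping coefficient $\bb$ grows \emph{linearly} in space and that $\uu$ is already known to be bounded below by a positive constant $\varepsilon$ by Lemma~\ref{consistencymonotone}(iii) and Lemma~\ref{characterizationALF}.

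First I would record the evolution equation for $1-\uu$. From the Ricci flow PDEs~\eqref{Ricciflowpdes1}--\eqref{Ricciflowpdes2} and the commutator formula one computes that
\[
\partial_t(1-\uu) = \Delta(1-\uu) + (\text{lower order gradient terms}) - \frac{1}{\bb^2}\Big(2\bb_s^2\,\uu(1-\uu) + 2\uu^2(1-\uu)(1+\cdots)\Big),
\]
the point being that at a spatial maximum of $1-\uu$ the reaction term is bounded above by $-\tfrac{c}{\bb^2}(1-\uu)$ for a uniform $c>0$, once we use $\uu\geq\varepsilon$, $\bb_s$ bounded (Lemma~\ref{firstorderestimates}), and the curvature control. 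The key extra input is that, by the linear growth of $\bb$ in the Euclidean-volume-growth case (Lemma~\ref{characterizationALF}, preserved in time by Lemma~\ref{ALFpreserved}) together with Lemma~\ref{lemma1zeromasscase}, in the region where $1-\uu$ is not yet small we have $\bb$ comparable to $(1-\uu)^{-1/\nu}$, so $\bb^{-2}\geq \tilde c\,(1-\uu)^{2/\nu}$ on that region. Substituting, the function $\phi(t)\doteq \sup_{\R^4}(1-\uu)(\cdot,t)$ satisfies, in the barrier/viscosity sense, an ODE of the form $\phi' \leq -\tilde c\,\phi^{1+2/\nu}$ at points realizing the supremum; here I would invoke the fact (as in Lemma~\ref{lemma1zeromasscase}) that the supremum is attained at an interior point because $1-\uu\to 0$ at spatial infinity.

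Integrating the differential inequality $\phi'\leq -\tilde c\,\phi^{1+2/\nu}$ gives exactly a bound of the form $\phi(t)\leq \beta(1+\beta t)^{-\delta}$ with $\delta = \nu/2$, which is the claimed estimate. To make the ``supremum attained'' step rigorous one can instead run the maximum principle on the time-dependent region $\{\bb\geq R(t)\}$ versus its complement as in the proof of Lemma~\ref{firstderivativesboundedGk}, or simply use that on any fixed time slice $(1-\uu)$ decays at spatial infinity with a rate preserved by Lemma~\ref{ALFpreserved}; either way the comparison with the ODE solution is standard once the reaction term is controlled.

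The main obstacle I expect is bookkeeping the gradient terms in the evolution equation of $1-\uu$ so that they genuinely vanish at a spatial maximum (they do, since $\partial_s\uu=0$ there), and — more substantively — justifying the passage from the PDE differential inequality to the scalar ODE $\phi'\leq -\tilde c\,\phi^{1+2/\nu}$ uniformly in time. This requires that the constant $\tilde c$ coming from the comparison $\bb^{-2}\gtrsim (1-\uu)^{2/\nu}$ does not degrade as $t\to\infty$, which is precisely where one needs the \emph{time-uniform} linear lower bound $\bb(s,t)\geq \gamma s$ for $s$ large; this follows from monotonicity of $\bb_s$, the boundary value $\bb_s(\origin,t)=1$, and the fact that $\bb_s\to 1$ at spatial infinity in the zero-mass case, all of which are preserved along the flow. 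Once that uniformity is in hand, the rest is the elementary integration of a Bernoulli-type ODE.
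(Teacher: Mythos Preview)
Your proposal is correct and follows essentially the same approach as the paper: compute the evolution of $1-\uu$, note it vanishes at the origin and at spatial infinity so the supremum is attained, use $\uu\geq\varepsilon$ to get $\partial_t(1-\uu)|_{\max}\leq -c\,\bb^{-2}(1-\uu)$, then invoke Lemma~\ref{lemma1zeromasscase} to replace $\bb^{-2}$ by $(1-\uu)^{2/\nu}$ and integrate the resulting Bernoulli inequality.

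One remark: your final paragraph worries about needing a \emph{time-uniform linear lower bound} $\bb(s,t)\geq\gamma s$ to keep the comparison constant $\tilde c$ from degrading. This is a red herring. The inequality $\bb^{-2}\geq \tilde c\,(1-\uu)^{2/\nu}$ is an immediate rearrangement of $\bb^{\nu}(1-\uu)\leq C$ from Lemma~\ref{lemma1zeromasscase}, which is already uniform in space-time; no separate control on the growth of $\bb$ is required. The paper's proof uses exactly this and nothing more.
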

\begin{proof} Consider the function $h = 1 - \uu$. In the zero mass case we have $h(\origin,t) = 0$ and $h(s,t)\rightarrow 0$ as $s\rightarrow \infty$, for all positive times. At any positive maximum we may compute that:
\[
\partial_{t}(1- \uu)|_{\text{max}} \leq \frac{1}{\bb^{2}}\left(-4\uu(1-\uu^{2})\right).
\]
\noindent From Lemma \ref{lemma1zeromasscase} we see that $\bb \leq \alpha(1-\uu)^{-\nu}$, yielding
\[
\partial_{t}(1- \uu)|_{\text{max}} \leq \left(-\frac{(1-\uu)^{\frac{2}{\nu}}}{\alpha}\right)4\uu(1-\uu^{2})
\]
\noindent Since the roundness ratio is uniformly bounded from below by Lemma \ref{consistencymonotone}, we find that there exists $\varepsilon > 0$ such that
\[
\partial_{t}(1- \uu)|_{\text{max}} \leq -\left(\frac{4}{\alpha}\varepsilon(1 + \varepsilon)\right)(1-\uu)^{\frac{2}{\nu} + 1} \leq -\beta(1-\uu)^{\frac{2}{\nu} + 1}.
\]
\noindent We may then apply the maximum principle and integrate the previous relation to obtain
\[
(1 - \uu)_{\text{max}}(t) \leq \frac{\beta}{(1 + \beta t)^{\frac{\nu}{2}}},
\]
\noindent up to choosing $\beta$ large enough.
\end{proof}
We finally prove that any immortal Ricci flow in $\Gaf$ with zero mass encounters a Type-III singularity at infinite time.
\begin{proof}[Proof of (ii) in Theorem \ref{maintheoremconvergenceALF}]
Given an immortal Ricci flow solution $(\R^{4},g(t))_{t\geq 0}$ starting at $g_{0}\in\Gaf$ with $m_{g_{0}} = 0$, let us assume for a contradiction that there is a Type-II(b) singularity at infinite time. One can then argue as for the proof of Proposition \ref{curvatureboundedintime} and deduce that there exists a space-time sequence $(p_{j},t_{j})$, with $t_{j}\nearrow \infty$, such that the pointed sequence $(\R^{4},g_{j}(t),p_{j})$, defined by $g_{j}(t) = \lambda_{j}g(t_{j}+t/\lambda_{j})$, where $\lambda_{j}=\lvert \text{Rm}\rvert(p_{j},t_{j})$, converges to an eternal Ricci flow solution $(M_{\infty},g_{\infty}(t),p_{\infty})$. By Lemma \ref{characterizationALF} and Lemma \ref{ALFpreserved} there exists $\varepsilon > 0$ such that 
\[
\uu(\cdot,t) \geq \varepsilon > 0,
\]
\noindent for all times $t\geq 0$. We may then apply Lemma \ref{lowerboundforCA} and derive that \eqref{lowerboundsweetCA}, \eqref{uujboundedCA} are satisfied along the Ricci flow sequence. From Corollary \ref{corollarycccurvature} we also get
\[
\bb_{j}(p_{j},0) = \sqrt{\lambda_{j}}\bb(p_{j},t_{j}) = \sqrt{(\cc^{2}\lvert\text{Rm}\rvert)(p_{j},t_{j})}\uu^{-1}(p_{j},t_{j}) \leq \alpha\varepsilon^{-1},
\]
\noindent for some $\alpha > 0$. Therefore, the compactness result in Proposition \ref{compactnessresult} holds and we may hence use Corollary \ref{corollaryzeromasscase} to deduce that $(M_{\infty},g_{\infty}(t),p_{\infty})_{t\in \R}$ is an eternal warped Berger solution to the Ricci flow such that
\[
(1 - \uu_{\infty})(q,t) = \lim_{j\rightarrow\infty}(1 - \uu)(s_{j}(q),t_{j} + t) = 0,
\]
\noindent for all $q\in M_{\infty}$ and for all $t\in \R$. Thus $g_{\infty}(t)$ is rotationally symmetric. Since by Lemma \ref{lowerboundforCA} we see that $g_{\infty}(t)$ has Euclidean volume growth, we can follow the proof of Proposition \ref{curvatureboundedintime} to conclude that $g_{\infty}(t)$ is flat, therefore contradicting the choice of the rescaling factors.

We have shown that for any Ricci flow solution starting at some $g_{0}\in\Gaf$ with zero mass, the singularity forming at infinite time is of Type-III. Given $t_{j}\nearrow \infty$, the Ricci flow sequence $(\R^{4},g_{j}(t),\origin)_{t\in[-t_{j},0]}$ satisfies the assumptions in Proposition \ref{compactnessresult} and hence converges to a warped Berger solution $(\R^{4},g_{\infty}(t),p_{\infty})$. From the Type-III condition we see that $\lvert \text{Rm}_{\infty}\rvert_{\infty}(\cdot,t) \equiv 0$, meaning that $(M_{\infty},g_{\infty})$ is the Euclidean space.
\end{proof}

\bibliographystyle{alpha}
\bibliography{referencesconvergence}
\end{document}